\numberwithin{equation}{section}
\newtheorem{thm}[equation]{Theorem}
\newtheorem{lem}[equation]{Lemma}
\newtheorem{prop}[equation]{Proposition}
\newtheorem{cor}[equation]{Corollary}
\newtheorem{defn}[equation]{Definition}
\theoremstyle{remark}
\newtheorem{rmk}[equation]{Remark}
\newtheorem{ex}[equation]{Example}
\renewcommand{\emptyset}{\font\cmsy = cmsy10 at 10pt
	\hbox{\cmsy \char 59}
}
\newcommand\R{\mathbb{R}}
\newcommand\N{\mathbb{N}}
\newcommand\T{\mathbb{T}}
\newcommand{\bbb}{\bm{b}}
\newcommand{\ccc}{\bm{c}} 
\newcommand{\ddd}{\bm{d}}
\mathchardef\mathdash="2D
\definecolor{burntsienna}{rgb}{0.91, 0.45, 0.32}
\definecolor{sapgreen}{rgb}{0.31, 0.49, 0.16}
\definecolor{teal}{rgb}{0.0, 0.5, 0.5}
\definecolor{blue}{rgb}{0.38, 0.51, 0.71} %glaucous, 97,130,181, #6182B5
\definecolor{darkblue}{RGB}{17, 42, 60} % 112A3C
\definecolor{red}{RGB}{175, 49, 39} % AF3127
\definecolor{orange}{RGB}{217, 156, 55} % D99C37
\definecolor{green}{RGB}{144, 169, 84} % 90A954
\definecolor{palegreen}{RGB}{197, 184, 104} % C5B868
\definecolor{yellow}{RGB}{250, 199, 100} % FAC764
\definecolor{brokenwhite}{RGB}{218, 192, 166} % DAC0A6
\definecolor{brokengrey}{rgb}{0.77, 0.76, 0.82} % {196,194,209}, C4C2D1
\newcommand\GS{\mathsf {GS}}
\newcommand\CSM{\mathsf {MO}}
\newcommand{\OOO}{\mathbb {O}}
\newcommand\Klgr{\Xi}%{\widetilde{\mathsf{\Gr}_*}}
\newcommand{\pr}[1]{\mathsf{psh}(#1)}
\newcommand{\sh}[1]{\mathsf{sh}(#1)}%%not until topology
\newcommand{\TT}{\mathbb T}
\newcommand{\DD}{\mathbb D}
\newcommand{\TTp}{\mathbb T_*}
\newcommand\Set{\mathsf{Set}}
\newcommand\Cat{\mathsf {Cat}} %%FIRST USED IN WEBER SECTION
\newcommand\fin{\mathsf{Set_f}}
\newcommand\sSet{\mathsf{sSet}}%%FIRST USED IN WEBER SECTION
\newcommand{\mm}[1][m]{\mathbf{#1}}
\newcommand{\nul}{\mathbf{0}}
\newcommand{\one}{\mathbf{1}}
\newcommand{\two}{\mathbf{2}}
\newcommand{\nn}{\mathbf{n}}
\newcommand{\CCat}{ \mathsf{C}}
\newcommand{\DCat}[1][D]{ \mathsf{#1}}
\newcommand{\ElP}[2]{\ensuremath{\mathsf{el}_{#2}(#1)}}%{\ensuremath{\mathsf{El}_{#1}{(#2)}}}
\newcommand{\ov }{/}
\newcommand{\defeq}{\overset{\textup{def}}{=}}
\newcommand{\fiso}{{\mathbf{B}}}%{{\mathsf{Bij}}}%{\mathbb {P}}%NEED BETTER NOTATION HERE...
\newcommand{\fisinv}{{\fiso}^{\scriptstyle{\bm \S}}}%{{\fiso}^{\circlearrowleft}}%{\mathbb{P}^\circlearrowleft}
\newcommand{\Comm}{{K}}
\newcommand{\CComm}[2][(\CCC,\omega)]{{\Comm}^{#1}_{#2}}
\newcommand\CCC{\mathfrak{C}}
\newcommand\DDD[1][D]{\mathfrak{#1}}
\newcommand\Di{\mathfrak{Di}}% \{\In, \Out\}
\newcommand{\elG}[1][\mathcal{G}]{\ensuremath{\mathsf{el}{(#1)}}}
\newcommand{\ElS}[1][S]{\ensuremath{\mathsf{el}{(#1)}}}%{\ensuremath{\fisinv \ov {#1}}}
\tikzset{->-/.style={decoration={
			markings,
			mark=at position #1 with {\arrow{>}}},postaction={decorate}}}
\tikzset{-<-/.style={decoration={
			markings,
			mark=at position #1 with {\arrow{<}}},postaction={decorate}}}
\newcommand\nuCSM{\mathsf {MO}^-}
\newcommand\CO{\mathsf {CO}}
\newcommand\E{\mathsf{E}}
\newcommand{\MM}{\mathbb {M}}
\newcommand{\boffcat}[1][\MM, \DCat]{ \Theta_{#1}}
\newcommand{\Op}{ \mathsf{Op}}
\newcommand{\diag}{\mathsf{\bm D}}
\newcommand{\Gret}{\mathsf{Gr_{et}}}
\newcommand\Gr{\mathsf{C}\Gret}%%Changes for $\mathsf{etGr}$ 16/07
\newcommand{\Griso}{\mathdash\mathsf{CGr_{iso}}}
\newcommand{\Gretp}{\mathsf{Gr}_{*}}%{\mathsf{Gr}_{\mathsf {et} \large{*}}}
\newcommand{\Grp}{\ensuremath{\mathsf{C}\Gretp}}
\newcommand{\GSp}{\ensuremath{\mathsf{\GS}_*}}
\newcommand{\fisinvp}{\ensuremath{\fisinv_*}}%%First used in WEBER
\newcommand\G{\mathcal{G}}
\renewcommand\H{\mathcal{H}}
\newcommand\W{\mathcal W}
\newcommand\I{\mathcal{I}}
\newcommand\C{\mathcal{C}}
\newcommand{\Lk}[1][k]{\ensuremath{\mathcal L^{#1}}}
\newcommand{\Wl}[1][m]{\ensuremath{\mathcal W^{#1}}}
\newcommand{\Wm}[1][m]{\ensuremath{\mathcal W^{#1}}}
\newcommand\CX[1][X]{{\mathcal C_{#1}}}
\newcommand{\Fgraph}{ \xymatrix{
	E \ar@(lu,ld)[]_\tau&& H \ar[ll]_s \ar[rr]^t&& V}}
\newcommand{\Fgraphdash}{\xymatrix{
	E \ar@(lu,ld)[]_{\tau } &&H \ar[ll]_{s } \ar[rr]^{t }& &V }}
\newcommand{\Fgraphvar}[6]{\xymatrix{
	*[r] {#1}\ar@(ul,dl)[]_{#6} && {#2} \ar[ll]_-{#4} \ar[rr]^-{#5}&& {#3}}}
\newcommand\Cv[1][v]{{\mathcal C_{\mathbf{#1}}}}
\newcommand{\EI}{\ensuremath{{E_\bullet}}}
\newcommand{\vH}[1][v]{\sfrac{H}{#1}}
\newcommand{\vE}[1][v]{\sfrac{E}{#1}}
\newcommand{\nV}[1][n]{V_{#1}}
\newcommand{\nH}[1][n]{H_{#1}}
\newcommand{\nE}[1][n]{E_{#1}}
\newcommand\shorte[1][\tilde e]{{\shortmid_{#1}}}
\newcommand{\esG}[1][\mathcal{G}]{\ensuremath{\mathsf{es}{(#1)}}}
\newcommand{\esv}[1][v]{\ensuremath{\iota_{{#1}}}}
\newcommand{\ese}[1][\tilde e]{\ensuremath{\iota_{{#1}}}}%{\ensuremath{\mathbf{\tilde{#1}}}}
\newcommand{\esh}[1][h]{\ensuremath{\delta_{{#1}}}} %WARN: Is $\kappa$ a good choice?
\newcommand{\yet}{\Upsilon}
\newcommand{\yetp}{{\yet_*}}
\newcommand\X{\mathcal X}
\newcommand\Y{\mathcal Y}
\newcommand\Gg{\mathbf \Gamma}
\newcommand\Gdg{\mathbf \Lambda}
\newcommand\Gid[1][\G]{\mathbf{I}^{#1}}
\newcommand\coGg[1][\G]{{\Gg}({#1})}%{\overline{\Gg}}
\newcommand{\elpG}[1][\mathcal{G}]{\ensuremath{\mathsf{el}_*{(#1)}}}
\newcommand{\Gnov}[1][W]{\ensuremath{\mathcal G_{\setminus #1}}}
\newcommand{\Enov}[1][W]{\ensuremath{E_{\setminus #1}}}
\newcommand{\Hnov}[1][W]{\ensuremath{H_{\setminus #1}}}
\newcommand{\Vnov}[1][W]{\ensuremath{V_{\setminus #1}}}
\newcommand{\snov}[1][W]{\ensuremath{s_{\setminus #1}}}
\newcommand{\tnov}[1][W]{\ensuremath{t_{\setminus #1}}}
\newcommand{\taunov}[1][W]{\ensuremath{\tau_{\setminus #1}}}
\newcommand{\delW}[1][W]{\mathsf{del}_{\setminus #1}}
\newcommand\Grsimp{\mathsf{CGr}_{\mathsf{sim}}}
\newcommand\XGrsimp{\mathdash\mathsf{CGr}_{\mathsf{sim}}}
\newcommand{\etap}{\ensuremath{\eta^{\TTp}}}
\newcommand{\Tp}{\ensuremath{T_*}}
\newcommand{\factcat}[1][\beta]{\ensuremath{\mathsf{fact}_*(#1)}}
\newcommand{\listm} {\small{\mathsf{list}}}%{{L_0}}
\newcommand\V{\mathcal{V}}
\newcommand{\op}{\mathcal O}
\newcommand{\CommE}[1][\E]{{K_{#1}}}
\newcommand{\CGSE}[1][\E]{{\GS^{(\CCC,\omega)}_{#1}}}
\newcommand{\GSE}[1][\mathsf E]{{\GS_{#1}}} %CO
\newcommand{\prE}[2][\E]{\mathsf{psh_{#1}}(#2)}%CO
\newcommand\nuCO{\mathsf {CO}^-}
\newcommand{\GSEp}[1][\E]{\ensuremath{{\mathsf{\GS}_{#1}}_*}}
\newcommand\Gretsimp{\mathsf{Gr}_{\mathsf{sim}}}
\newcommand\XGretsimp{\mathdash\mathsf{Gr}_{\mathsf{sim}}}
\newcommand{\bfom}{\bm{\overleftarrow \omega}}
\newcommand{\WD}{\mathrm{WD}}
\newcommand{\CWD}[1][(\CCC, \omega)]{\WD^{#1}}
\newcommand{\CSME}[1][\E]{{\mathsf {MO}}_{#1}}
\newcommand{\nuCSME}[1][\E]{{\mathsf {MO}}^{-}_{#1}}
\newcommand{\COE}[1][\E]{{\mathsf {CO}}_{#1}}
\newcommand{\nuCOE}[1][\E]{{\mathsf {CO}}^{-}_{#1}}
\newcommand{\LL}{\mathbb L}
\newcommand{\TTk}{{\mathbb{ T}^{\times}}} %^{\scriptsize{\mathsf{et}}}}}
\newcommand{\Tk}{{T^{\times}} }%{\scriptsize{\mathsf{et}}}}}
\newcommand{\etak}{{ \eta^{\mathbb{T}^{\times}}}}%{\scriptsize{\mathsf{et}}}}}
\newcommand{\TTpk}{{\tilde{ \mathbb {L }}\mathbb{ T}_*}} %^{\scriptsize{\mathsf{et}}}}}
\newcommand{\Tpk}{{\widetilde L T_* }} %{\scriptsize{\mathsf{et}}}}}
\newcommand{\Grisok}{{\mathdash\mathsf{Gr}_{\mathrm{iso}}}}
\newcommand{\CA}{{\mathsf{CA}}}
\newcommand{\Sym}{{\mathsf{Sym}}}
\newcommand{\CCO}[1][(\CCC,\omega)]{{\CO^{#1}}}
\newcommand{\CCA}[1][(\CCC,\omega)]{\CA^{#1}}
\newcommand{\man}{\mathcal M}%%one-dimensional manifold
\newcommand\Klgrt{{\Xi^\times}}%{\widetilde{\mathsf{\Gr}_*}}
\newcommand{\GretG}[1][\G]{\ensuremath{\Gret^{(#1)}}}
\newcommand{\GrG}[1][\G]{\ensuremath{\Gr^{(#1)}}}
\newcommand{\ModR}[1][R ]{ {#1}\mathdash\mathsf{Mod}}
\newcommand{\core}[1]{\mathrm{Core}(#1)}
\newcommand{\Coriso}[1][X]{{{#1}\mathdash\mathsf{Cor}_{\boxtimes}}_{\mathrm{iso}}}
\newcommand{\CorGg}[1][\X]{ \mathsf{Cor}_{\boxtimes} ^{(#1)}}
\newcommand{\alg}[1][\mathcal A]{#1}
\newcommand{\Br}[1][\delta]{\mathsf{Br}_{#1}}
\newcommand{\Brd}{\mathsf{Br}_{\downarrow}}
\newcommand{\Bru}{\mathsf{Br}_{\uparrow}}
\newcommand{\CBD}[1][(\CCC, \omega)]{\mathsf{BD}^{#1}}
\newcommand{\BD}{\mathsf{BD}}
\newcommand{\BDop}{\mathring{\mathsf{BD}}} %{\mathsf{BD}_{ \circledcirc}}%\scriptscriptstyle{\mathrm{open}}}}
\newcommand{\BDd}{\mathsf{BD}_{\downarrow}}
\newcommand{\BDu}{\mathsf{BD}_{\uparrow}}
\newcommand{\vect}[1][\Bbbk]{ \mathsf{Vect}_{\scriptstyle{#1}}}
\newcommand{\modR}[1][R]{ {#1}\mathdash\mathsf{Mod}}
\newcommand{\fopen}[1][f]{\mathring{{#1}}}%^{\scriptscriptstyle{\mathrm{Open}}}}
\newcommand{\Sf}[1][f]{\mathfrak S(#1)}
\newcommand{\Tf}[1][f]{\mathfrak T(#1)}
\newcommand\dipal{\{\wideparen{+,-}\}}%{\scriptsize{\{x\leftrightarrow y\}}}
\newcommand\DiBD{\CBD[\scriptstyle{{\dipal}}]}
\newcommand{\dicomm}{K^{\scriptscriptstyle{\dipal}}} %{\large{\curvearrowleftright}}\atop{\scriptstyle{\{+,-\}}}}}}	%{\CBD[\scriptscriptstyle{(+ \leftrightarrow -) }]}
\newcommand{\rev}[1]{{\overleftarrow{#1}}}
\newcommand{\kclf}[1][f]{\mathrm{k}_{#1}}%{\mathfrak{k}^{#1}}%_{\mathrm{Cl}}}%_{\mathrm{Cl}}}{\mathfrak{k}^{#1}}%_{\mathrm{Cl}}}
\newcommand{\kcl}{\mathrm{k}}%{\mathfrak{k}}%_{\mathrm{Cl}}}
\newcommand{\colop}[1][\lambda]{{{#1}_{\partial}}}
\newcommand{\colcl}[1][\lambda]{{\widetilde{#1}}}
\newcommand{\coev}[1]{{\lceil{#1}\rceil}}
\newcommand{\ev}[1]{{\lfloor{#1}\rfloor}}
\newcommand{\pal}{\mathsf{Pal}}
\tikzset{
dot/.style = {circle, fill, minimum size=#1,
inner sep=0pt, outer sep=0pt},
dot/.default = 5pt % size of the circle diameter 
}
\title[A nerve theorem for circuit algebras]{Brauer diagrams, modular operads, and a graphical nerve theorem for circuit algebras}
\author{Sophie Raynor}
\thanks{The author acknowledges the support of Australian
	Research Council grants DP160101519 and FT160100393.}
\begin{document}

\maketitle

\begin{abstract}
	Circuit algebras, used in the study of finite-type knot invariants, are a symmetric analogue of Jones's planar algebras. They are very closely related to circuit operads, modular operads equipped with a monoidal product. This paper gives a description of circuit algebras in terms categories of Brauer diagrams, and thereby establishes a link between circuit algebras and the representation theory of classical Lie groups. An abstract nerve theorem for circuit operads -- and hence circuit algebras -- is proved and a graphical calculus is established using an iterated distributive law, and an existing nerve theorem for modular operads.

\end{abstract}

\textbf{Note:} This paper has been superceded by \href{https://arxiv.org/abs/2412.20260}{arXiv:2412.20260} (on circuit algebras, Brauer diagrams, modular operads and invariant theory) and \href{https://arxiv.org/abs/2412.20262}{arXiv:2412.20262} (on a graphical calculus, iterated distributive laws and nerve theorem for circuit algebras). The version below is from November 2022.
\section{Introduction}

{Circuit algebras are a symmetric (or non-planar) version of Jones's planar algebras\cite{Jon94}. They were introduced as a framework for working with virtual tangles in the study of finite type invariants of knotted objects by Bar-Natan and Dansco \cite{BND17}. Recently, Dansco, Halacheva and Robertson have shown that oriented circuit algebras are equivalent to wheeled PROPs \cite{DHR20}, and used this to give descriptions of the (graded) Kashiwara-Vergne group $\mathrm{KV}$ ($\mathrm{KRV}$), and the graded Grothendieck-Teichm\"uller group $\mathrm{GRT}$, as automorphism groups of circuit algebras \cite{DHR21}.

This paper compares circuit algebras with modular operads in the style of \cite{Ray20} (see also \cite{JK11, HRY19a, HRY19b}), and establishes a nerve theorem characterising circuit algebras as presheaves over a category of (disconnected) graphs that satisfy a so-called `Segal condition'.
The basic data of a (coloured) circuit algebra is given by a graded monoid $(A_n)_{n \in \N}$ in a symmetric monoidal category $(\V, \otimes, I)$, together with a levelwise action of the permutation groupoid, and an additional `contraction' operation that satisfies suitable axioms. By \cref{thm. CO CA}, circuit algebras may be viewed as modular operads equipped with an extra external (or monoidal) product. In previous literature \cite{DHR20, DHR21, Hal16, Tub14}, 
circuit algebras have been defined as algebras over an operad of wiring diagrams. The present work describes a family of operads of wiring diagrams in terms of categories of `coloured Brauer diagrams' (or coloured symmetric tangle categories), 
and this leads to a new, equivalent definition of a general class of (coloured) circuit algebras: 
\begin{thm}\label{thm. def CA}
	A (coloured) circuit algebra is a symmetric lax monoidal functor from a category of coloured Brauer diagrams.
\end{thm}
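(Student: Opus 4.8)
The plan is to read the asserted description as an equivalence of \emph{definitions} and to prove it by matching the data of a symmetric lax monoidal functor out of the category $\CBD$ of coloured Brauer diagrams against the operadic definition of a circuit algebra from the prior literature, organising the comparison through the modular-operadic picture of \cref{thm. CO CA}. First I would pin down $\CBD$ as a symmetric monoidal category and record a generators-and-relations presentation: as a self-dual (compact closed) symmetric monoidal category, $\CBD$ is generated by the colours together with the symmetries (crossings) and, for each colour $c$, a cup $I \to c \boxtimes \bar c$ and a cap $c \boxtimes \bar c \to I$ to its dual $\bar c$ (under the involution $\omega$), subject only to naturality of the symmetry and the zigzag (snake) coherences. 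The monoidal product $\boxtimes$ is disjoint union of diagrams and is strict on objects.

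The combinatorial crux is to identify the operad underlying $\CBD$ with the operad of wiring diagrams $\CWD$ used to define circuit algebras in \cite{DHR20, Hal16, Tub14}: a wiring diagram is exactly a perfect matching of boundary ports, that is, a coloured Brauer diagram, and composition of wiring diagrams is vertical composition of Brauer diagrams. I would check that this assignment is an isomorphism of operads, the one delicate point being the bookkeeping of closed components (circles) produced by composition; here the absence of a circle-evaluation scalar, together with the use of the \emph{lax} rather than strong monoidal condition, is what makes the correspondence go through without collapsing loops to a fixed dimension.

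With the presentation in hand, unwinding a symmetric lax monoidal functor $F \colon \CBD \to \V$ yields precisely the data of a circuit algebra: the values $F(c_1 \boxtimes \cdots \boxtimes c_n)$ recover the graded pieces $A_n$; the lax structure maps $F(a) \otimes F(b) \to F(a \boxtimes b)$ are the external (monoidal) product of the graded monoid; the images of the crossings are the permutation action; the images of the caps are the contraction operations; and the composite $I \to F(I) \to F(c \boxtimes \bar c)$ of the lax unit with the image of a cup is the distinguished pairing element. The coherence axioms for a symmetric lax monoidal functor then translate term by term into the equivariance, associativity, and unitality of the external product and into the contraction axioms, which by \cref{thm. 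CO CA} are exactly the modular-operad axioms augmented by the monoidal product.

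The main obstacle I anticipate is the second step: establishing that the stated generators and relations genuinely present $\CBD$ with underlying operad $\CWD$, so that no spurious relations are imposed and none are missing. Equivalently, phrased on the modular-operadic side, the difficulty is to match the graph-indexed contraction structure of a modular operad with composition of Brauer diagrams while correctly tracking closed loops; once this identification is secure, the remaining verifications amount to a routine, if lengthy, unravelling of coherence diagrams.
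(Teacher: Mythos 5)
Your proposal is correct in substance, but it takes a genuinely different route from the paper, and the comparison is instructive. The paper makes your hardest step vanish by fiat: it \emph{defines} the operad of wiring diagrams to be the operad underlying the permutative category of coloured Brauer diagrams, $\CWD \defeq \op^{\CBD}$ (\cref{defn. operad of wiring diagrams}), defines a circuit algebra as a $\V$-algebra for $\CWD$ (\cref{def CA}), and then obtains the theorem, in the form of \cref{thm. lax functor ca}, immediately from the general fact that $\V$-algebras for the operad underlying a small permutative category are precisely symmetric lax monoidal functors out of it (\cref{rmk. lax functors and algebras}, cited to Leinster). The identification with the punctured-disc/1-manifold wiring diagrams of the earlier literature --- your step 2, which you rightly single out as the crux, closed components and all --- is treated only informally, in \cref{rmk classic wd}. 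So your route buys a theorem-level comparison with the pre-existing definition of circuit algebras, at the cost of a presentation argument and a by-hand unwinding; your step 3 is essentially a hands-on proof of the specialisation of \cref{rmk. lax functors and algebras} to $\CBD$ (compare \cref{ssec BD func} and \cref{prop. product and contraction prop}), whereas the paper buys brevity by legislating the identification into the definitions.

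Two caveats. First, excise the appeal to \cref{thm. CO CA}. In the paper's logical order that theorem sits \emph{downstream} of the present one: its proof passes through \cref{prop. CA CO} and \cref{prop. product and contraction prop}, and the proof of the latter opens by invoking \cref{thm. lax functor ca}; using it here is therefore circular. It is also only a statement about $\Set$-valued circuit algebras, while the theorem concerns arbitrary $\V$. Fortunately the appeal is inessential: your term-by-term translation (values give the graded pieces, lax structure maps the external product, crossings the symmetric action, caps the contractions, lax unit composed with cups the distinguished pairing) stands on its own against the operadic definition. Second, your claimed presentation of $\CBD$ is missing a relation: the Brauer pairings are \emph{symmetrically} self-dual, i.e.\ $\cup \circ \sigma_{\two} = \cup$ (equivalently $\sigma_{\two} \circ \cap = \cap$), and this does not follow from naturality plus the zigzag identities --- a symplectic form furnishes a self-duality in $\vect$ for which it fails. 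In fact no presentation is needed at all: arguing as the paper does, directly with the combinatorial description of $\CBD$ and the general lemma on operads underlying permutative categories, bypasses both issues.
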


Versions of the categories of Brauer diagrams in \cref{thm. def CA} have been widely studied for nearly a century since Brauer's 1937 paper \cite{Bra37} extending Schur-Weyl duality to describe representations of the finite dimensional orthogonal and symplectic groups (e.g.~{\cite{Wen88, Koi89, LZ15}}). 
Hence, a pleasing consequence of the approach of this paper is that, via \cref{thm. def CA} (and \cref{thm. CO CA}), it establishes a dictionary for translating between categorical (Brauer categories), operadic (circuit algebras) and modular operadic approaches to active research in representation theory and quantum algebra. 

The main technical results of the paper are the construction -- using an iterated distributive law -- of a monad for circuit algebras, and the proof of an abstract `Weber-style' \cite{Web07, BMW12} nerve theorem which provides an explicit description of the graphical combinatorics of circuit algebras.  This is closely related to the graphical nerve theorem for modular operads of \cite[Theorem~8.2]{Ray20} and is based on the observation (\cref{thm. CO CA}) that, in the category of sets, circuit algebras coincide with modular operads equipped with an extra product operation. 
\begin{thm}
	[\cref{iterated law}]
	\label{thm: main intro}
	The category $\CA$ of set-valued circuit algebras is equivalent to the Eilenberg-Moore (EM) category of algebras for a monad $\OOO^\times$ on Joyal and Kock's graphical species category $\GS$ \cite{JK11}. % (see also \cite{HRY19a, HRY19b, Ray20}).
\end{thm}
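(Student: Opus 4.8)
The plan is to realise the extra external product of a circuit algebra as an additional layer of algebraic structure sitting on top of the modular operad monad, and to glue the two together by a distributive law. Recall from \cite{Ray20} that set-valued modular operads are the Eilenberg--Moore algebras of a monad on $\GS$, and that this monad is itself assembled from a distributive law between two simpler monads on $\GS$ — one governing the graphical composition and contraction of operations, the other supplying units. I will write $\TT$ for the composition monad. By \cref{thm. CO CA}, a set-valued circuit algebra is precisely a modular operad together with a compatible symmetric monoidal (external) product, so it suffices to encode the product monadically and then show that it assembles with the modular operad structure into a single monad whose algebras are exactly these enriched modular operads.

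First I would introduce a free-monoid (or list) monad $\LL$ on $\GS$ whose free algebras are finite formal $\otimes$-products, i.e.\ disjoint unions, of operations; its Eilenberg--Moore algebras are graphical species equipped with the commutative monoid structure encoding the external product, which is exactly the data distinguishing a circuit algebra from a bare modular operad. Next I would produce a distributive law $\LL\,\TT \Rightarrow \TT\,\LL$, together with its unital refinement, expressing the fact that the external product distributes over graphical composition and contraction; this is precisely the compatibility demanded by the circuit algebra axioms, read off from the behaviour of disjoint unions of graphs under gluing. Because $\TT$ (and hence the modular operad monad) is already a composite arising from a distributive law, introducing $\LL$ does not yield a single distributive law but an iterated one in the sense of Cheng: three constituent monads and three pairwise laws, whose composite $\OOO^\times$ is a monad exactly when the three laws satisfy the Yang--Baxter coherence condition. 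I would verify this condition directly on generators and record the resulting structure maps $\muk$ and $\etak$.

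Finally, standard distributive-law theory identifies the Eilenberg--Moore category of the composite monad $\OOO^\times$ with the category of objects carrying mutually compatible algebra structures for all three constituent monads — that is, modular operads equipped with a compatible external product. Combining this identification with \cref{thm. CO CA} yields the desired equivalence $\CA \simeq \GS^{\OOO^\times}$. I expect the main obstacle to be the verification of the iterated distributive law: constructing the pairwise laws is largely formal, but checking the Yang--Baxter compatibility requires controlling how contraction, units and disjoint union interact simultaneously on possibly disconnected graphs, and ensuring that no order of application introduces spurious identifications. This combinatorial coherence check is the technical heart of the argument, and it is where the explicit graphical description of $\GS$ and of the gluing of Brauer diagrams does the real work.
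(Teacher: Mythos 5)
Your proposal follows essentially the same route as the paper: the monad $\OOO^\times$ is assembled from the three monads $\LL$, $\DD$, $\TT$ on $\GS$ via pairwise distributive laws satisfying Cheng's Yang--Baxter condition (this is \cref{prop. iterated law} and \cref{iterated law}, with $\OOO^\times = \LL\DD\TT$), and the passage from circuit operads to circuit algebras is completed by \cref{thm. CO CA}, exactly as you outline. One small correction: your law between $\LL$ and $\TT$ is written in the wrong direction --- it must run $TL \Rightarrow LT$ (gluing a connected graph whose vertices carry disjoint unions of corollas produces a \emph{disjoint union} of connected gluings, so the composite functor is $LDT$, not $TDL$), since there is no canonical way to rewrite a disjoint union of connected gluings as a single connected one.
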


As in \cite{Ray20}, this enables the proof of a corresponding nerve theorem, characterising circuit algebras as functors from a full subcategory $\Klgrt\subset \CA$ whose objects are graphs. 
The nerve theorem for circuit algebras is proved using iterated distributive laws \cite{Che11}, together with the abstract nerve theory of \cite{BMW12}. Let $\pr{\Klgrt}$ denote the category of presheaves (contravariant $\Set$-valued functors) on $\Klgrt$, then:

\begin{thm}[\cref{nerve theorem}]
	\label{thm nerve intro}

	The nerve functor $N \colon \CA \to \pr{\Klgrt}$ induced by the inclusion $\Klgrt \hookrightarrow \CA$ is fully faithful. 
	Moreover, there is a canonical restriction functor $R^* \colon \pr{\Klgrt} \to \GS$, such that a presheaf $P$ on $\Klgrt$ is equivalent to the nerve of a circuit algebra if and only if $P$ is completely described by $R^*P$, in which case $P$ is said to `be Segal', or `satisfy the {Segal condition'}.
\end{thm}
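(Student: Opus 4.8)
The plan is to deduce the theorem from the abstract nerve machinery of Berger--Melli\`es--Weber \cite{BMW12}, applied to the monad $\OOO^\times$ on $\GS$ whose Eilenberg--Moore category is $\CA$; this identification is exactly \cref{thm: main intro}. Recall the shape of that machinery: if $\diag \subset \GS$ is a dense subcategory with respect to which $\OOO^\times$ is a \emph{monad with arities}, then the full subcategory $\Theta_{\OOO^\times} \subset \CA$ spanned by the free algebras $\OOO^\times i$ on the arities $i$ is dense in $\CA$, the induced nerve $N \colon \CA \to \pr{\Theta_{\OOO^\times}}$ is fully faithful, and its essential image consists precisely of those presheaves that send the canonical Segal cocones to limit cones. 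Thus the theorem reduces to three tasks: (i) choosing the arities and identifying $\Theta_{\OOO^\times}$ with $\Klgrt$; (ii) verifying that $\OOO^\times$ is a monad with arities; and (iii) unwinding the resulting Segal condition into the stated form involving $R^*$.

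For task (i), the natural candidate for the arities is the subcategory $\diag \subset \GS$ of \emph{elementary graphs}---the edges and stars---since $\GS$ is by construction the presheaf category on $\diag$, so that $\diag$ is dense by the density theorem. The free circuit algebra on an elementary graph is, by the explicit description of $\OOO^\times$ coming from the iterated distributive law, decorated by a graph; running over all of $\diag$ and closing up under the free operations produces exactly the category of (possibly disconnected) graphs, which is $\Klgrt = \Xi^\times$. This is the point at which the passage from modular operads to circuit algebras becomes visible: where the modular operad monad $\TT$ generates only connected graphs $\Klgr = \Xi$ (as in \cite[Theorem~8.2]{Ray20}), the extra external product in $\OOO^\times$ adjoins disjoint unions, enlarging $\Xi$ to $\Xi^\times$.

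Task (ii) is the crux. I would show that $\OOO^\times$ is a monad with arities $\diag$ by exhibiting it as a \emph{parametric right adjoint} (local right adjoint) monad on the presheaf category $\GS$ and identifying its generic factorizations, so that Weber's criterion \cite{Web07} applies. The cleanest route leverages the iterated distributive law of \cref{thm: main intro} together with \cref{thm. CO CA}: the modular operad monad $\TT$ is already known to be a parametric right adjoint with arities $\diag$ (this underlies \cite[Theorem~8.2]{Ray20}), and the external-product layer is a monad of free commutative monoid type that is likewise a parametric right adjoint. The task is to check that the distributive law \cite{Che11} assembling these into $\OOO^\times$ is compatible with the two arity structures, so that the composite again has arities $\diag$, now with generic factorizations indexed by disconnected graphs. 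Concretely, this amounts to verifying that an arbitrary graph decomposes canonically---first into connected components along the product, then each component into its edges and stars via the modular-operadic generic factorization---and that this two-step decomposition is genuinely generic for $\OOO^\times$. The main obstacle is precisely this compatibility: parametric-right-adjointness is not automatically preserved under composition of monads via a distributive law, and the disconnected combinatorics must be shown to interact coherently with the contraction and graph-substitution structure rather than merely formally.

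With (ii) in hand, task (iii) is essentially bookkeeping. The abstract theorem yields that $N$ is fully faithful and that a presheaf $P$ on $\Klgrt$ lies in its image if and only if it sends the Segal cocones---the colimit presentations of each graph as a diagram of its elementary subgraphs glued along the graphical incidence data---to limit cones. Taking $R^* \colon \pr{\Klgrt} \to \GS$ to be restriction along the inclusion $\diag \hookrightarrow \Klgrt$, and using the identification $\GS = \pr{\diag}$, this Segal condition becomes exactly the assertion that $P$ is the right Kan extension of $R^* P$ along $\diag \hookrightarrow \Klgrt$, that is, that $P$ is completely described by $R^* P$. This recovers the statement as phrased.
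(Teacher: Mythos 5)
Your strategy founders at task (ii), and the failure is not a technical loose end but precisely the central difficulty this paper is organised around. You assert that the modular operad monad is ``already known to be a parametric right adjoint with arities $\diag$'' on $\GS$, and that this underlies \cite[Theorem~8.2]{Ray20}. The opposite is true: because of the \emph{problem of loops} (the interaction of contracted units with wheel graphs), the unital modular operad monad $\DD\TT$ -- and likewise the circuit algebra monad $\LL\DD\TT$ -- does \emph{not} satisfy the conditions of \cite{BMW12} on $\GS$. The nerve theorem of \cite{Ray20} is proved not for $\DD\TT$ on $\GS$ but for the lifted monad $\TTp$ on the category $\GSp$ of pointed graphical species ($\DD$-algebras), with arities the category $\Grp$ of connected graphs and \emph{pointed} \'etale morphisms. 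That is exactly the role of the iterated distributive law here as well: it is not used to propagate parametric-right-adjointness through a composite on $\GS$ (which, as you yourself concede, is not automatic -- and here actually fails), but to produce the lift $\TTpk = \widetilde{\LL}\TTp$ to $\GSp$, where the \cite{BMW12} conditions can be verified directly. The actual verification is \cref{connected}: the factorisation categories $\factcat$ of Kleisli morphisms are connected, and the proof uses the similarity structure of $\Gretp$ (the morphisms $z$, $\kappa^m$ and vertex deletions) -- structure that simply does not exist at the level of $\GS$. Only unital-free fragments ($\TTk$ with arities $\Gret$) behave the way your plan assumes.

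There is a second, related error in task (i). In the Weber/BMW framework the nerve's domain $\Theta_{\MM,\DCat}$ is bijective-on-objects from the arities $\DCat$, so if you take $\DCat$ to be the elementary graphs (sticks and corollas), your $\Theta$ has \emph{only} sticks and corollas as objects; there is no ``closing up under the free operations'', and you do not recover $\Klgrt$, whose objects are all graphs. To obtain $\Klgrt$ the arities must be the category $\Gretp$ of all graphs and pointed \'etale morphisms, dense in $\GSp$. Moreover, the disconnected combinatorics cannot be treated as a formal layer on top of the connected case: \cref{rmk: Gr no arities} gives an explicit counterexample (involving $\C_\nul$ mapping to $\shortmid \amalg \shortmid$ through the two-looped graph $\C_{\mathbf{4}}^{1\ddagger 2, 3 \ddagger 4}$) showing that even the connected graphs $\Grp$ fail to provide arities for $\TTpk$, so your proposed ``first split into components, then apply the modular-operadic factorisation'' decomposition is not generic for the composite monad. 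Your task (iii) is a correct reading of the Segal condition, but it only becomes available once the base category ($\GSp$, not $\GS$) and the arities ($\Gretp$, not the elementary graphs) are chosen correctly.
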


Before outlining the construction, let us consider some reasons why such a result may be of interest. 

As has already been mentioned, the description of circuit algebras in terms of categories of Brauer diagrams indicates a fundamental relationship between circuit algebras, (topological) field theories, and representations of diverse algebraic structures -- including the classical Lie groups -- of interest in mathematics and physics. 
\cref{thm. def CA} enables us to translate circuit algebra and modular operad machinery to other (algebraic) contexts. Moreover, Theorems \ref{thm: main intro} and \ref{thm nerve intro}, establish a new graphical calculus for studying representations of Brauer categories. 

 In \cite[Section~6]{Ray20}, I explained how the so-called `problem of loops' creates an obstacle to constructing the modular operad monad on $\GS$. 
This is also an issue for the circuit algebra monad. 
In each case, the problem is resolved using distributive laws \cite{Bec69}. See  \cite[Section~7]{Ray20} for the construction of the modular operad monad. \cref{sec. iterated} describes the construction of the circuit algebra monad. For circuit algebras, it is even necessary to construct a triple of monads, and an iterated distributive law \cite{Che11}. 
\begin{thm}[\cref{prop. iterated law} {\&} \cref{iterated law}]\label{thm: composite intro}	\label{thm. dist intro}
	There is a triple of monads $\LL, \DD$ and $\TT$ on $\GS$ and distributive laws between them that 
	describe an iterated distributive law \cite{Che11}. The EM category of algebras for the induced composite monad $\LL\DD\TT$ is equivalent (via \cref{thm. CO CA}) to the category $\CA$ of ($\Set$-valued) circuit algebras. 
\end{thm}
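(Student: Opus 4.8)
The plan is to decompose the construction into three monads on $\GS$, each capturing one layer of the circuit algebra structure, and to organize their interaction as an iterated distributive law in the sense of \cite{Che11}. Since \cref{thm. CO CA} already identifies set-valued circuit algebras with modular operads equipped with an external product, and since \cite[Section~7]{Ray20} supplies the modular operad monad on $\GS$ via a distributive law resolving the problem of loops, the natural strategy is to factor the modular operad monad itself and then adjoin a third monad $\LL$ governing the monoidal (external) product. Concretely, I would take $\TT$ to be the monad encoding the `contraction-free' operadic composition (the free construction on graphs without loops or self-contractions), $\DD$ to be the monad that freely adjoins contractions/loops, and $\LL$ to be the monad freely adjoining the external tensor product $A_m \otimes A_n \to A_{m+n}$. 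The modular operad monad of \cite{Ray20} is recovered as the composite $\DD\TT$ via the distributive law $\TT\DD \Rightarrow \DD\TT$ established there.

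First I would pin down each of the three monads precisely on $\GS$, checking that $\LL$, $\DD$, and $\TT$ are genuinely monads (unit and multiplication satisfying the monad laws) by exhibiting their algebras as the expected algebraic structures. Second, I would construct the three pairwise distributive laws: $\lambda_{\DD\TT}\colon \TT\DD \Rightarrow \DD\TT$ (already available from \cite{Ray20}), $\lambda_{\LL\TT}\colon \TT\LL \Rightarrow \LL\TT$, and $\lambda_{\LL\DD}\colon \DD\LL \Rightarrow \LL\DD$. Each of these encodes a commutation rule — how contractions slide past composites, how the external product slides past composites, and how the external product interacts with contractions — and verifying that each is a distributive law amounts to checking Beck's four pentagon/square compatibility axioms \cite{Bec69} relating the law to the two units and two multiplications. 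Third, and this is the crucial coherence step, I would verify the single Yang--Baxter-type hexagon condition of \cite{Che11} that allows the three pairwise laws to be assembled into an iterated distributive law; this is exactly the content I expect to need for \cref{prop. iterated law}. Once that hexagon holds, \cite{Che11} guarantees that the composite $\LL\DD\TT$ carries a canonical monad structure and that its EM category of algebras is the category of objects equipped with compatible $\LL$-, $\DD$-, and $\TT$-algebra structures.

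The final identification of $\LL\DD\TT\mathdash\mathsf{alg}$ with $\CA$ then proceeds in two moves. An $(\LL\DD\TT)$-algebra is, by the iterated-law formalism, simultaneously a $\DD\TT$-algebra (hence a modular operad, by \cite{Ray20}) and an $\LL$-algebra (hence carries an external product), with the distributive laws enforcing precisely the compatibility axioms between contraction, composition, and external product that define a circuit algebra. Invoking \cref{thm. CO CA}, which says a circuit algebra is exactly a modular operad with such an external product, completes the equivalence $\LL\DD\TT\mathdash\mathsf{alg} \simeq \CA$, giving \cref{iterated law}.

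The hard part will be the coherence verification in the third step: confirming that the three pairwise distributive laws satisfy Cheng's compatibility hexagon. The two new laws $\lambda_{\LL\TT}$ and $\lambda_{\LL\DD}$ must be defined so that the external product, contractions, and compositions all commute past one another consistently, and the subtlety is that $\DD$ carries the loop/contraction data where the `problem of loops' originally obstructed a naive multiplication. I expect the genuine work to lie in checking that sliding the external product past a contraction and then past a composition agrees with performing those slides in the opposite order — a combinatorial identity on graphs that must be traced carefully through the graphical species structure, since any failure of associativity or of the interaction with loops would break the iterated law. Establishing the other monad laws and the two-out-of-three recognition of algebras I expect to be comparatively routine given the machinery of \cite{Ray20} and \cite{Che11}.
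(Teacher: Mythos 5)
Your overall scaffolding (three monads, three pairwise laws, one Yang--Baxter check, then identify algebras) matches the shape of the paper's argument, but your decomposition misassigns the content of $\TT$ and $\DD$, and this is a genuine gap rather than a cosmetic difference. In the paper (following \cite{Ray20}), $\TT$ is the \emph{non-unital modular operad} monad: it carries \emph{both} the multiplication \emph{and} the contraction, being built from colimits over connected graphs (\cref{sec. mod op}); the monad $\DD$ does not adjoin contractions at all --- it is the small ``pointing'' monad that freely adjoins the \emph{units} $\epsilon$ and contracted units $o$ (\cref{ssec. D monad}). Your version --- $\TT$ = contraction-free composition, $\DD$ = freely adjoined contractions/loops --- is a different factorisation, and the distributive law $TD \Rightarrow DT$ you claim is ``already available from \cite{Ray20}'' is not: the law constructed there mediates between the contraction-carrying monad and the unit-pointing monad, not between a composition monad and a contraction monad. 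Worse, your three monads contain no unit data anywhere, so the composite could at best produce \emph{non-unital} structures and could not recover $\CA$ (circuit algebras have units, induced by $\cap$); and if you instead make your $\TT$ unital, then freely adjoining contractions on top of it runs squarely into the ``problem of loops'' of \cite[Section~6]{Ray20}: contracting a formal unit produces vertex-free loop combinatorics for which the free multiplication is ill-defined. The entire point of the paper's decomposition is to avoid this by keeping contraction \emph{inside} $\TT$ and isolating the units in $\DD$, so that the law $\lambda_{\DD\TT}$ (implemented by vertex-deletion morphisms) absorbs contracted units correctly.

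Consequently the plan as written cannot be carried out: the two ``new'' laws you would need ($\lambda_{\LL\TT}$, $\lambda_{\LL\DD}$) are fine in spirit and do match the paper's (where $\lambda_{\LL\TT}$ exhibits the non-unital circuit operad monad as $\LL\TT = \TTk$, \cref{lem. LT is Tk}, and $\lambda_{\LL\DD}$ is essentially trivial), but the third law underpinning your Yang--Baxter hexagon does not exist in your formulation, so the coherence check you flag as ``the hard part'' is being attempted for the wrong monads. The fix is to adopt the paper's assignment of roles: $\TT$ = multiplication $+$ contraction (non-unital modular operads), $\DD$ = units, $\LL$ = free external product (\cref{ssec. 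LL}); then the Yang--Baxter condition (\cref{prop. iterated law}) reduces to a graph-substitution/vertex-deletion identity, the algebras of $\LL\DD\TT$ are identified with circuit operads (\cref{iterated law}), and $\CO \simeq \CA$ is \cref{thm. CO CA}.
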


Here, the composite $\DD\TT$ is the modular operad monad on $\GS$ described in \cite[Section~7]{Ray20}, and $\TT$ is the non-unital modular operad monad of \cite[Section~5]{Ray20}. Hence \cref{thm. dist intro} implies that circuit algebras are, equivalently, modular operads with extra structure. 

By \cref{unpointed modular operad} and \cref{lem. LT is Tk}, algebras for the monad $\LL \TT$ are non-unital circuit algebras, whose underlying modular operadic multiplication is not required to have a multiplicative unit. And, by \cref{sec: non-unital} these 
are just symmetric monoidal functors from subcategories of `\textit{downward}' Brauer diagrams \cite{SS15, KRW21}. Hence, it follows immediately from \cref{sec: non-unital} and \cite[Sections~3~\&~4]{SS15}, that the 
categories of representations of the infinite orthogonal and symplectic groups (respectively the infinite general linear group) are canonically described by full subcategories of non-unital monochrome circuit algebras (respectively non-unital monochrome oriented circuit algebras). A similar idea has been exploited by 
Kupers and Randall-Williams in \cite{KRW21}  to prove that the Malcev Lie algebras associated to the Torelli groups of surfaces of arbitrary genus are stably Koszul. The same observation appears in \cite{Sto22} where modular operads are obtained as lax functors from the \textit{Brauer properad}.

Unital circuit algebras are more complicated due to the problem of loops \cite{Ray20}. \cref{thm. def CA} identifies representations of so-called Brauer categories -- in the sense of \cite{LZ15}, and recently studied in e.g~\cite{Rey15, RS20, SS20I} -- with circuit algebras satisfying additional dimension and symmetry constraints: for example, there are circuit algebras describing  sequences of representations of finite dimensional orthogonal groups. It is an ongoing project, with R. Street, to further investigate this approach to representations of Brauer (and related) categories.

Weakening the axioms to consider $(\infty, 1)$ or \textit{up-to-homotopy} circuit algebras, provides a second motivation for results of this paper. This is relevant, for example, to the work of Dansco, Halacheva and Robertson comparing the {Grothendieck-Teichm\"uller} and {Kashiwara-Verne} groups $\mathrm{GT}$ and $\mathrm{KV}$. In \cite{DHR21}, they have used circuit algebras to obtain results relating the \textit{graded} {Grothendieck-Teichm\"uller} and {Kashiwara-Verne} groups $\mathrm{GRT}$ and $\mathrm{KRV}$. However, under passage to pro-unipotent completion, the circuit algebra operations are not preserved on the nose. Hence, to obtain an ungraded version of these results relating $\mathrm{GT}$ and $\mathrm{KV}$ directly, it is necessary to adopt a weakened notion of circuit algebras (see \cite[Introduction,~ Remark~1.1]{DHR21}). Since \cref{thm nerve intro} gives a description of ($\Set$-valued) circuit algebras in terms of a \textit{strong} Segal condition, it is natural to define $(\infty,1)$-circuit algebras as 9simplicial set valued) presheaves that satisfy a weakened Segal condition. %Models of $(\infty, 1)$ modular operads have been described 
Indeed, \cite{HRY19b} and \cite{Ray20} proved -- via different methods, and using slightly different graphical categories -- that 
weakening the modular operadic Segal condition gives a reasonable notion of $(\infty, 1)$-modular operads, that corresponds with the fibrant objects in a model structure on the relevant category of graphical presheaves. Unfortunately, neither of these models generalises immediately to circuit algebras. Nonetheless, one hopes that the characterisations of circuit algebras, both as algebras of operads of wiring diagrams, and in terms of algebras for the monad $\LL \DD\TT$, should indicate how to transfer an existing model structure on subcategory of \textit{dendroidal sets} to obtain a model on $\Klgrt$-spaces, and thereby obtain $(\infty, 1)$-circuit algebras in terms of the expected weak version of the Segal condition in \cref{thm nerve intro}.

Finally, it is worth highlighting the abstract machinery of this paper -- specifically the combined application of (iterated) distributive laws and abstract nerve theory -- that extends those of \cite{Ray20}. The monad for (set-valued) circuit algebras is constructed in \cref{sec. monad CO} as a composition -- via an iterated distributive law \cite{Che11} -- of three monads, each of which governs a different aspect of the structure of circuit algebras. The proof of \cref{thm nerve intro} uses the abstract nerve theory of \cite{BMW12}, but (as with \cite[Theorem~8.2]{Ray20}) essentially depends on the decomposition of the circuit algebra monad into its constituent parts. The same technique is used again in \cite{Ray20comp} (in preparation). 
An accessible account of the interplay of abstract nerve theory and distributive laws is given in \cref{sec. cat intro}. It would be interesting to explore if this strategy of using distributive laws to broaden the scope for applying abstract nerve theory has wider applications.

\subsection{Details and overview}

In \cite{Ray20}, it was shown that modular operads are algebras for a monad on a certain combinatorial category $\GS$ whose objects are coloured collections called `graphical species' \cite{JK11}, and a graphical category and corresponding nerve theorem for modular operads were obtained, using the nerve machinery of \cite{BMW12}.   

This paper builds on that result by comparing circuit algebras with graded monoids in the category of modular operads, here called `circuit operads'. 
It will be proved in \cref{thm. CO CA} that, in the category of sets, circuit algebras and circuit operads are equivalent. 
More generally, circuit algebras with values in an arbitrary symmetric monoidal category $(\V, \otimes, I)$ are \textit{enrichments} of $\Set$-valued circuit algebras, with a set (or {palette}) of objects (colours), and a set of $\V$-objects encoding (multi-)morphisms between them. 
By contrast, circuit operads satisfy the same axioms, but \textit{internal} to a category $\E$ with finite limits. Hence circuit operads in $\E$ are described by an \textit{object object}, 
and a \textit{graded (multi)-morphism object} in $\E$.

The proof of Theorems \ref{thm: main intro} and \ref{thm nerve intro} relies on an iterated distributive law and the abstract nerve machinery of \cite{BMW12, BG19}. \cref{sec. cat intro} provides a relatively informal and accessible introduction to these notions. 

After establishing notation and terminology for symmetric monoidal categories, \cref{sec. BD} provides a detailed discussion of the categories of (coloured) Brauer diagrams and their functors. These are used, in \cref{sec. CA}, to define circuit algebras.

\cref{ssec operads} provides a quick introduction to operads and their algebras. Operads of wiring diagrams and circuit algebras are introduced in \cref{ssec. wd and ca}.
 
 In \cref{sec: definitions}, circuit operads are introduced as presheaves over the category $\GS$ of graphical species \cite{JK11}. A canonical functor from circuit algebras in $\Set$ to circuit operads is described, as well as a monadic adjunction between circuit operads and modular operads (as they are defined in \cite{Ray20}). 

The technical work of the paper starts at \cref{s. graphs}. This section is largely a review of basic graph constructions, based, as in \cite{Ray20} (and \cite {HRY19a, HRY19b}), on the definition of graphs introduced in \cite{JK11}.

A monad $\TTk$ for non-unital circuit operads is constructed in \cref{sec: non-unital}. This is almost identical to the monad discussed by Kock in \cite{Koc18} and very similar to the monad $\TT$ for non-unital modular operads constructed in \cite{Ray20}. \cref{sec. mod op} reviews (and extends) the construction in \cite{Ray20} of the modular operad monad $\DD\TT$ by composing $\TT$ with a unit monad $\DD$. 

 However, in spite of the close similarities between $\TTk$ and $\TT$, it is not possible to obtain the monad for unital circuit operads directly from a distributive law involving $\TTk$. 
Instead, it is necessary to supplement the construction of \cite{Ray20} by observing that $\TTk$ is, itself a composite monad. 

This happens in \cref{sec. monad CO}, in which an iterated distributive law is described and it is shown that the resulting composite $\LL \DD \TT$ of three monads on $\GS$ is indeed the desired circuit operad monad. Moreover, it is proved that the forgetful functor from $\Set$-valued circuit algebras to $\Set$-valued circuit operads is, in fact, an equivalence of categories. 

Finally, \cref{sec. nerve} contains a description of the graphical category $\Klgrt$ for circuit operads. From there,  the proof of the nerve theorem (\cref{thm nerve intro}) is straightforward. 
Let $\GSp$ be the EM category of algebras for the monad $\DD$ on $\GS$. By the classical theory of distributive laws \cite{Bec69}, there is a lifted monad $\widetilde{ \LL }\TTp$  on $\GSp$ whose algebras are circuit operads. \cref{thm nerve intro} is then implied by the observation that 
the lifted monad $\widetilde{ \LL }\TTp$ on $\GSp$ 
satisfies the conditions of the abstract nerve machinery of \cite{BMW12}: it has `arities' (see \cref{ssec: Weber}). 

\subsection*{Acknowledgements}
I thank Marcy Robertson for taking the time to explain her work on $\mathrm{GT}$, for motivating me to write this paper, and, most of all, for her unfailing encouragement and guidance. I am also grateful to Richard Garner for a small observation that made a big difference to the character and reach of this work.
I am grateful to the members of Centre of Australian Category Theory, Macquarie University for their support, friendship, and mathematical input.	

I worked on this paper at Macquarie in the first half of 2021. I acknowledge my good fortune in being able to enjoy the benefits of working on campus -- whilst accessing a wealth and diversity of mathematical conversations across the globe -- at a time when most of the world was dealing with much more profound consequences of the COVID pandemic.  I acknowledge and pay my respects to the Darug Nation, the traditional owners of the land on which Macquarie is built, and to the Bindal and Wulgurukaba  Peoples upon whose lands I now live and work.

\section{Distributive laws and abstract nerve theory}\label{sec. cat intro}

The main result of this work is the construction of a monad (\cref{iterated law}), and a graphical category and nerve theorem (\cref{thm: CO nerve}) for circuit algebras that give a complete description of their combinatorics. The method uses the machinery of abstract nerve theory \cite{BMW12, Web07}, in combination with (iterated) distributive laws \cite{Bec69, Che11}.

Given its connections with representations of classical groups, this work is aimed at a diverse audience including representation theorists and topologists, as well as category theorists. Therefore, this section is intended to 
provide an accessible explanation of these formal concepts and how they combine to prove \cref{thm: CO nerve}.

\subsection{Iterated distributive laws} \label{ssec. dist} 

Informally, monads are gadgets that encode, via their algebras, {(algebraic) structure} on objects. The key principle underlying this work is that it is possible to build up a description of more complicated algebraic structures on the objects of some category by suitably `combining' monads that describe simpler structures. To do this, we need distributive laws \cite{Bec69}.

For a more detailed review of the general theory of monads and their algebras, see for example {\cite[Chapter~VI]{Mac98}. For now, it suffices that a monad $\MM = (M, \mu^{\MM}, \eta^\MM)$ on a category $\CCat$ is given by an endofunctor $M \colon \CCat \to \CCat$, together with natural transformations $\mu^{\MM} \colon M^2 \Rightarrow M$ (called the \textit{monadic multiplication}), and $\eta^{\MM} \colon \mathrm{id}_{\Cat}\Rightarrow M$ (called the \textit{monadic unit}), that satisfy certain axioms making $\MM$ into an associative monoid in the category of endofunctors on $\CCat$.
	
	 An algebra $(x, \theta)$ for $\MM$ consists of an object $ x$ of $\CCat$, and a morphism $\theta \colon Mx \to x$ that satisfies two axioms that assert the compatibility of $\theta$ with $\mu^{\MM}$ and $\eta^{\MM}$. Algebras are the objects of the \textit{Eilenberg-Moore (EM) category of algebras for $\MM$}, whose morphisms $(x,\theta) \to (y, \rho)$  are given by morphisms $f  \in \CCat(x,y)$ such that $\rho \circ Mf   = f \circ  \theta $.

In general, monads do not compose, since the data of a pair of monads  $\MM = (M, \mu^{\MM}, \eta^{\MM})$ and $ \MM' = ({M'}, \mu^{\MM'}, \eta^{\MM'})$ on a category $\CCat$ is not sufficient to define a multiplication for the composite endofunctor $ M M' \colon \CCat \to \CCat$. 

Observe however that, given any natural transformation $\lambda\colon{M'}M \Rightarrow M{M'}$, there is an induced natural transformation $\mu_\lambda \defeq (\mu^{\mathbb{M}}\mu^{\mathbb{{M'}}})\circ (M\lambda M')  \colon( M{M'})^2 \Rightarrow M{M'}$. 

\begin{defn}\label{defn: dist} 
	A \emph{distributive law} for $\MM$ and $ \MM'$ is a natural transformation $\lambda\colon{M'}M \Rightarrow M{M'}$ such that the triple $(M{M'}, \mu_\lambda , \eta^{\mathbb{M}}\eta^{\mathbb{{M'}}})$ defines a monad $\mathbb{M} \mathbb{{M'}}$ on $\CCat$.
	
\end{defn}

 Distributive laws were first introduced by Beck \cite{Bec69} and are characterised by four axioms  that 
determine how the algebraic structures encoded by the separate monads 
interact with each other to form the structure encoded by their composite. 

\begin{ex}%\label{ex: dist cats}
	\label{ex: category composite} The category of small categories is isomorphic to the EM category of algebras for the category monad on the category of directed graphs. This monad sends a directed graph $G$ to the graph $\widetilde G$ with the same vertices, and a directed edge $v \leadsto w$ for each directed path (of length $n \geq 0$) from $v$ to $w$ in $G$.
	
	This monad may be obtained as a composite of the \textit{semi-category monad} -- which governs associative composition and assigns to each directed graph $G$ the graph with the same vertices as $G$ and edges corresponding to non-trivial directed paths (of length $n >0$) in $G$ -- and the \textit{reflexive graph monad} that adjoins a distinguished loop at each vertex of a directed graph. 
	The corresponding distributive law encodes the property that the adjoined loops provide identities for the (semi-categorical) composition.

\end{ex}
\begin{ex}
In \cite{Ray20}, the monad for modular operads on the category $\GS$ of graphical species (\cref{defn: graphical species}) is obtained, as in  \cref{ex: category composite} for categories, as a composite $\DD\TT$: the monad $\TT$ governs the composition structure (multiplication and contraction), and the monad $\DD$ adjoins distinguished elements that encode the combinatorics of the multiplicative unit. The distributive law ensures that the distinguished elements provide units for the multiplication (see also \cref{sec. mod op}).
\end{ex}

As usual, let $\CCat^\MM$ denote the EM category of algebras for a monad $\MM$ on $\CCat$.

Given monads $\MM$ and $\MM'$ on $\CCat$, and a distributive law $\lambda\colon {M'}M \Rightarrow M{M'}$, there is, by \cite[Section~3]{Bec69}, a commuting square of strict monadic adjunctions:
\begin{equation}\label{eq: adjunction diag}\small
{\xymatrix@C = .5cm@R = .3cm{
		\CCat^{\MM'} \ar@<-5pt>[rr]\ar@<-5pt>[dd]&\scriptsize{\top}&\CCat^{\MM\MM'} \ar@<-5pt>[ll]\ar@<-5pt>[dd]\\
		\vdash&&\vdash\\
		\CCat \ar@<-5pt>[uu] \ar@<-5pt>[rr]&\scriptsize{\top}& \CCat^\MM. \ar@<-5pt>[ll]\ar@<-5pt>[uu]}}\end{equation}

Moreover, by \cite{Che11},  the composite $\MM_n \dots \MM_1$ of an $n$-tuple of monads $\MM_1, \dots,\MM_n$ on $\CCat$ exists if there are pairwise distributive laws $\lambda_{i,j}: M_i M_j \Rightarrow M_j M_i$, for $1\leq i < j \leq n$ such that for each triple $1 \leq i < j< k \leq n$, the corresponding triple of monads and distributive laws satisfies (Yang-Baxter) conditions described in (\ref{YB}). In this case, there is an $n$-cube of adjunctions 
	as illustrated in (\ref{eq. cube}) for the case $n = 3$ (where only the left adjoints are marked):

\begin{equation}\label{eq. cube}
	\begin{tikzcd}[column sep={6em,between origins},row sep=2em]
	&
		\CCat^{\MM_3\MM_2}
	\arrow[rr, rightarrow]
	&&
		\CCat^{\MM_3\MM_2\MM_1}
	\\
		\CCat^{\MM_3}
	\arrow[rr, rightarrow]
	\arrow[ur,rightarrow]
	&&
		\CCat^{\MM_3\MM_1}
	\arrow[ur, rightarrow]
	\\
	&
		\CCat^{\MM_2}
			\arrow[rr, rightarrow]
		\arrow[uu,rightarrow]
	&&
		\CCat^{\MM_2\MM_1}
		\arrow[uu, rightarrow]
	\\
\CCat 
	\arrow[rr, rightarrow]
		\arrow[ur,rightarrow]
			\arrow[uu,rightarrow]
	&&
	\CCat^{\MM_1}
	\arrow[uu,rightarrow,crossing over] 
	\arrow[ur,rightarrow]
	\end{tikzcd}
\end{equation}

\subsection{Presheaves and slice categories}\label{ssec. presheaves}

Before discussing nerve functors, and the abstract nerve theory of \cite{BMW12, Web07}, it is first necessary to review (and establish notation for) certain categories of presheaves.

So, let $\E$ be a category (with finite limits), and let $\CCat$ be an essentially small category. An $\E$-valued \emph{$\CCat$-presheaf} is a functor $ S\colon \mathsf C^\mathrm{op} \to \E$. The corresponding functor category is denoted by $\prE{\mathsf C}$. If $\E = \Set$, the category of sets and all set maps, then $\pr{\CCat} \defeq \prE{\CCat}$. 

When $\E =  \Set$, the category of  elements (or \textit{Grothendieck construction}) of a presheaf $P \colon \CCat^{\mathrm{op}}\to \Set$ is defined as follows:
\begin{defn}%   %\srnote{R16}
	\label{defn: general element} 
	Objects of the category $\ElP{P}{\CCat}$ of \textit{elements of a presheaf $P \colon \CCat^{\mathrm{op}} \to \Set$} are pairs $(c, x)$ -- called \emph{elements of $P$} -- where $c$ is an object of $\CCat$ and $x \in P(c)$. Morphisms $(c,x) \to (d,y)$ in $\ElP{P}{\CCat}$ are given by morphisms $f \in \CCat (c,d)$ such that $P(f)(y) = x$. 	
\end{defn}

Recall that, if a presheaf $P \colon \CCat^{\mathrm{op}} \to \Set$ is representable (i.e.~ $P =  \CCat(-, c)$, for some $c \in \CCat$), then $\ElP{P}{\CCat}$ is precisely the \textit{slice category} $\CCat\ov c$ whose objects are pairs $(d,f)$ where $f \in \CCat (d,c)$, and morphisms $(d,f) \to (d', f')$ are commuting triangles in $\CCat$:
\[ \xymatrix{ d \ar[rr]^-g \ar[dr]_{f} && d' \ar[dl]^-{f'}\\&c&}\]

In general, a functor $\iota \colon \DCat \to \CCat$ induces a presheaf $\iota^* \CCat(-, c)$, $d \mapsto \CCat(\iota (d),c)$ on $\DCat$. For all $c \in \CCat$, the category $\ElP{\iota^*\CCat(-, c) }{\DCat}$ is called the \textit{slice category of $\DCat$ over $c$} and is denoted by $\iota \ov c$, or simply $\DCat \ov c$. 

 The \textit{slice category of $c$ over $\DCat$}, denoted by $c \ov \DCat$ (or $c \ov \iota$) is the category $\iota^{\mathrm{op}} \ov c$ whose objects are morphisms $f \in \CCat(c, \iota(-))$ and whose morphisms $(d,f)\to (d', f)$ are given by $g \in \DCat (d,d')$ such that $ \iota (g) \circ g = f' \in \CCat(c, \iota(d'))$.

\begin{ex}\label{ex. dense}
	Let $\iota \colon \DCat \hookrightarrow \CCat$ be an inclusion of small categories. By  \cite[Proposition~5.1]{Lam66}, the induced nerve $N \colon \CCat \to \pr{\DCat}$ if fully faithful if and only if every object $c \in \CCat$ is canonically the colimit of the forgetful functor $\DCat \ov c \to \CCat$, $ (f \colon \iota (d) \to c) \mapsto \iota (d)$. In this case, the inclusion $\iota \colon \DCat \hookrightarrow \CCat$ is called \textit{dense}.
\end{ex}

{Observe, in particular, that the Yoneda embedding $\CCat \to \pr{\CCat}$ induces a canonical isomorphism $ \ElP{P}{\CCat} \cong \CCat \ov P$ for all presheaves $P\colon \CCat ^{\mathrm{op}} \to \Set$. These categories will be identified in this work. }

\subsection{Abstract nerve theory}\label{ssec: Weber}

A functor $F\colon \DCat \to \CCat$ (between ordinary categories) induces a \textit{nerve functor}
$N_{\DCat}\colon \CCat \to \pr{\DCat}$ given by $N_{\DCat}(c)(d) = \CCat(F d, c) $ for all $c \in \CCat$ and $ d \in \DCat$. 

\begin{ex}\label{ex: cat nerve}
The nerve of a small category $\CCat$ is described by the image of $\CCat$ under the nerve functor induced by the inclusion $\Delta \hookrightarrow \Cat$, where $\Cat$ is the category of small categories, and $\Delta$ is the (topological) \textit{simplex category} of non-empty finite ordinals and order preserving morphisms. 
\end{ex}

Recall that every functor 
admits an (up to isomorphism) unique \textit{bo-ff factorisation} as a bijective on objects functor followed by a fully faithful functor. For example, if $\MM$ is a monad on a category $\CCat$, and $\CCat^{\MM}$ is the EM category of algebras for $\MM$, then the free functor $ \CCat \to \CCat^\MM$ has bo-ff factorisation $\CCat \rightarrow \CCat_{\MM} \rightarrow \CCat^{\MM}$, where $\CCat_{\MM}$ is the {Kleisli category} of free $\MM$-algebras (see e.g.\ \cite[Section~VI.5]{Mac98}). 

Hence, for any subcategory $\DCat $ of $ \CCat$, the bo-ff factorisation of the canonical functor $\DCat \hookrightarrow \CCat \xrightarrow {\text{ free}} \CCat^\MM$ factors through the full subcategory $\Theta_{\MM, \DCat}$ of $\CCat_\MM $ with objects from $\DCat$. There is 
an induced diagram of functors 
\begin{equation} \label{eq: arities}
\xymatrix{ 
	\boffcat\ar[rr]^{\text{f.f.}}						&& 	\CCat^\mathbb{M}\ar@<2pt>[d]^-{\text{forget}}
	\ar [rr]^-{N_{\mathbb{M}, \DCat}}				&&\pr{	\boffcat}\ar[d]^{j^*}	\\
	\DCat\ar@{^{(}->} [rr]^-{\text{}}_-{\text{}}	 \ar[u]_{j}^{\text{b.o.}}		&&\CCat \ar [rr]_{\text{f.f.}}^-{N_{\DCat}}	 \ar@<2pt>[u]^-{\text{free}}	
	&&\pr{\DCat}.
}
\end{equation} where  $j^*$ is the pullback of the bijective on objects functor $j \colon \DCat \to \Theta_{\MM, \DCat} $. The left square of (\ref{eq: arities}) commutes by definition, and the right square commutes up to natural isomorphism. 

By construction, the defining functor $\Theta_{\MM, \DCat} \to \CCat^{\MM}$ is fully faithful. Abstract nerve theory describes conditions on $\DCat$ and $\MM$ that ensure that the induced nerve $N_{\MM, \DCat} \colon \CCat^{\MM} \to \pr{\Theta_{\MM, \DCat}}$ is also fully faithful \cite{BMW12, Web07}. If there is a subcategory $\DCat \subset \CCat$ such that $\MM$ and $\DCat$ satisfy these conditions, then $\MM$ is called a \textit{monad with arities}, and $\DCat$ \textit{provides arities for $\MM$.}

Moreover, by \cite[Section~4]{Web07}, if $\DCat$ provides arities for $\MM$, then objects in the image of $N_{\MM, \DCat}$ in $ \pr{	\boffcat}$ are (up to equivalence) precisely those that are mapped by $j^*$ to the image of $N_{\DCat}$ in $\pr{\DCat}$. 
This is called the \textit{Segal condition} for the nerve theorem.

\subsection{Using distributive laws to prove nerve theorems}\label{ssec. combine}
Now, let $\DCat \subset \CCat$ be a dense subcategory of an essentially small category, and let $\MM$ be a monad on $\CCat$. 

Even if $\MM$ and $\DCat$ do not satisfy the conditions of \cite[Sections~1~\&~2]{BMW12}, if $\MM  = \MM^n \dots \MM^1$ is a composite monad, then it may still be possible to use abstract nerve theory, together with the distributive laws for the composite $\MM^n \dots \MM^1$, to establish that the induced nerve $N_{\MM, \DCat} \colon \CCat^{\MM} \to \pr{\Theta_{\MM, \DCat}}$ is fully faithful. 

Namely, the existence of pairwise distributive laws means that each of the $n!$ paths of left adjoints from $\CCat$ to $\CCat^\MM$ in the cube (\ref{eq. cube}) describes a distinct factorisation of the diagram (\ref{eq: arities}) into $n$ stages. The nerve $N_{\MM, \DCat}\colon \CCat^\MM\to \pr{\Theta_{\MM, \DCat}}$ is fully faithful if the conditions of \cite{BMW12} hold for any of these stages.

For example, if $\MM = \MM^n \dots \MM^1$, then there is a factorisation of the diagram (\ref{eq: arities}) as
\begin{equation}
\label{eq. iterated arities} 
\xymatrix@R = .3cm{ 
		\boffcat \ar[rr]^{\text{f.f.}}		\ar@{=}[d]				&& 	\CCat^{\MM}\ar@{=}[d]
	\ar [rr]^-{N_{\MM,\DCat} 	}		&&\pr{	\boffcat }\ar@{=}[d]	\\
	\Theta_{\MM_n \dots \MM_1, \DCat}\ar[rr]^{\text{f.f.}}						&& 	\CCat^{\MM_n \dots \MM_1}\ar@<2pt>[dd]^-{\text{forget}}
\ar [rr]^-{N_{\MM_n \dots \MM_1,\DCat} 	}		&&\pr{		\Theta_{\MM_n \dots \MM_1,\DCat}}\ar[dd]^-{j_{n}^*}	\\ &&&&\\
\vdots  \ar[uu]_-{j_n}^{\text{b.o.}}&& \vdots \ar@<2pt>[uu]^-{\text{free}}\ar@<2pt>[d] %^-{\text{forget}} 
&& \vdots \ar[d]\\	%&&&&\\
\Theta_{\MM_1, \DCat}\ar[rr]^{\text{f.f.}}			\ar[u]			&& 	\CCat^{\mathbb{M}_1}\ar@<2pt>[dd]^-{\text{forget}}\ar@<2pt>[u]%^-{\text{free}}
\ar [rr]^-{N_{\MM_1, \DCat}}				&&\pr{\Theta_{\MM_1, \DCat}}\ar[dd]^-{j_1^*}	\\&&&&\\
\DCat\ar@{^{(}->} [rr]^-{\text{}}_-{\text{}}	 \ar[uu]_-{j_1}^{\text{b.o.}}		&&\CCat \ar [rr]_{\text{f.f.}}^-{N_{\DCat}}	 \ar@<2pt>[uu]^-{\text{free}}	
&&\pr{\DCat}.
}
\end{equation} 

Hence, to prove that the induced nerve $N_{\MM, \DCat} = N_{\MM_n \dots \MM_1, \DCat}$ on $\CCat^{\MM}$ 
is fully faithful, it is sufficient to consider whether there is an $1 <i < n$ such that the conditions of \cite{BMW12} are satisfied by %any of the 
$\Theta_{\MM_n \dots \MM_i, \DCat} $ 
together with the lift of the monad $\MM_n \dots \MM_{i+1}$ on $\CCat^{\MM_i \dots \MM_1}$. The same trick can be applied for every other path in the cube (\ref{eq. cube}).

This is the method used in \cref{sec. nerve} to prove \cref{thm: CO nerve}, the nerve theorem for circuit operads: In \cref{sec. iterated}, the monad for circuit operads is described as a composite $\LL \DD \TT$ of three monads defined on the category $\GS$ of graphical species. On $\GS$, the composite $\LL \DD\TT$ does not satisfy the conditions of \cite{BMW12}. However, the induced monad $\widetilde{ \LL} \TTp$ on the category $\GSp$ of $\DD$-algebras does. Hence, there is a full subcategory $\Klgrt \subset \CO$ of the category of circuit operads that induces a fully faithful nerve functor $\CO \to \pr{\Klgrt}$ (\cref{thm: CO nerve}).

\section{Brauer diagrams}\label{sec. BD}

In \cref{sec. CA}, circuit algebras will be defined in terms of Brauer diagrams. These are an important tool in the representation theory of orthogonal, symplectic and general linear groups \cite{Bra37, SS20I}.

The category $\BD$ of monochrome Brauer diagrams is introduced in \cref{ssec. mono BD}. This definition is generalised to the categories $\CBD$ of coloured Brauer diagrams in \cref{ssec. colour BD}. %In fact, as we will discover in \cref{sec. CA}, circuit algebras can be described as 

Before defining these categories, \cref{ssec. SMC} provides a brief outline  of the notation and terminology conventions for symmetric monoidal categories 
that will be used in the rest of the paper.

\subsection{Symmetric monoidal categories and duality}\label{ssec. SMC}

For precise definitions and a detailed discussion, see e.g.~\cite[Chapter~8]{EGN15}.

Recall that a \textit{monoidal category} is a category $\V$ together with a bifunctor $\otimes \colon \V \times \V \to \V$ (the \textit{monoidal product}) that is associative up to natural \textit{associator} isomorphism, and for which there is an object $I$ of $\V$ (the \textit{monoidal unit}) that acts as a two-sided identity for $\otimes$ up to natural (left and right) \textit{unitor} isomorphisms. The monoidal product and the associator and unitor isomorphisms are required to satisfy axioms that mean that 
certain sensible diagrams commute. A monoidal category %$(\V, \otimes, I, \alpha, u)$
is called \textit{strict monoidal} if the associator and unitor isomorphisms are the identity.

A monoidal structure on $\V $ is \textit{braided} if, 
for all objects $x, y \in \V$, there are isomorphisms $\sigma_{x, y} \colon x \otimes y \rightarrow y \otimes x $ that satisfy the familiar braid 
identities 
\begin{equation} \label{eq. braids} (\sigma_{y, z}\otimes id_x)(id_y \otimes\sigma_{x, z})(\sigma_{x, y} \otimes id_z) = (id_z\otimes \sigma_{x, y})(\sigma_{x, z}\otimes id_y)( id_x\otimes\sigma_{y, z})  \ \text{ for all } x, y, z. \end{equation} If $\sigma_{y,x} =  \sigma_{x,y}^{-1}$ for all $x, y$, then the monoidal structure on $\V$ is \textit{symmetric}. 

\begin{rmk}
	
	In this paper, the associators, unitors and symmetry isomorphisms will be ignored in the notation, and (symmetric) monoidal categories will be denoted simply by $\V$, or sometimes $(\V, \otimes, I)$.

\end{rmk}

Symmetric strict monoidal categories are called \textit{permutative categories}. The notation $\oplus$ and $0$ will often be used to designate the monoidal product and unit of a permutative category $(\CCat, \oplus, 0)$. 

A \textit{$\DDD$-coloured PROP} is a small permutative category $\CCat$ whose object monoid is free on a set $\DDD$. When $\DDD = \{1\}$ is a singleton, then $\CCat$ is a (monochrome) PROP in the original sense of \cite{MacL65}.

A \textit{(lax) monoidal functor} $(\V_1, \otimes_1, I_1) \to (\V_2, \otimes_2, I_2)$ is described by a functor 
$\Theta \colon \V_1 \to \V_2$, together with a morphism $\eta_{\Theta} \colon I_2 \rightarrow \Theta (I_1)$ in $\V_2$ and a natural transformation $\theta\colon \Theta(-) \otimes_{2} \Theta(-) \Rightarrow \Theta(- \otimes_1 -)$ such that all the expected structure diagrams commute. A monoidal functor $ (\Theta, \theta, \eta_{\Theta} )$ is called \textit{strong} if $\theta$ and $ \eta_{\Theta}$ are invertible, and \textit{strict} if they are the identity. For convenience, $\eta_{\Theta}$, and $\theta$ will usually be suppressed in the notation.

Symmetric monoidal categories form a category $\Sym$ whose 
morphisms $ \Theta\colon (\V_1, \otimes_1, I_1) \to (\V_2, \otimes_2, I_2)$ 
are {monoidal functors} that preserve the symmetry isomorphisms strictly. 

\begin{ex}\label{ex. Sigma}
For each $n \in \N$, let $ \nn$ denote the set $\{1, 2, \dots, n\}$ (so $\nul = \emptyset$), and let $ \Sigma_n$ be the group of permutations on $\nn$. 

	Let $\Sigma$ be the \emph{symmetric groupoid} with $\Sigma(n,n) = \Sigma_n$ for all $n$, and $\Sigma (m,n) = \emptyset$ when $m \neq n$. There is a permutative structure on $\Sigma$ induced by addition of natural numbers. For any symmetric monoidal category $(\V, \otimes, I)$ and any choice of object $x \in \V$, there is a unique symmetric strict monoidal functor $\Sigma \to \V$ with $0\mapsto I$ and $1 \mapsto x $. 	
\end{ex}

An object $x$ of a symmetric monoidal category $\V$ has a dual object $x^* $ in $\V$ if there are morphisms  $\cap^{\CCat}_x \colon I\to x \otimes x^*$, $\cup^{\CCat}_x \colon x^* \otimes x \to I$ such that the \textit{triangle identities} hold:
\begin{equation} \label{eq. dual}
	(\cup^{\CCat}_x \otimes id_x) \circ (id_x \otimes \cap^{\CCat}_x)= id_x = (id_x \otimes \cup^{\CCat}_{x^*} )\circ (\cap^{\CCat}_{x^*} \otimes id_x).
\end{equation}.

	\begin{figure}	[htb!]

			\begin{tikzpicture}[scale = .16]
				\begin{pgfonlayer}{background}
			
					\node  (2) at (3.75, 5) {};
				
					\node  (4) at (1.25, 5) {};
			
					\node  (7) at (6.25, 5) {};
				
					\node  (10) at (1.25, 10) {};
			
					\node  (13) at (6.25, 0) {};
				
				%	\node at (2.5, 0) {\tiny{$ \cup_x$}}
					
					\node  (15) at (25, 5.25) {};
					\node  (16) at (21, 5.25) {};
					\node  (17) at (21.75, 5.25) {};
					\node  (18) at (22.5, 5.25) {};
					\node  (19) at (24.25, 5.25) {};
					\node  (20) at (23.5, 5.25) {};
					\node  (21) at (18.5, 5.25) {};
					\node  (22) at (19.25, 5.25) {};
					\node  (23) at (20, 5.25) {};
					\node  (24) at (25, 10.25) {};
					\node  (25) at (24.25, 10.25) {};
					\node  (26) at (23.5, 10.25) {};
					\node  (27) at (18.5, 0.25) {};
					\node  (28) at (19.25, 0.25) {};
					\node  (29) at (20, 0.25) {};

					\node  (30) at (12.75, 10) {};
					%	\node  (31) at (12, 10) {};
					\node  (32) at (12, 10) {};
					\node  (33) at (12, 0) {};
					%	\node  (34) at (12, 0) {};
					\node  (35) at (12.75, 0) {};
					
				\end{pgfonlayer}
				\begin{pgfonlayer}{above}
					%	\draw (11.center) to (5.center);
					\draw (10.center) to (4.center);
					%	\draw (9.center) to (0.center);
					%	\draw (6.center) to (12.center);
					\draw (7.center) to (13.center);
					%	\draw (8.center) to (14.center);
					%	\draw [bend left=90, looseness=3.25] (3.center) to (6.center);
					%	\draw [bend right=90, looseness=3.00] (0.center) to (1.center);
					\draw [bend right=90, looseness=2.25] (4.center) to (2.center);
					\draw [bend left=90, looseness=2.50] (2.center) to (7.center);
					%	\draw [bend right=90, looseness=2.00] (5.center) to (3.center);
					%	\draw [bend left=90, looseness=2.25] (1.center) to (8.center);
					%	\draw (26.center) to (20.center);
					\draw (25.center) to (19.center);
					%	\draw (24.center) to (15.center);
					%	\draw (21.center) to (27.center);
					\draw (22.center) to (28.center);
					%	\draw (23.center) to (29.center);
					%	\draw [bend right=90, looseness=2.25] (18.center) to (21.center);
					%	\draw [bend left=90, looseness=2.00] (15.center) to (16.center);
					\draw [bend left=90, looseness=2.00] (19.center) to (17.center);
					\draw [bend right=90, looseness=2.25] (17.center) to (22.center);
					%	\draw [bend left=90, looseness=2.50] (20.center) to (18.center);
					%	\draw [bend left=270, looseness=2.50] (16.center) to (23.center);
					\draw (32.center) to (33.center);

					\draw %[ thick] 
					(8, 4.8)--(9.5,4.8)
					(8, 5.3)--(9.5,5.3);
					
						\draw %[ thick] 
					(14.5, 4.8)--(16,4.8)
					(14.5, 5.3)--(16,5.3);

				\end{pgfonlayer}
		\end{tikzpicture}
	\caption{ The triangle identities. } \label{fig. triangle 1} \end{figure}

\begin{defn}
	\label{defn. compact closed} A \emph{compact closed category} is a symmetric monoidal category  such that every object has a dual \cite{MacL65}.
	
\end{defn}

Let $\V$ be a compact closed category. For all morphisms $f \in \V(x,y)$, there is a corresponding \emph{evaluation morphism} $\ev{f} \in \V(y^* \otimes x, I)$ induced by composition with composition with $\cup_y$ (\cref{fig eval} (a)), and \emph{coevaluation morphism} $\coev{f} \in \V(I, y \otimes x^*)$ induced by composition with $\cap_x$(\cref{fig eval} (b)):
\begin{equation}\label{eq ev coev dual}
	\ev{f} \defeq \cup_y\circ (id_y \otimes f)  \text{ and }  \coev{f} \defeq  (f\otimes  id_{x})\circ \cap_x.
\end{equation}
And there is a \emph{dual morphism} $f^* \in \V(y^*, x^*)$ (\cref{fig eval} (c)) 
given by
\begin{equation}\label{eq. BD dual}
	f^* \defeq (\cup_y \otimes id_x)\circ ( id_{y} \otimes f \otimes  id_x ) \circ ( id_{y} \otimes  \cap_x). 
\end{equation}
(The dual morphism $f^*$ is called the \textit{transpose morphism} in \cite{Sha21}.)

	\begin{figure}	[htb!]

	\begin{tikzpicture}%[scale = .2]
			\node at (-1,1) {(a)};		
\node at (0,0){\begin{tikzpicture}[scale = .2]
	\begin{pgfonlayer}{background}
		\node  (0) at (-2, 0) {};
		\node  (1) at (2, 0) {};
		\node  (2) at (-2, 5) {};
		\node  (3) at (2, 5) {};
%	\end{pgfonlayer}
%	\begin{pgfonlayer}{above}
		\draw (2.center) to (0.center);
		\draw (3.center) to (1.center);
		\draw [bend right=90, looseness=5.00] (0.center) to (1.center);
	%	\filldraw[white](-2,0)++(10cm,10cm);
	\end{pgfonlayer}

\filldraw[white] (-3,-1) rectangle ++(2cm,2cm);
\draw (-3, 1)--(-3, -1)--(-1, -1)--(-1, 1)-- (-3,1);
\node at (-2,-0.1){\scriptsize{$id_y$}};

\filldraw[white] (1,-1) rectangle ++(2cm,2cm);
\draw (3, 1)--(3, -1)--(1, -1)--(1, 1)-- (3,1);
\node at (2,-0.1){\scriptsize{$f$}};
\end{tikzpicture}};
\node at (2.5,1) {(b)};

\node at (3.5,0){\begin{tikzpicture}[scale = .2]
		\begin{pgfonlayer}{background}
			
				\node  (0) at (-2, 0) {};
				\node  (1) at (2, 0) {};
				\node  (2) at (-2, -5) {};
				\node  (3) at (2, -5) {};
				
				\draw (0.center) to (2.center);
				\draw (1.center) to (3.center);
				\draw [bend left=90, looseness=5.00] (0.center) to (1.center);
			\end{pgfonlayer}

		\filldraw[white] (-3,-1) rectangle ++(2cm,2cm);
		\draw (-3, 1)--(-3, -1)--(-1, -1)--(-1, 1)-- (-3,1);
		\node at (-2,-0.1){\scriptsize{$f$}};
		
		\filldraw[white] (1,-1) rectangle ++(2cm,2cm);
		\draw (3, 1)--(3, -1)--(1, -1)--(1, 1)-- (3,1);
		\node at (2.1,-0.1){\scriptsize{$id_x$}};
\end{tikzpicture}};
	\node at (6, 1) {(c)};		
\node at (8,0){\begin{tikzpicture}[scale = .2]
		\begin{pgfonlayer}{background}
			
			\node (0) at (-2, 0) {};
		\node (1) at (2, 0) {};
		\node (4) at (-6, 0) {};
		\node (5) at (-6, 6) {};
		\node (6) at (2, -6) {};

\draw [bend left=90, looseness=5.00] (0.center) to (1.center);
\draw (5.center) to (4.center);
\draw [bend right=90, looseness=5.00] (4.center) to (0.center);
\draw (1.center) to (6.center);
		\end{pgfonlayer}
	
		\filldraw[white] (-7,-1) rectangle ++(2cm,2cm);
		\draw (-7, 1)--(-7, -1)--(-5, -1)--(-5, 1)-- (-7,1);
		\node at (-6,-0.1){\scriptsize{$id_y$}};

		\filldraw[white] (-3,-1) rectangle ++(2cm,2cm);
		\draw (-3, 1)--(-3, -1)--(-1, -1)--(-1, 1)-- (-3,1);
		\node at (-2,-0.1){\scriptsize{$f$}};
		
		\filldraw[white] (1,-1) rectangle ++(2cm,2cm);
		\draw (3, 1)--(3, -1)--(1, -1)--(1, 1)-- (3,1);
		\node at (2.1,-0.1){\scriptsize{$id_x$}};
\end{tikzpicture}};
\end{tikzpicture}
\caption{(a) $\ev{f}\colon y^* \otimes x \to I$; (b)$\coev{f}\colon I \to y \otimes  x^*$; (c) $f^*\colon y^*\to x^*$ .} \label{fig eval}
\end{figure}
For composable morphisms $f$ and $g$, $(g \circ f)^* = f^* \circ g^*$ in $\V$.

\subsection{Monochrome Brauer diagrams}\label{ssec. mono BD}

The category $\BD$ of monochrome Brauer diagrams admits a concise definition as the free compact closed category generated by a single self-dual object. This section gives a more concrete description of $\BD$ in terms of pairings on finite sets.

\begin{defn}
	A \emph{pairing} (or \emph{perfect matching}) $\tau$ on a set $X$ is a partition of $X$ into two-element subsets.
\end{defn} Equivalently, a pairing on $X$ is a fixed point free involution $\tau \colon X \to X$:
\[ \tau^2 = id_X, \text{ and } \tau (x) \neq x \text{ for all } x \in X.\]

In particular, a finite set $X$ admits a pairing if and only if it has even cardinality. The empty set has a unique trivial pairing $\varnothing$ by convention. 

\begin{ex}\label{ex. manifold pairing} The boundary $\partial \man$ of a compact 1-manifold $\man$ has a canonical pairing $\tau^\man$, with $x  = \tau^\man y$ if $x, y \in \partial \man$, $x \neq y$, are in the same connected component of $\man$. 
\end{ex}

If $X$ and $Y$ are finite sets, a pairing $\tau$ on their disjoint union $X \amalg Y$ may be represented by a graph whose vertices are indexed by$X \amalg Y$, with elements of $X$ above those of $Y$, and edges connecting vertices $v_1 $ and $v_2$ if and only if the corresponding elements of $X \amalg Y$ are identified by $\tau$. 

\begin{defn}
	\label{def Brauer diag}
	
	A \emph{(monochrome) Brauer diagram} $f$ between natural numbers $m$ and $n$ is given by a pair $(\tau_f, \kclf)$ of 
	a pairing $\tau$ on the disjoint union $ \{ s_1 , \dots, s_m\}\amalg \{t_1, \dots, t_n\}$, and a natural number $\kclf$ called the \emph{number of closed components of $f$}.

	If $\kclf = 0$, then $f$ is called an \emph{open Brauer diagram}.

\end{defn}
Let $\BD(m,n)$ denote the set of Brauer diagrams from $m$ to $n$ and let $\BDop(m,n) \subset \BD(m,n)$ be the subset of open Brauer diagrams.  So $\BD(m,n) \cong \BDop (m,n) \times \N$ for all $m, n \in \N$.

\begin{ex}
	\label{ex symmetric groupoid}
	For all $n$, there is a canonical inclusion $\Sigma_n \hookrightarrow \BDop(n,n)$ that takes $\sigma \in \Sigma_n$ to the open Brauer diagram induced by the pairing $s_i \mapsto t_{\sigma i}$ on $\{s_1, \dots, s_n\} \amalg \{t_1, \dots, t_n\}$, $1 \leq i \leq n$.
	
	In particular, the pairing $s_i \mapsto t_i$, $1 \leq i \leq n$ defines the \emph{identity (open) Brauer diagram} $id_n$ on $n$.
	
\end{ex}

Brauer diagrams $(\tau_1, \kcl_1)\colon m_1 \to n_1$ and $(\tau_2, \kcl_2) \colon m_2 \to n_2$ compose \textit{horizontally} according to 
\[ (\tau_1, \kcl_1) \oplus (\tau_2, \kcl_2) = (\tau_1 \amalg \tau_2, \kcl_1 +\kcl_2) \colon m_1 + m_2 \to n_1 + n_2,\] and the trivial open Brauer diagram $(\varnothing, 0) \colon 0 \to 0$ acts as an identity for this composition. 

Any Brauer diagram $f = (\tau, \kcl)\colon m \to n$ may be written as a horizontal sum $(\tau, 0) \oplus (\varnothing, \kcl)$ of an open Brauer diagram $(\tau, 0) \colon m \to n$ and a scalar $(\varnothing, \kcl) =\bigoplus_{i = 1}^\kcl (\varnothing, 1)\colon 0 \to 0$. 

if the corresponding elements of $X \amalg Y$ are equivalent under $\tau$. 

A Brauer diagram $f = (\tau, \kcl) \colon m \to n$ may be represented by the graph for $\tau$, together with 
$\kcl$ closed circles (called \textit{bubbles} in \cite{RS20}) drawn next to the graph. Horizontal composition of Brauer diagrams is represented by juxtaposition of graphs and closed bubbles.

To define vertical composition of Brauer diagrams, observe that, given finite sets $X,Y, Z$, and pairings $\tau_{X,Y}$ and $\tau_{Y,Z}$ on $\X \amalg Y$ and $Y \amalg Z$ respectively, there is an induced pairing $\tau_{Y, Z} \circ \tau_{X,Y}$ on $X \amalg Z$ obtained, as in \cref{fig. pairing comp}, by stacking $\tau_{X,Y}$ and $\tau_{Y,Z}$: 

Namely, 
$\tau_{X,Y}$ and $\tau_{Y,Z}$ generate an equivalence relation on $X \amalg Y \amalg Z$ 
such that objects $x$ and $y$ are equivalent if and only if they are related by a sequence of (alternating) applications of $\tau_{X,Y}$ and $\tau_{Y,Z}$
(\cref{fig. pairing comp}(b)(i)-(iv)).

Each equivalence class contains precisely zero or two elements of $X \amalg Z$. The classes that contain two elements of $X \amalg Z$ 
-- the \textit{open components} of the composition -- describe the desired pairing $\tau_{Y, Z} \circ \tau_{X,Y}$ on $X \amalg Z$. %

The remaining equivalence classes -- that describe cycles of elements of $Y$ -- are called \textit{closed components formed by the composition} of $\tau_{X,Y}$ and $\tau_{Y,Z}$. 

Write $\Sf  = \{ s_1 , \dots, s_m\}$ for the \textit{source}, and $\Tf = \{t_1, \dots, t_n\}$ for the \textit{target} of a Brauer diagram $f \in \BD (m,n)$.  
Brauer diagrams $f = (\tau_f, \kclf) \in \BD (l,m)$ and $g =  (\tau_g, \kclf[g]) \in \BD (m,n)$  may be composed vertically to obtain a Brauer diagram $ g \circ f = (\tau_{gf}, \kclf[gf]) \in \BD(l,n)$. The pairing $\tau_{g,f}$ is the composition pairing $\tau g \circ \tau f$ obtained by identifying $ \Tf = \Sf[g]$ according to $t^f_i \mapsto s^g_{i}$. The natural number $\kclf[gf]$ satisfies 
\[ \kclf[gf] = \kclf + \kclf[g]\ + \kcl(\tau_{f}, \tau_{g})\] where $\kcl(\tau_{f}, \tau_{g})$ is the number of closed components formed by the composition of $\tau_f$ and $\tau_g$.

	\begin{figure}
		[htb!]
		
		\begin{tikzpicture}
			\node at (-2.5,1){(a)};
			\node at (-.5,0){	\begin{tikzpicture}[scale = .35]
					\begin{pgfonlayer}{above}
						\node at (5,18){\color{sapgreen}$X$};
						\node at (5,16){\color{violet}$Y$};
						\node at (5,14){\color{violet}$Y$};
						\node at (5,12){\color{cyan}$Z$};
						%	\filldraw[violet, fill opacity  = .1]	(1,14) ellipse (3.6cm and .6cm); 
						\node [dot, violet]  (12) at (-2, 16) {};
						\node  [dot, violet] (13) at (-1, 16) {};
						\node  [dot, violet] (14) at (0, 16) {};
						\node  [dot, violet] (15) at (1, 16) {};
						\node [dot, violet]  (16) at (2, 16) {};
						\node  [dot, violet] (17) at (3, 16) {};
						\node  [dot, violet] (18) at (4, 16) {};
						\node  [dot, sapgreen] (20) at (-0.5, 18) {};
						\node  [dot, sapgreen](21) at (1, 18) {};
						\node  [dot, sapgreen](22) at (2.5, 18) {};
						\node  [dot, cyan](28) at (-2, 12) {};
						\node   [dot, cyan](29) at (-0.5, 12) {};
						\node   [dot, cyan](30) at (1, 12) {};
						\node   [dot, cyan](31) at (2.5, 12) {};
						\node  [dot, cyan] (32) at (4, 12) {};
						\node [dot, violet]  (33) at (-2, 14) {};
						\node  [dot, violet] (34) at (-1, 14) {};
						\node  [dot, violet] (35) at (0, 14) {};
						\node  [dot, violet] (36) at (1, 14) {};
						\node [dot, violet]  (37) at (2, 14) {};
						\node  [dot, violet] (38) at (3, 14) {};
						\node [dot, violet]  (39) at (4, 14) {};
					\end{pgfonlayer}
					\begin{pgfonlayer}{background}
						\filldraw[draw = white, fill = violet, fill opacity  = .1]	(1,16) ellipse (3.6cm and .6cm); 
						\filldraw[draw = white, fill = violet, fill opacity  = .1]	(1,14) ellipse (3.6cm and .6cm);
						\filldraw[draw = white, fill = sapgreen, fill opacity  = .1]	(1,18) ellipse (3.6cm and .6cm);
						\filldraw[draw = white, fill =cyan , fill opacity  = .1]	(1,12) ellipse (3.6cm and .7cm);
						\draw [red, bend left=300, looseness=0.75] (20.center) to (12.center);
						\draw [red, in=90, out=-150] (21.center) to (14.center);
						\draw [red, bend left=60] (22.center) to (18.center);
						\draw [red, bend left=90, looseness=1.25] (16.center) to (17.center);
						\draw [red, bend left=60] (13.center) to (15.center);
						\draw [blue, bend right=75, looseness=1.25] (36.center) to (37.center);
						\draw [blue, in=135, out=-60] (35.center) to (31.center);
						\draw [blue, bend left=75] (28.center) to (29.center);
						\draw [blue, bend left=60, looseness=0.75] (30.center) to (32.center);
						\draw [blue, bend right] (34.center) to (38.center);
						\draw [blue, bend right] (33.center) to (39.center);
					\end{pgfonlayer}
			\end{tikzpicture}};
			\draw [thick, dashed, -> ] (2,0)--(3,0);%(-.5,-1.5)--(-.5,-2.5);
			\node at (5,0){\begin{tikzpicture}[scale = .35]
					\begin{pgfonlayer}{above}
						\node at (5,18){\color{sapgreen}$X$};
						\node at (5,15){\color{violet}$Y$};
						
						\node at (5,12){\color{cyan}$Z$};
						\node [dot, violet]  (12) at (-2, 15) {};
						\node  [dot, violet] (13) at (-1, 15) {};
						\node  [dot, violet] (14) at (0, 15) {};
						\node  [dot, violet] (15) at (1, 15) {};
						\node [dot, violet]  (16) at (2, 15) {};
						\node  [dot, violet] (17) at (3, 15) {};
						\node  [dot, violet] (18) at (4, 15) {};
						\node  [dot, sapgreen] (20) at (-0.5, 18) {};
						\node  [dot, sapgreen](21) at (1, 18) {};
						\node  [dot, sapgreen](22) at (2.5, 18) {};
						\node  [dot, cyan](28) at (-2, 12) {};
						\node   [dot, cyan](29) at (-0.5, 12) {};
						\node   [dot, cyan](30) at (1, 12) {};
						\node   [dot, cyan](31) at (2.5, 12) {};
						\node  [dot, cyan] (32) at (4, 12) {};
						\node [dot, violet]  (33) at (-2, 15) {};
						\node  [dot, violet] (34) at (-1, 15) {};
						\node  [dot, violet] (35) at (0, 15) {};
						\node  [dot, violet] (36) at (1, 15) {};
						\node [dot, violet]  (37) at (2, 15) {};
						\node  [dot, violet] (38) at (3, 15) {};
						\node [dot, violet]  (39) at (4, 15) {};
					\end{pgfonlayer}
					\begin{pgfonlayer}{background}
						\filldraw[draw = white, fill = violet, fill opacity  = .1]	(1,15) ellipse (3.6cm and .6cm); 
						
						\filldraw[draw = white, fill = sapgreen, fill opacity  = .1]	(1,18) ellipse (3.6cm and .6cm);
						\filldraw[draw = white, fill =cyan , fill opacity  = .1]	(1,12) ellipse (3.6cm and .7cm);
						\draw [red, bend left=300, looseness=0.75] (20.center) to (12.center);
						\draw [red, in=90, out=-150] (21.center) to (14.center);
						\draw [red, bend left=60] (22.center) to (18.center);
						\draw [red, bend left=90, looseness=1.5] (16.center) to (17.center);
						\draw [red, bend left=60, looseness=1.5] (13.center) to (15.center);
						\draw [blue, bend right=75, looseness=1.5] (36.center) to (37.center);
						\draw [blue, in=135, out=-60] (35.center) to (31.center);
						\draw [blue, bend left=75] (28.center) to (29.center);
						\draw [blue, bend left=60, looseness=0.75] (30.center) to (32.center);
						\draw [blue, bend right = 75, looseness=1] (34.center) to (38.center);
						\draw [blue, bend right = 90, looseness=1] (33.center) to (39.center);
					\end{pgfonlayer}
				\end{tikzpicture}
			};

	\node at (8.5,1){(b)};
			\node at (10,0){\begin{tikzpicture}[scale = .45]
					\begin{pgfonlayer}{above}
						\node   (12) at (-2, 15) {};
						\node   (13) at (-1, 15) {};
						\node   (14) at (0, 15) {};
						\node   (15) at (1, 15) {};
						\node   (16) at (2, 15) {};
						\node   (17) at (3, 15) {};
						\node   (18) at (4, 15) {};
						\node  [dot, sapgreen] (20) at (-0.5, 16) {};
						\node  [dot, sapgreen](21) at (1, 16) {};
						\node  [dot, sapgreen](22) at (2.5, 16) {};
						\node  [dot, cyan](28) at (-2, 12) {};
						\node   [dot, cyan](29) at (-0.5, 12) {};
						\node   [dot, cyan](30) at (1, 12) {};
						\node   [dot, cyan](31) at (2.5, 12) {};
						\node  [dot, cyan] (32) at (4, 12) {};
						\node   (33) at (-2, 15) {};
						\node   (34) at (-1, 15) {};
						\node   (35) at (0, 15) {};
						\node   (36) at (1, 15) {};
						\node   (37) at (2, 15) {};
						\node  (38) at (3, 15) {};
						\node   (39) at (4, 15) {};
					\end{pgfonlayer}
					\begin{pgfonlayer}{background}
						%	\draw [ bend left=300, looseness=0.75] (20.center) to (12.center);
						%		\draw [ bend left=300, looseness=0.75] (20.center) to (12.center);
						\draw [ in=90, out=-150] (21.center) to (31.center);
						%	\draw [ bend left=60] (22.center) to (18.center);
						\draw [ bend right=90, looseness=1.5] (20.center) to (22.center);
						%	\draw [ bend left=60, looseness=1.5] (13.center) to (15.center);
						%	\draw [ bend right=75, looseness=1.5] (36.center) to (37.center);
						%	\draw [ in=135, out=-60] (35.center) to (31.center);
						\draw [ bend left=75] (28.center) to (29.center);
						\draw [ bend left=60, looseness=0.75] (30.center) to (32.center);
						\draw [violet] (4, 14) circle (.5cm);
						%	\draw [ bend right = 75, looseness=1] (34.center) to (38.center);
						%	\draw [ bend right = 90, looseness=1] (33.center) to (39.center);
					\end{pgfonlayer}
				\end{tikzpicture}
			};
		\end{tikzpicture}
		\caption{(a) Composition of pairings on $X \amalg Y$ and $Y \amalg Z$; %(b) equivalence classes of the relation $R_\tau$; 
				(c) the resulting pairing on $X \amalg Z$, together with the single closed component formed in the composition. }\label{fig. pairing comp}
	\end{figure}

	This composition is associative, with two-sided unit $(id_n, 0)$. Hence, we may define:

	\begin{defn}\label{def. BD}

		Objects of the category $\BD$ of \emph{(monochrome) Brauer diagrams} are natural numbers $n \in \N$. Morphisms in  
		$\BD(m, n)$ are Brauer diagrams $f \colon m \to n$, and composition is vertical composition of Brauer diagrams.

	\end{defn}
	
	The category $\BD$ is a (monochrome) PROP with monoidal product induced by horizontal composition $\oplus$ of Brauer diagrams and monoid of scalars $\BD(0,0) \cong \N$. 
	
	\begin{ex}\label{ex. symm groupoid}
		By \cref {ex symmetric groupoid}, the symmetric groupoid $\Sigma$ embeds canonically in $\BD$. 
	\end{ex}

	By definition, $\BD(m_1, n_1) \cong \BD(m_2, n_2)$ for all $(m_1, n_1)$ and $(m_2, n_2)$ such that $m_1 + n_1 = m_2 + n_2$. 
	
In fact, the category $\BD$ has a compact closed structure, for which all objects are self-dual. Given a Brauer diagram $f \in \BD(m,n)$:
	\begin{itemize}
		\item its {evaluation morphism} $ \ev{f}  =(\ev{\tau}, \kcl) \in \BD (n+m, 0)$ is induced by the canonical isomorphism
			\[\{s_1, \dots, s_m \}\amalg \{t_1, \dots, t_n\} \ {\cong} \  \{t_n, \dots, t_1, s_1, \dots, s_m\}\amalg \emptyset \]
		\item its {coevaluation morphism} $ \coev{f}  =(\coev{\tau}, \kcl) \in \BD (0, n+m)$ is  induced by the canonical isomorphism
		\[	\{s_1, \dots, s_m \}\amalg \{t_1, \dots, t_n\}  \ {\cong} \ \emptyset \amalg \{t_1, \dots, t_n, s_m, \dots, s_1\}\]
		\item its {dual morphism} $f^* = (\tau^*, \kcl ) \in \BD(n,m)$ is induced by the canonical isomorphism \[	\{s_1, \dots, s_m \}\amalg \{t_1, \dots, t_n\} \ {\cong} \  \{t_n, \dots, t_1\} \amalg \{ s_m, \dots, s_1\} .\]%,m,n \in \N$.
	\end{itemize}

	In particular, the morphisms $id_1 \in \BD(1,1)$, $\cap \in \BD(0,2)$ and $\cup \in \BD(2,0)$ are the open Brauer diagrams induced by the unique pairing on the two-element set $\{s,t\} $. 
	 And, for all $n \in \N$, 
	 $ id_n =  \bigoplus_{i = 1}^n id_1\in \BD (n,n)$ and 
	$ \cap_n \defeq \coev{id_n}  \in \BD(0, 2n)$, and $\cup_n \defeq \ev{id_n} \in \BD(2n, 0)$
	satisfy the $n$-fold triangle identities (\cref{fig. triangle}).
	\begin{equation}\label{eq. n triangle} 
		(\cup_n \oplus id_n) \circ (id_n \oplus \cap_n)= id_n = (id_n\oplus \cup_n )\circ (\cap_n \oplus id_n).
	\end{equation}
	 
	\begin{rmk}\label{rmk. top to bottom} In this work, the direction of morphisms is depicted as top to bottom (source above target). Hence the designation of $\cup$ and $\cap$ may differ from other (Feynman diagram) conventions. \end{rmk}

	It is important to note that the subsets $\BDop(m,n) \subset \BD(m,n)$ of open Brauer diagrams do not describe a subcategory of $ \BD$. Namely, the composition $\cup \circ \cap   $ defines the {unit trace} $\bigcirc   =(\varnothing, 1)\in \BD(0,0)$ which is not open. And, for all $n \geq 0$, 
	\[ \cup_n \circ \cap _n = \bigoplus_{i = 1}^n (\cup \circ \cap) = (\varnothing, n) \in \BD(0,0).\]

	\begin{figure}	[htb!]
		\begin{tikzpicture}

			\node at (7.5,0){
				\begin{tikzpicture}[scale = .16]
					\begin{pgfonlayer}{background}
					%	\node at (-13,11){(b)};
						\node  (0) at (2, 5) {};
						\node  (1) at (3, 5) {};
						\node  (2) at (3.75, 5) {};
						\node  (3) at (4.5, 5) {};
						\node  (4) at (1.25, 5) {};
						\node  (5) at (0.5, 5) {};
						\node  (6) at (5.5, 5) {};
						\node  (7) at (6.25, 5) {};
						\node  (8) at (7, 5) {};
						\node  (9) at (2, 10) {};
						\node  (10) at (1.25, 10) {};
						\node  (11) at (0.5, 10) {};
						\node  (12) at (5.5, 0) {};
						\node  (13) at (6.25, 0) {};
						\node  (14) at (7, 0) {};

						\node  (15) at (25, 5.25) {};
						\node  (16) at (21, 5.25) {};
						\node  (17) at (21.75, 5.25) {};
						\node  (18) at (22.5, 5.25) {};
						\node  (19) at (24.25, 5.25) {};
						\node  (20) at (23.5, 5.25) {};
						\node  (21) at (18.5, 5.25) {};
						\node  (22) at (19.25, 5.25) {};
						\node  (23) at (20, 5.25) {};
						\node  (24) at (25, 10.25) {};
						\node  (25) at (24.25, 10.25) {};
						\node  (26) at (23.5, 10.25) {};
						\node  (27) at (18.5, 0.25) {};
						\node  (28) at (19.25, 0.25) {};
						\node  (29) at (20, 0.25) {};

						\node  (30) at (12.75, 10) {};
						\node  (31) at (12, 10) {};
						\node  (32) at (11.25, 10) {};
						\node  (33) at (11.25, 0) {};
						\node  (34) at (12, 0) {};
						\node  (35) at (12.75, 0) {};

						\node  (81) at (-9.25, 5.75) {};
						\node  (82) at (-8.25, 5.75) {};
						\node  (83) at (-7.5, 5.75) {};
						\node  (84) at (-6.5, 5.75) {};
						\node  (85) at (-10, 5.75) {};
						\node  (86) at (-10.75, 5.75) {};
						\node  (87) at (-5.5, 4.25) {};
						\node  (88) at (-4.75, 4.25) {};
						\node  (89) at (-4, 4.25) {};
						\node  (90) at (-9.25, 11) {};
						\node  (91) at (-10, 11) {};
						\node  (92) at (-10.75, 11) {};
						\node  (93) at (-5.5, -0.75) {};
						\node  (94) at (-4.75, -0.75) {};
						\node  (95) at (-4, -0.75) {};
						\node  (96) at (-5.75, 5.75) {};
						\node  (97) at (-4.75, 5.75) {};
						\node  (98) at (-4, 5.75) {};
						\node  (99) at (-10.75, 4.25) {};
						\node  (100) at (-10, 4.25) {};
						\node  (101) at (-9, 4.25) {};
						\node  (102) at (-8.25, 4.25) {};
						\node  (103) at (-7.25, 4.25) {};
						\node  (104) at (-6.5, 4.25) {};

						\node  (116) at (35.75, 5.75) {};
						\node  (117) at (29, 5.75) {};
						\node  (118) at (35, 5.75) {};
						\node  (119) at (34.25, 5.75) {};
						\node  (120) at (29, 4.25) {};
						\node  (121) at (29.75, 4.25) {};
						\node  (122) at (30.5, 4.25) {};
						\node  (123) at (35.75, 10.75) {};
						\node  (124) at (35, 10.75) {};
						\node  (125) at (34.25, 10.75) {};
						\node  (126) at (29, -0.75) {};
						\node  (127) at (29.75, -0.75) {};
						\node  (128) at (30.5, -0.75) {};
						\node  (129) at (29.75, 5.75) {};
						\node  (130) at (30.75, 5.75) {};
						\node  (131) at (31.5, 5.75) {};
						\node  (132) at (32.5, 5.75) {};
						\node  (133) at (33.25, 5.75) {};
						\node  (134) at (31.5, 4.25) {};
						\node  (135) at (32.25, 4.25) {};
						\node  (136) at (33.25, 4.25) {};
						\node  (137) at (34, 4.25) {};
						\node  (138) at (35, 4.25) {};
						\node  (139) at (35.75, 4.25) {};
					\end{pgfonlayer}
					\begin{pgfonlayer}{above}
						\draw (11.center) to (5.center);
						\draw (10.center) to (4.center);
						\draw (9.center) to (0.center);
						\draw (6.center) to (12.center);
						\draw (7.center) to (13.center);
						\draw (8.center) to (14.center);
						\draw [bend left=90, looseness=3.25] (3.center) to (6.center);
						\draw [bend right=90, looseness=3.00] (0.center) to (1.center);
						\draw [bend right=90, looseness=2.25] (4.center) to (2.center);
						\draw [bend left=90, looseness=2.50] (2.center) to (7.center);
						\draw [bend right=90, looseness=2.00] (5.center) to (3.center);
						\draw [bend left=90, looseness=2.25] (1.center) to (8.center);
						\draw (26.center) to (20.center);
						\draw (25.center) to (19.center);
						\draw (24.center) to (15.center);
						\draw (21.center) to (27.center);
						\draw (22.center) to (28.center);
						\draw (23.center) to (29.center);
						\draw [bend right=90, looseness=2.25] (18.center) to (21.center);
						\draw [bend left=90, looseness=2.00] (15.center) to (16.center);
						\draw [bend left=90, looseness=2.00] (19.center) to (17.center);
						\draw [bend right=90, looseness=2.25] (17.center) to (22.center);
						\draw [bend left=90, looseness=2.50] (20.center) to (18.center);
						\draw [bend left=270, looseness=2.50] (16.center) to (23.center);
						\draw (32.center) to (33.center);
						\draw (31.center) to (34.center);
						\draw (30.center) to (35.center);
						\draw (92.center) to (86.center);
						\draw (91.center) to (85.center);
						\draw (90.center) to (81.center);
						\draw (87.center) to (93.center);
						\draw (88.center) to (94.center);
						\draw (89.center) to (95.center);
						\draw [bend left=90, looseness=2.25] (82.center) to (83.center);
						\draw [bend left=90, looseness=2.25] (84.center) to (96.center);
						\draw [bend left=90, looseness=2.25] (97.center) to (98.center);
						\draw [bend right=90, looseness=2.25] (99.center) to (100.center);
						\draw [bend right=90, looseness=2.25] (101.center) to (102.center);
						\draw [bend right=90, looseness=2.25] (103.center) to (104.center);
						\draw (86.center) to (99.center);
						\draw (82.center) to (100.center);
						\draw (85.center) to (101.center);
						\draw (102.center) to (84.center);
						\draw (81.center) to (103.center);
						\draw (104.center) to (97.center);
						\draw (83.center) to (87.center);
						\draw (96.center) to (88.center);
						\draw (98.center) to (89.center);
						\draw (125.center) to (119.center);
						\draw (124.center) to (118.center);
						\draw (123.center) to (116.center);
						\draw (120.center) to (126.center);
						\draw (121.center) to (127.center);
						\draw (122.center) to (128.center);
						\draw [bend left=90, looseness=2.25] (117.center) to (129.center);
						\draw [bend left=90, looseness=2.50] (130.center) to (131.center);
						\draw [bend left=90, looseness=2.50] (132.center) to (133.center);
						\draw (117.center) to (120.center);
						\draw (130.center) to (121.center);
						\draw (132.center) to (122.center);
						\draw [bend right=90, looseness=2.50] (134.center) to (135.center);
						\draw [bend right=90, looseness=2.50] (136.center) to (137.center);
						\draw [bend right=90, looseness=2.50] (138.center) to (139.center);
						\draw (116.center) to (139.center);
						\draw (118.center) to (137.center);
						\draw (119.center) to (135.center);
						\draw (129.center) to (134.center);
						\draw (131.center) to (136.center);
						\draw (133.center) to (138.center);
						
						\draw %[ thick] 
						(-2.5, 4.8)--(-1,4.8)
						(-2.5, 5.3)--(-1,5.3);
						
							\draw %[ thick] 
						(8, 4.8)--(9.5,4.8)
						(8, 5.3)--(9.5,5.3);
						
							\draw %[ thick] 
						(14.5, 4.8)--(16,4.8)
						(14.5, 5.3)--(16,5.3);
						
							\draw %[ thick] 
						(26, 4.8)--(27.5,4.8)
						(26, 5.3)--(27.5,5.3);

					\end{pgfonlayer}
				\end{tikzpicture}

			};
			
		\end{tikzpicture}
		\caption{The triangle identities in $\BD$ with $n = 3$.} 
	\label{fig. triangle}
\end{figure}

\begin{ex}\label{ex: Brauer category}
	Let $R$ be a commutative ring, and let $\ModR$ be its category of modules. 
	Let $\underline{\BD}_R$  be the free $\modR$-category on $\BD$. So, for each pair $m, n $ of natural numbers, $\underline{\BD}_R(m,n)$ is the free $R$ module (infinitely) generated by $\BD(m,n)$. 
	For each $\delta \in R$, let $\Br = \Br^R$ be the $\modR$-enriched \textit{Brauer category} \cite{LZ15} whose 
	objects are natural numbers $n \in \N$ and, 
	for all $m, n \in \N$, $\Br(m,n) $ is the free $R$-module (finitely) generated by the open Brauer diagrams in $\BD(m,n)$. Composition in $\Br$ is defined by 
	\[ \tau_g \tau_f =\delta^{\kclf[gf]}\tau_{gf}\in \Br(k,n) \text{ for all generators }\tau_f \in \Br(k,m), \tau_g \in \Br(m,n).\]
	
	If $ R[t]$ is the ring of polynomials in $R$, then the canonical isomorphisms $\underline{\BD}_R(m,n) \cong \Br[t]^{R[t]}(m,n)$ of $R$-modules induced by $(\tau, \kcl) \mapsto t^{\kcl}\tau$ extend to an isomorphism of $\modR$-enriched categories.

	A \emph{Brauer algebra} is an algebra of the form $\Br(n,n)$, for some commutative ring $R$, $\delta \in R$ and $n \in \N$. They were introduced in \cite{Bra37}, where Brauer used them to study the representation theory of the orthogonal and symplectic groups.
	
	In particular, Brauer proved that, if $R  = \Bbbk$ is a field of characteristic $0$ and $V$ is a $k$-dimensional vector space over $\Bbbk$, then representations of $\Br[k]^{\Bbbk}(n,n)$ in $V^{\otimes n}$ are in one to one correspondence with degree $n$ representations of the $k$-dimensional orthogonal group $\mathrm O (k)$ on $V^{\otimes n}$. And, if $ W$ is a $2m$-dimensional vector space over $ \Bbbk$, representations of $\Br[(-m)]^{\Bbbk}(n,n)$ in $W^{\otimes n}$ are in one to one correspondence with degree $n$ representations of the $m$-dimensional symplectic group $\mathrm {Sp} (m)$ on $W^{\otimes n}$. 
	
\end{ex}

\begin{defn}\label{def. dw BD}
	The subcategory $\BDd \subset \BD$ of \emph{downward Brauer diagrams} is the subcategory of open morphisms $  (\tau^{\downarrow}, 0)\in \BD(m,n)$ such that, for all $y \in \Tf$, 
 $\tau^{\downarrow}(y)  \in  \Sf$.  So, $\BDd(m,n) $ is empty whenever $n >m$.
	
	The category $\BDu \subset \BD$ of \emph{upward Brauer diagrams} is the opposite category of $\BDd$. 
\end{defn}

In particular, $\cap \not\in \BDd(0,2)$, $\cup \not \in \BDu(2,0)$.  The intersection of $\BDd$ and $\BDu$ in $BD$ is the symmetric groupoid $\Sigma$. 

\begin{ex}
	\label{ex down Brauer}
For a commutative ring $R$, the \textit{downwards Brauer category} $\Brd $ is the free $\modR$-enriched category on $ \BDd$, and the \textit{upwards Brauer category} $\Bru$ is the free $\modR$-enriched category on $\BDu$.
	
	No closed components are formed by composition in $\BDd$ (or $\BDu$). Hence $\Brd $ is a subcategory of $ \Br$ (and $\Bru \subset \Br$) for all $\delta \in R$. 
	
In particular, for all $n \in \N$, $\Brd(n,n) = \Bru(n,n) \cong R^{\Sigma_n}$ is canonically a $\Sigma_n$-module. It follows from the results of \cite{SS15} that symmetric strict monoidal functors $\BDd \to \vect$ describe representations of the\textit{ infinite orthogonal group} $\mathrm O(\infty) \defeq \bigcup_{n \in \N} \mathrm O(n)$.

\end{ex}

\begin{rmk}
\label{rmk. generators}
By e.g.~\cite{LZ15}, the category $\BD$ is generated under horizontal and vertical composition by the morphisms $id_1$, $\cup$, $\cap$ and the unique non-identity permutation $\sigma_{\two} \in \Sigma_2 \subset \BD(2,2)$. 

The category $\BDd$ is generated by $id_1, \sigma_{\two}$, and $\cup$ (and $\BDu$ is generated by $id_1, \sigma_{\two}$, and $\cap$) under horizontal and vertical composition. 
\end{rmk}

\begin{ex}
\label{ex. manifold components} Let $\I$ denote the unit interval $[0,1]$, and let $\man \cong n_o (\I)\amalg n_c (S^1)$ be a compact 1-manifold with canonical pairing $\tau^\man$ on $\partial \man$ as in \cref{ex. manifold pairing}. 
If $m,n \in \N$ satisfy $m+n = 2 n_o$, and $\phi \colon \{s_1, \dots, s_m\} \amalg \{t_1, \dots, t_n\} \to \partial \man$ is any isomorphism, then 
$(\phi^{-1} \tau^\man \phi, n_c) \in \BD(m,n)$.

Conversely, given a morphism $ f = (\tau, \kcl )\in \BD(m,n)$, there is a (unique up to isomorphism) pair $(\man_f, \phi_f)$ of a compact 1-manifold $\man_f\cong \frac{m+n}{2}(\I) \amalg \kcl (S^1)$, and an isomorphism $\phi_f \colon %\nn \oplus \rev{\mm} 
\Sf \amalg \Tf \to \partial \man$ such that $\phi^{-1}_f \tau^{\man_f} \phi_f = \tau$.

\end{ex}

Following \cref{ex. manifold components}, define $ \partial f  \defeq \Sf \amalg \Tf$ to be the \textit{boundary of $f = (\tau, \kcl)\in \BD(m,n)$}. So $\partial f$ consists of $m +n$ points.

The set $\pi_0(f)$ \textit{of components of $f$} is the set of components of a compact manifold $\man_f$ as in \cref{ex. manifold components}. 
So, $|\pi_0(f)|  = \frac{(m+n)}{2} + \kcl$,  and a morphism $f  = (\tau, \kcl) \in \BD(m,n)$ may also be written as a diagram of cospans of finite sets (where the arrow $\partial f \to \pi_0 (f)$ is the canonical map): 
\begin{equation}\label{eq. cospan def BD} \xymatrix{  \Sf \ar@{-->}[rrd] \ar@{>->}[rr]^{S_i \ \mapsto \tau (S_i)}&& \partial f \ar[d] && \ar@{>->}[ll]_-{t_j \ \mapsto \tau(t_j) }\Tf  \ar@{-->}[lld]\\&& \pi_0(f).&&}\end{equation}

\begin{rmk}\label{rmk. pushout} 

By (\ref{eq. cospan def BD}), for composable morphisms $f \in \BD(k,m)$ and $g \in \BD(m,n)$, we may consider the pushout diagram: % (where $P(gf)$ and $\pi^P(gf)$ are the pushouts of the obvious)
\begin{equation}\label{eq. cospan pushout} 
	\xymatrix@C=.6cm {
		\Sf\ar[drr]_-{\tau_f} &&&& \Tf =  \Sf[g]  \ar[dll]^-{\tau_f} \ar[drr]_-{\tau_g}&&&& \Tf[g] \ar[dll]^-{\tau_g}\\
		&& \partial f \ar@{-->}[drr]\ar[d] &&&&  \partial g\ar@{-->}[dll]\ar[d] &&\\
		&& \pi_0 (f)  \ar@{-->}[drr]&& P(gf)\ar[d] && \pi_0 (g)\ar@{-->}[dll]&& \\
		&&&& \pi^P(gf). &&&&
	}
\end{equation}

In general, $P(gf) \not \cong \partial (gf) = \Sf \amalg \Tf[g]$ and hence composition of morphisms in $\BD$ is not described by compositions (pushouts) of cospans as in (\ref{eq. cospan pushout}). (So $\BD$ is not a \textit{cospan category}.)

For example, in 
the pushout (\ref{eq. cospan pushout}) for the composition $\cup \circ \cap = \bigcirc  \in \BD (0, 0)$, $P(\bigcirc)$ is a two element set. But $\bigcirc \in \BD(0,0)$ so $\partial \bigcirc = \emptyset$. (This is closely related to the \textit{problem of loops} discussed in detail in \cite[Section~6]{Ray20}. See also \cref{rmk. down Brauer graph of graphs}.)

Instead, to represent general composition of all Brauer diagrams in $\BD$ 
in terms of pushouts of cospans, one can replace a diagram of the form (\ref{eq. cospan def BD}) with a diagram of manifolds:
\begin{equation}\label{eq. cospan man BD} \xymatrix{  \Sf \ar[r]^-{\phi\circ \tau}& \partial \man\ar[r]^-{\text{inc }}& \man & \ar[l]_-{\text{inc }} \partial \man &\ar[l]_-{\phi\circ \tau}\Tf , }\end{equation} 
where, $(\man, \phi)$ describes $f\in \BD(m,n)$ as in \cref{ex. manifold components}, and the isomorphism $\pi_0 (\man) \xrightarrow \cong \pi_0 (f)$ is fixed by $\phi$ on open components of $\man$.

Then, if $(\man_f, \phi_f)$ represents $f = (\tau_f, \kclf) \in \BD(k,m)$, and 
$(\man_g, \phi_g) $ represents $g = (\tau_g, \kclf[g]) \in \BD(m,n)$, the composition $gf \in \BD(k,n)$ is obtained from the pushout of cospans of manifolds: 
\begin{equation}\label{eq. manifold pushout} 
	\xymatrix @C = 0.6cm{
		\Sf\ar[drr]_-{\text{inc }\circ\phi_f\circ \tau_f} &&&& \Tf  =  \Sf[g]  \ar[dll]^-{\text{inc }\circ\phi_f\circ \tau_f} \ar[drr]_-{\text{inc }\circ\phi_g\circ \tau_g}&&&& \Tf[g] \ar[dll]^-{\text{inc }\circ\phi_g\circ \tau_g}\\
		&& \man_f \ar@{-->}[drr] &&&& \man_g \ar@{-->}[dll] &&\\
		&&&&\man_f \amalg_{\scriptscriptstyle{ \Tf} } \man_g. && && 
}\end{equation}

\end{rmk}

\subsection{Coloured Brauer diagrams}\label{ssec. colour BD}

Let $\CCC$ be a set, and let $\listm \CCC =\coprod_{n \in \N} \CCC^n$ denote the set of finite ordered sets $\ccc  = (c_1, \dots, c_n)$ of elements of $\CCC$. So $\listm \CCC$ underlies the free associative monoid on $\CCC$. For $\ccc = (c_1, \dots, c_m)$ and $\ddd = (d_1, \dots, d_n)$ is $\listm \CCC$, their (concatenation) product $\ccc \ddd = \ccc\oplus \ddd$ is given by \[  \ccc \ddd  \defeq (c_{1}, \dots, c_{m}, d_1, \dots, d_n) .\] The empty list $\varnothing_{\CCC} \defeq(-)$ is the unit for $\oplus$. 

The symmetric groupoid $\Sigma$ acts on $\listm \CCC$ from the right by $\sigma \colon \ccc   \mapsto (\ccc \sigma) \defeq (c_{\sigma 1}, \dots, c_{\sigma m}) $, for all $\ccc= (c_1, \dots, c_m)$ and $\sigma \in \Sigma_m$.

\begin{defn} \label{def. palette}

A pair $(\CCC, \omega )$ of a set $\CCC$ together with an involution $\omega \colon \CCC \to \CCC$ 
is called an \emph{(involutive) palette}. % in $\CCat$}. \warn{NOTATION??}
Elements $c \in \CCC$ are called \emph{colours in $(\CCC, \omega )$}. The set of orbits of $\omega$ in $\CCC$ is denoted by $\widetilde \CCC$.

Objects of the category $\pal$ are palettes $(\CCC, \omega)$, and morphisms $(\CCC, \omega) \to (\CCC', \omega')$ are given by morphisms $\lambda \in \Set(\CCC, \CCC')$ that commute with the involution such that $\lambda \circ \omega = \omega' \circ \lambda$.

\end{defn}

For any palette $(\CCC, \omega)$, the induced involutions $ \ccc \mapsto \rev{\ccc} \defeq (c_m , \dots, c_1) $ and $\ccc \mapsto \omega \ccc \defeq (\omega c_1,\dots,  \omega c_m) $ on $\listm \CCC$ commute. Hence their composite $\bfom $ also defines an involution on $\listm \CCC$:
\[\bfom (\ccc) \defeq \rev{\omega \ccc} = \omega (\rev{\ccc})  = (\omega c_m, \dots, \omega c_1) \ \text {for all } \ccc = (c_1, \dots, c_m) \in \listm \CCC. \]

Now let $(\CCC, \omega)$ be any palette, and let $(X, \tau)$ be the palette described by a pairing $\tau$ on a finite set $ X $. 
\begin{defn}

\label{def. colouring}

A \emph{$(\CCC, \omega)$-colouring of $\tau$} is morphism  $\lambda \colon (X, \tau )\to (\CCC, \omega)$ in $\pal$. 

A \emph{$(\CCC, \omega)$-colouring $\lambda$ of a Brauer diagram $f  = (\tau, \kcl)\in \BD(m, n)$} is given by a pair $\lambda = (\colop, \colcl)$ where $\colop$ is a colouring of $\tau$ and $\colcl$ is a map $\pi_0(f)\to \widetilde {\CCC}$ such that the following diagram of sets commutes: 

\begin{equation}\label{eq. colour} \xymatrix{ \partial f \ar[rr]^-{\lambda_\partial }\ar[d]^{\cong}_{\tau}&& \CCC \ar[d]^{\omega}_{\cong}\\
	\partial f  \ar[rr]^-{\lambda_\partial }\ar@{->}[d]&& \CCC \ar@{->>}[d]\\
	\pi_0(f) \ar[rr]^-{\widetilde \lambda} && \widetilde \CCC.}\end{equation} 

The \emph{type of the colouring $\lambda$} 
is the pair $(\ccc, \ddd)\in (\listm \CCC)^2$ -- where $\ccc$ is called the \emph{input type}, and $\ddd$ is called the \emph{output type}, of $(f, \lambda)$ -- defined by:
\begin{equation}\label{eq. type of colouring}
\ddd = (d_1, \dots, d_n)  =  \lambda_{\partial}( \Tf), \text{ and } \ccc = (c_1, \dots, c_m)  =\omega \circ \lambda_{\partial}(\Sf ).
\end{equation}
\end{defn}

\begin{rmk}

The application of $\omega$ in the definition of the input type $\ccc = \omega \circ \colop (\Sf)$ is necessary to define categorical composition of coloured Brauer diagrams in \cref{def. colour BD}.
\end{rmk}

Given $\ccc = (c_1, \dots, c_m)$ and $ \ddd =(d_1, \dots, d_n) $ in $ \listm \CCC$, objects of the set $\CBD(\ccc, \ddd)$ of \emph{$(\CCC, \omega)$-coloured Brauer diagrams from $\ccc$ to $\ddd$} are pairs $(f, \lambda)$ where $f = (\tau, \kcl)$ is a morphism in $\BD(m,n)$, and $\lambda$ is a colouring of $f$ of type $(\ccc, \ddd)$.

Horizontal composition $\oplus$ of coloured Brauer diagrams $(f, \lambda ) \in \CBD(\ccc_1, \ddd_1)$ and $(g, \gamma) \in \CBD (\ccc_2, \ddd_2)$ is given by juxtaposition and concatenation:
\[ (f, \lambda) \oplus (f, \gamma) = (f \oplus g, \lambda \amalg \gamma)\in \CBD (\ccc_1\ccc_2, \ddd_1\ddd_2).\]

Let $(f, \lambda) \in \CBD (\bbb, \ccc)$ and $(g, \gamma) \in \CBD (\ccc, \ddd)$ with $ f = (\tau_f, \kclf) \in \BD(k,m)$ and $g = (\tau_g \kclf[g]) \in \BD(m,n)$ be such that $ gf  = (\tau_{gf}, \kclf[gf]) \in \BD(k,n)$. By definition, $\gamma_\partial (y) = \omega \lambda_\partial (y)$ for each $y \in \Tf  = \Sf $.  
Hence, 
for each $x \in \Sf$, $\lambda_\partial(\tau_{gf}) (x) = \omega \lambda_\partial (x)$. And likewise, for $z \in \Tf[g]$, $\gamma_\partial (\tau_{gf})(z) = \omega \gamma_\partial (z)$. Therefore, there is a well-defined colouring $(\gamma \lambda)_\partial$ of $\tau_{gf}$ that restricts to $\lambda_\partial$ on $\Sf$ and to $\gamma_\partial$ on $\Tf[g]$.

Moreover, for each closed component $[y] = (y, y_1 = \tau_f(y), y_2 = \tau_g (y_1), \dots) \subset \Tf$ created by the composition $g\circ f$, there is a well-defined element $\tilde c \in \widetilde \CCC$ given by the class of $\colcl (y_i) $ ($y_i\in [y]$) in $\widetilde \CCC$. Therefore, the composition $g \circ f$
induces a well-defined map $\widetilde {\gamma \lambda} \colon \pi_0 (gf) \to \widetilde \CCC$ (see \cref{fig. composing colours}). 

It follows that  $ \left ((\gamma \lambda)_\partial, \widetilde {\gamma \lambda}\right )$ describes a colouring  $\gamma \lambda $ on $\tau_{gf}$.

\begin{figure}
[htb!]
\begin{tikzpicture}
\node at (0,0){\begin{tikzpicture}[scale = .6]
		\begin{pgfonlayer}{above}
			\node [dot, red]  (12) at (-2, 15) {};
			\node [left, above] at (-2, 15) {\tiny{$\omega c$}};
			\node  [dot, sapgreen] (13) at (-1, 15) {};
			\node [right, above] at (-1, 15) {\tiny{$d$}};
			\node [right, below] at (-1, 15) {\tiny{$\omega d$}};
			\node  [dot, blue] (14) at (0, 15) {};
			\node  [dot, sapgreen] (15) at (1, 15) {};
			\node [right, above] at (1, 15) {\tiny{$\omega d$}};
			\node [right, below] at (1, 15) {\tiny{$d$}};
			\node [dot, sapgreen]  (16) at (2, 15) {};
			\node [right, above] at (2, 15) {\tiny{$ d$}};
			\node [right, below] at (2, 15) {\tiny{$\omega d$}};
			\node  [dot, sapgreen] (17) at (3, 15) {};
			\node [right, above] at (3, 15) {\tiny{$\omega d$}};
			\node [right, below] at (3, 15) {\tiny{$d$}};
			\node  [dot, red] (18) at (4, 15) {};
			\node [left,above] at (4, 15) {\tiny{$ c$}};
			\node  [dot, red] (20) at (-0.5, 18) {};
			\node [left, below] at (-0.5, 18) {\tiny{$ c$}};
			\node  [dot, blue](21) at (1, 18) {};
			\node  [dot, red](22) at (2.5, 18) {};
			\node [right,below] at (2.5,18) {\tiny{$\omega c$}};
			\node  [dot, violet](28) at (-2, 12) {};
			\node   [dot, violet](29) at (-0.5, 12) {};
			\node   [dot, red](30) at (1, 12) {};
			\node [left, above] at (1,12) {\tiny{$ \omega c$}};
			\node   [dot, blue](31) at (2.5, 12) {};
			\node  [dot, red] (32) at (4, 12) {};
			\node [right, above] at (4,12) {\tiny{$  c$}};
			\node [dot, red]  (33) at (-2, 15) {};
			\node [left,below] at (-2, 15) {\tiny{$ c$}};
			\node  [dot, sapgreen] (34) at (-1, 15) {};
			\node  [dot, blue] (35) at (0, 15) {};
			\node  [dot, sapgreen] (36) at (1, 15) {};
			\node [dot, sapgreen]  (37) at (2, 15) {};
			\node  [dot, sapgreen] (38) at (3, 15) {};
			\node [dot, red]  (39) at (4, 15) {};
			\node [left,below] at (4, 15) {\tiny{$\omega c$}};
			
		\end{pgfonlayer}
		\begin{pgfonlayer}{background}
			\draw [red, bend left=300, looseness=0.75] (20.center) to (12.center);
			\draw [blue, in=90, out=-150] (21.center) to (14.center);
			\draw [red, bend left=60] (22.center) to (18.center);
			
			%	\draw [teal, line width=4pt,
			%	fill opacity=0.2,  bend left=90, looseness=1.5] (16.center) to (17.center);
			\draw [sapgreen, bend left=90, looseness=1.5] (16.center) to (17.center);
			%	\draw [teal, line width=4pt,
			%	fill opacity=0.2,  bend left=60, looseness=1.5] (13.center) to (15.center);
			\draw [sapgreen, bend left=60, looseness=1.5] (13.center) to (15.center);
			%	\draw [teal, line width=4pt,
			%fill opacity=0.2,  bend right=75, looseness=1.5] (36.center) to (37.center);
			\draw [sapgreen, bend right=75, looseness=1.5] (36.center) to (37.center);
			\draw [blue, in=135, out=-60] (35.center) to (31.center);
			\draw [violet, bend left=75] (28.center) to (29.center);
			\draw [red, bend left=60, looseness=0.75] (30.center) to (32.center);
			%	\draw [teal, line width=4pt,
			fill opacity=0.2,  bend right = 75, looseness=1] (34.center) to (38.center);
			\draw [sapgreen, bend right = 75, looseness=1] (34.center) to (38.center);
			\draw [red, bend right = 90, looseness=1] (33.center) to (39.center);
		\end{pgfonlayer}
	\end{tikzpicture}
};
\draw [thick, dashed, -> ] (3.4,0)--(4.5,0);%(-.5,-1.5)--(-.5,-2.5);
%	\node at (8.5,1){(b)};
\node at (7,0){\begin{tikzpicture}[scale = .5]
		\begin{pgfonlayer}{above}

			\node  [dot, red]  (20) at (-0.5, 16) {};
				\node [left, below] at (-0.7, 16) {\tiny{$ c$}};
			\node  [dot, blue](21) at (1, 16) {};

			\node  [dot, red] (22) at (2.5, 16) {};
				\node [right, below] at (2.8, 16) {\tiny{$\omega c$}};

			\node  [dot, violet](28) at (-2, 12) {};
			
			\node   [dot, violet](29) at (-0.5, 12) {};
			
			\node  [dot, red] (30) at (1, 12) {};
				\node [right, above] at (1, 12) {\tiny{$\omega c$}};
			\node   [dot, blue](31) at (2.5, 12) {};
			
			\node  [dot, red]  (32) at (4, 12) {};
				\node [right, above] at  (4, 12) {\tiny{$c$}};

		\end{pgfonlayer}
		\begin{pgfonlayer}{background}
			%	\draw [ bend left=300, looseness=0.75] (20.center) to (12.center);
			%		\draw [ bend left=300, looseness=0.75] (20.center) to (12.center);
			\draw [blue, in=90, out=-150] (21.center) to (31.center);
			%	\draw [ bend left=60] (22.center) to (18.center);
			\draw [red, bend right=90, looseness=1.5] (20.center) to (22.center);
			%	\draw [ bend left=60, looseness=1.5] (13.center) to (15.center);
			%	\draw [ bend right=75, looseness=1.5] (36.center) to (37.center);
			%	\draw [ in=135, out=-60] (35.center) to (31.center);
			\draw [violet, bend left=75] (28.center) to (29.center);
			\draw [red, bend left=60, looseness=0.75] (30.center) to (32.center);
			\draw [sapgreen] (4, 14) circle (.5cm);
			\node at (4.8,14){\tiny{$[d]$}};
			%	\draw [ bend right = 75, looseness=1] (34.center) to (38.center);
			%	\draw [ bend right = 90, looseness=1] (33.center) to (39.center);
		\end{pgfonlayer}
	\end{tikzpicture}
};
\end{tikzpicture}
\caption{Composing coloured pairings.} \label{fig. composing colours}
\end{figure}

\begin{defn}
\label{def. colour BD}
Objects of the category $\CBD$ of \emph{$(\CCC, \omega)$-coloured Brauer diagrams} are elements of $\listm \CCC$. Morphisms in $\CBD(\ccc, \ddd)$ are $(\CCC, \omega)$-coloured Brauer diagrams of type $(\ccc, \ddd)$, with composition of morphisms $(f, \lambda) \in \CBD (\bbb, \ccc)$ and $(g, \gamma) \in \CBD (\ccc, \ddd)$ is given by $(gf, \gamma \lambda) \in \CBD(\bbb,\ddd)$.

\end{defn}

The category $\CBD$ is a $\CCC$-coloured PROP (see \cref{ssec. SMC}), with monoidal structure $\oplus$ and unit $O \in \CBD(\varnothing_{\CCC},\varnothing_{\CCC})$, induced by concatenation of object lists and disjoint union of coloured Brauer diagrams. It has a compact closed structure given by $\ccc^* =  \rev{\omega \ccc}$ for all $\ccc$. 

If $f \in \BD(m,n)$ is such that $(f,\lambda)$ is a morphism in $\CBD(\ccc, \ddd)$ then the %there is a corresponding %for some $f \in BD(|\ccc|,|\ddd|)$, 
\begin{itemize}
\item \textit{evaluation morphism} $\ev{(f, \lambda)} $ is given by $(\ev{f}, \lambda) \in  \CBD((\rev{\omega \ddd})\oplus\ccc, \varnothing_{\CCC})$; 
\item \textit{coevaluation morphism} $\coev{(f, \lambda)}$ is given by $(\coev{f}, \lambda) \in \CBD(\varnothing_{\CCC}, \ddd\oplus(\rev{\omega \ccc}))$;
\item \textit{dual morphism} $ (f, \lambda)^* $ is given by $(f^*, \lambda) \in \CBD ((\rev{\omega \ddd}), (\rev{\omega \ccc}))$.	
\end{itemize}

\begin{figure}
[htb!]
\begin{tikzpicture}
\node at  (-4, 1){(a)};
\node at (0,0){
	\begin{tikzpicture}[scale = .5]%[rotate=90]
		\draw[thick, violet]%[violet, fill=violet!10] 
		(-.3,1)--(-.3,-1);
		\draw[thick, sapgreen]
		(0,1)--(0,-1);
		\draw[thick, red]
		(.3,1)--(.3,-1);
	\end{tikzpicture}
};
\node at (-.8,0){$=$};
\node at (.8,0){$=$};
\node at (-2.5,0){
	\begin{tikzpicture}[scale = .5]%[rotate=90]
		\draw[ dashed]
		(-.6,0)--(3,0);
		\draw[ violet]%[violet, fill=violet!10] 
		(-.3,1.5)--(-.3,0);
		\draw[ sapgreen]
		(0,1.5)--(0,0);
		\draw[ red]
		(.3,1.5)--(.3,0);
		
		\draw[ red]%[violet, fill=violet!10] 
		(.9,0) arc (0:-180:.3);
		\draw[sapgreen]
		(1.2,0) arc(0:-180:.6) ;
		\draw[violet]
		(1.5,0) arc(0:-180:.9) ;
		
		\draw[violet] 
		(2.1,0) arc(0:180:.3) ;
		\draw[sapgreen]
		(2.4,0) arc(0:180:.6) ;
		\draw[ red]
		(2.7,0) arc(0:180:.9);
		\draw[ violet]%[violet, fill=violet!10] 
		(2.1,-1.5)--(2.1,0);
		\draw[ sapgreen]
		(2.4,-1.5)--(2.4,0);
		\draw[ red]
		(2.7,-1.5)--(2.7,0);
		
\end{tikzpicture}};
\node at (2.5,0){
	\begin{tikzpicture}[scale = .5]%, rotate=180]
		\draw[ dashed]
		(.6,0)--(-3,0);
		\draw[ red]%[violet, fill=violet!10] 
		(.3,1.5)--(.3,0);
		\draw[ sapgreen]
		(0,1.5)--(0,0);
		\draw[ violet]
		(-.3,1.5)--(-.3,0);
		
		\draw[ violet]%[violet, fill=violet!10] 
		(-.9,0) arc (0:180:-.3);
		\draw[sapgreen]
		(-1.2,0) arc(0:180:-.6) ;
		\draw[red]
		(-1.5,0) arc(0:180:-.9) ;
		
		\draw[red] 
		(-2.1,0) arc(0:-180:-.3) ;
		\draw[sapgreen]
		(-2.4,0) arc(0:-180:-.6) ;
		\draw[ violet]
		(-2.7,0) arc(0:-180:-.9);
		\draw[ red]%[violet, fill=violet!10] 
		(-2.1,-1.5)--(-2.1,0);
		\draw[ sapgreen]
		(-2.4,-1.5)--(-2.4,0);
		\draw[ violet]
		(-2.7,-1.5)--(-2.7,0);
		
\end{tikzpicture}};
\node at  (4.5,1){(b)};
\node at (7.5,0){
	\begin{tikzpicture}[scale = .5]%[rotate=90]
		
		\draw[ red]%[violet, fill=violet!10] 
		(4,.2)	--(4,0) arc(0:-180:.3) -- (3.4,.2);
		\draw[sapgreen]
		(4.3,.2)	--	(4.3,0) arc(0:-180:.6) -- (3.1,.2);
		\draw[violet]
		(3.4,1.8)--	(4.6,1)--(4.6,.2)--	(4.6,0) arc(0:-180:.9)--(2.8,1 )--(4,1.8);
		
		\draw[violet] 
		(4,1.8)	--(4,2) arc(0:180:.3) -- (3.4,1.8);
		\draw[sapgreen]
		(4.3,0)	--	(4.3,2) arc(0:180:.6) -- (3.1,0);
		\draw[ red]
		(3.4,.2)--(3.4,0.5)--(4.6,1.8)--	(4.6,2) arc(0:180:.9)--(2.8,1.8 )--(4,0.5)--(4,.2) ;
		\node at (5.5,1){$=$};
		\draw[violet] (6.5,1)circle  (.5cm);
		\draw[sapgreen] (8,1)circle  (.5cm);	
		\draw[red] (9.5,1)circle  (.5cm);
\end{tikzpicture}};
\end{tikzpicture}
\caption{
(a) The coloured triangle identities; (b) some ways of forming the trace $\mathrm{tr}(id_{\ccc}) $.} 
\label{fig. cups colours}
\end{figure}

\begin{ex}\label{ex. directed Brauer diagrams} (See also \cref{ex. wheeled props}.)
Let $\dipal$ be the palette given by the unique non-trivial involution $+ \leftrightarrow -$ on the two element set $\{+,-\}$. 
Then $\DiBD$ is the category of \textit{oriented Brauer diagrams}. This is the free compact closed category with duals on a single object, or, equivalently, the quotient of the \emph{oriented tangle category} \cite{Tur89}, obtained by identifying over and under-crossings. 
Morphisms in $\DiBD$ are represented, as in \cref{fig. composing directions}, by diagrams of oriented intervals and unoriented circles.

Let $\Bbbk$ be a field of characteristic 0 and recall from \cref{ex: Brauer category}, the Brauer categories $\Br$, $\delta \in \Bbbk$. For $ \delta \in\Bbbk$, the \textit{walled Brauer category} $W\Br \subset \Br$ -- studied in the representation theory of general linear groups {\cite {Wen88, SS15}} -- is the subcategory of $\Br$ whose objects are given by pairs $(m,n)  \in \N^2$, and {$W\Br((m_1, n_1), (m_2, n_2)) \subset \Br (m_1 +n_1, m_2 + n_2))$} is subspace spanned by pairings $\tau$ that correspond to bijections \[\{s^+_1, \dots, s^+_{m_1}\} \amalg \{t^-_1, \dots, t^-_{n_2}\}\xrightarrow{\cong} \{s^-_1, \dots, s^-_{n_1}\} \amalg \{ t^+_1, \dots, t^+_{m_1}\}.\]

Since every element of $\listm \{+,-\}$ is isomorphic -- via a canonical shuffle permutation -- to a unique element of the form $(+, \dots, +, - , \dots, -)$, there is a faithful functor $ \DiBD \to W \Br$ that takes open morphisms in $\DiBD$ to generating diagrams in $W\Br$. 
\begin{figure}
[htb!]
\begin{tikzpicture}
	\node at (0,0) {
		
		\begin{tikzpicture}
			\node at (9,-4){\begin{tikzpicture}[scale = .45]
					\begin{pgfonlayer}{above}
						\node [dot ]  (12) at (-2, 15) {};
						\node [above] at (12) {\small$-$};
						\node [below] at (12) {\small$+$};
						\node  [dot ](13) at (-1, 15) {};
						\node [above] at (13) {\small$+$};
						\node [below] at (13) {\small$-$};
						\node  [dot ] (14) at (0, 15) {};
						\node [above] at (14) {\small$-$};
						\node [below] at (14) {\small$+$};
						\node  [dot ](15) at (1, 15) {};
						\node [above] at (15) {\small$-$};
						\node [below] at (15) {\small$+$};
						\node [dot ] (16) at (2, 15) {};
						\node [above] at (16) {\small$+$};
						\node [below] at (16) {\small$-$};
						\node  [dot ] (17) at (3, 15) {};
						\node [above] at (17) {\small$-$};
						\node [below] at (17) {\small$+$};
						\node  [dot ] (18) at (4, 15) {};
						\node [above] at (18) {\small$+$};
						\node [below] at (18) {\small$-$};
						
						\node at (-4, 16.5){$f\colon$};
						\node  [dot ] (20) at (-0.5, 18) {};
						%	\node [above] at (20) {\small$-$};
						\node [below] at (20) {\small$+$};
						\node [dot ](21) at (1, 18) {};	
						%	\node [above] at (21) {\small$-$};
						\node [below] at (21) {\small$+$};
						\node  [dot ](22) at (2.5, 18) {};
						%	\node [above] at (22) {\small$-$};
						\node [below] at (22) {\small$-$};
						\node at (-4, 13.5){$g\colon$};		
						\node  [dot ](28) at (-2, 12) {};
						\node [above] at (28) {\small$-$};
						%	\node [below] at (28) {\small$+$};
						\node   [dot ](29) at (-0.5, 12) {};
						\node [above] at (29) {\small$+$};
						%	\node [below] at (29) {\small$+$};
						\node   [dot ](30) at (1, 12) {};
						\node [above] at (30) {\small$+$};
						%	\node [below] at (30) {\small$+$};
						\node   [dot ](31) at (2.5, 12) {};
						\node [above] at (31) {\small$-$};
						%	\node [below] at (31) {\small$+$};
						\node  [dot ](32) at (4, 12) {};
						\node [above] at (32) {\small$-$};

					\end{pgfonlayer}
					\begin{pgfonlayer}{background}
						\draw [bend left=300, looseness=0.75, red, ->-=.5] (20.center) to (12.center);
						\draw [ in=90, out=-150, red, ->-=.5] (21.center) to (14.center);
						\draw [bend left=60, red, -<-=.5] (22.center) to (18.center);
						%	\draw [teal, line width=4pt,
						%	fill opacity=0.2,  bend left=90, looseness=1.5] (16.center) to (17.center);
						\draw [bend left=90, looseness=1.5, red, ->-=.5] (16.center) to (17.center);
						%	\draw [teal, line width=4pt,
						%	fill opacity=0.2,  bend left=60, looseness=1.5] (13.center) to (15.center);
						\draw [ bend left=60, looseness=1.5,  red, ->-=.6] (13.center) to (15.center);
						%	\draw [teal, line width=4pt,
						%fill opac[bend right] ity=0.2,  bend right=75, looseness=1.5] (36.center) to (37.center);
						\draw [ bend right=75, looseness=1.5, red, ->-=.5] (15.center) to (16.center);
						\draw [ in=135, out=-60, red, ->-=.5] (14.center) to (31.center);
						\draw [bend left=75, , red, -<-=.48] (28.center) to (29.center);
						\draw [bend left=60, looseness=1,  red, ->-=.5] (30.center) to (32.center);
						%	\draw [teal, line width=5pt,
						%	fill opacity=0.2,  bend right = 75, looseness=1] (13.center) to (17.center);
						\draw [ bend right = 75, looseness=1, red, -<-=.5] (13.center) to (17.center);
						\draw [bend right = 90, looseness=1, red, ->-=.5] (12.center) to (18.center);
					\end{pgfonlayer}
				\end{tikzpicture}
			};
		\end{tikzpicture}
	};
	
	\node at (6,0) {
		\begin{tikzpicture}[scale = .45]
			\begin{pgfonlayer}{above}
				\node [dot, blue] (0) at (0, 0) {};
				\node [dot, blue] (1) at (-1, 0) {};
				\node [dot, blue] (2) at (-2, 0) {};
				\node [dot, blue] (3) at (-3, 0) {};
				\node [dot, red] (4) at (1, 0) {};
				\node [dot, red] (5) at (2, 0) {};
				\node [dot, red] (6) at (3, 0) {};
				\node [dot, blue] (7) at (-2.25, 3) {};
				\node [dot, blue] (8) at (-0.75, 3) {};
				\node [dot, red] (9) at (1, 3) {};
				\node [dot, blue] (10) at (0, -3) {};
				\node [dot, blue] (11) at (-1.5, -3) {};
				\node [dot, blue] (12) at (-3, -3) {};
				\node [dot, red](13) at (1.5, -3) {};
				\node [dot, red] (14) at (3, -3) {};
			\end{pgfonlayer}
			\begin{pgfonlayer}{background}
				\draw[dashed, gray](.5, 3.5)--(0.5, -3.5);
				\draw [bend right=15] (7.center) to (3.center);
				\draw [bend right] (8.center) to (2.center);
				\draw [bend left] (9.center) to (6.center);
				\draw [bend right=15, looseness=1.25] (2.center) to (11.center);
				\draw [bend right=90] (0.center) to (4.center);
				\draw [bend left=90, looseness=0.75] (10.center) to (14.center);
				\draw [bend right=90, looseness=0.75] (1.center) to (5.center);
				\draw [in=270, out=-45, looseness=0.75] (3.center) to (6.center);
				\draw [bend left=90, looseness=0.75] (0.center) to (5.center);
				\draw [bend left=90, looseness=0.75] (1.center) to (4.center);
				\draw [bend left=90, looseness=0.75] (12.center) to (13.center);
			\end{pgfonlayer}
		\end{tikzpicture}
	};

\end{tikzpicture}
\caption{Composing directed Brauer diagrams $f \colon (-,-,+) \to (-,+,-,-,+,-,+)$ and $g \colon(-,+,-,-,+,-,+)\to (-,+,+,-,-)$. Up to a shuffle permutation, this is equivalent to a composition of walled Brauer diagrams, where horizontal arrows go from left to right.} \label{fig. composing directions}
\end{figure}

More generally, if $\DDD$ is a set, and $\CCC = \DDD \times\dipal$ with involution $ \omega: (c,+)\mapsto (c,-)$, then $\CBD$ is the free compact closed category with objects generated by $\DDD$. 

\end{ex}

\subsection{Functors from categories of Brauer diagrams}
\label{ssec BD func}

A symmetric monoidal functor $ (\alg, \eta_{\alg}, \alpha) \colon \BD \to \V$ induces interesting algebraic structure on its image in $\V$. 

By definition, $( \alg (n) )_{n \in \N}$ describes an $\N$-graded monoid in $\V$ with product $\boxtimes$ and unit induced by the structure morphisms 
\[\boxtimes_{m,n} \defeq  \alpha_{(m,n)} \colon \alg (m) \otimes \alg (n) \to \alg (m+n), \text{ and } \eta_{\alg}\colon I \to \alg (0). \] The product $\boxtimes$ is associative up to associators in $\V$. 

Moreover, for all $n \geq 2$, and all $1 \leq i <  j <n$, there is a canonical \textit{contraction} morphism 
$\zeta^{i \ddagger j}_{n}\colon \alg(n)\to \alg (n-2)$ induced by the open Brauer diagram $(\cup \oplus id_{n-2})\circ \rho^{i,j}_n \in \BD(n, n-2)$ (see \cref{fig. contraction} (a))), where $\rho^{i,j}_n \in \Sigma_n$ is the shuffle permutation 
on $\nn = \{1, \dots, n\} $ 
given by $i \mapsto 1, j \mapsto 2$, whilst leaving the relative order of the remaining elements unchanged. 

The contraction commutes in the sense that, given $1 \leq k <m \leq n$ such that $i,j,k,m$ are all distinct, 
\begin{equation}\label{eq. mono contraction commutes}\zeta^{k' \ddagger m'}_{n-2}\circ \zeta^{i \ddagger j}_{n}  =\zeta^{i' \ddagger j'}_{n-2}\circ \zeta^{k \ddagger m}_{n} \colon \alg (n) \to  \alg (n-4). 
\end{equation}
Here $1 \leq k' < m' \leq n-2$ and $1 \leq i' < j' \leq n-2$ are the obvious adjusted indices 
of $k,m$ and $i,j$ in  $ \{1, \dots, n-2\}$. 
(See \cref{fig. contraction} (b).)

\begin{figure}[htb!]
\begin{tikzpicture}
	\node at (-2,1.5) {(a)};
	\node at (-.5, -2) { $\zeta^{2 \ddagger 5}_5$};
	\node at (-.5,0) {
		\begin{tikzpicture}[scale = .4]
			
			\node  (0) at (-5, 4) {};
			\node  (1) at (-5, 0) {};
			\node  (2) at (-4, 4) {};
			\node  (3) at (-4, 0) {};
			\node  (4) at (-3, 4) {};
			\node  (5) at (-3, 0) {};
			\node  (6) at (-2, 4) {};
			\node  (7) at (-2, 0) {};
			\node  (8) at (-1, 4) {};
			\node  (9) at (-1, 0) {};
			\node  (10) at (-3, -3) {};
			\node  (11) at (-2, -3) {};
			\node  (12) at (-1, -3) {};
			
			\draw (2.center) to (1.center);
			\draw (8.center) to (3.center);
			\draw (0.center) to (5.center);
			\draw (4.center) to (7.center);
			\draw (6.center) to (9.center);
			\draw (5.center) to (10.center);
			\draw (7.center) to (11.center);
			\draw (9.center) to (12.center);
			\draw [bend right=90, looseness=8.00] (1.center) to (3.center);
			
		\end{tikzpicture}
		
	};

	\node at (3.7,1.5) {(b)};
\node at (5,0) {
	\begin{tikzpicture}[scale = .3]
		
		\node (0) at (-6, 4) {};
	\node (1) at (-6, 0) {};
	\node (2) at (-5, 4) {};
	\node (3) at (-5, 0) {};
	\node (4) at (-4, 4) {};
	\node (5) at (-4, 0) {};
	\node (6) at (-3, 4) {};
	\node (7) at (-3, 0) {};
	\node (8) at (-2, 4) {};
	\node (9) at (-2, 0) {};
	\node (10) at (-4, -2) {};
	\node (11) at (-3, -2) {};
	\node (12) at (-2, -2) {};
	\node (13) at (-1, 4) {};
	\node (14) at (-1, 0) {};
	\node (15) at (-1, -2) {};
	\node (16) at (-2, -5) {};
	\node (17) at (-1, -5) {};
	
	\draw (13.center) to (14.center);
	\draw (14.center) to (15.center);
	\draw (15.center) to (17.center);
	\draw (0.center) to (1.center);
	\draw (6.center) to (3.center);
	\draw (2.center) to (5.center);
	\draw (4.center) to (7.center);
	\draw (8.center) to (9.center);
	\draw [bend right=90, looseness=5.75] (1.center) to (3.center);
	\draw (5.center) to (10.center);
	\draw (9.center) to (11.center);
	\draw (7.center) to (12.center);
	\draw (12.center) to (16.center);
	\draw [bend right=90, looseness=5.75] (10.center) to (11.center);

	\end{tikzpicture}
	
};
	\node at (7.5, -2) { $\zeta^{1 \ddagger 3}_4\circ \zeta^{1\ddagger 4}_6=\zeta^{1 \ddagger 3}_4\circ \zeta^{2 \ddagger 5}_6$};
		\draw %[ thick] 
	(6.8, .2)--(7.2,.2)
	(6.8,0)--(7.2,0);
\node at (9,0) {
	\begin{tikzpicture}[scale = .3]

		\node (0) at (-5, 4) {};
		\node (1) at (-5, 0) {};
		\node (2) at (-4, 4) {};
		\node (3) at (-4, 0) {};
		\node (4) at (-3, 4) {};
		\node (5) at (-3, 0) {};
		\node (6) at (-2, 4) {};
		\node (7) at (-2, 0) {};
		\node (8) at (-1, 4) {};
		\node (9) at (-1, 0) {};
		\node (10) at (-3, -2) {};
		\node (11) at (-2, -2) {};
		\node (12) at (-1, -2) {};
		\node (13) at (0, 4) {};
		\node (14) at (0, 0) {};
		\node (15) at (0, -2) {};
		\node (16) at (-1, -5) {};
		\node (17) at (0, -5) {};
		
		\draw (2.center) to (1.center);
		\draw (8.center) to (3.center);
		\draw (0.center) to (5.center);
		\draw (4.center) to (7.center);
		\draw (6.center) to (9.center);
		\draw [bend right=90, looseness=6.00] (1.center) to (3.center);
		\draw (5.center) to (10.center);
		\draw (9.center) to (11.center);
		\draw (7.center) to (12.center);
		\draw (13.center) to (14.center);
		\draw (14.center) to (15.center);
		\draw [bend right=90, looseness=6.00] (10.center) to (11.center);
		\draw (12.center) to (16.center);
		\draw (15.center) to (17.center);

	\end{tikzpicture}
	
};

\end{tikzpicture}
\caption{Brauer diagrams for contraction}\label{fig. contraction}
\end{figure}

\label{ex. unit in Set CA}

Together, the contraction $\zeta$ and monoidal product $\boxtimes$ induce a `multiplication' operation $\diamond$ (see \cref{defn: multiplication}), by  
\begin{equation}\label{eq. bd mult}\diamond_{m,n}^{i \ddagger j} = \zeta^{i \ddagger m+j}_{m+n} \circ \boxtimes_{m,n}  \colon \alg(m) \otimes \alg (n) \to \alg (m+n -2),
\end{equation} for all $m,n \geq 1$ and all $1 \leq i \leq m$, $1 \leq j \leq n$. 

Let $\rho_{n \leadsto i} \in \Sigma_n$ be the permutation $n \mapsto i$ whilst leaving the relative order of the remaining elements of $\nn$ unchanged. By the triangle identities (\ref{eq. n triangle}), 
\[ (\cup \oplus \rho_{n \leadsto i}) \circ \rho _{n+2}^{i, n+1} \circ (id_n \oplus \cap) = id _n \text{ in } \BD.\]

Hence, 
the image of the cap morphism $\cap \in \BD(0,2)$ induces a two-sided unit $\epsilon_{\alg} \colon I \mapsto \alg (2)$ for the multiplication $\diamond$ on $\alg$. 

\begin{rmk}
A symmetric functor $\BDd \to (\V, \otimes, I)$ from the category of downward Brauer diagrams induces contraction and multiplication operations on its image in $\V$. However, since $\cap$ is not a morphism in $\BDd$, there is no unit for this multiplication (see also \cite{Ray20, SS20I}). 
\end{rmk}

\label{ex.contraction of Brauer diagrams}\label{ex. operations on CBD algebras}

The coloured case follows in the same way: If $(\CCC, \omega)$ is a palette, then a functor $( \alg, \alpha, \eta_{\alg}) \colon \CBD \to (\V, \otimes , I)$ equips the $\listm \CCC$-indexed object $(\alg(\ccc))_{\ccc}$ with an, up to associators in $\V$, associative unital graded monoidal product via $\alpha$. The induced product $\boxtimes$ has unit $ I \mapsto \alg (\varnothing_{\CCC})$.

A contraction $\zeta$ and multiplication $\boxtimes$ on $(\alg(\ccc))_{\ccc}$ are obtained exactly as in the monochrome case. And, once again, $\cap_c \in \CBD(\varnothing_{\CCC}, (c, \omega c))$ induces a distinguished morphism $ \alg (\cap_c) \colon I \to \alg(c, \omega c)$ that acts as a two-sided unit for $\diamond$ for all $c \in \CCC$.

\section{Circuit algebras}\label{sec. CA}
Circuit algebras -- defined in \cref{ssec. wd and ca} -- are algebraic structures described by a small family $(A_c)_c$ of algebraic objects (usually vector spaces). Their operations are governed by wiring diagrams, which are, essentially, non-planar versions of Jones's planar diagrams \cite{Jon94}. 

Wiring diagrams are commonly described by partitioning boundaries of 1-manifolds (e.g.~{\cite{DHR20,DHR21}}). However, they 
admit a straightforward description in terms of 
Brauer diagrams, and this is the approach of this paper.

\subsection{Operads preliminaries} \label{ssec operads}

This section summarises the basic theory of (coloured) operads. 
See \cite{Lei04} and \cite{BM07} for more details and precise definitions of coloured operads (called multicategories in \cite{Lei04}) and their algebras.

As in \cref{ssec. colour BD}, let $\listm \DDD$ denote the set of finite lists $\ddd = (d_1, \dots, d_n)$ on a set $\DDD$. Recall that a (symmetric) $\DDD$-coloured operad $\op$ (in the category of sets) is given by a $(\listm \DDD \times \DDD)$-graded set $ (\op(c_1, \dots, c_m; d))_{(c_1, \dots, c_m; d)}$, and 
a family of \textit{composition morphisms} % in $\V$
\[ \gamma \colon \op(\ccc; d) \times \left( \prod_{ i = 1}^m \op(\bbb_i; c_i)\right) \to \op(\bbb_1 \dots \bbb_m;d)\] defined for all $d \in \DDD$, $ \ccc  = (c_i)_{i = 1}^m $ and all $\bbb_i  \in \listm \DDD$ ($1 \leq i \leq m$).

For $\phi \in \op(c_1, \dots, c_m; d)$, $d$ is called the \textit{output} of $\phi$ and each $c_i$ is an \textit{input} of $\phi$. The symmetric groupoid $\Sigma$ acts on $\op$ by permuting the inputs: each $\sigma \in \Sigma_n$ induces isomorphisms 
\[ \op(c_1, \dots, c_n; d) \xrightarrow{\cong} \op(c_{\sigma 1}, \dots, c_{\sigma n}; d). \]

The composition $\gamma$ is required to be associative and equivariant with respect to the $\Sigma$-action on $\op$. 

For all $d \in D$, there is an element $\nu_d \in \op (d;d)$ such that 
for all $\ccc  = (c_1, \dots, c_m) \in \listm \DDD$, the morphisms given by the composites 
\[\op (\ccc;d)\xrightarrow{ \cong } I \times \op (\ccc;d)\xrightarrow{(\nu_d, id)}  \op(d; d) \times \op (\ccc;d) \xrightarrow{\gamma} \op (\ccc;d), \]
and 
\[
\op (\ccc;d)\xrightarrow{ \cong }\op (\ccc;d)\times I \\ \xrightarrow{(id, \bigotimes_{i =1}^m \nu_{c_i})}  \op (\ccc;d)\times \left(\bigotimes_{ i =1}^m \op (c_i; c_i) \right)\xrightarrow{\gamma}   \op (\ccc;d),\] are the identity on $\op (\ccc;d)$.  

	\begin{ex}\label{ex. first cat operad}
		In particular, the restriction of a $\DDD$-coloured operad $\op$ to the sets $\op(c;d)$( for $c, d \in \DDD$) describes a small category.  Conversely, a small category $\CCat$ with object set $\CCat_0$ describes a $\CCat_0$-coloured operad $C$ such that $C(x;y) = \CCat(x,y)$ for all $x, y \in \CCat$ and $C(x_1, \dots, x_k; y) = \emptyset $ for $k \neq 1$. 
	\end{ex}
	
The following example is somewhat more interesting: 
	\begin{ex}
		\label{ex. operad of a perm cat}
		
		Underlying any (small) permutative category $(\CCat, \oplus, 0)$, with object set $\CCat_0$, is a $\CCat_0$-coloured (symmetric) operad $\op^{\CCat}$, 
		defined by 
		\[\op^{\CCat}(x_1, \dots, x_n; y)  \defeq \CCat(x_1 \oplus \dots \oplus x_n, y ),\] and 
		with operadic composition $\gamma $ in $\op^{\CCat}$ induced by composition in $\CCat$:
		\[ \gamma \colon \op^{\CCat}(x_1, \dots, x_n; y) \times \left(\prod_{i = 1}^k \op^{\CCat}(w_{i,1}, \dots, w_{i, m_i};x_i) \right) \to  \op^{\CCat}(w_{1,1}, \dots, w_{i, m_i}, \dots, w_{n, m_n};y),  \]  
		\[ \left(\overline g, (\overline f_i)_i\right) \mapsto \overline{\left(g \circ  (f_1 \oplus \dots \oplus f_n)\right)}  \] for all $\overline g \in \op^{\CCat}(x_1, \dots, x_n;y)$ described by $g \in \CCat (x_1 \oplus \dots \oplus x_n, y)$, and $\overline f_i \in \op^{\CCat}(w_{i,1}, \dots, w_{i, m_i};x_i)$ described by $f_i \in \CCat (w_{i,1}\oplus \dots \oplus w_{i,m_i}, x_i)$ ($1 \leq i \leq n$). 
		
			(In fact, any monoidal category has an underlying operad \cite[Example~2.1.3]{Lei04}, but that is not needed in this work.)

	\end{ex}

	Objects of the category $\Op$ are (coloured) operads. 
	For $i = 1,2$, let $(\op^i, \gamma^i, \nu^i)$ be a $\DDD_i$-coloured operad. A morphism $\alg \colon (\op^1, \gamma^1, \nu^1) \to  (\op^2, \gamma^2, \nu^2)$ is given by 
	a map of sets $\alg_{0} \colon \DDD_1 \to \DDD_2$, and a $(\listm \DDD_1 \times \DDD_1)$-indexed family of maps \[\alg_{(c_1, \dots, c_k; d)} \colon \op^1(c_1, \dots, c_k; d) \to \op^2(\alg_0 (c_1), \dots,\alg_0 (c_k); \alg_0 (d) )\] that respect units and composition, and are equivariant with respect to the action of $\Sigma$.

	If $\alg_0 = id_{\DDD}$, with $\DDD = \DDD_1= \DDD_2 $, then $\alg \colon \op^1 \to \op^2$ is called \textit{colour-preserving}. The category of $\DDD$-coloured operads and colour-preserving morphisms is denoted by $\Op^{\DDD}$.

	\begin{ex}
		\label{ex. endo operad}
		Let $(\V, \otimes, I)$ be a symmetric monoidal category. Given a set $\DDD$ and a $\DDD$-indexed object $A = (A(c))_{c \in \DDD}$ in $\V$, the $\DDD$-coloured \textit{endomorphism operad $End^A$} is given by
		\[End^A(c_1, \dots, c_k;d)\defeq \V\left(A(c_1) \otimes \dots \otimes A(c_k), A(d)\right),\] together with the obvious composition and units induced by composition and identities in $\V$.
		
	\end{ex}

	Let $(\V, \otimes, I)$ be a symmetric monoidal category and $\op$ a $\DDD$-coloured operad.
	\begin{defn}\label{def operad alg}
		A \emph{$\V$-algebra for $\op$} is a $\DDD$-indexed object $(A(c))_c$ in $\V$, together with a morphism $\alg \colon \op \to End^A$ of $\DDD$-coloured operads. 
		
		The category $\mathsf{Alg}_{\V}(\op)$ of $\V$-algebras for $\op$ is the subcategory of the slice category $\op \ov \Op$ whose objects are of the form $(A, \alg)$ with $ A = (A(c))_c$ a $\DDD$-graded object in $\V$. A morphism $ g \colon (A, \alg) \to (B, \mathcal B)$ in $\op \ov \Op$ is a morphism in $\mathsf{Alg}_{\V}(\op)$ if and only if there are morphisms $ g_c \in \V( A(c) , B(c))$ (for all $c \in \DDD$), such that, for all $ c_1, \dots, c_k,d \in \DDD$ and all $\phi \in End^A (c_1, \dots, c_k;d)$, the following diagram commutes in $\V$:
		
		\[ \xymatrix{ A(c_1) \otimes \dots \otimes  A(c_k) \ar[d]_{\phi}\ar[rr]^-{g_{c_1} \otimes \dots \otimes g_{c_k}}&& 
			 B(c_1) \otimes \dots \otimes B(c_k) \ar[d]^{g(\phi)}\\ 
			 A(d)\ar[rr]_-{g_d} && B(d).}\]

	\end{defn}
	
	\begin{rmk}
		Although 
		\cref{def operad alg} relies on the symmetric monoidal structure on $\V$, it is concerned with operads in the category of sets and does not involve operads enriched in a (closed) symmetric monoidal category. 
		
	\end{rmk}

	If $\op = \op^{\CCat}$ is the operad underlying a small permutative category $\CCat$, then a $\V$-algebra for $\op$ describes a functor $\CCat \to \V$. It follows from e.g.~ \cite[Chapters~2-3,]{Lei04} (and particularly Theorem 3.3.4(b) there) that: 
	\begin{lem}\label{rmk. lax functors and algebras}\label{prop. lax functors and algebras}
		The category $\mathsf{Alg}_{\V} (\op^{\CCat})$ of $\V$-algebras for the operad $\op^{\CCat}$ underlying a small permutative category $(\CCat, \oplus, 0)$ is 
		canonically isomorphic to the category of symmetric lax monoidal functors $\alg \colon (\CCat, \oplus, 0) \to (\V, \otimes , I)$. 
	\end{lem}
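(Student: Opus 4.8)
The plan is to realise the isomorphism by mutually inverse, choice-free constructions, so that it is genuinely canonical rather than merely an equivalence. \textbf{From algebras to functors.} Given a $\V$-algebra $(A,\alg)$ for $\op^{\CCat}$, I would set $\Theta(x)\defeq A(x)$ on objects and, for a morphism $f\in\CCat(x,y)=\op^{\CCat}(x;y)$ regarded as a unary operation, $\Theta(f)\defeq \alg(f)\in\V(A(x),A(y))$. Functoriality of $\Theta$ is exactly preservation by $\alg$ of operadic units and of the unary part of operadic composition, which by \cref{ex. operad of a perm cat} is ordinary composition in $\CCat$. The lax structure is read off the distinguished operations: put $\eta_{\Theta}\defeq\alg(id_0)\in\V(I,A(0))$, coming from the nullary operation $id_0\in\op^{\CCat}(;0)$, and $\theta_{x,y}\defeq\alg(id_{x\oplus y})\in\V(A(x)\otimes A(y),A(x\oplus y))$, coming from the binary operation $id_{x\oplus y}\in\op^{\CCat}(x,y;x\oplus y)$. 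Naturality of $\theta$ is obtained by applying $\alg$ to the two operadic descriptions of $f\oplus g$, namely $\gamma(id_{x'\oplus y'};f,g)=f\oplus g=\gamma(f\oplus g;id_{x\oplus y})$, and reading off the result through the composition rule of $End^A$ (\cref{ex. endo operad}); the associativity and unit coherence diagrams for $(\Theta,\theta,\eta_\Theta)$ are the images under $\alg$ of the operadic associativity and unit axioms, and strict preservation of the symmetry is the image of the $\Sigma$-equivariance of $\alg$.

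\textbf{From functors to algebras.} Conversely, given a symmetric lax monoidal functor $(\Theta,\theta,\eta_\Theta)$, I would set $A(x)\defeq\Theta(x)$ and, for $g\in\op^{\CCat}(x_1,\ldots,x_n;y)=\CCat(x_1\oplus\cdots\oplus x_n,y)$, define
\[ \alg(g)\defeq\Theta(g)\circ\theta_{x_1,\ldots,x_n}\colon A(x_1)\otimes\cdots\otimes A(x_n)\to A(y), \]
where $\theta_{x_1,\ldots,x_n}$ is the iterated lax structure map (with $\theta_{()}=\eta_\Theta$ and $\theta_{(x)}=id$), well defined by monoidal coherence. Preservation of units is immediate, and the composition axiom, after expanding both sides with the composition rules of $\op^{\CCat}$ and $End^A$ and cancelling the common factor $\Theta(g)$, reduces to
\[ \Theta(f_1\oplus\cdots\oplus f_n)\circ\theta_{\underline{w}}=\theta_{x_1,\ldots,x_n}\circ\big(\alg(f_1)\otimes\cdots\otimes\alg(f_n)\big), \]
where $\underline{w}$ is the concatenation of the input lists of $f_1,\ldots,f_n$; this follows from the block-associativity of the iterated $\theta$ together with naturality of $\theta$ applied to $f_1,\ldots,f_n$. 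Equivariance of $\alg$ is precisely the strict compatibility of $\Theta$ with the symmetries.

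\textbf{Mutual inverseness and morphisms.} The two assignments are inverse on objects. Starting from $(\Theta,\theta,\eta_\Theta)$ one recovers $\alg(id_{x\oplus y})=\Theta(id_{x\oplus y})\circ\theta_{x,y}=\theta_{x,y}$ and $\alg(id_0)=\eta_\Theta$; starting from $(A,\alg)$ the key point is the operadic factorisation $g=\gamma(g;id_{x_1\oplus\cdots\oplus x_n})$ of an arbitrary $n$-ary operation through its unary incarnation and the $n$-ary identity, which forces $\alg(g)=\Theta(g)\circ\theta_{x_1,\ldots,x_n}$. For morphisms, a map of $\V$-algebras is a family $\{g_c\in\V(A(c),B(c))\}$ making the square of \cref{def operad alg} commute for every operation; restricting to unary operations gives naturality of $\{g_c\}$ as a transformation $\Theta\Rightarrow\Theta'$, while applying it to the binary and nullary identity operations yields exactly the two monoidal-naturality axioms. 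Hence algebra morphisms coincide with monoidal natural transformations, and the two functors are mutually inverse isomorphisms of categories.

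\textbf{Main obstacle.} The real content is the coherence bookkeeping: checking that $\theta_{x_1,\ldots,x_n}$ is independent of bracketing and that the operad-morphism composition axiom matches the lax-monoidal associativity constraint once the (possibly non-trivial) associators and symmetry of $\V$ are threaded through the iterated tensor products defining $End^A$. This is exactly the content of \cite[Theorem~3.3.4(b)]{Lei04}, so in practice I would carry out the identifications of data above and then invoke that theorem for the coherence matching, rather than re-deriving all of \cite[Chapters~2--3]{Lei04}.
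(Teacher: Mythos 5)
Your proposal is correct and follows essentially the same route as the paper, which offers no independent argument but simply derives the lemma from \cite[Chapters~2--3, Theorem~3.3.4(b)]{Lei04}. Your explicit dictionary (unary operations giving the functor, the nullary and binary identity operations giving $\eta_{\Theta}$ and $\theta$, and the factorisation $\alg(g)=\Theta(g)\circ\theta_{x_1,\dots,x_n}$ forced by $g=\gamma(g;id_{x_1\oplus\dots\oplus x_n})$) is precisely the correspondence underlying that cited theorem, to which you also defer the coherence bookkeeping.
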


	\subsection{Wiring diagrams and circuit algebras}\label{ssec. wd and ca}
	 
	 Circuit algebras are algebras over an operad of wiring diagrams. Wiring diagrams are often defined (for example in {\cite{DHR20,DHR21}}) as equivalence classes of immersions of compact 1-manifolds in punctured 2-discs (see \cref{rmk classic wd}).
	 
	 This work takes a different approach.

\begin{defn}\label{defn. operad of wiring diagrams}
	For a given palette $(\CCC, \omega)$, the \emph{$\listm \CCC$-coloured operad of $(\CCC, \omega)$-wiring diagrams} is the {operad $\CWD \defeq \op^{{\CBD}}$ underlying $\CBD$} (\cref{ex. operad of a perm cat}).

	For each $(\ccc_1, \dots, \ccc_k;\ddd) \in \listm^2 \CCC$, elements of the set
	\[ \CWD (\ccc_1, \dots, \ccc_k; \ddd) \defeq \CBD(\ccc_1 \oplus \dots \oplus \ccc_k; \ddd)\]
	are called \emph{wiring diagrams of type $(\ccc_1, \dots, \ccc_k; \ddd)$}. 
	
	When $\CCC$ is the singleton set, the $\N $-coloured operad $\CWD  \defeq \op^{\BD}$ is denoted by $ \WD$, and called the \emph{operad of (monochrome) wiring diagrams}.  
\end{defn}
\begin{figure}
	[htb!]
	\begin{tikzpicture}
		\node at (-.5,0){
			\begin{tikzpicture}[scale = .33]
				\begin{pgfonlayer}{above}
					\node [dot] (0) at (-9, 2) {};
					\node [dot] (1) at (-8, 2) {};
					\node [dot] (2) at (-7, 2) {};
					\node [dot] (3) at (-6, 2) {};
					\node [dot] (4) at (-8.5, -1) {};
					\node [dot] (5) at (-6.5, -1) {};
					
					\node [dot] (6) at (-5, 2) {};
					\node [dot] (7) at (-4, 2) {};
					\node [dot] (8) at (-3, 2) {};
					\node [dot] (9) at (-2, 2) {};
					
					\node [dot] (10) at (-1, -1) {};
					\node [dot] (11) at (-1, 2) {};
					
					\node [dot] (12) at (1, -1) {};
					\node [dot] (13) at (2.5, -1) {};
					
					\node [dot] (14) at (5, -1) {};
					\node [dot] (15) at (6.5, -1) {};
					\node [dot] (16) at (8, -1) {};
					\node [dot] (17) at (6, 2) {};
					
					\node [dot] (18) at (-3.25, -5) {};
					\node [dot] (19) at (-1.25, -5) {};
					\node [dot] (20) at (0.75, -5) {};
					\node [dot] (21) at (2.75, -5) {};
					\node [dot] (22) at (4.75, -5) {};
					\node [dot] (23) at (-5.25, -5) {};
					
					\draw[thick,red] (-5.5,-1.5)--(-5.5,2.5)
					%(-1.5,-1.5)--(-1.5,2.5)
					(0,-1.5)--(0,2.5)
					%(3.2,-1.5)--(3.2,2.5)
					(4.5,-1.5)--(4.5,2.5)
					(8.5,-1.5)--(8.5,2.5)
					(-9.5,-1.5)--(-9.5,2.5);
					\draw[ultra thick,cyan] 
					(-1.5,2.5)--(-1.5,1.8)
					(3.2,2.5)--(3.2,1.8)
					(-6.5,2.5)--(-6.5, 1.8)
					(-3.5,2.5)--(-3.5, 1.8)
					(1,2.5)--(1, 1.8)
					(2,2.5)--(2, 1.8)
					(6.5,2.5)--(6.5, 1.8)
					;
				\end{pgfonlayer}
				\begin{pgfonlayer}{background}
					\draw [bend right=60, looseness=0.75] (0.center) to (3.center);
					\draw [in=120, out=-105] (1.center) to (5.center);
					\draw [in=60, out=-75, looseness=1.25] (2.center) to (4.center);
					\draw [bend right=90] (7.center) to (9.center);
					\draw [bend right=90] (6.center) to (8.center);
					\draw [in=90, out=-90] (11.center) to (10.center);
					\draw [bend left=90, looseness=1.25] (12.center) to (13.center);
					\draw [in=75, out=-75] (17.center) to (15.center);
					\draw [bend left=90, looseness=0.75] (14.center) to (16.center);
					\draw [in=75, out=-105] (10.center) to (23.center);
					\draw [bend left=90, looseness=0.75] (18.center) to (21.center);
					\draw [bend right=90, looseness=1.50] (12.center) to (13.center);
					\draw [in=30, out=-120, looseness=0.75] (16.center) to (19.center);
					\draw [bend left=90, looseness=0.50] (14.center) to (5.center);
					\draw [in=150, out=-45] (4.center) to (20.center);
					\draw [in=120, out=-105] (15.center) to (22.center);
				\end{pgfonlayer}
		\end{tikzpicture}};
		\draw[gray, dashed, ->, line width = 1](3.7,0)--(4.5,0);
		\node at (8,0){\begin{tikzpicture}[scale = .33]
				\begin{pgfonlayer}{above}
					\node [dot] (0) at (-9, 2) {};
					\node [dot] (1) at (-8, 2) {};
					\node [dot] (2) at (-7, 2) {};
					\node [dot] (3) at (-6, 2) {};
					\node [dot] (6) at (-5, 2) {};
					\node [dot] (7) at (-4, 2) {};
					\node [dot] (8) at (-3, 2) {};
					\node [dot] (9) at (-2, 2) {};
					\node [] (10) at (-1, -1) {};
					\node [dot] (11) at (-1, 2) {};
					\node [] (12) at (8, -1) {};
					\node [] (13) at (9.5, -1) {};
					\node [] (15) at (6.5, -1) {};
					\node [] (16) at (8, -1) {};
					\node [dot] (17) at (6, 2) {};
					\node [dot] (18) at (-3.25, -5) {};
					\node [dot] (19) at (-1.25, -5) {};
					\node [dot] (20) at (0.75, -5) {};
					\node [dot] (21) at (2.75, -5) {};
					\node [dot] (22) at (4.75, -5) {};
					\node [dot] (23) at (-5.25, -5) {};
					\draw[ultra thick,cyan] (-5.5,1.8)--(-5.5,2.5)
					(-1.5,1.8)--(-1.5,2.5)
					(0,1.8)--(0,2.5)
					(3.2,1.8)--(3.2,2.5)
					(4.5,1.8)--(4.5,2.5)
					(8.5,1.8)--(8.5,2.5)
					(-9.5,1.8)--(-9.5,2.5);
					\draw[ultra thick,cyan] 
					(-6.5,2.5)--(-6.5, 1.8)
					(-3.5,2.5)--(-3.5, 1.8)
					(1,2.5)--(1, 1.8)
					(2,2.5)--(2, 1.8)
					(6.5,2.5)--(6.5, 1.8)
					;
				\end{pgfonlayer}
				\begin{pgfonlayer}{background}
					\draw [bend right=60, looseness=0.75] (0.center) to (3.center);
					\draw [bend right=90] (7.center) to (9.center);
					\draw [bend right=90] (6.center) to (8.center);
					\draw [in=90, out=-90] (11.center) to (10.center);
					\draw [bend left=90, looseness=1.25] (12.center) to (13.center);
					\draw [in=75, out=-75] (17.center) to (15.center);
					\draw [in=75, out=-105] (10.center) to (23.center);
					\draw [bend left=90, looseness=0.75] (18.center) to (21.center);
					\draw [bend right=90, looseness=1.50] (12.center) to (13.center);
					\draw [in=120, out=-105] (15.center) to (22.center);
					\draw (2.center) to (20.center);
					\draw (1.center) to (19.center);
				\end{pgfonlayer}
			\end{tikzpicture}
		};
	\end{tikzpicture}
	\caption{Composition in $\CWD$. (See also \cref{fig. pictorial}.) }
	\label{fig. CWD comp}
\end{figure}

	\begin{rmk}\label{rmk classic wd}
	Wiring diagrams are often defined (for example in {\cite{DHR20,DHR21}}) as equivalence classes of immersions of compact 1-manifolds in punctured 2-discs. According to this representation, wiring diagrams form an operad with composition described by inserting discs into punctures as in \cref{fig. disc rep}. From this point of view, the composition in \cref{fig. CWD comp} is represented as in \cref{fig. disc rep}.
	
	On this interpretation, operads of coloured wiring diagrams may be defined by colouring 1-manifolds according to \cref{ex. manifold components} and \cref{def. colouring}.

	This visualisation of wiring diagrams illustrates the relationship between wiring diagrams (and hence circuit algebras) and planar diagrams and algebras \cite{Jon99}. It also clearly exhibits the operad of monochrome wiring diagrams as a suboperad of the operad of wiring diagrams defined in \cite{Spi13}. 
	
	Moreover, the disc representation of wiring diagrams is highly suggestive of the graphical constructions that will follow in Sections \ref{s. graphs}-\ref{sec. nerve}.
	
	However, the definition in terms of Brauer diagrams is more concise and, furthermore, reveals the deep connections between circuit algebras, representations of classical groups and, via Theorems \ref{thm. CO CA} and \ref{thm: CO nerve}, the combinatorics of modular operads.

\end{rmk}

\begin{figure}[htb!]
	
	\includegraphics[width=0.95\textwidth]{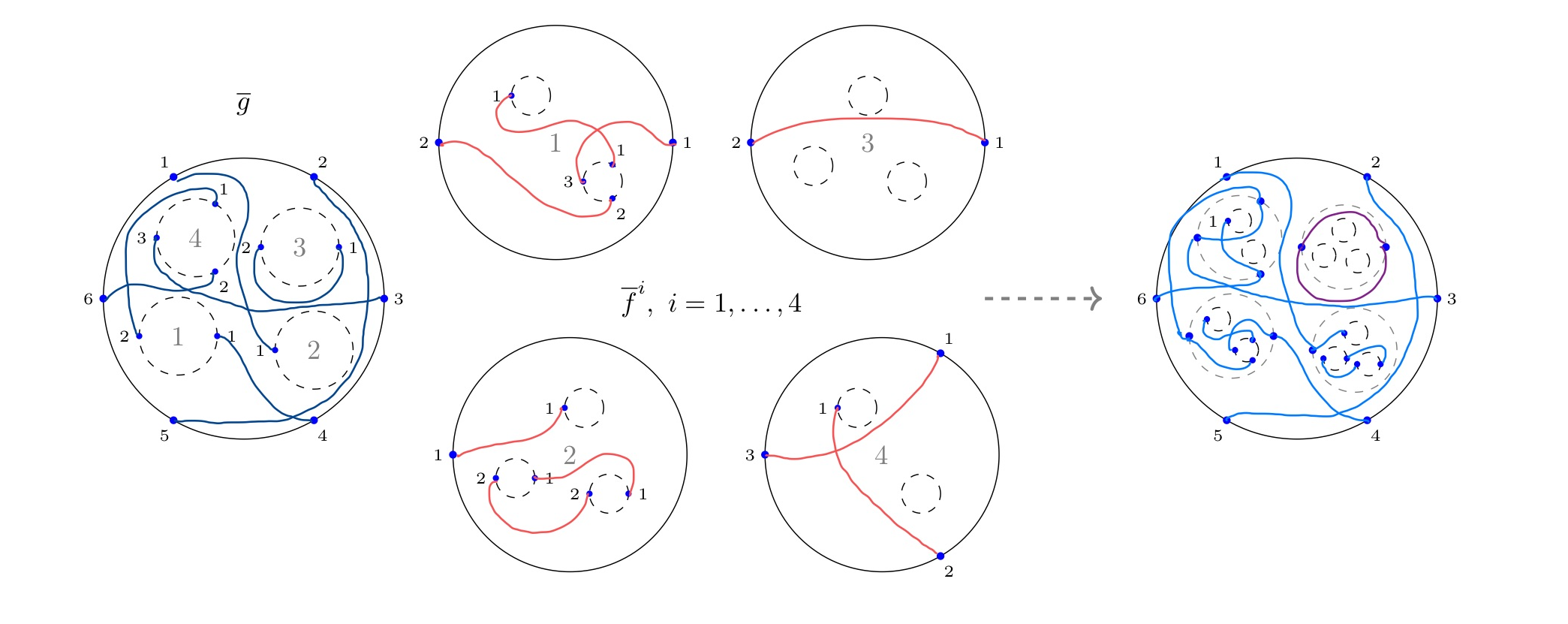}
	
	\caption{Disc representation of the composition in \cref{fig. CWD comp}.}
\label{fig. disc rep}\label{fig. pictorial}
\end{figure}

Let $\V$ be a symmetric monoidal category. The definition of circuit algebras in \cite{BND17, DHR20} is generalised by:
\begin{defn}\label{def CA}A \emph{$(\CCC,\omega)$-coloured $\V$-circuit algebra} is a $\V$-valued algebra for the operad $\CWD$ of $(\CCC, \omega)$-coloured wiring diagrams.
	The full subcategory of $\V$-circuit algebras in $\mathsf{Alg}(\CWD)$ is denoted by $\CCA_\V$.  When $\V = \Set$, $\CCA_\V$ is denoted simply by $\CCA$. 
\end{defn}

\begin{ex}		\label{ex. wheeled props} \label{ex. wheeled props operad} As in \cref{ex. directed Brauer diagrams}, let $\Di$ be the palette described by the unique non-trivial involution on the two element set $\{+,-\}$. For any set $\DDD$, there is a palette $\DDD \times \Di$ given by the involution $(c, + )\leftrightarrow (c,-)$, for all $c \in \DDD$.
	
	If $(\CCC, \omega) = \DDD \times \Di$ for some set $\DDD$, then $\CWD$ is an operad of \emph{$\DDD$-coloured oriented wiring diagrams}, and $\DDD \times \Di$-coloured circuit algebras are \emph{$\DDD$-coloured oriented circuit algebras}. 
	
These are described in detail in \cite{DHR20, DHR21}. In \cite{DHR20}, it is proved that $\DDD$-coloured oriented $\V$-circuit algebras in $\V$ are equivalent to $\DDD$-coloured wheeled PROPs in $\V$. Hence $\Di$-coloured circuit algebras are (monochrome) wheeled PROPs. These have been described in \cite{MMS09, Mer10} and have applications in geometry and deformation theory.
	
\end{ex}

It follows immediately from \cref{rmk. lax functors and algebras} and \cref{defn. operad of wiring diagrams} that:
\begin{thm}\label{thm. lax functor ca}
	The category $\CCA_\V$ of $(\CCC, \omega)$-coloured $\V$-circuit algebras is isomorphic to the category of symmetric monoidal functors $\alg \colon \CBD \to \V$.

\end{thm}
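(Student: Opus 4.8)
The plan is to read the result off from the three cited statements in sequence, since no genuinely new construction is required; the work is entirely bookkeeping. First I would unwind \cref{def CA}: by definition $\CCA_\V$ is the category of $\V$-algebras for the operad $\CWD$ of $(\CCC,\omega)$-coloured wiring diagrams, realised as the full subcategory of $\mathsf{Alg}(\CWD)$ whose objects carry an underlying $\listm\CCC$-graded object in $\V$. So the content of the theorem is an identification of the \emph{source} of these algebras.

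Next I would invoke \cref{defn. operad of wiring diagrams}, which sets $\CWD \defeq \op^{\CBD}$, the symmetric coloured operad underlying the category $\CBD$. The key structural input here is that $\CBD$ is a \emph{small permutative category}: it was shown immediately after \cref{def. colour BD} to be a $\CCC$-coloured PROP, with strict monoidal product $\oplus$ given by concatenation of object lists and disjoint union of coloured Brauer diagrams, and with strict unit $O$. This is precisely the hypothesis under which \cref{ex. operad of a perm cat} produces the underlying operad $\op^{\CBD}$ and under which \cref{rmk. lax functors and algebras} applies.

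Then I would apply \cref{rmk. lax functors and algebras} with $\CCat = \CBD$, giving a canonical isomorphism between $\mathsf{Alg}_\V(\op^{\CBD})$ and the category of symmetric lax monoidal functors $(\CBD, \oplus, O) \to (\V, \otimes, I)$. Chaining the identifications $\CCA_\V = \mathsf{Alg}_\V(\CWD) = \mathsf{Alg}_\V(\op^{\CBD})$ with this isomorphism yields the claim. I would note that the isomorphism of \cref{rmk. lax functors and algebras} is the identity on the underlying graded object and on morphisms, merely repackaging the operadic composition $\gamma$ of $\op^{\CBD}$ (built from composition in $\CBD$) as the laxity data of a monoidal functor; hence the equivalence is an honest isomorphism of categories, not just an equivalence.

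I do not expect a substantive obstacle — the statement is flagged as ``immediate'' — but there is one convention to reconcile, and this is where I would be careful. The theorem writes ``symmetric monoidal functor'' whereas \cref{rmk. lax functors and algebras} produces a ``symmetric lax monoidal functor.'' Under the convention fixed in \cref{ssec. SMC}, where ``monoidal functor'' means ``lax monoidal functor'' unless explicitly qualified as strong or strict, these coincide, so no strengthening is being asserted. Confirming this, together with checking that the permutative structure on $\CBD$ is exactly the one presupposed by the lemma, is essentially the whole of the argument.
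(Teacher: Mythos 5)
Your proposal is correct and follows exactly the paper's own route: the paper proves this theorem by noting it "follows immediately" from \cref{defn. operad of wiring diagrams} (which sets $\CWD = \op^{\CBD}$ for the permutative category $\CBD$) together with \cref{rmk. lax functors and algebras} applied to $\CCat = \CBD$, precisely the chain of identifications you describe. Your additional care about the lax-by-default convention for ``monoidal functor'' from \cref{ssec. SMC} is the right reading of the statement and matches the paper's usage.
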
 

\begin{ex}
	In particular, if $R$ is a commutative ring, and $\V= \modR$, then a monochrome $\V$-circuit algebra $\alg$ describes a sequence of $R$-modules, each equipped with a pairing $\alg(n)\otimes \alg(n) \to R$ induced by $\alg (\cup_n)$.
	
Moreover, if $\alg (0)= R$ and $\alg (m) \otimes \alg (n) \cong \alg (m +n)$, then $\alg$ corresponds to a symmetric strong monoidal functor $\BD \to \modR$. In this case, the pairing on $\alg (n)$ is non-degenerate for each $n$ and $\alg (n)$ describes a Brauer algebra $\Br(n,n)$ with $\delta  = \alg (\bigcirc) \in R$.

\end{ex}

The notation $\alg$ will be used to denote both a circuit algebra $(A, \alg)$ and the corresponding functor $\alg \colon \CBD \to \V$. 	

The assignment $(\CCC, \omega) \mapsto \CCA_\V$ defines a $\Cat$-valued presheaf $ca_\V$ on the palette category ${\pal}$: a morphism $\phi \colon (\CCC, \omega) \to (\CCC', \omega')$ in $\pal$ induces a strict symmetric monoidal functor $\CBD \to \CBD[(\CCC', \omega')]$, and hence any $(\CCC', \omega')$ coloured circuit algebra $\mathcal A'$ pulls back to a $(\CCC, \omega )$-coloured circuit algebra $\phi^* \mathcal A'$.

For any symmetric monoidal category $\V$, let $\CA_\V$ be the category of \emph{all} $\V$-circuit algebras whose objects are pairs $((\CCC, \omega), \mathcal A)$ of a palette $(\CCC, \omega)$ and a $(\CCC, \omega)$-coloured $\V$-circuit algebra $\mathcal A$, and whose morphisms $((\CCC, \omega), \mathcal A) \to ((\CCC', \omega'), \mathcal A') $ are pairs $(\phi, \alpha )$ where $\phi \colon \CCC\to \CCC'$ satisfies $\phi \omega = \omega' \phi$ and $ \alpha \colon \phi^* \mathcal A' \to \mathcal A$ is a morphism of $(\CCC, \omega)$-coloured circuit algebras.  
 
 In other words $\CA_\V$ is the Grothendieck construction of the functor $ca_A \colon \pal^{\mathrm{op}} \to \Cat$. When $(\V, \otimes, I) = (\Set, \times, *)$, $ \CA_\V$ is denoted by $\CA \defeq \CA_{\Set}$.

	\medspace

	By \cref{thm. lax functor ca}, a $(\CCC, \omega)$-coloured $\V$-circuit algebra 
	consists of 
	a collection $(\alg(\ccc))_{\ccc \in \listm \CCC}$ of objects and, for each $(\ccc_1, \dots, \ccc_k; \ddd)  \in \listm^2 \CCC \times \listm \CCC$, a set of $\V$-morphisms $\alg{(\overline f, \lambda)}\colon \bigotimes _{ i = 1} ^k \alg (\ccc_i) \to \alg(\ddd)$ indexed by Brauer diagrams $(f, \lambda) \in \CBD(\ccc_1 \oplus \dots \oplus \ccc_k , \ddd)$. 
	
	These satisfy:
	\begin{itemize}
		\item for all $\ccc \in \listm \CCC$, $\alg({ \overline {id}_{\ccc}} )= id_{\alg(\ccc)} \in \V(\alg(\ccc),\alg(\ccc))$; 
		\item the morphisms $\alg{( \overline f, \lambda)}$ are equivariant with respect to the $\Sigma$-action on $\listm \CCC$ and on $\CWD$;
		\item given wiring diagrams $(\overline f, \lambda) \in \CWD (\ccc_1, \dots, \ccc_k; \ddd)$, and, for all $1 \leq i \leq k $, $(\overline f^i, \lambda^i)\in \CWD (\bbb_{i, 1}, \dots, \bbb_{i, k_i};\ccc_i)$, the diagram 
		\begin{equation}\label{eq. algebra maps} \xymatrix{ \bigotimes_{i = 1}^k \bigotimes_{j = 1}^{k_i} \alg(\bbb_{i,j}) \ar[rrrrd]_-{ \alg{\left( \gamma \left(( \overline f, \lambda),( \overline f^i, \lambda^i)_{ i}\right)\right)} \qquad {}}\ar[rrrr]^{\bigotimes_{i = 1}^k \alg{(\overline f^i, \lambda^i)}} &&&& \bigotimes_{i = 1}^k \alg(\ccc_i) \ar[d]^{ \alg{(\overline f, \lambda )}}\\ &&&& \alg(\ddd)}\end{equation} commutes in $\V$.
		
	\end{itemize}

	Let $\alg$ be a $(\CCC, \omega)$-coloured $\V$-circuit algebra. 
	\begin{prop}
		\label{prop. product and contraction prop}
		The collection $(\alg({\ccc}))_{\ccc \in \listm \CCC}$ describes a $\listm \CCC$-graded monoid  $(\alg, \boxtimes, \alg(\varnothing_{\CCC} ))$ in $\V$, equipped with
		contraction maps $\zeta_{\ccc}^{i \ddagger j} \colon \alg(\ccc) \to \alg(\ccc_{\widehat{i,j}})$ for all $1 \leq i < j \leq m$ and all $\ccc = (c_1, \dots, c_m)\in \listm \CCC$ such that $c_i  = \omega c_j $.   
		These satisfy:%	such that
		\begin{enumerate}[(c1)]
			\item the graded monoidal product $\boxtimes$ on $(\alg(\ccc))_{\ccc \in \listm \CCC}$ is associative up to associators in $\V$;
			\item contractions commute: 
			\[\zeta^{i' \ddagger j'}_{\ccc_{\widehat{k,m}}} \circ \zeta^{k \ddagger m}_{\ccc}= \zeta^{k' \ddagger m'}_{\ccc_{\widehat{i,j}}} \circ \zeta^{i \ddagger j}_{\ccc} \colon \alg({\ccc} )\to \alg({\ccc_{\widehat{i,j,k,m}}}) \ \text{ wherever defined};\] 
			\item contraction commutes with monoidal product:
			\[ \zeta^{ i \ddagger j}_{\ccc\ddd}\  \circ\  \boxtimes_{\ccc, \ddd}  = \boxtimes_{\ccc_{\widehat{i,j}}\oplus \ddd} \ \circ \ (\zeta^{i \ddagger j}_{\ccc} \otimes id_{\alg(\ddd)}) \colon \alg(\ccc)\otimes \alg(\ddd) \to \alg(\ccc_{\widehat{i,j}}\ddd)\]
			for all $\ddd \in \listm \CCC$ and $\ccc = (c_1, \dots, c_m) \in \listm \CCC$ with $c_i  = \omega c_j$, $1 \leq i < j \leq m$.
		\end{enumerate}
		
		Moreover, for each $c \in \CCC$, there is a distinguished morphism $\epsilon_c \in \V(I, \alg{(c, \omega c)})$ such that
		\begin{enumerate}[(e1)]
			\item $\epsilon_c$ satisfies $\zeta_{\ccc \oplus (c, \omega c)}^{i\ddagger 2} ( id_{\ccc} \boxtimes \epsilon_c)  = id_{\ccc}$ for all $\ccc\in \listm \CCC$ and all $i$ such that $c_i = c$. % for all $c \in \CCC$, there is a morphism $\epsilon_c \in \V()
		\end{enumerate}
	\end{prop}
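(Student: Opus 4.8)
The plan is to invoke \cref{thm. lax functor ca} to replace the circuit algebra $\alg$ by the corresponding symmetric (lax) monoidal functor $\alg \colon \CBD \to \V$, and then to read off every operation and axiom as the image under $\alg$ of a suitable morphism or identity already living in $\CBD$. First I would fix the data. The graded monoidal product $\boxtimes_{\ccc, \ddd} \colon \alg(\ccc) \otimes \alg(\ddd) \to \alg(\ccc \oplus \ddd)$ is the laxator of $\alg$, with unit $\alg(\varnothing_{\CCC})$ supplied by $\eta_{\alg}\colon I \to \alg(\varnothing_{\CCC})$, so that $(\alg(\ccc))_{\ccc}$ becomes a $\listm \CCC$-graded monoid. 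For $\ccc = (c_1, \dots, c_m)$ and $1 \leq i < j \leq m$ with $c_i = \omega c_j$, I set $\zeta^{i \ddagger j}_{\ccc} \defeq \alg$ applied to the coloured open Brauer diagram $(\cup \oplus id) \circ \rho^{i,j}_m \in \CBD(\ccc, \ccc_{\widehat{i,j}})$, exactly as in the monochrome discussion of \cref{ssec BD func}, where $\rho^{i,j}_m \in \Sigma_m$ is the shuffle taking positions $i,j$ to the front and the two $\cup$-legs carry the matched colours $c_i = \omega c_j$. Finally, $\epsilon_c \defeq \alg(\cap_c)$ is the image of the coevaluation $\cap_c \in \CBD(\varnothing_{\CCC}, (c, \omega c))$.

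With these definitions each clause reduces to functoriality. Clause (c1) is precisely the associativity coherence of the lax monoidal functor $\alg$. For (c2), I would exhibit the coloured analogue of \eqref{eq. mono contraction commutes}: for distinct indices the two composites of contraction diagrams coincide \emph{on the nose} in $\CBD$, since disjoint cups and the corresponding shuffle permutations commute as pairings and their colourings agree; applying $\alg$ then yields the stated equality. Axiom (e1) follows the same way from the triangle identity $(\cup \oplus \rho_{n \leadsto i}) \circ \rho^{i, n+1}_{n+2} \circ (id \oplus \cap) = id$ recorded in \cref{ssec BD func} (a consequence of \eqref{eq. n triangle}): this is an identity of open coloured Brauer diagrams whose colour type forces $c_i = c$, and its image under $\alg$ is exactly the asserted unit law $\zeta^{i \ddagger 2}_{\ccc \oplus (c, \omega c)}(id_{\ccc} \boxtimes \epsilon_c) = id_{\ccc}$.

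The clause (c3) is where I would take slightly more care, since it mixes the functorial image $\zeta$ with the laxator $\boxtimes$. Here the plan is to observe that in $\CBD$ the contraction diagram on $\ccc$, horizontally summed with $id_{\ddd}$, equals the contraction diagram on $\ccc \oplus \ddd$ acting on the paired positions $i,j$ (both sides being $(\cup \oplus id)\circ \rho$ for the appropriate shuffle). Applying $\alg$ and invoking naturality of the laxator — equivalently the bifunctoriality of $\boxtimes$ expressed by the commuting diagram \eqref{eq. algebra maps} — then gives (c3). The main obstacle throughout is not the diagrammatic topology, which is already settled monochromatically, but the colour bookkeeping: at each step one must check that the cup is a genuine morphism of $\CBD$, i.e.\ that the matching condition $c_i = \omega c_j$ of \cref{def. colour BD} holds and is preserved under the shuffles and disjoint unions involved. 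This amounts to tracking the involution $\omega$ through the composition rule for coloured Brauer diagrams; once verified, the remaining identities are immediate from functoriality.
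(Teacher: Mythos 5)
Your proposal is correct and follows essentially the same route as the paper: the paper's proof simply invokes \cref{thm. lax functor ca} to view $\alg$ as a symmetric lax monoidal functor $\CBD \to \V$ and then cites the operations and identities (product via the laxator, contraction via $(\cup \oplus id)\circ\rho^{i,j}$, unit via $\cap_c$, and the triangle/commutation identities) already established in \cref{ssec BD func}. Your additional care with the colour bookkeeping and with deriving (c3) from naturality of the laxator just makes explicit what the paper leaves implicit.
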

	\begin{proof}
		By \cref{thm. lax functor ca}, $\alg$ describes a symmetric monoidal functor $(\alg, \alpha, \eta_\alpha) \colon \CBD \to \V$ and so the proposition follows from \cref{ssec BD func}. In particular, the monoidal product $\boxtimes$ on $(\alg({\ccc}))_{\ccc \in \listm \CCC}$ is induced by $\alpha $ and the maps $\overline{id_{\ccc\ddd} } \in \CWD(\ccc, \ddd; \ccc\ddd) $ corresponding to the identity morphisms $id_{\ccc\ddd}\in \CBD(\ccc\ddd,\ccc\ddd)$. 
	\end{proof}

	Let $\WD_{\downarrow} \subset \WD$ be the suboperad of (monochrome) \textit{downward wiring diagrams}. This is the operad underlying the category $\BDd$ of downward Brauer diagrams. 
	\begin{cor}
		\label{cor. down WD}
		A $\V$-algebra for $\WD_{\downarrow} $ is a graded monoid $(\alg(n)_n, \boxtimes, \alg (0))$ in $\V$ equipped with a contraction operation $\zeta$, such that the monoid multiplication $\boxtimes$ and $\zeta$ satisfy (c1)-(c3). 
	\end{cor}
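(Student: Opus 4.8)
The plan is to reduce the statement to the generators-and-relations description of the PROP $\BDd$, exactly as in the proof of \cref{prop. product and contraction prop}, and then to observe that omitting $\cap$ from the generating set precisely removes the unit axiom (e1) while leaving (c1)--(c3) intact. First I would invoke \cref{defn. operad of wiring diagrams}, which defines $\WD_{\downarrow}$ as the operad $\op^{\BDd}$ underlying $\BDd$, together with \cref{prop. lax functors and algebras}, to identify the category of $\V$-algebras for $\WD_{\downarrow}$ with the category of symmetric lax monoidal functors $\alg \colon \BDd \to \V$.

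Given such an $\alg$, the graded object $(\alg(n))_n$ carries a product $\boxtimes$ and unit $\alg(0)$ induced by the lax monoidal structure on the horizontal sum $\oplus$ (as in \cref{ssec BD func}), and each open downward diagram $(\cup \oplus id_{n-2})\circ \rho^{i,j}_n \in \BDd(n, n-2)$ (see \cref{fig. contraction}) maps under $\alg$ to a contraction $\zeta^{i \ddagger j}_n$. Axiom (c1) is then the associativity clause of the monoidal-functor axioms; (c2) is the image under $\alg$ of the relation \eqref{eq. mono contraction commutes}, which holds already in $\BD$ and hence in the subcategory $\BDd$; and (c3) is the image of the interchange relation between $\oplus$ and $\cup$. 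This is the monochrome, downward specialization of the argument proving \cref{prop. product and contraction prop}.

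The essential point distinguishing this corollary from \cref{prop. product and contraction prop} is the \emph{absence} of a unit. In the full category $\BD$ the distinguished morphism $\epsilon$ is obtained from the image of $\cap \in \BD(0,2)$, but by \cref{rmk. generators} the category $\BDd$ is generated under horizontal and vertical composition by $id_1$, $\sigma_{\two}$ and $\cup$ alone, with $\cap \notin \BDd(0,2)$. Consequently no morphism in $\BDd$ supplies a unit for $\boxtimes$, so (e1) has no downward analogue and the structure is exactly a graded monoid with contraction subject to (c1)--(c3). The hard part will be the converse direction: to show that any graded monoid $(\alg(n)_n, \boxtimes, \alg(0))$ equipped with a contraction $\zeta$ satisfying (c1)--(c3) assembles into a well-defined symmetric lax monoidal functor on $\BDd$. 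This amounts to checking that, for the presentation of $\BDd$ by the generators $id_1, \sigma_{\two}, \cup$, the complete list of defining relations is spanned by the symmetric-group relations together with (c1)--(c3); I would verify this by reducing an arbitrary downward Brauer diagram to the normal form $(\cup_k \oplus id_n)\circ \sigma$, with $\sigma \in \Sigma_{n+2k}$ (a permutation followed by $k$ elementary contractions), and confirming that any two such factorisations are related by (c1)--(c3).
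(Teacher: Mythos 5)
Your first two paragraphs reproduce the paper's own argument: the proof in the paper is a one-liner which cites the discussion following \cref{fig. contraction} (the material of \cref{ssec BD func}, via \cref{ex. unit in Set CA}) to observe that the graded monoid and contraction structures of a $\WD$-algebra are induced entirely by morphisms lying in $\BDd$, whence the corollary ``follows immediately.'' Where you genuinely depart from the paper is your third paragraph. The paper reads \cref{cor. down WD} in the same one-directional spirit as \cref{prop. product and contraction prop}, which it specializes: it describes the structure carried by a $\WD_{\downarrow}$-algebra and never proves (or needs) the converse assembly of $(\mathrm{c}1)$--$(\mathrm{c}3)$ data into a symmetric lax monoidal functor on $\BDd$. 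Your normal-form argument --- every downward diagram factors as $(\cup^{\oplus k}\oplus id_n)\circ\sigma$, with well-definedness governed by the hyperoctahedral stabilizer (permuting cups is $(\mathrm{c}2)$, swapping the legs of a cup is the convention $\zeta^{i\ddagger j}=\zeta^{j\ddagger i}$), and functoriality using the fact that no closed components arise under composition in $\BDd$ --- is a reasonable route to the stronger, two-way reading, and what it buys is an honest equivalence of categories rather than a forgetful description. Two caveats if you carry it out: the relations of $\BDd$ also include the mixed compatibilities between permutations and cups (equivariance of $\zeta$ under the $\Sigma$-action), which are neither pure symmetric-group relations nor $(\mathrm{c}1)$--$(\mathrm{c}3)$, so equivariance must be built into the data of ``contraction operation''; and this converse is extra work the statement, as the paper uses it, does not require.

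One small slip of terminology: what $\cap\in\BD(0,2)$ provides is the distinguished morphism $\epsilon$ acting as a unit for the \emph{multiplication} $\diamond$ (axiom $(\mathrm{e}1)$), not a unit for $\boxtimes$ --- the external unit $I\to\alg(0)$ is part of the lax monoidal structure and survives perfectly well in $\BDd$. Your conclusion is correct, but the phrase ``no morphism in $\BDd$ supplies a unit for $\boxtimes$'' should refer to $\diamond$.
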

	\begin{proof}
		By \cref{ex. unit in Set CA}, the graded monoid and contraction structures of a $\WD$-algebra depend only on morphisms in $\BDd$, whence the corollary follows immediately. 
	\end{proof}

	Since $\cap\in \BD(0,2)$ is not a morphism in $\BDd$, algebras for $\WD_{\downarrow}$ do not, in general satisfy (e1), and therefore do not have a unit for the induced multiplication $\diamond$ (\cref{ex. unit in Set CA}). For this reason they will be referred to as \textit{non-unital (monochrome) circuit algebras}.
	
	As in \cref{ex down Brauer}, let 
		$\mathrm O(\infty) \defeq \bigcup_{n \in \N} \mathrm O(n)$ denote the {infinite orthogonal group}. This is isomorphic to the the \textit{infinite symplectic group} $\mathrm{Sp}(\infty) \defeq \bigcup_{n \in \N} \mathrm{Sp}(n)$. Let $\mathrm{GL}(\infty) \defeq \bigcup_{n \in \N} \mathrm{GL}(n)$ denote the infinite general linear group. Since the free $\vect$-category on $\BDd$ is the downward Brauer category $\Brd$ (see \cref{ex: Brauer category}), the following corollary follows immediately from \cite{SS15},  \cref{ex: Brauer category} and \cref{cor. down WD}:
		
		\begin{cor}\label{cor nonunital Brauer}
			There is a canonical inclusion of the category $\mathsf{Rep}(\mathrm O(\infty))$ of (complex) representations of  $\mathrm O(\infty)$ -- which is {isomorphic } to the category $\mathsf{Rep} (\mathrm{Sp}(\infty))$ of representations of $\mathrm{Sp}(\infty)$ -- as the full subcategory of monochrome non-unital $\vect[\mathbb C]$-circuit algebras $\alg$ such that $A_0 \cong \Bbbk$, and $\boxtimes_{m,n}\colon A_m \otimes A_n \to A_ {m+n}$ is an isomorphism for all $m,  n \in \N$.
			
			Likewise, the category $\mathsf{Rep} (\mathrm{GL} (\infty))$ of representations of the infinite general linear group describes a full subcategory of the category of monochrome oriented non-unital $\vect[\mathbb C]$-circuit algebras. % 
		\end{cor}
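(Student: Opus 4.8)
The strategy is to unwind the definition of a non-unital circuit algebra into a functor from $\BDd$ and then to quote \cite{SS15}. By \cref{cor. down WD}, a non-unital monochrome $\V$-circuit algebra is precisely a $\V$-algebra for the operad $\WD_{\downarrow} = \op^{\BDd}$ underlying the permutative category $\BDd$ of downward Brauer diagrams. Applying \cref{prop. lax functors and algebras} to $\BDd$, such algebras are the same as symmetric lax monoidal functors $\alg \colon \BDd \to \V$. Taking $\V = \vect[\mathbb C]$, the content of the corollary is to determine which such functors correspond to representations of $\mathrm O(\infty)$.

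First I would show that the two conditions $A_0 \cong \Bbbk$ and ``$\boxtimes_{m,n}$ an isomorphism'' are jointly equivalent to $\alg$ being a \emph{strong} symmetric monoidal functor. The multiplication constraints of $\alg$ are exactly the maps $\boxtimes_{m,n}$, so these are invertible by hypothesis; and the unit constraint $\eta_{\alg} \colon I \to \alg(0)$ is forced to be invertible once $\alg(0) \cong \Bbbk = I$ and the $\boxtimes_{m,n}$ are isomorphisms, by the monoidal unit laws (choosing any $n$ with $\alg(n) \neq 0$; the case $\alg \equiv 0$ being trivial). Conversely a strong monoidal $\alg$ visibly satisfies both conditions. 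Hence the prescribed full subcategory of non-unital circuit algebras is the full subcategory of strong symmetric monoidal functors $\BDd \to \vect[\mathbb C]$, which, by strictification of monoidal functors, is equivalent to the category of symmetric strict monoidal functors $\BDd \to \vect[\mathbb C]$.

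Now I would invoke the results of \cite{SS15}, as recorded in \cref{ex down Brauer}: symmetric strict monoidal functors $\BDd \to \vect$ describe the (algebraic) representations of $\mathrm O(\infty)$. This provides the canonical inclusion of $\mathsf{Rep}(\mathrm O(\infty))$ as the stated full subcategory of non-unital monochrome $\vect[\mathbb C]$-circuit algebras. The isomorphism $\mathsf{Rep}(\mathrm O(\infty)) \cong \mathsf{Rep}(\mathrm{Sp}(\infty))$ is then a consequence of the observation in \cref{ex down Brauer} that no closed components are formed by composition in $\BDd$: the scalar parameter $\delta$ (which distinguishes the orthogonal from the symplectic series) never enters the composition law of $\Brd$, so the single category $\BDd$ simultaneously governs both families. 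This is the content of \cite[Sections~3~\&~4]{SS15}.

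The oriented statement follows by running the identical argument with $\BDd$ replaced by the category of downward oriented Brauer diagrams (the downward part of $\DiBD$ from \cref{ex. directed Brauer diagrams}, equivalently downward walled Brauer diagrams): monochrome oriented non-unital $\vect[\mathbb C]$-circuit algebras are symmetric lax monoidal functors out of this category, strong exactly when the dimension conditions hold, and by \cite[Sections~3~\&~4]{SS15} the strict ones are the representations of $\mathrm{GL}(\infty)$. There is no serious obstacle here: the only real work is the bookkeeping of the second paragraph — verifying that the two explicit isomorphism conditions are jointly equivalent to strongness of the monoidal functor, and that strictification does not change the full subcategory up to equivalence — after which the corollary is a direct citation of \cite{SS15}.
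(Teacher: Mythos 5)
Your proof is correct and takes essentially the same route the paper intends: the paper's own proof is nothing more than the assertion that the corollary "follows immediately" from \cref{cor. down WD}, \cref{ex: Brauer category}/\cref{ex down Brauer}, and \cite{SS15}, and your argument -- identifying monochrome non-unital circuit algebras with symmetric lax monoidal functors $\BDd \to \vect[\mathbb C]$ via \cref{cor. down WD} and \cref{prop. lax functors and algebras}, showing the two stated conditions amount exactly to strongness, and then citing Sam--Snowden for both the orthogonal/symplectic identification and the oriented ($\mathrm{GL}(\infty)$) case -- is precisely the intended unwinding. The only content you add beyond the paper is the explicit bookkeeping (the unit-constraint invertibility argument and the strictification remark), which the paper leaves implicit.
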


		\begin{ex}\label{ex. free ca}
															Let $S = (S(\ccc))_{\ccc \in \listm \CCC}$ be a collection of sets equipped with an action of $\Sigma$ such that, for each $\ccc$ %= (c_1, \dots, c_n)$ 
															and $\sigma \in \Sigma_n$, there is an isomorphism $S(\ccc)\xrightarrow{\cong } S(\ccc \sigma)$. %$ S(c_1, \dots, c_n)\xrightarrow{\cong} S(c_{\sigma 1} , \dots , c_{\sigma_n} )$. 
															The collection $(FS(\ccc))_{\ccc \in \listm \CCC}$ of $(\CCC,\omega)$-coloured wiring diagrams \textit{decorated by $S$} is defined by
															\[ \begin{array}{ll}
																FS(\ddd)& = \coprod_{(\ccc_1, \dots, \ccc_k) \in \listm^2 \CCC}\left( \CWD (\ccc_1, \dots, \ccc_k; \ddd) \times \prod_{i = 1}^k S(\ccc_i)\right)\\
																& = \coprod_{((\ccc\oplus\dots\oplus\ccc_k),(f, \lambda)))\in \CBD \ov \ddd} \left ( \prod_{i = 1}^k S(\ccc_i)\right)
															\end{array}.\]%FS(\ddd) = \coprod_{(\ccc_1, \dots, \ccc_k) \in \listm^2 \CCC}\left( \CWD (\ccc_1, \dots, \ccc_k; \ddd) \times \prod_{i = 1}^k S(\ccc_i)\right)\]  
															
															The set $FS$ underlies the \textit{free circuit algebra $\mathcal{F}S $ on $S$}:
															
															For each $(\overline f, \lambda )\in \CWD(\ccc_1, \dots, \ccc_k; \ddd)$, the morphism $\mathcal F S(\overline f, \lambda) \colon FS(\ccc_1)\times \dots \times FS(\ccc_k) \to FS(\ddd)$ is described by 
															\[ \prod_{i = 1}^k \left( (\overline f^i, \lambda^i ), (x_{j_i}^i)_{j _i= 1}^{m_i}\right) \mapsto \left(\gamma\left((\overline f, \lambda) , \left((\overline f^i, \lambda^i )_{i = 1}^k\right)\right), (x_{j_i}^i)_{{\overset{1 \leq j_i \leq m_i}{1 \leq i \leq k}}} \right).\]
													
														\end{ex}

														In general, circuit algebras, like operads (see \cite[Section~2.1]{GK94}), admit \textit{presentations} in terms of \textit{generators} and \textit{relations} (see \cite[Remark~2.6]{DHR21}). Indeed, every circuit algebra $\alg = (A, \alpha)$ is obtained as a quotient of the free circuit algebra $\mathcal F A$ on its underlying coloured collection $A$. Except to state the following familiar example, presentations of circuit algebras are not discussed in this work, and the interested reader is referred to \cite{DHR21} for more details.

														\begin{ex}\label{ex. tangles} Let $(\CCC, \omega) = \dipal$ and let $T = T(\ccc)_{\ccc \in \listm\{+.-\}} $ be the  $\listm\{+.-\}$-indexed set given such that $T(+,+,-,-)$ is the two element set of an \textit{over}- and {under-crossing} of two oriented intervals embedded in $\R^2 \times [0,1]$, and $T (\ccc) = \emptyset$ for $\ccc \neq (+,+, -,-)$. The circuit algebra $VT$ of virtual oriented tangles (see {\cite{DHR21}}) is the quotient of the free circuit algebra $\mathcal F T$ by relations induced 
															by the virtual Reidemeister identities (see \cite{Kau99}).

															More generally, we may consider circuit algebras of $(\CCC, \omega)$-coloured virtual tangles. 
															This includes, for example, circuit algebras of embedded tangles of mixed dimensions.

														\end{ex}

%%% New section added at beginning 20 May 2021
\section{Graphical species, modular operads, and circuit operads}\label{sec: definitions}

By \cref{prop. product and contraction prop}, a $(\CCC, \omega)$-coloured circuit algebra $\alg$ in a symmetric monoidal category $(\V, \otimes, I)$ has the structure of a $\listm \CCC$-graded monoid with contractions and distinguished morphisms $ \epsilon _c \in \V(I, \alg(c, \omega c))$ satisfying the conditions (c1)-(c3) and (e1) stated there. 

If $\V = \Set$, then the converse also holds (see \cref{thm. CO CA}). A $\listm \CCC$-graded set $A = (A(\ccc))_{\ccc}$, together with morphisms $ \epsilon _c\colon I \to \alg(c, \omega c)$, has the structure of a $(\CCC, \omega)$-coloured circuit algebra if and only if it describes a graded monoid  $(A, \boxtimes, I)$ in $\V$, and is equipped with a family of contraction operations that satisfy the conditions of \cref{prop. product and contraction prop}.

Circuit algebras in an arbitrary monoidal category describe enriched version of these structures.

\subsection{Graphical species}
\label{ssec: graphical species}

Graphical species provide a suitable notion of coloured collection  $(\alg({\ccc}))_{\ccc \in \listm \CCC}$  for describing various generalisations of (coloured) operads. They were introduced in \cite{JK11} to define coloured $\Set$-valued modular operads (called \textit{compact closed categories} in \cite{JK11}) in \cite{JK11, Ray20, HRY19a, HRY19b}. 

This section provides a short discussion on graphical species in an arbitrary category $\E$ with finite limits. For more details, the reader is referred to \cite{Ray20}, where graphical species in $\Set$ are discussed at length. 

Let $\fin \subset \Set$ denote the full subcategory of finite sets, and let $\fiso \subset \fin$ be the groupoid of finite sets and bijections. 

\begin{rmk}\label{rmk. species}
	(See also \cref{ssec. presheaves}.) 

	An $\E$-valued presheaf $P\colon \fiso^\mathrm{op} \to \E$, also called a \emph{(monochrome} or \emph{single-sorted) species in $\E$} \cite{Joy81}, determines a presheaf $\Sigma^{\mathrm{op}} \to \E$ by restriction. %, or 
	
	Conversely, a $\Sigma$-presheaf $Q$ may always be extended to a $\fiso$-presheaf $Q_{\fiso}$, by setting
	\begin{equation} \label{eq. sigma to fiso}Q_{\fiso}(X) \defeq \mathrm{lim}_{(\nn,f) \in \Sigma \ov X} Q(\nn) \ \text{ for all } n \in \N.\end{equation}
\end{rmk}

Let the category $\fisinv$ be obtained from $\fiso $ by adjoining a distinguished object $\S$ that satisfies 
\begin{itemize}
	\item $\fisinv (\S, \S) = \{ id, \tau \} $, and $\tau^2 = id$;
	\item for each finite set $X$ and each element $x \in X$, there is a morphism $ch_x \in \fisinv (\S, X)$ that `chooses' $x$, and $\fisinv(\S, X)= \{ch_x\ ,\ ch_x \circ \tau \}_{x \in X}$;
	\item for all finite sets $X$ and $Y$, $\fisinv(X,Y) = \fiso (X,Y)$, and morphisms are equivariant with respect to the action of $\fiso$. That is, $ ch_{f(x)} = f \circ ch_x \in \fisinv (\S, Y)$ for all $ x \in X$ and all bijections $f\colon X \xrightarrow {\cong} Y$.
\end{itemize}
 
\begin{defn}\label{defn: graphical species}
	A \emph{graphical species $S$ in a category $\E$} is a presheaf $S\colon {\fisinv}^\mathrm{op} \to \E$, and $\GSE\defeq\prE{\fisinv}$ is the category of graphical species in $\E$. When $\E = \Set$, we write $\GS \defeq \GSE[\Set]$.

\end{defn}

Hence, a graphical species $S$ is described by an $\E$-valued species $(S_X)_{X \in \fiso}$ and an involutive object $(S_\S, S_\tau)$ in $\E$, 
together with a family of projections $S(ch_x)\colon S_X \to S_\S$, defined for each finite set $X$, and $x \in X$, and equivariant with respect to the action by $\fiso$. 

Henceforth, $\E$ will always denote a category with all  finite limits.
Let $*$ denote the terminal object of $\E$. 

\begin{ex}\label{Comm}
	The terminal graphical species $\CommE$ is the constant graphical species that sends $\S$ and all finite sets $X$ to the terminal object $*$ in $\E$.
	
\end{ex}

\begin{defn}\label{def: GS palette}

A morphism $ \gamma \in \GSE(S, S' )$ is \emph{colour-preserving} if its component at $\S$ is the identity on ${S_\S}$. 
A graphical species $S \in \GSE$ is called \textit{monochrome} if $S_\S = *$.
\end{defn}

\begin{defn}\label{c arity}
	The element category (\cref{defn: general element}) of a graphical species $S$ in $\Set$ is denoted by $\elG[S] \defeq \ElP{S}{\fisinv}$.
	
	Elements $c \in S_\S$ are called \emph{colours of $S$}. % If $S_\S = *$ is trivial then $S$ is a \emph{monochrome graphical species}.
		For each $ \underline c = (c_x)_{x \in X} \in {S_\S}^{X}$, the \emph{$\underline c$-(coloured) arity $ {S_{\underline c}}$} is the fibre above $ \underline c \in {S_\S}^{X}$ of the map $(S(ch_x))_{x \in X}\colon {S_X} \to{S_\S}^{X}$.
	
\end{defn}

\begin{ex}\label{ex. terminal coloured species} 
	(Compare \cref{ex. directed Brauer diagrams}, \ref{ex. wheeled props operad}.)
{For any palette $(\CCC, \omega)$, the terminal $(\CCC, \omega)$-coloured graphical species $\CComm{}$ in $\Set$ is given by $\CComm{\underline c}= \{*\}$ for all $\underline c \in \CCC^X$ and all finite sets $X$. This is the terminal object of the category $\CGSE$ of $(\CCC, \omega)$-coloured graphical species in $\Set$, and palette-preserving morphisms.}
	
Recall from Examples \ref{ex. directed Brauer diagrams} and \ref{ex. wheeled props} the category $\DiBD $ 
of directed Brauer diagrams, and the corresponding operad of oriented wiring diagrams and consider the terminal $\dipal$-coloured 
graphical species $\dicomm$.
 
For all finite sets $X$, $\dicomm_{X}=\{+, -\}^X$ is the set of partitions $X =X_{+}\amalg  X_{-}$ of $X$ into \textit{input} and \textit{output} sets, and morphisms $ \dicomm_X \to \dicomm_Y$ are bijections $X \xrightarrow{\cong} Y$ that preserve the partitions. A \textit{monochrome directed graphical species} is a  $\dipal$-coloured graphical species.
	
\end{ex}

\subsection{Modular operads} \label{subs. MO and Comp}

This section generalises the definition of modular operads in $\Set$ from \cite[Section~1.3]{Ray20} to arbitrary categories with finite limits. 

Let $S $ be a graphical species in a category $\E$ with finite limits. For finite sets $X$ and $Y$ with elements $x \in X$ and $y \in Y$, let $(S_X \times S_Y)^{x \ddagger y}  \to S_X \times S_Y$ denote the pullback \[\xymatrix{(S_X \times S_Y)^{x \ddagger y} \ar[rr] \ar[d] && S_X \ar[d]^-{ S(ch_x)}\\
S_Y\ar[rr]_-{S(ch_y \circ \tau)}&& S_\S.}\]
More generally, given distinct elements $x_1, \dots, x_k \in X$ and $y_1, \dots, y_k \in Y$, let $(S_X \times S_Y)^{x_1 \ddagger y_1, \dots, x_k \ddagger y_k}$ be the limit of the collection $ S_X \xrightarrow{S(ch_{x_i})} S_\S \xleftarrow{S(ch_{y_i} \circ \tau)} S_Y$, $1 \leq i \leq k$. 
\begin{defn}\label{defn: multiplication}\label{coloured mult cont} A \emph{multiplication} $\diamond$ on $S$ is a family of morphisms 
\[	\diamond_{X,Y}^{x \ddagger y} \colon (S_X \times S_Y)^{x \ddagger y} \to S_{(X \amalg Y) \setminus \{x,y\}},\] in $\E$, defined for all pairs of finite sets $X$ and $Y$, with elements $x \in X$ and $y \in Y$, and such that 
\begin{enumerate}\item the obvious diagram \[
\xymatrix@R = .5cm{ (S_X \times S_Y)^{x \ddagger y} \ar[rrd]^-{\diamond_{X,Y}^{x \ddagger y}}\ar[dd]_{\cong}&&{}\\
	&&\quad  S_{(X \amalg Y)\setminus\{x,y\}}\\
(S_Y \times S_X)^{y \ddagger x} \ar[urr]_-{ \diamond_{Y,X}^{y \ddagger x}}&& {}}\]
commutes in $\E$,
\item $\diamond $ is equivariant with respect to the $\fisinv$-action on $S$: if $ \hat \sigma \colon X \setminus \{x\} \xrightarrow {\cong } W \setminus \{w\}$ and $ \hat \rho \colon Y\setminus \{y\} \xrightarrow {\cong } Z \setminus \{z\}$ are restrictions of bijections $  \sigma\colon X \xrightarrow{\cong} W$ and $ \rho\colon Y \xrightarrow{\cong} Z$ such that $ \sigma(x) = w$ and $ \rho (y) = z$, then 
		\[ S(\hat \sigma \sqcup  \hat \rho )\  \diamond_{W,Z}^{w \ddagger z} = \diamond_{X,Y}^{x \ddagger y} \ S(  \sigma \sqcup  \rho),\] 
		(where $ \sigma \sqcup \rho\colon X \amalg Y \xrightarrow{\cong} W \amalg Z$ is the blockwise permutation).
\end{enumerate}

	A multiplication $\diamond$ on $S$ is \emph{unital} if $S$ there is a unit-like morphism $\epsilon\in \E( S_\S , S_\two)$ 
	 such that, for all finite sets $X$ and all $x \in X$, the composite
\begin{equation}\label{eq. mult unit} \xymatrix{ S_X \ar[rr]^-{ (id \times ch_x)\circ \Delta}&& S_X \times S_\S \ar[rr] ^-{ id \times \epsilon} &&S_X \times S_\two \ar[rr]^-{\diamond_{X, \two}^{ x \ddagger 2}}&& S_X }\end{equation} is the identity on $S_X$. 
\end{defn}

 By (\ref{eq. mult unit}) and condition (1) of \cref{defn: multiplication}, if a multiplication $\diamond$ on $S$ admits a unit $\epsilon$, then it is unique.
 
 Let $S$ be a graphical species in $\E$, and let $x \neq y$ be distinct elements of a finite set $Z$. Let $(S_Z)^{x \ddagger y} = S_{Z}^{y \ddagger x}$ denote the equaliser 
 \begin{equation}
 	\label{eq. dagger pullback}\xymatrix{S_Z^{x \ddagger y} \ar[rr]&& S_Z \ar@<4pt>[rr]^-{S(ch_x)} \ar@<-4pt>[rr]_-{S(ch_y \circ \tau)} && S_\S.}
 \end{equation}

 More generally, for any morphism $ f \colon E \to S_Z$ in $\E$, let $E^{x \ddagger y}  = E^{y \ddagger x}$ denote the pullback of $f$ along the universal map $S_Z^{x \ddagger y}\to S_Z$.
 
 Given distinct elements $x_1, y_1, \dots, x_k, y_k$ of $S_Z$, and a morphism $ f \colon E \to Z$ in $\E$, then $ E^{x_1 \ddagger y_1, \dots ,x_k \ddagger y_k} \defeq (E^{x_1 \ddagger y_1, \dots ,x_{k-1} \ddagger y_{k-1}} )^{x_k \ddagger y_k}$ is the obvious limit.

 Invariance of $E^{x_1 \ddagger y_1, \dots ,x_k \ddagger y_k}$ under permutations $(x_i, y_i) \mapsto (x_{\sigma i}, y_{\sigma i})$, $\sigma \in Aut (\mathbf {k})$ follows from invariance of the defining morphisms  \[ \xymatrix {E^{x_1 \ddagger y_1, \dots ,x_k \ddagger y_k}\ar[rr]&& S^{x_1 \ddagger y_1, \dots ,x_k \ddagger y_k}_Z \ar[rr]&& S_{Z - \{x_1, y_1, \dots x_k, y_k\}.}}\]

 \begin{defn}\label{defn: contraction}
 	A \emph{(graphical species) contraction} $\zeta$ on $ S$ is a family of maps $\zeta^{x \ddagger y}_X \colon S_X^{x \ddagger y}\to S_{X \setminus \{x,y\}}$ defined for each finite set $X$ and pair of distinct elements $x,y \in X$. 
 	
 	The contraction $\zeta$ is equivariant with respect to the action of $\fisinv$ on $S$: If $\hat \sigma \colon X \setminus \{x,y\}\xrightarrow{\cong }Z \setminus\{w,z\}$ is the restriction of a bijection $ \sigma\colon X\xrightarrow{\cong} Z $ with $\sigma(x) = w$ and $\sigma(y) = z$. Then 
 	\begin{equation} \label{eq. contraction equi} S(\hat \sigma) \circ \zeta^{w\ddagger z}_Z = \zeta ^{x \ddagger y}_X \circ S( \sigma) \colon S_Z^{w \ddagger z} \to S_{X \setminus\{x,y\}}.\end{equation}

 \end{defn}

 In particular, by (\ref{eq. contraction equi}), if $\zeta$ is a contraction on $S$, then  $\zeta_X^{x\ddagger y}= \zeta_X ^{y\ddagger x}$ for all finite sets $X$ and all pairs of distinct elements $x, y \in X$.

\begin{defn}\label{def. mod op} 
	A modular operad $(S, \diamond, \zeta, \epsilon)$ is a graphical species equipped with unital multiplication $(\diamond, \epsilon)$ and a contraction $\zeta$ such that the following four coherence axioms are satisfied:	
	
	\emph{(M1)}	\emph{Multiplication is associative.}\\
For all finite set $X, Y, Z$ and elements $x \in X$, $z \in Z$, and distinct $y_1, y_2 \in Y$, 

\[
\xymatrix@C =.8cm@R =.4cm{
	(	S_{X} \times S_{Y} \times S_{Z} )^{x \ddagger y_1 , y_2 \ddagger z}
	\ar[rr]^-{\diamond_{X, Y}^{x \ddagger y_1}\times id_{S_Z}}
	\ar[dd]_{id_{S_X} \times \diamond_{Y,Z}^{y_2\ddagger z} } &&
	\left (S_{(X \amalg Y)\setminus \{x,y_1\}}\times S_{Z} \right)^{y_2 \ddagger z}
	\ar[dd]^{\diamond^{y_2 \ddagger z} }\\
	&{=}&\\
	\left(S_{X}\times  S_{(Y \amalg Z) \setminus \{y_2,z\}}\right)^{x \ddagger y_1}
	\ar[rr]_-{\diamond^{x\ddagger y_1}} &&
	S_{(X \amalg Y \amalg Z) \setminus \{x, y_1,y_2,z\}}. }%\end{equation}
\]

	\emph{(M2)}	\emph{Contraction satisfies (C2).}\\

	\emph{(M3)}\emph{ Multiplication and contraction commute.} \\For finite sets $X$ and $Y$, mutually distinct elements $x_1, x_2$ and $x_3$ in $X$, and $y \in Y$, the following diagram commutes:
	\[
	\xymatrix@C =.8cm@R =.4cm{
		(	S_{X }^{x_1 \ddagger x_2} \times S_{Y})^{x_3 \ddagger y}
		\ar[rr]^-{\zeta^{x_1 \ddagger x_2} \times id}
		\ar[dd]_ {\diamond^{x_3\ddagger y}} &&
		(	S_{X \setminus\{x_1, x_2\}} \times S_{Y)} )^{x_3 \ddagger y}
		\ar[dd]^{\diamond^{x_3 \ddagger y}}\\
		&{=}&\\
		S_{(X \amalg Y)\setminus \{x_3,y\}}^{x_1 \ddagger x_2}
		\ar[rr]_-{\zeta^{x_1 \ddagger x_2}} &&
		S_{(X \amalg Y)\setminus \{x_1, x_2,x_3,y\}}.}\]

	\emph{(M4)} \emph{`Parallel multiplication' of pairs.}\\For finite sets $X$ and $Y$, and distinct elements $x_1, x_2 \in X$ and $y_1 , y_2 \in Y$, the following digram commutes:
	
	\[{
	\xymatrix@C =.8cm@R =.4cm{
		(	S_{X} \times S_{Y})^{x_1 \ddagger y_1, x_2 \ddagger y_2}
		\ar[rr]^-{ \diamond^{x_1 \ddagger y_1}}
		\ar[dd]_{ \diamond{x_2 \ddagger y_2}}&&
		S_{(X \amalg Y) \setminus \{x_1, y_1\}}^{x_2 \ddagger y_2}
		\ar[dd]^{\zeta^{x_2 \ddagger y_2}}\\&{=}&\\
		S_{(X \amalg Y) \setminus \{x_2, y_2\}}^{x_1 \ddagger y_1}\ar[rr]_-{\zeta^{x_1 \ddagger y_1}}&&
		S_{(X \amalg Y) \setminus \{x_1,x_2, y_1,y_2\}}.}}\]
	
\begin{figure}[htb!]
		\begin{tikzpicture}
	\node at (-4.5,3){	\includestandalone[width = .45\textwidth]{CO_subfiles/standalones/M1}};
	\node at (0,0){	\includestandalone[width = .45\textwidth]{CO_subfiles/standalones/M3}};
		\node at (0,6){	\includestandalone[width = .45\textwidth]{CO_subfiles/standalones/C2}};
	\node at (4.5,3){	\includestandalone[width = .45\textwidth]{CO_subfiles/standalones/M4}};
	\node at (-4.5,3){(M1)};
	\node at (0,0){(M3)};
	\node at (0,6){(M2)};
	\node at (4.5,3){(M4)};
	\end{tikzpicture}

	\caption{Modular operad axioms (M1)-(M4).}\label{fig. MO axioms}
\end{figure}
(Axioms (M1)-(M4) are illustrated in \cref{fig. MO axioms}.)% Axiom (M2) corresponds to (C2) and is illustrated in \cref{fig. CO axioms}.)

	The category of (non-unital) modular operads in $\E$ is denoted by ($\nuCSME$) $\CSME$.
\end{defn}

\subsection{Circuit Operads}\label{ssec CO}
A circuit operad in a category $\E$ with finite limits is a graphical species $A$ in $\E$ that has contraction and graded monoid structures analogous to circuit algebras 
(\cref{prop. product and contraction prop}). 

In this section, it is shown that circuit operads in $\E$ canonically admit a modular operad structure. This will be used in Sections \ref{s. graphs}-\ref{sec. iterated}, to construct the composite monad for circuit operads by modifying the modular operad monad of 
\cite{Ray20}.

\begin{defn} \label{def. external}
	An \emph{external product} on a graphical species $S$ in $\E$ is given by a $\fiso \times \fiso$-indexed collection of equivariant morphisms \[\boxtimes_{X,Y}: S_X \times S_Y \to S_{X \amalg Y},\] in $\E$ 
	such that, for all elements $x \in X$, the following diagram commutes in $\E$:
	\[ \xymatrix{S_X \times S_Y \ar[rr]^-{S(ch_x^{X \amalg Y}) \circ \boxtimes_{X,Y}}\ar[d] && S_\S \\
		S_X \ar[rru]_-{S(ch^X_x)} && }\]
	
	An \emph{external unit} for the external  product $\boxtimes$ is a distinguished morphism $\circleddash \colon * \to S_\nul$ such that the composite 
	\[ \xymatrix{ S_X \ar[r]^-{\cong} &S_X \times *  \ar[rr]^{id \times \circleddash }&& S_X \times S_\nul \ar[rr]^- {\boxtimes} && S_{(X \amalg \nul )} \ar@{=}[r] &S_X}\] is the identity on $S_X$ for all finite sets $X$.
	
\end{defn}

\begin{rmk} \label{rmk. monoidal unit} The monoidal unit $\circleddash $ for $\boxtimes$ is suppressed in the discussion (and notation) since most of the constructions in Sections \ref{sec: definitions}-\ref{sec. nerve} of this paper carry through without it. 
	To agree with circuit algebra conventions, and since most examples of structures related to circuit operads do involve a unital external product, external products in this paper are unital. See also \cref{rmk. no empty graph}, and \cite[Paragraph~4.7]{Koc18}.
	
\end{rmk}

An external product $\boxtimes$ and contraction $\zeta$ on a graphical species $S$ induce an equivariant multiplication $\diamond$ on $S$ given by, for all finite sets $X$ and $Y$, all $x \in X$ and $y \in Y$, the composition 
\begin{equation}\label{eq. forget mult}
	\diamond_{X,Y}^{x \ddagger y} \colon \xymatrix{(S_X\times S_Y)^{x \ddagger y} \ar[rr]^-{\boxtimes} && (S_X \boxtimes S_Y )^{x \ddagger y} \ar[rr]^-{\zeta^{x\ddagger y}}&& S_{(X \amalg Y)\setminus \{x,y\}}}.
\end{equation} %defines an equivariant multiplication defined, 

Let $\sigma _\two$ denote the unique non-identity involution on $\two$.
\begin{defn}
	\label{defn: formal connected unit}
	A morphism $\epsilon  \in \E(S_\S, S_\two)$ is \emph{unit-like for $S$} if 
	\begin{equation}
		\label{eq: unit compatible with involution}	
		\epsilon \circ  \omega = S(\sigma_\two)  \circ \epsilon\colon S_\S \to S_\two, \text{ and } \ 
		S(ch_1)\circ \epsilon   = id_{S_{\S}}.
	\end{equation} 
	
\end{defn}
It follows immediately from the definition that if $\epsilon$ is unit-like for $S$, then $\epsilon$ is monomorphic.

\begin{defn}\label{def. co}
	A \emph{non-unital circuit operad} in $\E$ is a graphical species $S$ in $\E$, equipped with an external product $\boxtimes$ and a contraction $\zeta$, such that the following three axioms 
	are satisfied: 
	
	\emph{(C1)}	\emph{External product is associative.}\\
	For all finite sets $X, Y, Z$,
	the following square commutes:
	\[\xymatrix@C =.8cm@R =.4cm{
		S_{X} \times S_Y \times S_Z \ar[rr]^-{\boxtimes_{X,Y} \ \times\ id_{S_Z}}\ar[dd]_{id_{S_X}\ \times\ \boxtimes_{Y,Z}} && 
		S_{X \amalg Y} \times S_Z \ar[dd]^{\boxtimes_{(X \amalg Y), Z}}\\
		&{=}&\\
		S_{X} \times S_{Y \amalg Z} \ar[rr]_-{ \boxtimes_{X, (Y \amalg Z)}}&&		S_{X\amalg Y \amalg Z }}.\]

\emph{(C2)} \emph{Order of contraction does not matter.} \\For all finite sets $X$ with distinct elements $w,x,y,z$, the following square commutes:
\[\xymatrix@C =.8cm@R =.4cm{
	S_{X} ^{ w\ddagger x, y \ddagger z}\ar[rr]^-{\zeta^{w \ddagger x}}\ar[dd]_{\zeta^{y \ddagger z}} && 
	S_{X \setminus \{w,x\} }^{y \ddagger z}\ar[dd]^{\zeta^{y \ddagger z}}\\
	&{=}&\\\
	S_{X \setminus \{y,z\} }^{w \ddagger x}\ar[rr]_-{\zeta^{w \ddagger x}}&&	S_{X \setminus \{w,x,y,z\} }}.\]

\emph{(C3)} 
For all finite sets $X$ and $Y$, and distinct elements $x_1, x_2 \in X$, the following square commutes:
\[\xymatrix@C =.8cm@R =.4cm{
	S_{X}^{x_1 \ddagger x_2} \times S_Y \ar[rr]^-{\boxtimes }\ar[dd]_{\zeta^{x_1 \ddagger x_2}\ \times\  id_{S_Y}} && 
	(	S_{X}  \times S_Y)^{ x_1\ddagger  x_2 } \ar[dd]^{\zeta^{x_1 \ddagger x_2}}\\
	&{=}&\\
	S_{X \setminus \{x_1, x_2\} }\times S_Y\ar[rr]_-{\boxtimes}&&		S_{X \setminus \{x_1, x_2\} }\times S_Y .}\]

\begin{figure}[htb!]
	\begin{tikzpicture}
		\node at (-4.5,3){	\includestandalone[width = .45\textwidth]{CO_subfiles/standalones/C1}};
	%	\node at (0,1){	\includestandalone[width = .45\textwidth]{CO_subfiles/standalones/C2}};
		\node at (4.5,3){	\includestandalone[width = .45\textwidth]{CO_subfiles/standalones/C3}};
		\node at (-4.5,3){(C1)};
	%	\node at (0,1){(C2)};
		\node at (4.5,3){(C3)};
	\end{tikzpicture}
	\caption{Circuit operad axioms (C1) and (C2).}\label{fig. CO axioms}
\end{figure}
(Axioms (C1) and (C3) are illustrated in \cref{fig. CO axioms}. Axiom (C2) corresponds to (M2) and is illustrated in \cref{fig. MO axioms}.)

Let $\nuCO$ denote the category of non-unital circuit operads and morphisms of graphical species that preserve the external product and contraction. 

A \emph{(unital) circuit operad} is given by a non-unital circuit operad $(S, \boxtimes, \zeta)$ together with a unit-like morphism $\epsilon \colon S_\S \to S_\two $, such that for all finite sets $X$ and all $x \in X$, the composite morphism 
\begin{equation}
	\label{eq. conn unit} \xymatrix@C =.7cm{ S_X \ar[rr]^-{ (id \times ch_x)\circ \Delta}&& (S_X \times S_\S)^{x \ddagger 2} \ar[rr] ^-{ id \times \epsilon} &&(S_X \times S_\two)^{x \ddagger 2} \ar[rr]^-{\boxtimes}&&{ (S_X \boxtimes S_\two)^{x \ddagger 2} }\ar[rr]^-{ \zeta^{x \ddagger 2}_{X \amalg \two} }&& S_X }
\end{equation} is the identity on $S_X$. (Here $\Delta \colon S_X \to S_X \times S_X$ is the diagonal, and the last map makes use of the canonical isomorphism $( X \amalg \{1\})\setminus\{x\} \cong X$.)

Morphisms in the circuit operad category $\CO$ are morphisms in $\nuCO$ that respect the unit. 
\end{defn}

The following proposition, that follows immediately from \cref{def. co} and \cref{prop. product and contraction prop}, implies that $\Set$-valued circuit algebras describe circuit operads with the obvious product and contraction:
\begin{prop}	\label{prop. CA CO}
There is a canonical faithful functor $\CA \to \CO$. 
\end{prop}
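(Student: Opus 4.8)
The plan is to build the functor $\CA \to \CO$ directly out of the data furnished by \cref{prop. product and contraction prop}, and then to extract faithfulness from the observation that this data determines the circuit algebra it came from.

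First, on objects. Given a $(\CCC,\omega)$-coloured $\Set$-circuit algebra $\alg$, I would form a graphical species $S=S_{\alg}$ with $S_\S=\CCC$, $S_\tau=\omega$, and, for each finite set $X$ and colouring $\underline c\colon X\to\CCC$, arity $S_{\underline c}=\alg(\ccc)$ for any list $\ccc\in\listm\CCC$ representing $\underline c$. The passage from the list-indexed family $(\alg(\ccc))_{\ccc\in\listm\CCC}$ to an honest $\fisinv$-presheaf is precisely the extension of a $\Sigma$-presheaf to a $\fiso$-presheaf of \cref{rmk. species} (formula (\ref{eq. sigma to fiso})); the $\Sigma$-equivariance built into a circuit algebra guarantees this is well defined and functorial on $\fisinv$, with the projections $S(ch_x)$ recording the colour at $x$. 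By \cref{prop. product and contraction prop}, $\alg$ carries an associative graded product $\boxtimes$, a contraction $\zeta$, and, for each colour $c$, a distinguished element $\epsilon_c=\alg(\cap_c)\in\alg(c,\omega c)$; these are exactly an external product, a contraction, and (assembling over $c$) a morphism $\epsilon\colon S_\S\to S_\two$ in the sense of \cref{def. co}.

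Next I would verify that $(S_{\alg},\boxtimes,\zeta,\epsilon)$ is a circuit operad, which is a straight translation of axiom labels: associativity (c1) of $\boxtimes$ gives (C1), commutativity of contractions (c2) gives (C2), and compatibility of contraction with the product (c3) gives (C3). The unit relation (e1) for $\epsilon_c$ is the circuit operad unit axiom (\ref{eq. conn unit}), while the unit-likeness conditions (\ref{eq: unit compatible with involution}) follow from $\epsilon_c=\alg(\cap_c)$, since $\epsilon_{\omega c}$ is then the $\sigma_\two$-transpose of $\epsilon_c$ and the first leg of $\epsilon_c$ has colour $c$. On morphisms, a circuit algebra morphism $(\phi,\alpha)$ consists of a palette map $\phi$ together with a compatible family $\alpha$ between the underlying graded collections; these assemble into a morphism of the associated graphical species whose component at $\S$ is $\phi$ and whose arity components are given by $\alpha$, and which commutes with $\boxtimes$, $\zeta$ and $\epsilon$ because $\alpha$ does by definition. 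Functoriality is immediate, composition of graphical species morphisms being computed componentwise.

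Faithfulness is then a bookkeeping observation: the assignment loses no information, since the palette $(\CCC,\omega)$ is recovered as $(S_\S,S_\tau)$, the collection $(\alg(\ccc))_\ccc$ as the arities of $S_{\alg}$, and both $\phi$ and $\alpha$ as the colour- and arity-components of the induced morphism; hence distinct circuit algebra morphisms induce distinct circuit operad morphisms. The only genuinely non-formal step — and the one I would treat with care — is the first: reorganising the list-indexed, $\Sigma$-equivariant data of $\alg$ into a $\fisinv$-presheaf, and checking that the product and contraction of \cref{prop. product and contraction prop} are equivariant morphisms of graphical species in the precise sense of \cref{def. external,defn: contraction}. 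Everything downstream of that is the dictionary (c1)--(c3),(e1) $\leftrightarrow$ (C1)--(C3),(\ref{eq. conn unit}) together with the recovery argument for faithfulness.
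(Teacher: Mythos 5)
Your proposal is correct and takes essentially the same approach as the paper, which gives no separate argument but simply asserts that the proposition ``follows immediately from'' \cref{def. co} and \cref{prop. product and contraction prop}; your write-up is exactly that immediate argument made explicit (re-indexing the $\Sigma$-equivariant list-graded collection as a graphical species via (\ref{eq. sigma to fiso}), translating (c1)--(c3) and (e1) into (C1)--(C3) and the unit axiom (\ref{eq. conn unit}), and recovering the palette, arities and morphism components for faithfulness). There is nothing to flag.
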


In fact, we have the following proposition, that will follow from \cref{unpointed modular operad} and \cref{lem. LT is Tk}, together with the classical theory of distributive laws \cite{Bec69}:

\begin{prop}	\label{prop: CO to MO} There are canonical monadic adjunctions 
	\begin{equation}\label{eq: adjunction diag}\small
		{\xymatrix@C = .5cm@R = .3cm{
				\nuCOE \ar@<-5pt>[rr]\ar@<-5pt>[dd]&\scriptsize{\top}&\COE\ar@<-5pt>[ll]\ar@<-5pt>[dd]\\
				\vdash&&\vdash\\
				\nuCSME \ar@<-5pt>[uu] \ar@<-5pt>[rr]&\scriptsize{\top}& \CSME \ar@<-5pt>[ll]\ar@<-5pt>[uu]}}\end{equation} between the categories of (non-unital) modular and product modular operads in $\E$.
	
\end{prop}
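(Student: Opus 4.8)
The plan is to realise the square as one face of the cube of Eilenberg--Moore adjunctions \eqref{eq. cube} attached to the iterated distributive law of \cref{thm. dist intro}, taken over the base $\nuCSME$. First I would record the four monadic identifications. The forgetful functor $\COE \to \CSME$ sends a circuit operad $(S,\boxtimes,\zeta,\epsilon)$ to the modular operad $(S,\diamond,\zeta,\epsilon)$ whose multiplication $\diamond$ is the one induced by $\boxtimes$ and $\zeta$ via \eqref{eq. forget mult} (this is exactly the ``circuit operads canonically admit a modular operad structure'' of \cref{ssec CO}); likewise $\nuCOE \to \nuCSME$ forgets $\boxtimes$ but keeps the induced non-unital multiplication, and the two horizontal functors $\COE \to \nuCOE$, $\CSME \to \nuCSME$ forget the unit-like morphism $\epsilon$. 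By \cref{unpointed modular operad} these forgetful functors exhibit $\nuCOE$ and $\nuCSME$ as the categories of $\TTk$- and $\TT$-algebras, and by \cref{lem. LT is Tk} the non-unital circuit operad monad factors as $\TTk = \LL\TT$. Writing the modular operad monad as $\DD\TT$ (the decomposition of \cite[Section~7]{Ray20} reviewed in \cref{sec. mod op}), the four categories become $\nuCSME = \GSE^{\TT}$, $\CSME = \GSE^{\DD\TT}$, $\nuCOE = \GSE^{\LL\TT}$ and $\COE = \GSE^{\LL\DD\TT}$.

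Next I would produce the square purely formally. Since $\TT$ is the innermost factor, the distributive laws of \cref{thm. dist intro} let $\DD$ and $\LL$ lift to monads $\widetilde\DD$ and $\widetilde\LL$ on the base $\nuCSME = \GSE^{\TT}$, and the Yang--Baxter compatibility of the iterated law supplies a distributive law between $\widetilde\DD$ and $\widetilde\LL$. Applying Beck's theorem \cite[Section~3]{Bec69} to this pair of monads on $\nuCSME$ yields a commuting square of strict monadic adjunctions with corners $\nuCSME$, $(\nuCSME)^{\widetilde\DD}=\CSME$, $(\nuCSME)^{\widetilde\LL}=\nuCOE$, and the composite corner $(\nuCSME)^{\widetilde\LL\widetilde\DD}$; by Cheng's iterated law \cite{Che11} this last corner is $\GSE^{\LL\DD\TT}=\COE$. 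This is precisely the asserted diagram, with all four legs monadic by construction. The monads $\TT,\DD,\LL$ and their distributive laws are built over $\Set$, but they are assembled from finite limits and graphical coproducts, so the same identifications and the same Beck square transport to $\GSE = \prE{\fisinv}$ for any $\E$ admitting these; I would present the $\Set$-case and note the general case is verbatim once the monads are known to exist over $\E$.

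The substance is thus pushed entirely into the two cited lemmas, and I expect the \emph{unit-adjoining} leg to be the main obstacle: showing that $\CSME \to \nuCSME$ (equivalently $\COE \to \nuCOE$) is monadic. Freely adjoining a unit-like $\epsilon$ subject to the unit axiom \eqref{eq. conn unit} while remaining compatible with contraction is exactly the \emph{problem of loops} (cf.\ \cref{rmk. pushout} and \cite[Section~6]{Ray20}): a naive free-monoid construction fails because the unit axiom forces identifications among contracted configurations. This is why the monad $\DD$, and with it the whole distributive-law apparatus, is required, and why the clean proof of the square must route through the cube \eqref{eq. cube} rather than through a direct free construction.
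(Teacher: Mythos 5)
Your proposal is correct and takes essentially the same route as the paper: the paper likewise obtains the square as the face over $\GSE^{\TT}$ of the cube of EM adjunctions (\ref{eq. cube}) arising from the distributive laws, using the identifications $\nuCSME \cong \GSE^{\TT}$, $\CSME \cong \GSE^{\DD\TT}$, $\nuCOE \cong \GSE^{\LL\TT}$, $\COE \cong \GSE^{\LL\DD\TT}$ together with Beck's theory, and with the same concrete adjoints (the right adjoints forget $\boxtimes$ and retain $\diamond = \zeta \circ \boxtimes$ as in (\ref{eq. forget mult}); the left adjoints are induced by the free graded monoid monad $\LL$). Only minor bookkeeping differs: the identification $\nuCSME \cong \GSE^{\TT}$ is \cref{prop. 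Talg} rather than \cref{unpointed modular operad}, and the paper constructs $\TT$, $\DD$, $\LL$ over a general $\E$ from the outset rather than transporting the $\Set$-valued constructions.
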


The right adjoints of the pairs $\CSME \rightleftarrows \COE$ (and $\nuCSME \rightleftarrows \nuCOE$) in \cref{prop: CO to MO} are induced by the forgetful functor $(S, \boxtimes, \zeta)\mapsto (S, \diamond, \zeta)$ where the multiplication $\diamond$ is defined by suitable compositions $\zeta \circ \boxtimes$ as in (\ref{eq. forget mult}). (See also \cref{ex. unit in Set CA}.)

The left adjoint is induced by the free graded monoid monad. If $S$ is a graphical species, then there is a graphical species $LS$ defined by
\[ LS_X = \mathrm{colim}_{(Y, f) \in \core{X \ov \fin}} \coprod_{y \in Y} S_{f^{-1}(y)}\] where $\core{X \ov \fin}$ is the maximal subgroupoid of the slice category $X \ov \fin$ whose objects are morphisms of finite sets $f \colon X \to Y$, and whose morphisms $(Y,f) \to (Y', f')$ are isomorphisms $g \cong Y \to Y'$ such that $ gf = f'$.

The free monoid structure induces the external product on $LS$, and, if $(S, \diamond, \zeta)$ is a (non-unital) modular operad, then the modular operad axioms imply that the pair of operations $\zeta$ and $\diamond$, define a contraction on $LS$. (See \cref{sec. iterated}.)

In particular, since units for the modular operadic multiplication are unique:
\begin{cor}\label{cor. inclusion CO nuCO}
	The canonical forgetful functors $\COE \to \nuCOE$ and $\CSM \to \nuCSM$ are inclusions of categories.
\end{cor}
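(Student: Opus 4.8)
The plan is to prove that each of these forgetful functors is injective on objects and faithful, which is exactly what is required to exhibit its domain as a subcategory of its codomain.

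Faithfulness is immediate from the definitions. By \cref{def. co}, a morphism in $\COE$ is by definition a morphism in $\nuCOE$ that in addition respects the unit-like morphism $\epsilon$, and analogously a morphism in $\CSM$ is a morphism in $\nuCSM$ that respects $\epsilon$. Thus on every hom-set the forgetful functor is literally the inclusion of the subset of unit-preserving morphisms into the set of all structure-preserving morphisms; in particular it is injective on morphisms, and well-definedness on objects is clear since the non-unital axioms form a subset of the unital ones.

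The substantive point is injectivity on objects, and here I would invoke uniqueness of units. A unital circuit operad is a non-unital circuit operad $(S, \boxtimes, \zeta)$ together with a unit-like $\epsilon \colon S_\S \to S_\two$ satisfying (\ref{eq. conn unit}), and the forgetful functor simply discards $\epsilon$. I would first observe that (\ref{eq. conn unit}) is precisely the unit axiom (\ref{eq. mult unit}) of \cref{defn: multiplication} for the multiplication $\diamond = \zeta \circ \boxtimes$ induced on $S$ by (\ref{eq. forget mult}). Hence any unit-like $\epsilon$ witnessing that $(S, \boxtimes, \zeta)$ is unital is in particular a unit for the induced multiplication $\diamond$, and so by the uniqueness remark following \cref{defn: multiplication} it is the unique such morphism. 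Consequently two unital circuit operad structures sharing the same underlying $(S, \boxtimes, \zeta)$ must carry the same unit, so $\COE \to \nuCOE$ is injective on objects. The case of $\CSM \to \nuCSM$ is identical, using directly that the unit of a modular operadic multiplication is unique when it exists. Combining injectivity on objects with faithfulness yields that each functor is an isomorphism onto a subcategory of its target, i.e.\ an inclusion of categories.

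I do not expect a genuine obstacle here: the argument is short once uniqueness of units is in hand. The only points requiring care are bookkeeping ones—confirming that the circuit operad unit condition (\ref{eq. conn unit}) really is an instance of the modular-operadic unit condition (\ref{eq. mult unit}), so that the earlier uniqueness statement applies verbatim, and making explicit that \emph{inclusion of categories} is being read as injective-on-objects-and-faithful rather than as a full embedding. One could additionally ask whether these inclusions are full, which would amount to showing that every structure-preserving morphism automatically preserves units; such a strengthening is not needed for the corollary as stated.
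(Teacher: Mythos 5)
Your proposal is correct and follows exactly the paper's (very terse) argument: the paper justifies the corollary with the single remark that ``units for the modular operadic multiplication are unique,'' which is precisely your chain of reasoning — the circuit operad unit condition (\ref{eq. conn unit}) is the unit axiom (\ref{eq. mult unit}) for the induced multiplication $\diamond = \zeta \circ \boxtimes$ of (\ref{eq. forget mult}), uniqueness of that unit gives injectivity on objects, and faithfulness is definitional since morphisms in $\COE$ (resp.\ $\CSM$) are just unit-respecting morphisms of the underlying non-unital structures. You have merely made explicit the bookkeeping the paper leaves to the reader.
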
 

In \cref{sec. iterated}, we shall see that the converse to \cref{prop. CA CO} is also true, and there is an equivalence of categories $\CA \simeq \CO$ (\cref{thm. CO CA}).

\begin{ex}\label{ex. wheeled prop CO}
(Recall Examples \ref{ex. directed Brauer diagrams}, \ref{ex. wheeled props} and \ref{ex. terminal coloured species}.)
By \cite{DHR20}, wheeled PROPs are equivalent to circuit algebras. Hence, by \cref{prop. CA CO}, wheeled PROPs in $\Set$ describe $\Set$-valued circuit operads.

The image of a wheeled PROP under the forgetful functor $\CO \to \CSM$ is its underlying \textit{wheeled properad} (see \cite{HRY15,JY15}).

\end{ex}

\newcommand{\defretract}[5]{\xymatrix{*[r]{#1} \ar@<1ex>[rrr]^-{#3} \ar@(ul,dl)[]_{#5} &&& #2 \ar@<1ex>[lll]^-{#4}}}

\section{Background on graphs}\label{s. graphs}

The combinatorics of modular operads have been fully described in \cite[Section~8]{Ray20} in terms of a category $\Klgr$ of graphs. The remainder of the present paper is concerned with an analogous construction for circuit operads.

This section reviews the definition of 
the category $\Gret$ of graphs and \'etale morphisms -- introduced in \cite{JK11}, and used in \cite{Koc18, HRY19a, HRY19b, Ray20} -- and describes how the graphs in this category are related to wiring diagrams. 
The interested reader is referred to \cite[Sections~3~\&~4]{Ray20} for more details and explicit proofs related to the graphical formalism.

\subsection{The \'etale category of graphs}\label{subs:feynman graphs}

Recall (from \cref{ex. Sigma}) that $\nn\defeq\{1,\dots, n\}$. Let $(m_1, \dots, m_k; n) \in \listm( \N) \times \N$.

Let $\overline f \in \WD (m_1, \dots, m_k; n)$ be a monochrome wiring diagram with underlying (open) Brauer diagram $f = (\tau_f,0) \in \BD(\sum_{i = 1}^k m_i, n)$ where $\tau_f$ is a pairing on the set $\partial f  = \nn \amalg (\coprod_{i = 1}^k \mm_i)$. 

Then $\overline f$ may equivalently be written as 
\begin{equation}\label{eq. wiring graph}
\Fgraphvar{\partial f}{\coprod_{i =1}^k \mm_i}{\mathbf{k}  }{\text{ inc}}{p}{\tau_{f}}
\end{equation} where $p \colon \coprod_{i =1}^k \mm_i \to \mm[k] $ is the canonical projection. 

By forgetting the orderings on $\mm[k]$ and each $\mm_i$, $1 \leq i \leq k$, we obtain the definition, originally due to Joyal and Kock \cite{JK11}, of a (Feynman) graph.

\begin{defn}\label{defn: graph}
	
	A \emph{graph} $\G$ is a diagram of finite sets
	\begin{equation} \label{eq. graph} \G = \Fgraph,\end{equation}
	such that $s\colon H \to E$ is injective, and $\tau\colon E\to E $ is an involution without fixed points.
	\end{defn}

Elements of $V$ are vertices of $\G$ and elements of $E$ are called \textit{edges} of $\G$. The set $\tilde E$ of $\tau $-orbits in $E$, where $\tilde e \in \tilde E$ is the orbit of an edge $e \in E$, is the set of connections in $\G$. Elements $h$ of the set $H$ -- of \textit{half-edges} of $\G$ -- will also be written as ordered pairs $h = (s(h), t(h)) \in E \times V$.

\begin{rmk}\label{geom real} By \cref{ex. manifold components}, a graph is a rule for gluing closed intervals along their boundaries. The geometric realisation of a graph $\G$ is the one-dimensional space $|\G|$, obtained by taking the discrete space $\{ *_v\}_{v \in V}$, and, for each $e \in E$, a copy $[0,\frac{1}{2}]_e$ of the interval $[0,\frac{1}{2}]$ subject to the identifications
	\begin{itemize}
		\item 
		$0_{s(h)}\sim *_{t(h)}$ for $h \in H$,
		\item 
		$(\frac{1}{2})_{e} \sim (\frac{1}{2})_{\tau e}$ for all $e \in E$. 
		
	\end{itemize}

\end{rmk}
\begin{defn}\label{defn: inner edges} \label{defn. stick comp}
	An \emph{inner edge} of $\G$ is an element $e \in E$ such that $\{e, \tau e\} \subset im(s)$. % and $\tau e \in im(s)$. 
The set $\EI \subset E$ of {inner edges} of $\G$ is the maximal subset of $im(s) \subset E$ that is closed under $\tau$. The set of \emph{inner $\tau$-orbits} $\tilde e \in \widetilde E$ with $e \in \EI$ is denoted by $\widetilde {\EI}$. 
Elements of the set $E_0 = E\setminus im(s)$ are \emph{ports} of $\G$. 

A \emph{stick component} of a graph $\G$ is a pair $\{e, \tau e\}$ of edges of $\G$ such that $e$ and $\tau e$ are both ports.

A graph $\G$ that is obtained, as in (\ref{eq. wiring graph}), from a wiring diagram in $\WD$ is called \emph{ordered}.

	\end{defn}

\begin{ex}\label{stick} 
Recall that there are unique open 
Brauer diagrams $id_1 \in \BD (1,1)$, $ \cap  = \coev{id_1} \in \BD(0, 2)$ and $ \cup  = \ev{id_1} \in \BD(2, 0)$, and hence also wiring diagrams $\overline{id_1} \in \WD(1;1)$, $\overline \cap \in \WD(-;2)$ and $\overline \cup_{(\two; \nul)} \in \WD(2;0)$ and $\overline \cup _{(\one, \one; \nul)}\in WD(1,1;0)$. These correspond, by (\ref{eq. wiring graph}), these describe the following graphs, each of which has one edge orbit:
	\begin{itemize}
		\item for $\overline{id_1} \in \WD(1;1)$, the corresponding graph is the `1-corolla' 
			\[ \xymatrix{
			 \C_{\one}\defeq&\mathbf 2\ar@(lu,ld)[] &  \one \ar[l] \ar[r]&\one }\] with one vertex, no inner edges, and one port  (see \cref{fig. one orbit}(a));
			\item for $\overline \cap \in \WD(-;2)$, the corresponding graph is the \textit{stick graph} $(\shortmid)$ 
		\[ \xymatrix{
		( \shortmid)\defeq&\mathbf 2\ar@(lu,ld)[] & \mathbf 0 \ar[l] \ar[r]&\mathbf  0}\] with no vertices, no inner edges and two ports (see \cref{fig. one orbit}(b)); 
		\item for $\overline \cup_{(\two; \nul)} \in \WD(2;0)$, the corresponding graph is the \textit{wheel graph} $\W$ 
	\[ \xymatrix{
		\W \defeq&\mathbf 2\ar@(lu,ld)[] & \two  \ar[l]_{id} \ar@{->>}[r]&\mathbf 1}\] with one vertex, one inner edge orbit and no ports (see \cref{fig. one orbit}(c));
		\item the graph corresponding to $\overline \cup _{(\one, \one; \nul)}\in WD(1,1;0)$ has two vertices, one inner edge orbit and no ports (\cref{fig. one orbit}(d)):
	\[ \xymatrix{
&\mathbf 2\ar@(lu,ld)[] & \two  \ar[l]_{id} \ar[r]^{id}&\two}.\] 
	\end{itemize}

The stick and wheel graphs $(\shortmid)$ and $\W$ are particularly important in what follows.

	\begin{figure}
		[htb!]
		\begin{tikzpicture}[scale =.85]
		%	\begin{tikzpicture}
		
			\node at (-1,2.5) {(a)};
		\draw (5,1)--(5,2);
		\node at (4.7,1.3) { \small{$2$}};
		\node at (4.7, 1.8) {\small{$ 1$}};
		%\node at (5,-1) {$|\C_{\{x\}}|$};

		\node at (3,2.5) {(b)};
		\draw (0,1)--(0,2);
	\node at (-.3,1.2) { \small{$1^\dagger$}};
		\node at (-.3, 1.9) {\small{$ 1$}};
			\filldraw 
		(0,1)circle (2pt);

		\node at (7,2.5) {(c)};
		\node at (9,1.5)
		{\begin{tikzpicture}
			%				\node at (0,-1) {$\W = \Wl[1]$};
			\draw (0,0) circle (15pt);
		\filldraw (.53,0) circle (2pt);
		\end{tikzpicture}};	
			\node at (12,2.5) {(d)};
			\draw (14,1)--(14,2);
				\filldraw 
			(14,1)circle (2pt);
				\filldraw 
			(14,2)circle (2pt);

		\end{tikzpicture}
		\caption{(Realisations of) graphs with one edge orbit: (a) the 1-corolla $\C_\one$; (b) the stick graph $(\shortmid)$; (c) the wheel graph $\W$ consists of a single inner edge orbit with one end vertex; (d) a single inner edge orbit with distinct end vertices.} \label{fig. one orbit}
	\end{figure}

\end{ex}

For any set $ X$, let $X^\dagger \cong X$ denote its formal involution. 
\begin{ex}\label{corolla}(See also \cref{fig. one orbit}(d).) %and \ref{fig: stick and corollas}(c).)
		The \emph{$X$-corolla} $\CX$ associated to a finite set $X$ has the form
	\[ \xymatrix{
		\CX: && *[r] {X \amalg X^\dagger }
		\ar@(lu,ld)[]_{\dagger}&& 
		X^\dagger \ar@{_{(}->}[ll]_-{\text{inc}}\ar[r]& \{*\}.
	}\]

\end{ex}

\label{Hv Ev} 
Let $\G$ be a graph with vertex and edge sets $V$ and $E$ respectively. For each vertex $v$, define $\vH \defeq t^{-1}(v) \subset H$ to be the fibre of $t$ at $v$, and let $\vE\defeq s(\vH) \subset E$.
\begin{defn}\label{defn: valency} %   %\srnote{1703}
	
	Edges in the set $\vE$ are said to be \emph{incident on $v$}.

	The map $|\cdot|\colon V \to \N$, $v \mapsto |v| \defeq |\vH|$, defines the \emph{valency} of $v$ and $\nV \subset V$ is the set of \emph{$n$-valent} vertices of $\G$. 
	
	A vertex $v$ is \emph{bivalent} if $|v| = 2$. An \emph{isolated vertex} of $\G$ is a vertex $v \in V(\G)$ such that $|v| = 0$. A \emph{bivalent graph} is a graph $\G$ with $V = \nV[2]$.

\end{defn}

Bivalent and isolated vertices are particularly important in \cref{ss. pointed}.

\label{Hn En} Vertex valency induces an $\N$-grading on the edge set $E$ (and half-edge set $H$) of $\G$: For $n \geq 1$, define $\nH \defeq t^{-1}(\nV)$ and $\nE \defeq s(\nH)$. Since $s(H) = E\setminus E_0 = \coprod_{n \geq 1} \nE$, 
\[E = \coprod_{n \in \N } \nE.\]% is not a port.

\begin{ex}
	The stick graph $(\shortmid)$ (\cref{stick}) has $E_0(\shortmid) = E(\shortmid) = \two$, and $\EI(\shortmid)   = \emptyset$. Conversely, the wheel graph $\W$ has $ E_0(\W) = \emptyset$ and $\EI(W) = E_2 (\W) = E(\W) \cong \two. $
	
	For all finite sets $X$, the $X$-corolla $\CX$ (see \cref{corolla}) with vertex $*$ has set of ports $E_0 (\CX) = X$ and $\vE[*]  = X^\dagger$. If $X  = \nn$ for some $n \in \N$, then $|*| = n$, so $ V = \nV$, and $E = \nE \amalg \nE[0]$ with $\nE[i] \cong X$ for $i = 0, n$.\end{ex}

Let $\diag$ be the small category $\label{grdiag}
\xymatrix{
	\bullet \ar@(lu,ld)[]_{}&\bullet \ar[l] _{}\ar[r]^-{}&\bullet}$. We are interested in the subcategory $\Gret$ of $\pr{\diag}$ whose objects are graphs and whose morphisms are described by: % So $\Gret \subset \pr{\diag}$. 
\begin{defn}
	\label{defn. etale morphism} 
	An \emph{\'etale morphism} $f \colon \G \to \G'$ of graphs is a commuting diagram of finite sets 
	\begin{equation}\label{morphism}
	\xymatrix{
		\G\ar[d]&&   E \ar@{<->}[rr]^-\tau\ar[d]&&E \ar[d]&& H \ar[ll]_s\ar[d] \ar[rr]^t&& V\ar[d]\\
		\G'&&  E' \ar@{<->}[rr]_-{\tau'}&&E'&& H' \ar[ll]^{s'} \ar[rr]_{t'}&& V' }\end{equation}
such that the right-hand square is a pullback. The category of graphs and \'etale morphisms is denoted by $\Gret$.
\end{defn}

It is straightforward to verify that the right hand square in (\ref{morphism}) is a pullback if and only if, for all $v \in V$, the map $\vE \to \sfrac{E'}{f(v)}$ induced by the restriction of the map $V\to V'$ is bijective. Hence, \'etale morphisms describe local isomorphisms, as the name suggests. 

\begin{ex}\label{ex: choose}
	For any graph $\G$ with edge set $E$, there is a canonical (up to unique isomorphism) bijection $\Gret(\shortmid, \G) \cong E$. The morphism $1 \mapsto e$ in $\Gret(\shortmid, \G)$ that \emph{chooses} an edge $e$ is denoted $ ch_e$, or $ch^\G_e$. 
	
\end{ex}

Pointwise disjoint union defines a symmetric strict monoidal structure on $\Gret$ with unit given by the empty graph $\oslash\defeq  \xymatrix{
{ \ \emptyset \ }
&
	\ \emptyset \ \ar[l]_-{}\ar[r]&\ \emptyset
}$. 

\begin{defn}\label{defn. conn graph}
	A graph is \emph{connected} if it cannot be written as a disjoint union of at least two non-empty graphs. A \textit{connected component} $\H$ of $\G$ is a maximal connected subdiagram of $\G$.
\end{defn}
In particular, %a connected component $ \H\subset \G$ inherits a graph structure from $\G$, 
any graph $\G$ is the disjoint union of its connected components, and the inclusion $\H \hookrightarrow \G$ of a connected component $\H$ of $\G$ describes a pointwise injective \'etale morphism. The empty graph $\oslash$ is not connected.

Disjoint union commutes with geometric realisation: for all graphs $\G$ and $\H$, $|\G \amalg \H| = |\G| \amalg |\H|$, and a graph $\G$ is connected if and only if it's geometric realisation $|\G|$ is a connected space.

\begin{ex}
	Let $X$ and $Y$ be finite sets. Write $\mathcal D_{X,Y} \defeq \CX \amalg \CX[Y]$ for the disjoint union of $X$ and $Y$-corollas:
	\[ \xymatrix{
		*[r] {(X\amalg Y) \amalg (X\amalg Y)^\dagger }
		\ar@(lu,ld)[]_{\dagger}&& &
		(X \amalg Y)^\dagger \ar@{_{(}->}[lll]_-{\text{inc}}\ar[r]^{t_{X,Y}}& \{*_X, *_Y\}
	}\] where the arrow $t_{X,Y}$ is the obvious projection $X^\dagger \to *_X$, $Y^\dagger \to *_Y$. 	
This has ports $X \amalg Y = E_0(\CX)\amalg E_0(\CX[Y])$ and no internal edges. The canonical inclusions $\iota_X \colon \CX \to \mathcal D_{X, Y}  \leftarrow \CX[Y] \colon \iota_Y$ are \'etale.

\end{ex}

\begin{ex}\label{Shrubs}
	Up to isomorphism, $(\shortmid)$ is the only connected graph with no vertices.
	As in \cite{Ray20} (and following \cite{Koc16}) a \textit{shrub} $\mathcal S $ is a disjoint union of stick graphs.  %Since $H(\mathcal S) = V(\mathcal S)  = \emptyset$ for all shrubs $\mathcal S$, 
	Any commuting diagram of the form (\ref{morphism}) where $\G = \mathcal S$ is a shrub trivially satisfies the pullback condition, and hence defines a morphism in $\Gret$.

\end{ex}

\begin{ex}
\label{deg loop}   

The pair of parallel morphisms $id, \tau\colon (\shortmid) \rightrightarrows (\shortmid)$ given by the endomorphisms of $(\shortmid)$ in $\Gret$ has coequaliser $* \leftarrow \emptyset \rightarrow \emptyset$ in $\pr{\diag}$. Since this is not a graph, the morphisms $id_{(\shortmid)}$ and $ \tau$ do not admit a coequaliser in $\Gret$. This provided the key motivating example for \cite{Ray20}.

The two endomorphisms $id_\W, \tau_\W \colon \W \to \W$, viewed as morphisms of presheaves on $\diag$, have coequaliser $* \leftarrow * \rightarrow *$ which is terminal in $\pr{\diag}$, and is not a graph. In particular, $\Gret$ does not have a terminal object and is therefore not closed under (finite) limits.

\end{ex}

As \cref{deg loop} shows, $\Gret$ does not, in general, admit finite colimits. However, if $\G $ is a graph and $ e_1, e_2 \in E_0$ are ports of $\G$ with $e_1 \neq \tau e_2$, then the colimit $\G^{e_1 \ddagger e_2}$ of the parallel morphisms \begin{equation}
\label{eq. dagger graph}
ch_{e_1}, ch_{e_2} \circ \tau \colon (\shortmid) \rightrightarrows \G
\end{equation} is the graph described by identifying $e_1 \sim \tau e_2$ and $e_2 \sim \tau e_1$ in the diagram (\ref{eq. graph}) defining $\G$.

\begin{ex}
	The graph $\C_\two^{e\ddagger \tau e}$ obtained by identifying the ports of the $\two$-corolla is isomorphic to the wheel graph $\W$. 
\end{ex}

\begin{ex}\label{ex: N graph} More generally, for $X$ a finite set with distinct elements $x$ and $y$, the graph $%\C_X^{x \ddagger y} \defeq 
	\C_X^{x \ddagger y}$ %is the quotient of the corolla $\CX$ obtained by identifying the edge pairs $x\sim \tau y$ and $y \sim \tau x$. It 
	has ports in $E_0 = X\setminus \{x,y\}$, one inner $\tau$-orbit $\{x,y\}$ (bold-face in \cref{fig: MN}), and one vertex $v$:
	\[\xymatrix{
		*[r] { 
		(	X \setminus \{x,y\}) \amalg X^\dagger}
		\ar@(lu,ld)[]_{\tau} &&&  
			X^\dagger \ar@{_{(}->}[lll]_-{s} \ar[r]^-{t}& \{v\} , }\]
		where $\tau x^\dagger = y^\dagger $ and $\tau z = z^\dagger$ for $z \in X \setminus \{x,y\}$. 
	Graphs of the form 	$\C_X^{x \ddagger y}$ encode formal contractions in graphical species.

\end{ex}

Since the monoidal structure on $\Gret$ is induced by the cocartesian monoidal structure on diagrams of finite sets, colimits in $\Gret$, when they exist, commute with $\amalg$. In particular, 
%\begin{ex}\label{ex: first gluing example} 
for any graphs $\G$ and $ \H$ and distinct ports $e_1, e_2$ of $\G$, 
\begin{equation}
\label{eq. C3 motivation} \G^{e_1 \ddagger e_2} \amalg \H = (\G \amalg \H)^{e_1 \ddagger e_2}.
\end{equation}

\begin{ex}\label{ex: M graph}
In particular, recall that $\mathcal D_{X,Y}$ is the disjoint union $\CX \amalg \CX[Y]$. Let $x \in X$, and $y \in Y$. By identifying the edges $x \sim \tau y, y \sim \tau x$, we obtain a graph $\mathcal D_{X,Y}^{x\ddagger y} \defeq \left(\CX \amalg \CX[Y]\right)^{x \ddagger y}$, that has two vertices and one inner edge orbit $\{x,y\}$, highlighted in bold-face in \cref{fig: MN}.
\[\xymatrix{
	*[r] { \left(\small
		{
			(X \amalg Y)\setminus \{x,y\} \amalg (X \amalg Y)^\dagger 
		}\right)
	}
	\ar@(lu,ld)[]_{\tau} &&&&
	{ \left(\small
		{ (X \amalg Y)^\dagger 
			}\right)
	}\ar@{_{(}->}[llll]_-{s} \ar[rr]^-{t}&& \small{\{v_X, v_Y\}}}\] with $s$ the obvious inclusion and the involution $\tau$ described by $ z \leftrightarrow z^\dagger$ for $z \in (X\amalg Y)\setminus\{x,y\} $ and $x^\dagger \leftrightarrow y^\dagger$. The map $t$ is described by $t^{-1}(v_X ) = \left(X\setminus\{x\}\right)^\dagger \amalg \{y\}$ and $t^{-1}(v_Y) = \left(Y\setminus\{y\}\right)^\dagger\amalg \{x\}$. 

In the construction of modular operads, graphs of the form $\mathcal D_{X,Y}^{x\ddagger y}$ are used to encode formal multiplications 
 in graphical species. \end{ex}

\begin{figure}[htb!] %\label{multiplication graph}
\begin{tikzpicture}

\node at(0,0){
	\begin{tikzpicture}{
		\node at (6,0) 
		{\begin{tikzpicture}[scale = 0.5]
			\filldraw(0,0) circle(3pt);
			\foreach \angle in {0,120,240} 
			%	\foreach \angle in {-0,90,180,270} 
			{
				\draw(\angle:0cm) -- (\angle:1.5cm);

			}
			\draw [ ultra thick] (0,0) -- (1.5,0);
			\node at (.5, -0.3) {\tiny $x$};
			%  \node at (0.3, 1) {};
			\node at (2.2, -0.3) {\tiny$ y$};

			\end{tikzpicture}};
		
		\node at (7,0){\begin{tikzpicture}[scale = 0.5]
			\filldraw(0,0) circle(3pt);
			%	\foreach \angle in {60,180, 300} 
			\foreach \angle in {-0,90,180,270} 
			{
				\draw(\angle:0cm) -- (\angle:1.5cm);

			}
			\draw [ ultra thick] (0,0) -- (-1.5,0);
			% \node at (-1, -0.3) {\tiny 2};
			%  \node at (1, 0.8) {\tiny 5};
			%\node at (1, -0.8) {\tiny 4};

			\end{tikzpicture}};

		\node at (6.5,-2){ |$\mathcal D_{X,Y}^{x\ddagger y}$|};
}\end{tikzpicture}};

\node at (5, 0){	\begin{tikzpicture}

\node at (5,0) 
{\begin{tikzpicture}[scale = 0.5]
	\filldraw(0,0) circle(3pt);
	\draw (0,-1.5)--(0,1.5);
	\draw[ultra thick] 
	(0,0)..controls (3,3) and (-3,3)..(0,0);
	\node at (-1,.6) {\tiny $x$};
	% \node at (0.3, 1) {\tiny 2};
	\node at (1, .6) {\tiny $y$};
	%\node at (0.3,-1) {\tiny 2};
	\end{tikzpicture}};

\node at (5,-2){ |$\C_{X}^{x \ddagger y}$|};

\end{tikzpicture}
};
\end{tikzpicture}
\caption{ Realisations of $\mathcal D_{X,Y}^{x\ddagger y}$ and $\C_{X}^{x\ddagger y}$ for $X \cong \mathbf{4}, \ Y \cong \mm[3]$.} 
\label{fig: MN}
\end{figure}
\label{Hv Ev}    

In general, given ports $e_1, e_2$ of a graph $\G$ such that $e_1 \neq \tau e_2$, then $\G^{e_1 \ddagger e_2}$ is formed by `gluing' the ports $e_1$ and $e_2$ (see \cref{fig: MN}). As the following examples shows, the result of forming $\G^{e_1 \ddagger e_2 }$ is somewhat surprising when the ports $e_1 = \tau e_2$ are the edges of a stick component:

\begin{ex}\label{ex. stick colimit} 
	 Observe that, 
	 $ch_1^{(\shortmid)} \colon (\shortmid) \to (\shortmid)$, $1 \mapsto 1$ is the identity on $(\shortmid)$ and $ch_2^{(\shortmid)} \colon (\shortmid) \to (\shortmid)$, $1 \mapsto 2$ is precisely $\tau$. Hence 
		$ ch_2^{(\shortmid)} \circ \tau = \tau^2 = id_{\shortmid}, $
 and therefore \[ (\shortmid)^{1 \ddagger 2} = (\shortmid).\]
 
 It follows that, if a pair of ports $e_1, e_2 = \tau e_1 \in E_0(\G)$ form a stick component of a graph $\G$ (\cref{defn. stick comp}), then $\G^{e_1 \ddagger e_2} \cong \G$.
 
	\end{ex}

\begin{defn}
	\label{lem. mono}
A graph \emph{embedding} is a morphism $f \in \Gret (\G, \H)$ that factors %, up to unique isomorphism, 
as 
\[ \G \to \G^{e_1 \ddagger e'_1, \dots, e_k \dagger e'_k} \hookrightarrow \H \] where the first morphism describes a colimit $\G \to \G^{e_1 \ddagger e'_1, \dots, e_k \dagger e'_k}$ of pairs of parallel maps 
\[ch_{e_i}, ch_{e'_i} \circ \tau \colon (\shortmid) \rightrightarrows \G,\] (with $e_1, e'_1, \dots, e_k, e'_k$ distinct ports of $\G$), and the second morphism is an inclusion (pointwise injection) in $\Gret$. 
\end{defn}

Let $\G = \G' \amalg \mathcal S$, where $\mathcal S$ is a shrub and each connected component of $\G'$ has non-empty vertex set.  The following follows immediately from the definition of embeddings. 

\begin{lem}\label{lem. embedding}
A morphism $f \in \Gret (\G, \H)$ is an embedding if and only if:
\begin{itemize}
	\item the images $f(\G')$ and $f(\mathcal S)$ are disjoint in $\H$;
	\item $f$ is injective on vertices and half edges;
	\item the restriction of $f$ to $\mathcal S$ is injective.
\end{itemize}
Embeddings form a subcategory in $\Gret$.
\end{lem}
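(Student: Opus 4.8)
The plan is to read the conclusion directly off the defining factorisation of an embedding in \cref{lem. mono}, the crucial structural observation being that forming a colimit $\G \to \G^{e_1 \ddagger e'_1, \dots, e_k \ddagger e'_k}$ of port-gluings quotients \emph{only} the edge set. Since each glued edge $e_i, e'_i$ is a port, i.e.\ lies in $E_0 = E \setminus \mathrm{im}(s)$, the identifications $e_i \sim \tau e'_i$ and $e'_i \sim \tau e_i$ never merge two distinct elements of $\mathrm{im}(s)$; hence the gluing map $q$ is the identity on the vertex set $V$ and on the half-edge set $H$, and a surjection on $E$. Thus any embedding $f = \iota \circ q$, with $\iota$ a pointwise injection, is on $V$ and $H$ just an injection precomposed with a bijection.

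For the forward implication I would proceed as follows. Injectivity of $f$ on vertices and half-edges is then immediate. For the two shrub conditions, I would use that the parallel maps $ch_{e_i}, ch_{e'_i}\circ\tau\colon (\shortmid)\rightrightarrows \G$ only touch ports, together with the no-op phenomenon of \cref{ex. stick colimit} (gluing the two ends of a single stick component is the identity): I would show that for a morphism of the prescribed form the gluings can, without loss of generality, be taken among the ports of $\G'$, so that $\iota$ carries the stick components of $\mathcal{S}$ injectively to edge-orbits of $\H$ disjoint from $\iota(\G')$. Injectivity of $\iota$ then yields both that $f\restriction_{\mathcal{S}}$ is injective and that $f(\G')$ and $f(\mathcal{S})$ are disjoint in $\H$.

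For the backward implication, assuming the three conditions, I would reconstruct the factorisation from $f$ itself. The candidate gluing data is the set of pairs of distinct ports $(e,e')$ of $\G'$ with $f(e) = \tau_\H f(e')$ and $\{f(e), f(e')\}\subseteq \mathrm{im}(s_\H)$ (so that $f$ folds them onto an inner $\tau$-orbit of $\H$, cf.\ \cref{defn: inner edges}); let $q\colon \G \to \G^{e_1\ddagger e'_1,\dots}$ be the induced colimit, giving a unique $\bar f$ with $f = \bar f\circ q$. It then remains to check that $\bar f$ is a pointwise injection. On vertices and half-edges this is condition (2) transported across the bijection $q$. On edges I would invoke the étale pullback condition of \cref{defn. etale morphism} — rigid on $\G'$ precisely because each component of $\G'$ has a vertex, forcing $\sfrac{E}{v}\xrightarrow{\cong}\sfrac{E'}{f(v)}$ — to conclude that the only coincidences $f(x)=f(y)$ among edges of $\G'$ are those already recorded by the chosen gluings, while conditions (1) and (3) rule out any coincidence involving a stick edge of $\mathcal{S}$. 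Finally, the subcategory claim follows formally: the identity is the embedding with empty gluing set and identity inclusion, and a composite of two embeddings is rearranged, using that gluing commutes with disjoint union, (\ref{eq. C3 motivation}), and that pointwise injections preserve ports, into a single colimit of port-gluings followed by a single pointwise injection.

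The hard part will be the shrub bookkeeping. Because a stick carries no vertices, the étale (pullback) condition imposes \emph{no} constraint on it, so a stick may legitimately map onto an inner edge-orbit of $\H$; moreover, gluing a stick port to a port of $\G'$ would absorb the stick into $\G'$ and destroy monicity of $f$. The two conditions concerning $\mathcal{S}$ — disjointness of $f(\G')$ and $f(\mathcal{S})$, and injectivity of $f\restriction_{\mathcal{S}}$ — are exactly the constraints excluding these degeneracies, and the care in the proof lies in verifying that they do so, the rigid part $\G'$ being controlled directly by the étale condition and condition (2).
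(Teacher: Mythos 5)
Your structural observation (port-gluing is the identity on vertices and half-edges and only quotients the edge set) is correct, and your backward direction is essentially sound: conditions (1)--(3) together with injectivity of $s_{\H}$ really do force every edge identification under $f$ to be recorded by a matched pair of ports of $\G'$, and gluing exactly those pairs yields the required factorisation. The genuine gap is in your forward direction, at the step ``the gluings can, without loss of generality, be taken among the ports of $\G'$''. Under \cref{lem. mono} as literally stated this is false, not merely unproved. Take $\G = \CX \amalg (\shortmid)$ with stick edges $\{a,\tau a\}$, and glue $x \ddagger a$ for some port $x$ of $\CX$: the colimit $\G^{x \ddagger a}$ is isomorphic to $\CX$ (the stick is absorbed into the orbit $\{x, x^{\dagger}\}$), so the quotient map $\G \to \CX$ factors as a port-gluing followed by an isomorphism and is therefore an embedding in the sense of \cref{lem. mono}; yet it sends $\mathcal{S}$ onto $\{x, x^{\dagger}\} \subseteq f(\G')$, violating the first condition. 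Likewise, gluing two distinct stick components end to end is, per \cref{lem. mono}, an embedding whose restriction to $\mathcal{S}$ is not injective, violating the third. \cref{ex. stick colimit} only neutralises gluing the two ends of a \emph{single} stick to each other, so it cannot eliminate these gluings; and your suggestion that such gluings are excluded because they would ``destroy monicity of $f$'' does not work, because embeddings are not required to be monomorphisms --- that is precisely the point of the remark following \cref{lem. embedding}.

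In fairness, you were set a task the paper itself dodges: its entire ``proof'' is the assertion that the lemma follows immediately from the definition, and the obstruction you hit is a genuine mismatch in the text rather than an artefact of your strategy. As stated, \cref{lem. mono} is strictly weaker than the three conditions, so the ``only if'' direction of \cref{lem. embedding} fails; and one cannot simply adopt the three conditions as the definition either, since they are not closed under composition: the pointwise injection $\CX \amalg (\shortmid) \to \CX \amalg \CX[Y]$ (with $Y$ a singleton, the stick mapping onto the edge orbit $\{y, y^{\dagger}\}$ of $\CX[Y]$) and the gluing quotient $\CX \amalg \CX[Y] \to \mathcal D^{x \ddagger y}_{X,Y}$ both satisfy (1)--(3), but their composite sends $\mathcal{S}$ onto the inner orbit of $\mathcal D^{x \ddagger y}_{X,Y}$, which lies inside the image of $\G'$. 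The lemma becomes true --- and your backward argument plus your structural observation becomes a complete proof --- exactly when the gluing data in \cref{lem. mono} is restricted to pairs of ports of $\G'$. That restriction is what the rest of the paper implicitly uses, but it is not what \cref{lem. mono} says, and it is the step your proposal assumes rather than establishes.
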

 
\begin{rmk}
The name `embedding' comes from \cite{HRY19a, HRY19b}. In \cite[Proposition~4.8]{Ray20}, I mistakenly identified the graph morphisms satisfying the conditions of \cref{lem. embedding} with the monomorphisms in $\Gret$, and used the term `monomorphism' to refer to `embeddings' throughout the paper. Whilst all monomorphisms are embeddings, the converse does not hold, as explained in \cite[Section~1.3]{HRY19a} (see, in particular, their Example 1.23). The last part of the given proof of \cite[Proposition~4.8]{Ray20} holds only when we consider parallel morphisms from a graph with no stick components.

As a consequence, statements about graph `monomorphisms' in \cite{Ray20} should actually refer to graph `embeddings'. The substance is unchanged. 

\end{rmk}

\begin{ex}\label{def Lk} 
	For $k \geq 0$, the \emph{line graph} $\Lk$ is the connected bivalent graph (illustrated in \cref{fig. line and wheel}) with ports in $ E_0 = \{1_{\Lk}, 2_{\Lk}\}$, and 
	\begin{itemize}
		\item ordered set of edges $E (\Lk)= (l_j)_{j = 0}^{2k+1}$ where $l_0 = 1_{\Lk} \in E_0$ and $l_{2k+1} = 2_{\Lk} \in E_0$, and the involution is given by $\tau (l_{2i}) = l_{2i +1}$, for $0 \leq i \leq k$, 
		\item ordered set of $k$ vertices $ V(\Lk) = (v_i)_{i = 1}^k $, such that $ \vE[v_i] = \{ l_{2i -1}, l_{2_i}\}$ for $1 \leq i \leq k$.
	\end{itemize}
	So, $\Lk$ is described by a diagram of the form 
	$\Fgraphvar{\ \two \amalg 2(\mathbf k)\ }{2(\mathbf k)}{\mathbf k.}{}{}{} $
	
	The line graphs $(\Lk)_{k \in \N}$ may be defined inductively by gluing copies of $\CX[\two]$:
	\[ \begin{array}{llll}
	\Lk[\nul]& = & (\shortmid) &\\
	\Lk[k+1] & \cong  & (\Lk \amalg \C_\two)^{ 2_{\Lk}\ddagger 1_{\C_\two}}, & k \geq 0.
	\end{array}\]
\end{ex}

\begin{ex}%   
	
	\label{wheels}
For $m \geq 1$, the {wheel graph} $\Wm$ (illustrated in \cref{fig. line and wheel}) is the connected bivalent graph with no ports, obtained as the coequaliser in $\pr{\diag}$ of the morphisms $ch_{ 1_{\Lk[m]}}, ch_{2_{\Lk[m]}} \circ \tau \colon (\shortmid) \rightrightarrows \Lk[m]$. So, $\Wm$ has
	\begin{itemize}
		\item $2m$ cyclically ordered edges $E(\Wl) = (a_j)_{j = 1}^{2m}$, such that the involution satisfies $\tau (a_{2i}) = a_{2i +1}$ for $0 \leq i \leq m$ (where $a_0 = a_{2m}$),
		\item $ m$ {cyclically} ordered vertices $V(\Wl) = (v_i)_{i = 1}^m$, that $ \vE[v_i] = \{ a_{2i -1}, a_{2i }\}$ for $1 \leq i \leq m$, %   
	\end{itemize} 
and is described by a diagram of the form $\Fgraphvar{\ 2(\mathbf m)\ }{2(\mathbf m)}{\mathbf m.}{}{}{}$

	\begin{figure}[htb!]
	
		\begin{tikzpicture}
		\node at (0,-1.5) {$\mathcal L^4$};
		\node at (0,0) { \begin{tikzpicture}[scale = 0.4]
			\draw (0,0) -- (10,0);
			\filldraw (2,0) circle (3pt);
			\filldraw (4,0) circle (3pt);
			\filldraw		(6,0) circle (3pt);
			\filldraw		(8,0) circle (3pt);
			\node at (0,-.7) {\tiny $l_0$};
			\node at (1.7,-.7) {\tiny $l_1$};
			%\node at (6,-.7) {\tiny $\epsilon_c$};
			\node at (10,-.7) {\tiny $l_9$};

			\end{tikzpicture}
		};
		\node at (6,-1.5) {$\mathcal W^4$};
		\node at(6,0){\begin{tikzpicture}[scale = 0.4]
			\draw (0,0) circle (2cm);
			\filldraw (2,0) circle (3pt);
			\filldraw (0,2) circle (3pt);
			\filldraw		(-2,0) circle (3pt);
			\filldraw		(0,-2) circle (3pt);
			
			\node at (-.6,2.2) {\tiny{$a_8$}};	
			\node at (.6,2.2) {\tiny{$a_0$}};
			%\node at (-5,0) {$\mathcal W^4$};
			\end{tikzpicture}};
		\node at (10,-1.5) {$\W = \Wl[1]$};
		\node at(10,0){\begin{tikzpicture}
			%				\node at (0,-1) {$\W = \Wl[1]$};
			\draw (0,0) circle (15pt);
			\filldraw (.53,0) circle (1.5pt);
			
			\end{tikzpicture}};
		
	\end{tikzpicture}
	
	\caption{Line and wheel graphs.}
	\label{fig. line and wheel}
\end{figure}

\end{ex}

By \cite[Proposition~4.23]{Ray20}, a connected bivalent graph is isomorphic to $\Lk$ or $\Wl$ for some $k, m$. % or $m \geq 1$.

\subsection{The element category of a graph}

Intuitively, a graph may be thought of as a collection of corollas glued together along inner edges. This idea is made precise by the element category $\elG$ of a graph $\G$.

	Observe first that $\Gret(\shortmid, \shortmid) \cong \fisinv(\S, \S) $ canonically, so the maps $\S \mapsto (\shortmid)$ and $X \mapsto \CX$ describe a full inclusion of categories $\fisinv \hookrightarrow \Gret$. 
	The restriction of the Yoneda embedding to the image of $\fisinv$ in $\GS$ induces a functor $ \yet \colon \Gret \to \GS$, given by 
 \begin{equation}\label{eq. yet}
 \yet \G  \colon \C \longmapsto \Gret (\C, \G) \text{ for } \C \in \fisinv.
 \end{equation}
	
%\end{ex}

\begin{defn}
	\label{def. elements of a graph} The \emph{element category} $\elG$ of a graph $\G$ is the category of elements $\elG \defeq \elG[\yet \G] \cong \fisinv \ov \G$. % is called the \emph{category of elements of $\G$} and denoted by $\elG$.
\end{defn}

	Let $\G$ be a graph with edge set $E$. There is a maximal shrub $\mathcal S(E) \subset \G$:		\[ \xymatrix{
		     \mathcal S (E)\ar[d]_f&&   E \ar@{<->}[rr]^-\tau\ar@{=}[d]&&E \ar@{=}[d]&& \emptyset \ar[ll]\ar[d]\ar[rr]&& \emptyset \ar[d]\\
		       \G&&  E \ar@{<->}[rr]_-{\tau}&&E&& H' \ar[ll]^{s} \ar[rr]_{t}&& V}\]
	
whose connected components are indexed by the set of orbits $\tilde E$ of $\tau$ in $E$. For each orbit $\tilde e \in \tilde E$, let $\ese \colon (\shorte) \hookrightarrow \G$ be the inclusion of the stick graph $\ \xymatrix{
		\{e, \tau e\}\ar@(lu,ld)[] & \emptyset \ar[l] \ar[r]&\emptyset.} $ 

	Recall that, for each $v \in V$,  $\vE \defeq s(t^{-1}(v))$ is the set of edges incident on $v$. Let $\mathbf{v} = (\vE)^\dagger$ denote its formal involution. Then the corolla $\Cv$ is given by 
	\[ \Cv  =  \qquad  \xymatrix{	
		*[r] { \left(\vE \amalg ({\vE})^\dagger \right)}\ar@(lu,ld)[] && \vH \ar[ll]_-s \ar[r]^t&{\{v\}}. 
	}\] 
	
	The inclusion $\vE \hookrightarrow E$ induces a morphism $\esv^\G$ or $\esv \colon \Cv \to \G$. 
	Observe that, whenever there is an edge $e$ such that $e$ and $\tau e $ are both incident on the same vertex $v$ -- so $\{e, \tau e\}$ forms a \textit{loop} at $v$ -- then $\esv$ is not injective on edges.
	
	If $ \vE$ is empty -- so $\Cv$ is an isolated vertex -- then $ \Cv \hookrightarrow \G$ is a connected component of $\G$.
	
	\begin{defn}\label{def. neighbourhood v}
		A \emph{neighbourhood} of a vertex $v \in V$ is an embedding $u \colon \H \to \G$ such that $\esv \colon \Cv \to \G$ factors through $u$. 
	\end{defn}

\begin{defn}
\label{def: essential}
Let $\G$ be a graph. An \emph{essential morphism} (or \emph{essential subgraph of $\G$}) is a morphism of the form $\ese \colon (\shorte) \to \G$ ($\tilde e \in \widetilde E$) or $\esv \colon \Cv \to \G$ ($ v \in V$). 
The full subcategory $\esG \hookrightarrow \Gret \ov \G$ on the 
essential subgraphs is called the \emph{essential category of $\G$}. 

\end{defn}

 Non-identity morphisms in $\esG$ are in canonical bijection with half-edges of $\G$. 
 
 By definition, $\esG(\ese, \esv) \subset \Gret (\shorte, \Cv)$ for all edges $e$ and vertices $v$ of $\G$. If
 $h = (e, v) \in H$, then $e  \in E(\shorte) \cap E( \Cv)$, and $\esh \colon \ese \to  \esv$ denotes the (unique) morphism in $\esG$ that fixes $ e $. 
 
The following is proved in \cite[Lemma~4.17~\&~Lemma~4.25]{Ray20}:
\begin{lem}\label{lem: essential cover}
 There is an equivalence of categories $\elG \simeq \esG$ and, for all graphs $\G$,
\[ \G  = \mathrm{colim}_{ (\C, b) \in \esG} \C  =  \mathrm{colim}_{ (\C, b) \in \elG} \C,\] canonically. It follows that the embedding $\yet \colon \Gret \hookrightarrow \GS$ of (\ref{eq. yet}) is fully faithful.
\end{lem}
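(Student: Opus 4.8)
Looking at Lemma 4.x (the final statement), I need to prove two things: (1) the equivalence of categories $\elG \simeq \esG$, and (2) that $\G$ is canonically the colimit of the corollas and stick graphs over the essential (or element) category, from which full faithfulness of $\yet$ follows. Let me think about how to approach this.

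The statement claims three things: $\elG \simeq \esG$, that $\G = \mathrm{colim}_{(\C,b)\in\esG}\C = \mathrm{colim}_{(\C,b)\in\elG}\C$, and that $\yet$ is fully faithful.

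Let me reconstruct the structures. $\elG = \fisinv \ov \G$ (via Yoneda $\yet\G$), whose objects are étale morphisms from $(\shortmid)$ (edges) and from corollas $\CX$ into $\G$. The essential category $\esG$ is the full subcategory of $\Gret \ov \G$ on the essential subgraphs $\ese: (\shorte)\to\G$ and $\esv: \Cv\to\G$.

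Let me plan the proof.

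First, the equivalence $\elG \simeq \esG$. Objects of $\elG = \fisinv\ov\G$ are morphisms $\C\to\G$ with $\C\in\fisinv$, i.e. $\C$ is either $\S$ (giving edges via $\Gret(\shortmid,\G)\cong E$) or a finite set $X$ (giving étale maps $\CX\to\G$). The key point is to match these with essential subgraphs. An étale map $\CX\to\G$ is determined by where it sends the vertex $*$ and must be a local isomorphism, so it factors through some $\Cv$ for a vertex $v$ with $|v| = |X|$, and indeed corresponds to an essential morphism $\esv$ up to the choice of bijection $X\cong \vE$. I would construct a functor $\elG\to\esG$ sending an edge-element $ch_e: (\shortmid)\to\G$ to $\ese$, and a corolla-element $\CX\to\G$ to $\esv$ where $v$ is the image vertex. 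I'd check this is essentially surjective (every essential subgraph arises) and fully faithful on hom-sets.

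The colimit claim. The plan is to use the element-category decomposition. Since $\yet\G$ is a presheaf, the general fact (the co-Yoneda / density formula) gives $\yet\G \cong \mathrm{colim}_{(\C,b)\in\elG}\, \yet\C$ in $\GS$. Then because $\yet$ is an embedding restricting the Yoneda embedding along $\fisinv\hookrightarrow\Gret$, and the essential subgraphs are precisely stick graphs and corollas (whose images under $\yet$ are representables), the task reduces to showing this colimit of representables is computed in $\Gret$ by gluing corollas along sticks. I'd compute the colimit componentwise on the diagram $\diag$: the colimit of the edges and corollas over $\esG$ reproduces exactly the finite-set diagram $(E\rightleftarrows H\to V)$ defining $\G$, using that the maximal shrub $\mathcal S(E)$ (indexed by $\tilde E$) and the corollas $\Cv$ (indexed by $V$) cover $\G$, glued precisely along the half-edge morphisms $\esh:\ese\to\esv$. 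The half-edges index the non-identity morphisms of $\esG$, so the colimit cocone identifies each edge $e$ incident on $v$ in $\Cv$ with the corresponding stick $(\shorte)$, reconstructing $s, t, \tau$.

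Full faithfulness of $\yet$ then follows by the standard density argument sketched in \cref{ex. dense}: once every graph $\G$ is the canonical colimit of the forgetful functor $\fisinv\ov\G\to\Gret$, i.e. of its representable essential pieces, \cite[Proposition~5.1]{Lam66} gives that $\fisinv\hookrightarrow\Gret$ is dense, which is exactly the statement that $\yet$ is fully faithful.

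The main obstacle I expect is the careful treatment of \textbf{loops} and \textbf{stick components}, exactly the subtleties flagged in \cref{deg loop} and \cref{ex. stick colimit}. When an edge $e$ and $\tau e$ are both incident on the same vertex $v$, the map $\esv:\Cv\to\G$ fails to be injective on edges, so I must verify the colimit still correctly identifies the two half-edges of the loop rather than producing a degenerate diagram; and the colimit must be taken in $\Gret$, which is not cocomplete (\cref{deg loop}), so I must confirm the relevant colimit genuinely exists and is a graph. Handling $(\shortmid)$ itself — whose two endomorphisms $id,\tau$ are the images of $ch_1, ch_2$ — and verifying the cocone condition there without collapsing (as in \cref{ex. stick colimit}) will require the most care. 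The remaining verifications (functoriality, the componentwise colimit computation on $\diag$) are routine once the essential category and the shrub/corolla cover are set up correctly, and I would cite \cite[Lemma~4.17~\&~Lemma~4.25]{Ray20} for the parts already established there.
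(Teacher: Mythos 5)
Your proposal is sound, but note that the paper itself contains no proof of this lemma: it is quoted verbatim from \cite[Lemma~4.17~\&~Lemma~4.25]{Ray20}, which is the same citation you fall back on at the end, and your reconstruction (matching elements to essential subgraphs, gluing corollas along sticks, density via \cite[Proposition~5.1]{Lam66}) is essentially the argument of those cited lemmas. One caution on your framing: the co-Yoneda decomposition of $\yet \G$ in $\GS$ cannot be transported to a colimit in $\Gret$ without already knowing that $\yet$ is fully faithful, so that step is logically idle and would be circular if relied upon; your proof survives because you then verify the universal property in $\Gret$ directly. In that verification, the componentwise computation in $\pr{\diag}$ is not quite enough on its own, since $\Gret$ is not a full subcategory of $\pr{\diag}$ -- one must also check that the map out of the pointwise colimit induced by a cocone of \'etale morphisms is itself \'etale (this follows from the pullback condition for each $\esv$, and is the kind of routine check you correctly anticipate, along with the loop and stick-component cases).
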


In particular, if $\G$ has no stick components, then, for each $\tilde e \in \widetilde{\EI}$, the essential morphism $\ese: (\shorte) \hookrightarrow \G$ factors in precisely two ways through the canonical morphism $\coprod_{v \in V} \Cv \to \G$ that restricts to $\esv$ on each $\Cv$:
\begin{equation}\label{eq: parallel maps}
\xymatrix{ (\shorte) \ar@<4pt>[rrr]^{(e, \tau e) \mapsto (e, e^\dagger)}  \ar@<-4pt>[rrr]_{(e, \tau e) \mapsto ((\tau e)^\dagger, \tau e)}  &&&\coprod_{v \in V} \Cv \ar[rr]&& \G.  } \end{equation}
Hence, there exist parallel morphisms $\coprod_{\tilde e \in \widetilde{\EI}} (\shorte) \rightrightarrows \coprod_{v \in V} \Cv$ such that the diagram
\begin{equation}\label{eq: general coequaliser}
\xymatrix{ \coprod_{\tilde e \in \widetilde{\EI}} (\shorte) \ar@<4pt>[rr]\ar@<-4pt>[rr]&&\coprod_{v \in V} \Cv \ar[rr] && \G  } \end{equation}
describes a coequaliser in $\Gret$. Of course, the pair of parallel morphisms in (\ref{eq: general coequaliser}) is not unique (there are $2^{|\widetilde{\EI}|-1}$ choices), but they are unique up to isomorphism.

\subsection{Evaluating graphical species on graphs}

By  \cite[Section~VII.2]{MM94}, the inclusion of presheaves $\GSE \hookrightarrow \prE{\Gret}$:
\begin{equation}\label{eq. S graph} S \longmapsto \left(\G \mapsto \mathrm{lim}_{ (\C, b) \in \elG} S(\C)\right),\end{equation}
 induced by the fully faithful inclusion $\fisinv \hookrightarrow \Gret$, is also fully faithful. Henceforth, the same notation will be used for graphical species viewed as presheaves on $\fisinv$ and for their image as presheaves on $\Gret$. 
 
 Let $\G$ be a graph. A family of morphisms $\mathfrak U = \{f_i \in \Gret (\G_i , \G)\}_{i \in I} $ is \emph{jointly surjective} on $\G$ if $\G = \bigcup_{i \in I} im (f_i)$. 
Families of jointly surjective morphisms on $\G$ 
 define the {covers at $\G$} for a canonical \emph{\'etale topology} {$J$} on $\Gret$. Since $\elG$ refines every cover at $\G$, the category $\GS$ of graphical species in $\Set$ is equivalent to the category $\sh{\Gret,J}$ of \'etale sheaves on $\Gret$. It follows, in particular, that 
for all graphical species $S $ in $\Set$, and all graphs $\G$, there is a canonical isomorphism 
$S(\G) \cong \GS (\yet \G, S)$ (see \cite[Section~4.4]{Ray20}) .

\begin{defn}\label{def. S structured graphs} If $S$ is a graphical species in $\Set$, an element $\alpha \in S(\G)$ is called an \emph{$S$-structure on $\G$}. 
Objects of the category $\Gret (S)$, of \emph{$S$-structured graphs}, are pairs $(\G, \alpha)$ with $\alpha \in S(\G)$, and morphisms $(\G, \alpha) \to (\G', \alpha')$ in $\Gret (S) $ are given by morphisms $f \in \Gret (\G, \G')$ such that $S(f) (\alpha') = \alpha$.	\end{defn}

\begin{ex}\label{ex. product union}
Let $S$ be a graphical species in $\E$. By (\ref{eq. S graph}),  \[ S(\G) \times S(\H) = S(\G \amalg \H),\]for  any graphs $\G$ and $\H$. In particular, for finite sets $X$ and $Y$, 
\begin{equation}
\label{eq. product union}S_X \times S_Y = S(\CX) \times S(\CX[Y]) = S(\CX \amalg \CX[Y]).
\end{equation}
\end{ex}

\begin{ex}\label{ex. N-structures}
	Let $X$ be a finite set with distinct elements $x$ and $y$. Recall from (\ref{eq. dagger pullback}) that $S_X^{x \ddagger y} \in \E$ is the equaliser of the morphisms $ S(ch_x), S(ch_y \circ \tau)  \colon S_X \to S_\S $. Since the graph $\C_X^{x \ddagger y}$ is defined as the coequaliser of the morphisms $ ch_x, ch_y \circ \tau  \colon (\shortmid) \to \CX$ (see \cref{ex: N graph} ), it follows from (\ref{eq. S graph}) that
		\begin{equation}
	\label{eq. N-structures}S^{x \ddagger y}_X = S(\C_X^{x \ddagger y}).	\end{equation}

So, a contraction $\zeta$ on a graphical species $S$ in $\E$ is defined by a collection of maps $S(\C^{x \ddagger y}_X) \to S_{ X \setminus \{x,y\}}$ where $X$ is a finite set with distinct elements $x, y \in X$.

\end{ex}

 \section{Non-unital circuit operads}\label{a free monad}\label{sec: non-unital}\label{nonunital csm section}
 Henceforth, it will always be assumed that the category $\E$ has sufficient limits and colimits for all given constructions.

This section gives a short outline, closely following \cite{JK11} and \cite[Section~5]{Ray20}, of the construction of the non-unital circuit operad monad $\TTk$ on $\GSE$.

\subsection{Gluing constructions and labelled graphs}\label{subs: gluing}

 In \cref{ssec non-unital CO}, the monad $\TTk$ whose algebras are non-unital circuit operads will be defined in terms of graphs. In \cref{sec. iterated}, \cref{thm. CO CA} will establish an equivalence between the categories $\CA$ and $\CO$ of circuit algebras and circuit operads in $\Set$. It follows that, since circuit algebras are algebras for operads of wiring diagrams (\cref{def CA}), there must be graph constructions that are analogous to operadic composition in $\WD$.

The following terminology is based on \cite{Koc16}:
\begin{defn} \label{graph of graphs}
	Let $\G$ be a graph. A \emph{$\G$-shaped graph of graphs} is a functor $ \Gg\colon \elG \to \Gret$ (or $\Gg^\G$) such that
	\[\begin{array}{ll}
	\Gg(a) = (\shortmid) & \text{ for all } (\shortmid, a) \in \elG, \\
	E_0(\Gg(b)) = X &  \text{ for all } (\CX, b) \in \elG,
	\end{array}\]
	and, for all $(\C_{X_b},b) \in \elG $ and all $ x \in X_b$,
	\[ \Gg( ch_x) = ch^{\Gg(b)}_x \in \Gret(\shortmid, \Gg(b)).\]
	\label{defn degenerate}
	A $\G$-shaped graph of graphs $ \Gg\colon \elG \to \Gret$ is \emph{non-degenerate} if, for all $  v \in V$, $\Gg ( \esv)$ has no stick components. Otherwise, $\Gg$ is called \emph{degenerate}.

\end{defn}

In particular, the empty $\C_\nul$-indexed graph of graphs $\C_\nul \mapsto \oslash$ is non-degenerate.

\begin{figure}[!htb]
	
	\includestandalone[width = .4\textwidth]{CO_subfiles/standalones/substitutionstandalone}
	\caption{A $\G$ shaped graph of graphs $\Gg$ describes \textit{graph substitution} in which each vertex $v$ of $\G$ is replaced by a graph $\G_v$ according to a bijection $E_0(\G_v)  \xrightarrow{\cong} (\vE)^\dagger$. When $\Gg$ is non-degenerate, taking its colimit corresponds to erasing the inner (blue) nesting. } \label{fig: graph nesting}
\end{figure}

Informally, a non-degenerate $\G$-shaped graph of graphs may be viewed as a rule for substituting graphs into vertices of $\G$ as in \cref{fig: graph nesting}.

\begin{prop}\label{colimit exists}
	
	A 
	$\G$-shaped graph of graphs $\Gg$ admits a colimit $\Gg(\G)$ in $\Gret$, and, for each $(\C,b) \in \elG$, the universal map $b_{\Gg}  \colon \Gg(b) \to \coGg$ is an embedding in $\Gret$.
	
	If $\Gg$ is non-degenerate and $\Gg(b) \neq \oslash$ for all $(\C, b) \in \elG$, then 
	\begin{itemize}
		\item$\Gg$ describes a bijection $ E(\coGg) \cong E(\G) \amalg \coprod_{v \in V} \EI (\Gg(\esv))$ of sets;
		\item $\Gg$ induces an identity $E_0(\G) \xrightarrow{=} E_0(\coGg)$ of ports;
		\item there is a canonical surjective map $V(\Gg(\G)) \twoheadrightarrow V(\G)$ induced by the embeddings  $\Gg(\esv)\colon \Gg(\esv) \to \Gg(\G)$ for each vertex $v $ of $\G$. 
	\end{itemize}

\end{prop}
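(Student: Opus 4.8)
The plan is to build the colimit concretely inside the presheaf category $\pr{\diag}$, where all colimits exist and are computed levelwise, and then to show that (after a correction involving stick components) it lands in the non-full subcategory $\Gret$. By \cref{lem: essential cover} the indexing category $\elG \simeq \esG$ has one object $\ese$ for each edge-orbit $\tilde e \in \widetilde E$, with $\Gg(\ese) = (\shortmid)$, and one object $\esv$ for each vertex $v$, with $\Gg(\esv)$ a graph whose ports are the set $\vE$ of edges incident on $v$; its non-identity morphisms are the maps $\esh \colon \ese \to \esv$ indexed by half-edges $h = (e,v)$, which $\Gg$ sends to the edge-choosing map $ch_e \colon (\shortmid) \to \Gg(\esv)$. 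Hence the presheaf colimit $P \defeq \mathrm{colim}_{\elG}\Gg$ is the coequaliser that glues each $\Gg(\esv)$ to the sticks $\Gg(\ese)$ along these choosing maps; explicitly, for each inner edge-orbit $\{e,\tau e\}$ with $e$ on $v$ and $\tau e$ on $w$, the port $e$ of $\Gg(\esv)$ is attached, through the stick $\Gg(\ese)$, to the port $\tau e$ of $\Gg(\esw)$, while each port-orbit of $\G$ leaves one free end.

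I would first dispatch the case where $\Gg$ is non-degenerate and $\Gg(b) \neq \oslash$ for all $b$. Non-degeneracy says exactly that no port of any $\Gg(\esv)$ is $\tau$-paired with another port, i.e.\ the $\tau$-partner of every port lies in the image of its structure map $s$. Feeding this into the coequaliser, one checks that the induced involution on the edges of $P$ is fixed-point free and that $s_P$ is injective — the two ways these could fail are precisely the appearance of a stick-component port in the image of some $s$, which non-degeneracy forbids — so $P$ is a genuine graph. The three numbered conclusions are then read directly off the coequaliser: the edge classes split into those meeting $E(\G)$ (one per edge of $\G$, recording the connecting edges and the surviving ports) and the unglued inner edges of each $\Gg(\esv)$, giving the bijection $E(\coGg) \cong E(\G) \amalg \coprod_v \EI(\Gg(\esv))$ which one normalises so that it restricts to the identity $E_0(\G) = E_0(\coGg)$ on ports; and since $P$ has half-edge and vertex sets $\coprod_v H(\Gg(\esv))$ and $\coprod_v V(\Gg(\esv))$, collapsing each $V(\Gg(\esv))$ to $v$ gives the surjection $V(\coGg) \twoheadrightarrow V(\G)$.

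To promote $P$ from a colimit in $\pr{\diag}$ to one in the non-full subcategory $\Gret$, I would argue directly. A cocone in $\Gret$ from $\Gg$ to a graph $\H$ is in particular a cocone in $\pr{\diag}$, so it factors uniquely through $P$ by a presheaf morphism $\phi \colon P \to \H$, and it remains only to verify that $\phi$ is étale. By the local criterion following \cref{defn. etale morphism} this amounts to bijectivity, for each vertex $u$ of $P$, of the induced map from the edges incident on $u$ to those incident on $\phi(u)$. Each such $u$ is a vertex of a unique $\Gg(\esv)$, the structure map $b_{\Gg}\colon \Gg(\esv)\to P$ is a local isomorphism there, and its composite with $\phi$ is the étale leg of the cocone at $\esv$; bijectivity at $u$ follows. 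Thus $\phi$ is étale and $P = \coGg$ is the colimit in $\Gret$. That each $b_{\Gg}$ is an embedding is then read off the explicit description by verifying the criterion of \cref{lem. embedding}: $b_{\Gg}$ is injective on vertices and half-edges, and on the stick (shrub) part.

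The main obstacle is the general statement, without non-degeneracy, since there the presheaf colimit $P$ may fail to be a graph: if some $\Gg(\esv)$ carries a stick component whose two ports are attached to the two ends of a single edge-orbit of $\G$, the coequaliser produces a \emph{degenerate loop} of exactly the kind exhibited in \cref{deg loop}, which is not a graph. The resolution is \cref{ex. stick colimit}: gluing the two ports of a stick component together is absorbed in $\Gret$, $(\shortmid)^{1 \ddagger 2} = (\shortmid)$. I would therefore define $\coGg$ by replacing every such degenerate loop in $P$ with a stick component (and deleting the empty contributions when some $\Gg(b) = \oslash$), obtaining a bona fide object of $\Gret$, and then rerun the universal-property argument of the previous paragraph; the correction alters only stick components, on which the factorisation is forced by \cref{ex. stick colimit}, so existence and the embedding statement hold in full generality, while the three numbered identities need — and use — the stated non-degeneracy and non-emptiness hypotheses.
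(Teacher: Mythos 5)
Your overall architecture --- compute the colimit in $\pr{\diag}$, check it is a graph, transfer the universal property along the local \'etale criterion of \cref{defn. etale morphism}, and verify the embedding condition --- is sound, and it is genuinely different from the paper's own proof, which consists entirely of citations to the earlier paper (Propositions~5.16 and~7.15 and Corollary~5.19 of [Ray20]). Your treatment of the non-degenerate case and your argument that an \'etale cocone induces an \'etale factorisation through the presheaf colimit are essentially correct.

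The gap is in your final paragraph, and it is a real one: the failure mode you describe does not occur, and the repair you propose could not be justified if it did. Suppose $\Gg(\esv)$ has a stick component whose two ports correspond to the two ends of a loop $\{e, \tau e\}$ of $\G$ at $v$. By the axiom $\Gg(ch_x) = ch_x^{\Gg(b)}$ of \cref{graph of graphs}, the two parallel maps $\ese \rightrightarrows \Gg(\esv)$ are the edge-choosing maps at those two ports: the one indexed by the half-edge $(e,v)$ sends $\tau e$ to the port corresponding to $e$ and sends $e$ to that port's $\tau$-partner in $\Gg(\esv)$; the one indexed by $(\tau e, v)$ sends $e$ to the port corresponding to $\tau e$ and sends $\tau e$ to its $\tau$-partner. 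Under your hypothesis these two ports \emph{are} each other's $\tau$-partners, so the two maps are \emph{equal}. The coequaliser therefore identifies nothing beyond the evident gluing, the stick component of $\Gg(\esv)$ survives as a stick component of the presheaf colimit, and that colimit is already a graph. The twisted pair $(id,\tau)$ of \cref{deg loop} can never arise from a graph of graphs, precisely because $\Gg$ is forced to send both element maps to choosing maps, never to a choosing map composed with $\tau$. What actually has to be proved in the degenerate case is different: stick components of the $\Gg(\esv)$ chain together through the edges of $\G$ into alternating paths and cycles (exactly as in vertical composition of Brauer diagrams in \cref{ssec. mono BD}); each maximal chain contributes two edge classes which the induced involution exchanges, each class absorbs at most one $s$-image, and closed chains become stick components of the colimit (the ``lost loop'' phenomenon). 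That analysis --- which you do not give --- is what shows the presheaf colimit is a graph in full generality.

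The proposed surgery also cannot stand in for such an argument. If the presheaf colimit really did contain a $\tau$-fixed edge class, then the diagram would admit \emph{no} cocone in $\Gret$ at all: any \'etale cocone factors through the presheaf colimit by a map commuting with the involutions, and a graph has no $\tau$-fixed edges. In that hypothetical situation there is simply no colimit in $\Gret$ (exactly as in \cref{deg loop}), so replacing the offending class by a stick component and ``rerunning'' the universal-property argument has no content: the surgered object would not even receive compatible maps from the $\Gg(b)$, let alone be universal, and \cref{ex. stick colimit} forces nothing here because the gluing it describes is the one the diagram never performs. So, as written, your proof establishes the non-degenerate half of the proposition but not the general existence and embedding statements.
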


\begin{proof}%[Proof of \cref{colimit exists}.]
	If $\Gg$ is non-degenerate, then $\Gg$ admits a colimit $\Gg(\G) $ in $\Gret$ by \cite[Proposition~5.16]{Ray20}. In case $\Gg$ is degenerate, the existence of a colimit in $\Gret$ follows from \cite[Proposition~7.15]{Ray20}.

	The other statements follow directly from \cite[Corollary~5.19]{Ray20}.
\end{proof}

\begin{rmk}
	\label{rmk. down Brauer graph of graphs} Let $\G$ be a graph with $n$ ports, $k$ vertices, and such that each vertex $v_i \in V$ has $m_i = |\vE[v_i]|$ adjacent edges. Up to ordering, $\G$ describes a wiring diagram $\overline g \in \WD (\mm_1, \dots, \mm_k; \nn)$ with no closed components, and a $\G$-shaped graph of graphs $\Gg$ such that $ \Gg(b) \neq \oslash$ for all $(\C,b) \in \elG$ describes a choice of $k$ wiring diagrams $ \overline f^i \in \WD (\bm l _1, \dots, \bm l_{k_i};\mm_i)$ for $( l _1, \dots,  l_{k_i}) \in \listm \N$ and $1 \leq i \leq k$.
	
	Non-degeneracy of $\Gg$ corresponds to the condition, discussed in \cref{rmk. pushout}, that, for each of the $ \overline f^i$, the underlying morphism $f^i $ of Brauer diagrams is a morphism of the downward Brauer category $ f^i  \in \BDd (l_1 + \dots + l_{k_i}, m_i)$. This means, informally, that $f^i$ does not contain caps.  If $\Gg$ is non-degenerate, then its colimit $\Gg(\G)$ is described by the composition $\gamma\left( \overline g ,(\overline f_i)_i \right)$ in $\WD$. (Colimits of degenerate graphs of graphs are discussed in \cref{ss. pointed}.) 
	
{	In both the non-degenerate and degenerate cases, the colimit of $\Gg$ is induced by the pushout diagram (\ref{eq. cospan pushout}). \cref{colimit exists} may also be proved using this construction. }
\end{rmk}

\begin{ex}\label{ex. id Gg}
	By \cref{lem: essential cover}, every graph $\G$ is the colimit of the (non-degenerate) \textit{identity $\G$-shaped graph of graphs} $\Gid$ given by the forgetful functor $\elG \to \Gret$, $(\C, b) \mapsto \C$. It follows from (5.9) in \cite[Section~5.2]{Ray20} that, if $\G$ has no stick components, this is equivalent to the statement that $\G$ is the coequaliser of the canonical diagram
	\begin{equation}\label{eq: graph data}
	\xymatrix{ \mathcal S(\EI) \ar@<4pt>[rr]\ar@<-4pt>[rr]&&\coprod_{v \in V} \Cv \ar[rr]^-{\bigcup (\esv)}&& \G.} \end{equation}
	
	The identity $\G$-shaped graph of graphs corresponds, as in \cref{rmk. down Brauer graph of graphs} to a composition of wiring diagrams of the form $\gamma \left(\overline g , (\overline{ id}_{m_i})_{i = 1}^k \right)$.

\end{ex}

\begin{defn}\label{defn: graphs of graphs cat}
	The category of \emph{non-degenerate $\G$-shaped graphs of graphs $\GretG$} is the full subcategory of non-degenerate $\G$-shaped graphs of graphs $\Gg: \elG \to \Gret$ in the category of functors $\elG \to \Gret$ and natural transformations. 
\end{defn}

\begin{defn}
	\label{def. X graph}
	
	Let $X $ be a finite set. A generalised \emph{$X$-graph} (henceforth $X$-graph) is a $\CX$-shaped graph of graphs. An $X$-graph is \emph{admissible} if it is non-degenerate.  
	
	The groupoid of \emph{(admissible) $X$-graphs} and \emph{$X$-isomorphisms} is the maximal subgroupoid $\core{\GretG[\CX]}\subset \GretG[\CX]$ and is denoted by $X\Grisok$. 
	
	More generally, for any graph $\G$ and any $(\C,b) \in \elG$, $[b]\Grisok$ denotes the groupoid of non-degenerate $\C$-shaped graphs of graphs and isomorphisms.
\end{defn}
	
Hence, a generalised $X$-graph is given by a pair $\X = (\G, \rho)$, where $\G $ is a graph and $\rho\colon E_0\xrightarrow{\cong} X$. An $X$-graph is \textit{admissible} if it has no stick components, and an $X$-isomorphism in $ X\Grisok (\X, \X')$ is an isomorphism $ g \in \Gr(\G, \G')$ that preserves the $X$-labelling: $\rho' \circ g_{E_0} = \rho\colon E_0 \to X.$

\begin{ex}\label{ex. Xv graph of graphs}
	For any graph $\G$, a (non-degenerate) $\G$-shaped graph of graphs $\Gg \colon \elG \to \Gret$ is precisely a choice of (admissible) $X_b$-graph $\X_b$ for each $(\CX[X_b], b) \in \elG$. 
\end{ex}

\begin{ex}\label{ex. external graphs}
	If $X$ and $Y$ are finite sets, then the disjoint union of corollas $\mathcal D_{X,Y}  = \CX \amalg \CX[Y]$ is an admissible $X \amalg Y$-graph. 
\end{ex}

Let $S$ be a graphical species in $\E$, and $\G$ a graph. If $\Gg \colon \elG \to \Gret$ is a non-degenerate $\G$-shaped graph of graphs with colimit $\Gg(\G)$ in $\Gret$, then the composition 
\[ \elG^{\mathrm{op}} \xrightarrow{\Gg^{\mathrm{op}}} \Gret^{\mathrm{op}}\xrightarrow {S} \E\]%$ S\circ  \Gg^{\mathrm{op}}$ 
defines a functor $ \elG^{\mathrm{op}} \to \E$ with limit $S(\Gg(\G))$.

\begin{ex} Recall \cref{def. S structured graphs}. If $S$ is a graphical species in $\Set$ and $\G$ is any graph, then a non-degenerate $\G$-shaped graph of graphs $\Gg$ and choice of $S$-structure $\alpha \in S(\Gg(\G))$ determines a functor $\Gg_S \colon \elG \to \Gret(S)$ by $b \mapsto S(b_{\Gg})(\alpha)$ (where, for all $(\C, b) \in \elG$, $b_{\Gg}$ is the universal map described in \cref{colimit exists}). In this case $\Gg_{S}$ is  called a \textit{$\G$-shaped graph of $S$-structured graphs}.
\end{ex}

\subsection{A monad for non-unital circuit operads}\label{ssec non-unital CO}

It is now straightforward to construct a monad 
on $\GSE$, whose algebras are non-unital circuit operads in $\E$.

Let $\Tk  = \Tk_{\E}\colon \GSE \to \GSE$ be the endofunctor defined, for all $S \in \GSE$, by 

\begin{equation}\label{free}
\begin{array}{llll}
\Tk S_\S &= & S_\S, &\\
\Tk S_X &= & \mathrm{colim}_{\X\in X{\Grisok}}  S(\X) & \text{ for all finite sets } X
\end{array}
	\end{equation}

 On isomorphisms in $\fisinv$, $\Tk S$ acts by relabelling graph ports. To see that $\Tk S$ extends to all morphisms in $\fisinv$ and therefore defines a graphical species in $\E$, let $g \colon \X' \to \X'$ be a port-preserving isomorphism in $X\Grisok$, and for each $x \in X$, let $ch_x^{\X} \in \Gr(\shortmid, \G)$ be the map $ch_{\rho^{-1}(x)}$ defined by $ 1 \mapsto \rho^{-1}(x) \in E_0 (\G)$. 
 Then, \[S(g \circ ch^\X_x) = S(ch^\X_x)S(g) = S(ch^{\X'}_x). \]  Hence, the projections $\Tk S(ch_x)\colon \Tk S_X \to S_\S$ induced by
$S({ch^{\X}_x}) \colon S(\X) \to S(\shortmid) = S_\S$, 
are well-defined. The assignment $S \mapsto \Tk S$ is clearly natural on $S \in \GSE$, and so $\Tk$ defines an endofunctor on $\GSE$.

\begin{lem}
	\label{lem. Tk mult}
	For any graphical species $S$ in $\E$ and any finite set $\X$, there is a canonical isomorphism
	\[({\Tk})^2 S_X \xrightarrow {\cong} \mathrm{colim}_{ \X \in X \Grisok} \left(\mathrm{lim}_{\Gg \in \Gret^{(\X)} }S (\Gg(\X))\right),\] where, as usual, 
	$\Gg(\X)$ is the colimit of the $\X$-shaped graph of graphs $\Gg$ in $\Gret^{(\X)}$.
\end{lem}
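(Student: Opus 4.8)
The plan is to unwind both applications of $\Tk$ and then reduce the claim to a single interchange of a limit with a family of groupoid-indexed colimits. First I would apply the defining formula (\ref{free}) to the graphical species $\Tk S$, giving $(\Tk)^2 S_X = \Tk(\Tk S)_X = \mathrm{colim}_{\X \in X\Grisok}(\Tk S)(\X)$, where $(\Tk S)(\X)$ denotes the value of the graphical species $\Tk S$ on the underlying graph of $\X$. Since $\Tk S$ is itself a graphical species, \cref{lem: essential cover} together with (\ref{eq. S graph}) gives $(\Tk S)(\X) = \mathrm{lim}_{(\C, b) \in \elG[\X]} (\Tk S)(\C)$. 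The outer colimit over the groupoid $X\Grisok$ then passes through unchanged, so the whole problem reduces to producing a canonical isomorphism $(\Tk S)(\X) \cong \mathrm{colim}_{\Gg \in \Gret^{(\X)}} S(\Gg(\X))$, naturally in $\X \in X\Grisok$ (the inner operation being the colimit that realises graph substitution).

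Next I would substitute the values of $\Tk S$ at the generating objects of $\elG[\X]$: by (\ref{free}), $(\Tk S)(\shortmid) = S_\S$ at each stick component, while $(\Tk S)(\CX[X_b]) = \mathrm{colim}_{\Y_b \in X_b\Grisok} S(\Y_b)$ at each corolla $(\CX[X_b], b)$. Thus $(\Tk S)(\X)$ is presented as a limit, over the element category $\elG[\X]$, of a diagram whose corolla-values are colimits over the groupoids $X_b\Grisok$. The key step is the interchange: by \cref{ex. Xv graph of graphs}, an object of $\Gret^{(\X)}$ is exactly a simultaneous compatible choice of an admissible $X_b$-graph $\Y_b = \Gg(\esv)$ at each corolla (with sticks sent to sticks), so $\Gret^{(\X)}$ is precisely the indexing category for the ``glued'' product of the groupoids $X_b\Grisok$ over $\elG[\X]$. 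Distributing the limit over these colimits then yields $\mathrm{lim}_{(\C,b) \in \elG[\X]} \mathrm{colim}_{\Y_b} S(\Y_b) \cong \mathrm{colim}_{\Gg \in \Gret^{(\X)}} \mathrm{lim}_{(\C,b) \in \elG[\X]} S(\Gg(b))$.

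Finally I would identify the inner limit with $S(\Gg(\X))$. By \cref{colimit exists}, $\Gg(\X)$ is the colimit of $\Gg$ in $\Gret$, and the universal maps $b_\Gg \colon \Gg(b) \to \Gg(\X)$ are embeddings whose edge and vertex data assemble according to the bijection and surjection stated there; equivalently $\Gg(\X) = \mathrm{colim}_{(\C,b) \in \elG[\X]} \Gg(b)$. Applying (\ref{eq. S graph}) to $\Gg(\X)$ and using that its element category is built from those of the pieces $\Gg(b)$ via these embeddings gives $S(\Gg(\X)) \cong \mathrm{lim}_{(\C,b) \in \elG[\X]} S(\Gg(b))$. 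Combining the three displays produces the required canonical isomorphism, and naturality in $\X$ allows me to re-insert the outer colimit $\mathrm{colim}_{\X \in X\Grisok}$ and conclude.

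The hard part will be justifying the limit--colimit interchange, since limits and colimits do not commute in general. It works here because each colimit is taken over a groupoid of admissible graphs — effectively a coproduct over isomorphism classes, quotiented by automorphisms — and because $\elG[\X]$ has the rigid ``star-shaped'' form (corollas linked to sticks only through the maps $ch_x$, with no further relations), so that the limit is a wide pullback that distributes over these coproducts. Non-degeneracy is what makes this clean: it guarantees that the substituted graphs contribute no stick components and that $\Gg(\X)$ decomposes exactly as in \cref{colimit exists}, so no components are spuriously created or collapsed under the interchange. The delicate bookkeeping is that the automorphism group of $\Gg(\X)$ assembles correctly from those of the pieces $\Gg(b)$ and of $\X$; this is precisely the analysis carried out for the non-unital modular operad monad $\TT$ in \cite[Section~5]{Ray20}, and it transfers directly, since $\Tk$ differs from $\TT$ only in that its graphs of graphs are built from the larger groupoids of (possibly disconnected) admissible graphs.
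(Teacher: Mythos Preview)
Your unwinding matches the paper's: both reach $(\Tk)^2 S_X = \mathrm{colim}_{\X} \mathrm{lim}_{(\C,b)\in\elG[\X]} \mathrm{colim}_{\Y \in [b]\Grisok} S(\Y)$. From here the arguments diverge, and yours does not land on the statement as written --- your interchange produces $\mathrm{colim}_{\Gg \in \Gret^{(\X)}} S(\Gg(\X))$, whereas the lemma asserts $\mathrm{lim}_{\Gg \in \Gret^{(\X)}} S(\Gg(\X))$.

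The paper sidesteps the limit--colimit interchange entirely. Its key move is the observation that since each $[b]\Grisok$ is a groupoid, $\mathrm{colim}_{\Y \in [b]\Grisok} S(\Y) = \mathrm{lim}_{\Y \in [b]\Grisok} S(\Y)$. This converts the expression into an iterated limit $\mathrm{lim}_{(\C,b)} \mathrm{lim}_{\Y \in [b]\Grisok} S(\Y)$; then, since \cref{defn: graphs of graphs cat} identifies $\Gret^{(\X)}$ with the category of such compatible families, the universal property of limits (Fubini for limits, which holds unconditionally) gives $\mathrm{lim}_{\Gg \in \Gret^{(\X)}} S(\Gg(\X))$ directly. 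The paper never separately invokes the identification $S(\Gg(\X)) \cong \mathrm{lim}_{(\C,b)} S(\Gg(b))$ that occupies your third paragraph. So the ``hard part'' you flag is avoided rather than confronted; even if your interchange were made rigorous, you would still need the groupoid identity at the end to pass from $\mathrm{colim}_\Gg$ to $\mathrm{lim}_\Gg$ and match the statement.
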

\begin{proof}
	
By definition,
 \[\begin{array}{lll}
({\Tk})^2 S_X&=&\mathrm{colim}_{ \X \in X \Grisok} \Tk S(\X)\\
& = &\mathrm{colim}_{ \X \in X \Grisok} \mathrm{lim}_{ (\C, b) \in \elG[\X]} \Tk S(\C)\\
& = &\mathrm{colim}_{ \X \in X \Grisok} \mathrm{lim}_{ (\C, b) \in \elG[\X]} \mathrm{colim}_{ \mathcal Y \in [b]\Grisok} S(\mathcal Y). \end{array}\]

Since $ [b]\Grisok$ is a groupoid by definition,  $\mathrm{colim}_{ \mathcal Y \in [b]\Grisok} S(\mathcal Y) =\mathrm{lim}_{ \mathcal Y \in [b]\Grisok} S(\mathcal Y)$, for all $(\C, b) \in \elG[\X]$. 
By \cref{defn: graphs of graphs cat}, \[ \mathrm{lim}_{\Gg \in \Gret^{(\X)} }S (\Gg(\X)) \cong \mathrm{lim}_{ (\C, b) \in \elG[\X]}\left( \mathrm{lim}_{ \mathcal Y \in [b]\Grisok} S(\mathcal Y)\right)\] canonically, and therefore, by the universal property of (co)limits, there is a unique isomorphism
 \begin{equation}\label{eq. T2 iso}\begin{array}{lll}
\Tk^2 S_X&
\cong & \mathrm{colim}_{ \X \in X \Grisok} \left(\mathrm{lim}_{ \Gg \in \Gret^{(\X)} } S(\Gg(\X))\right). 
\end{array}\end{equation}
\end{proof}

By \cref{colimit exists}, each $\Gg(\X)$ inherits the structure of an $X$-graph from $\X$. Therefore, using \cref{lem. Tk mult}, there is a morphism $\mu^{\TTk} S_X: ({\Tk})^2 S_X \to \Tk S_X$ in $\E$ given by 
\begin{equation}\label{eq. T mult} \ \Tk^2 S_X =  \mathrm{colim}_{ \X \in X \Grisok}\left( \mathrm{lim}_{ \Gg \in \Gret^{(\X)} } S(\Gg(\X))\right)\to \mathrm{colim}_{ \X' \in X \Grisok} S(\X') = \Tk S_X.\end{equation}

This is  natural in $X$ and in $S$ and hence defines a natural transformation $\mu^{\TTk}\colon \Tk^2 \Rightarrow \Tk$.

\label{eta defn} 
A unit for the endofunctor $\Tk$ on $\GSE$ is provided by the natural transformation $ \etak  \colon id_{\GS} \Rightarrow \Tk$ induced by the inclusion $\fisinv \hookrightarrow \Gret \colon S_X \xrightarrow = S(\CX) \to \Tk S_X$.

It is straightforward to verify that the triple $\TTk = (\Tk, \mu^{\TTk}, \eta^{\TTk})$ defines a monad.

\subsection{$\TTk$-algebras are non-unital circuit operads.}

As usual, let $\E$ be an arbitrary category with sufficient limits and colimits.

The purpose of this section is to establish an isomorphism of categories between the category $\GSE^{\TTk}$ of $\TTk$-algebras, and the category $\nuCOE$ of non-unital circuit operads.

As a first example, let us describe the circuit operad structure on $\Tk S $ where $S$ is an arbitrary graphical species in $\E$. 
\begin{ex}\label{ex. free circuit operad}

	Let $X$ and $Y$ be finite sets. Given an $X$-graph $\X$ and a $Y$-graph $\Y$, the graph $\X \amalg \Y$ has the structure of an $X \amalg Y$-graph and is therefore trivially the colimit of a $\C_{X \amalg Y}$-shaped graph of graphs. It is also, by definition, the colimit of a $\CX \amalg \CX[Y]$-shaped graph of graphs. 
	
	Disjoint union of graphs induces a external product $\boxtimes$ on $\Tk S$. Since $\oslash \in \nul \Grisok$, there is a morphism $ *  = S(\oslash) \to \Tk S_\nul$ from the terminal object $* $ in $\E$, and this provides an external unit (see \cref{def. external}) for $\boxtimes$. For all admissible $X$-graphs $\X$ and admissible $Y$-graphs $\Y$, the following diagram -- in which the unlabelled maps are the defining morphisms for $\Tk$ -- commutes in $\E$:
	\[\xymatrix{ S(\X \amalg \Y ) \ar[rrd] \ar@{=}[r]& S(\X) \times S(\Y) \ar[r]& \Tk S_X \times \Tk S_Y \ar@{=}[r] \ar[d]_-{\boxtimes} & \Tk S(\CX \amalg \C_Y) \ar[r] & \Tk^2 S_{X \amalg Y} \ar[dll]^-{\mu^{\TTk}} \\
		&& \Tk S_{X \amalg Y}.&&}\]

	Similarly, $\Tk S$ admits a canonical contraction $\zeta$ such that, 
for all finite sets $X$, distinct elements $x, y \in X$, and all admissible $X$-graphs $\X$, the following diagram commutes in $\E$:
\[\xymatrix{ S(\X^{x \ddagger y})\ar[rr] \ar[rrd] && \Tk S(\C^{x \ddagger y}_X) \ar[rr] \ar[d]_-{\zeta} && \Tk^2 S_{X\setminus \{x,y\}} \ar[dll]^-{\mu^{\TTk}} \\
	&& \Tk S_{X\setminus \{x,y\}.}&&}\]

It follows immediately from the definitions that $\boxtimes $ and $\zeta$ satisfy (C1)-(C3) of \cref{def. co}.

\end{ex}

\begin{prop}\label{unpointed modular operad} 
	There is a canonical isomorphism of categories $\GS^\TTk  \cong \nuCO$.
\end{prop}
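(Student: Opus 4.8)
The plan is to establish the isomorphism $\GS^{\TTk} \cong \nuCO$ by exhibiting mutually inverse functors, following the standard pattern for nerve-style monad characterisations. Both categories have the same underlying data (a graphical species $S$ together with structure morphisms), so the task is to translate a $\TTk$-algebra structure $\theta \colon \Tk S \to S$ into an external product $\boxtimes$ and contraction $\zeta$ satisfying (C1)--(C3), and conversely, and then to check that algebra morphisms correspond exactly to morphisms of non-unital circuit operads.

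\smallskip

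\emph{From algebras to circuit operads.} Given a $\TTk$-algebra $(S, \theta)$, I would first recall from \cref{ex. external graphs} that $\mathcal D_{X,Y} = \CX \amalg \CX[Y]$ is an admissible $X \amalg Y$-graph, so it represents a point of $\Tk S_{X \amalg Y}$ via the universal cocone, and from \cref{ex. N-structures} and \cref{ex: N graph} that $\C_X^{x \ddagger y}$ is an admissible graph representing a point of $\Tk S_{X \setminus \{x,y\}}$ whenever we feed in an element of $S^{x\ddagger y}_X = S(\C^{x\ddagger y}_X)$. I then \emph{define} the external product $\boxtimes_{X,Y}$ as the composite
\[ S_X \times S_Y = S(\CX \amalg \CX[Y]) \to \Tk S_{X \amalg Y} \xrightarrow{\theta} S_{X \amalg Y}, \]
and the contraction $\zeta^{x \ddagger y}_X$ as
\[ S^{x\ddagger y}_X = S(\C_X^{x\ddagger y}) \to \Tk S_{X \setminus \{x,y\}} \xrightarrow{\theta} S_{X \setminus \{x,y\}}, \]
where the unlabelled maps are the structure maps of $\Tk$. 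Equivariance of both operations is inherited from the $\fisinv$-equivariance built into $\Tk$ in \cref{ssec non-unital CO}. The axioms (C1)--(C3) then reduce, via the associativity axiom $\theta \circ \mu^{\TTk} = \theta \circ \Tk\theta$ and \cref{lem. Tk mult}, to genuine facts about colimits of graphs of graphs: associativity of $\boxtimes$ (C1) comes from the associativity of disjoint union of corollas, interchangeability of contractions (C2) from equation (\ref{eq. C3 motivation}) and the commutativity of forming $\C_X^{x\ddagger y}$ at distinct pairs, and compatibility (C3) again from (\ref{eq. C3 motivation}), which is precisely the identity $\G^{e_1 \ddagger e_2} \amalg \H = (\G \amalg \H)^{e_1 \ddagger e_2}$ expressing that contraction commutes with disjoint union.

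\smallskip

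\emph{From circuit operads to algebras.} For the inverse direction, given a non-unital circuit operad $(S, \boxtimes, \zeta)$, I would use \cref{lem: essential cover} and \cref{colimit exists}: every admissible $X$-graph $\X$ is built as an iterated disjoint union of corollas followed by contractions of inner edge orbits (the coequaliser presentation (\ref{eq: general coequaliser})). I would define a map $S(\X) \to S_X$ for each admissible $\X$ by composing the external products $\boxtimes$ (gluing the corollas $\Cv$ along $\amalg$) with the contractions $\zeta$ (one for each inner $\tau$-orbit $\tilde e \in \widetilde{\EI}$), exactly mirroring \cref{ex. free circuit operad}. The crucial point is that (C1)--(C3) guarantee this composite is \emph{independent of the order} in which products and contractions are performed, so the definition descends to the colimit $\Tk S_X = \mathrm{colim}_{\X \in X\Grisok} S(\X)$ and yields a well-defined $\theta \colon \Tk S \to S$. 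The unit axiom $\theta \circ \etak = \mathrm{id}$ is immediate since $\etak$ is induced by $\CX \in X\Grisok$ (the identity $X$-graph), on which $\theta$ performs no products or contractions. The associativity axiom $\theta \circ \Tk\theta = \theta \circ \mu^{\TTk}$ follows from \cref{lem. Tk mult}, \cref{colimit exists} and \cref{rmk. down Brauer graph of graphs}: both paths evaluate $S$ on the iterated colimit $\Gg(\X)$, and the claim is that collapsing the nesting and then evaluating agrees with evaluating each inner graph first and then composing — which again is exactly the coherence encoded by (C1)--(C3).

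\smallskip

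\emph{Inverse bijection and morphisms, and the main obstacle.} I would then check the two constructions are mutually inverse: starting from $\theta$, restricting to corolla-disjoint-unions and contraction graphs recovers $\boxtimes$ and $\zeta$ by construction; starting from $(\boxtimes, \zeta)$, the reconstructed $\theta$ agrees with the original on each $S(\X)$ because any admissible $\X$ is generated from corollas and contractions, and the two operations were defined to implement precisely $\boxtimes$ and $\zeta$ on the generating graphs $\mathcal D_{X,Y}$ and $\C_X^{x\ddagger y}$. Both functors are the identity on underlying graphical species, so a morphism $f \in \GS(S,S')$ is a $\TTk$-algebra map iff $f \circ \theta = \theta' \circ \Tk f$, and since $\Tk f$ is computed by applying $f$ on each $S(\X)$, this condition is equivalent (by the corolla/contraction generation) to $f$ preserving $\boxtimes$ and $\zeta$ — i.e.\ to $f$ being a morphism in $\nuCO$. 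The \textbf{main obstacle} I anticipate is the well-definedness in the circuit-operad-to-algebra direction: one must verify that the order-independence supplied by (C1)--(C3) is genuinely \emph{enough} to make the assignment $S(\X) \to S_X$ descend to the colimit over $X\Grisok$ and to satisfy the associativity square for \emph{all} (possibly non-connected, possibly loop-containing) admissible graphs of graphs, not merely the two-corolla and single-contraction cases. This is exactly the coherence bookkeeping already carried out for modular operads in \cite[Section~5]{Ray20}, and I would lean on the decomposition results \cref{colimit exists} and \cref{lem. Tk mult} to reduce the general coherence to the finitely many axioms (C1)--(C3), precisely as is done there.
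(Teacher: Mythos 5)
Your proposal is correct and follows essentially the same route as the paper: the product and contraction are extracted from the structure map $\theta$ by evaluating on the graphs $\C_X \amalg \C_Y$ and $\C_X^{x\ddagger y}$, and conversely $h_\X \colon S(\X) \to S_X$ is assembled from iterated external products and contractions via the coequaliser presentation of an admissible $X$-graph, with (C1)--(C3) providing exactly the order-independence needed for this to descend to the colimit over $X\Grisok$ and to verify the algebra axioms. The one piece you omit is the external unit $\circleddash$, which the paper obtains from the empty graph via $* = S(\oslash) \to \Tk S_\nul \xrightarrow{h} S_\nul$ (recall $\oslash \in \nul\Grisok$ here, unlike in Kock's variant); this slots directly into your framework without further work.
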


\begin{proof} 
Let $(A,h)$ be an algebra for $\TTk$. 

For finite sets $X$ and $Y$, the morphism $\boxtimes_{X,Y} \colon A_X \times A_Y \to A_{X \amalg Y}$ in $\E$ is obtained by composing the morphism $ A_X \times A_Y  = A(\C_X \amalg \C_Y) \to {\Tk} A_{X \amalg Y}$ with ${h} \colon {\Tk} A_{X \amalg Y} \to A_{X \amalg Y}$. Moreover, the composition
\[ *  = A(\oslash) \to \Tk A_\nul \xrightarrow {h} A_\nul\] provides a unit for the external product $\boxtimes $ so defined.

And, if a finite set $X$ has distinct elements $x$ and $y$, then we may define a morphism $\zeta_{X}^{x \ddagger y} \colon A_X \to A_{X\setminus \{x,y\}}$  in $\E$ by composing 
$ A_X^{x\ddagger y}   = A(\mathcal{ N }_X^{x\ddagger y} )\to {\Tk} A_{X\setminus \{x,y\}}$ with $h$. %\colon A_{X\setminus \{x,y\}}.$

The proof that $ (A, \boxtimes, \zeta)$ satisfies the axioms for a circuit operad proceeds by representing each side of the equations in (C1)-(C3) of \cref{def. co} in terms of graph of graphs, and then showing that these have the same colimit in $\Gret$. %This is represented visually in \warn{REFREF (FIGURE)}. 

For (C1), the external product $\boxtimes_{X,Y}$ is represented by the graph $\C_X \amalg \C_Y$, and so \[\boxtimes_{X \amalg Y, Z} \circ  (\boxtimes_{X , Y}\  \times \  id_{A _Z})\] is represented by the $ \C_{X \amalg Y} \amalg \C_Z$-shaped graphs of graphs $\left (\iota_{X \amalg Y}\mapsto  \CX \amalg \CX[Y], \ \iota_Z \mapsto \CX[Z]\right )$, 
with colimit $\CX \amalg \CX[Y] \amalg \CX[Z]$. This is also the colimit of the $\CX \amalg \C_{Y \amalg Z}$-shaped graph of graphs $\left(\iota_X \mapsto \CX, \  \iota_{Y \amalg Z} \mapsto \CX[Y] \amalg \CX[Z]\right)$. 

Precisely, it follows from the monad algebra axioms that, for all finite sets $X, Y$ and $Z$, the diagram
\[\small{\xymatrix@C=.43cm@R = .7cm{
&&& A_{\scriptscriptstyle X} \times A_{\scriptscriptstyle Y }\times A_{\scriptscriptstyle Z}\ar[dl]_-{=} \ar[d]^{=}\ar[dr]^-{=}&&\\
%A_{X \amalg Y} \times A_Z \ar@{=}[d]
&A_{\scriptscriptstyle {X \amalg Y}} \times A_{\scriptscriptstyle Z } \ar@{=}[d]&\ar@{-->}[l]_-{(\boxtimes,id)}A(\C_X \amalg \C_Y) \times A(\C_Z)\ar[r]^-{=}\ar[d]& A(\C_X \amalg \C_Y \amalg \C_Z)\ar[dd]& A(\C_X) \times A(\C_Y \amalg \C_Z)\ar[d] \ar[l]_-{=}\ar@{-->}[r]^-{(id, \boxtimes)}& A_{\scriptscriptstyle  X }\times A_{\scriptscriptstyle { Y \amalg Z}} \ar@{=}[d]\\
&A(\C_{\scriptstyle {X \amalg Y}} \amalg \C_Z)  \ar[d]&{\Tk} A (\C_{\scriptstyle {X \amalg Y}} \amalg \C_Z) \ar[l]^-{h}\ar[d]&& {\Tk} A(\CX \amalg \C_{\scriptstyle { Y \amalg Z}}) \ar[d]\ar[r]_-{h}\ar[d]& A (\C_X \amalg \C_{\scriptstyle {Y\amalg Z}}) \ar[d]\\
&{\Tk} A_{\scriptscriptstyle{ X \amalg Y \amalg Z}}\ar[rrdd]_-{h}& \ar[l]^{{\Tk} h}{\Tk} ^2A_{ \scriptscriptstyle{ X \amalg Y \amalg Z}} \ar[r]_-{ \mu^{\TTk} k A}& {\Tk} A_{\scriptscriptstyle {X \amalg Y \amalg Z}}\ar[dd]^-{h}& \ar[l]^-{ \mu^{\TTk} k A}{\Tk} ^2A_{ \scriptscriptstyle {X \amalg Y \amalg Z}}\ar[r]_{{\Tk} h} &  {\Tk} A_{\scriptscriptstyle {X \amalg Y \amalg Z}}\ar[ddll]^-{h}\\
&&&&&\\
&&& A_{X \amalg Y \amalg Z}, &&}}\] where the maps $ A_{X \amalg Y} \times A_Z \rightarrow  A_{X \amalg Y \amalg Z} \leftarrow A_X \times A_{ Y \amalg Z}$ are precisely $\boxtimes_{ X \amalg Y, Z}$ and  $\boxtimes_{X,Y \amalg Z}$, commutes.   
Hence, $\boxtimes $ satisfies (C1) of \cref{def. co}.

For (C2), if $w,x,y,z$ are mutually distinct elements of $X$, then, by the monad algebra axioms, the diagram 
	\[\small{\xymatrix@C=.43cm@R = .65cm{
		&& (A_{\scriptscriptstyle X})^{\scriptscriptstyle {w \ddagger x, y\ddagger z} } \ar[dl]_-{=} \ar[d]^{=}\ar[dr]^-{=}&&\\
		%A_{X \amalg Y} \times A_Z \ar@{=}[d]
	\left	(	A_{\scriptscriptstyle {X \setminus \{w,x\}} }\right )^{\scriptscriptstyle {y \ddagger z}} \ar@{=}[d]&\ar@{-->}[l]_-{\zeta^{\scriptscriptstyle {w \ddagger x}}}\left(A(\C_{\scriptscriptstyle X}^{ \scriptscriptstyle {w \ddagger x}})\right)^{\scriptscriptstyle {y \ddagger z}} \ar[r]^-{=}\ar[d]& A\left ((\C_X \amalg \C_Y) ^{\scriptscriptstyle {w \ddagger x, y \ddagger z}}\right )\ar[dd]& \left(A(\C_{\scriptscriptstyle X}^{ \scriptscriptstyle {y \ddagger z}})\right)^{\scriptscriptstyle {x \ddagger y}}\ar[d] \ar[l]_-{=}\ar@{-->}[r]^-{\zeta^{y \ddagger z}}& \left	(	A_{\scriptscriptstyle {X \setminus \{y,z\}}}\right )^{\scriptscriptstyle{ w \ddagger x}}\ar@{=}[d]\\
		A(\C_{\scriptscriptstyle {X \setminus \{w,x\}}}^{\scriptscriptstyle{ y \ddagger z}} ) \ar[d]&{\Tk} A (\C_{\scriptscriptstyle {X\setminus \{w,x\}}}^{\scriptscriptstyle {y \ddagger z}}) \ar[l]_-{h}\ar[d]&&{\Tk} A (\C_{\scriptscriptstyle {X\setminus \{y,z\}}}^{\scriptscriptstyle {w \ddagger x}}) \ar[d]\ar[r]^-{h}\ar[d]& 	A(\C_{\scriptscriptstyle {X \setminus \{y,z\}}}^{\scriptscriptstyle {w \ddagger x}})  \ar[d]\\
		{\Tk} A_{\scriptscriptstyle {X\setminus \{w,x,y,z\}}}\ar[rrdd]_-{h}& \ar[l]_{{\Tk} h}{\Tk} ^2A_{\scriptscriptstyle {X\setminus \{w,x,y,z\}}} \ar[r]^-{ \mu^{\TTk} k A}& {\Tk} A_{\scriptscriptstyle {X\setminus \{w,x,y,z\}}}\ar[dd]^-{h}& \ar[l]_-{ \mu^{\TTk} k A}{\Tk} ^2A_{\scriptscriptstyle {X\setminus \{w,x,y,z\}}}\ar[r]^{{\Tk} h} &  {\Tk} A_{\scriptscriptstyle {X\setminus \{w,x,y,z\}}}\ar[ddll]^-{h}\\
	&&&&\\
		&& A_{\scriptscriptstyle {X\setminus \{w,x,y,z\}}}&&}}\]
	commutes in $\E$, and hence $\zeta$ satisfies (C2).

Finally, if $X$ and $Y$ are finite sets and $x_1$ and $x_2$ are distinct elements of $X$, then the diagram
	\[\scriptsize{\xymatrix@C=.4cm@R = .7cm{
			&& (A_{\scriptscriptstyle {X} }\times A_{\scriptscriptstyle {Y}})^{\scriptscriptstyle{ x_1 \ddagger x_2} }\ar[dl]_-{=}\ar[dr]^-{=}&&\\
			%A_{X \amalg Y} \times A_Z \ar@{=}[d]
			\left	(	A_{\scriptscriptstyle{ X \amalg Y}} \right )^{\scriptscriptstyle {x_1 \ddagger x_2}} \ar@{=}[d]&\ar@{-->}[l]_-{\boxtimes}\left(A(\C_X \amalg \C_Y)\right)^{\scriptscriptstyle {x_1\ddagger x_2}} \ar[ddr]\ar@{=}[rr]\ar[d]&& A(\C_X^{\scriptscriptstyle { x_1 \ddagger x_2}})\times A_Y \ar[ddl]\ar[d] \ar@{-->}[r]^-{\zeta^{x_1 \ddagger x_2}}& 	A_{\scriptscriptstyle {X \setminus \{x_1,x_2\}}}\times A_Y\ar@{=}[d]\\
			A(\C_{\scriptstyle {X \amalg Y}}^{\scriptscriptstyle {x_1 \ddagger x_2}} ) \ar[d]
			&{\Tk} 	A(\C_{\scriptstyle{ X \amalg Y}}^{\scriptscriptstyle {x_1 \ddagger x_2}} ) \ar[l]_-{h}\ar[d]&&{\Tk} A (\C_{\scriptstyle {X\setminus \{x_1, x_2\}}}\amalg \C_Y) \ar[d]\ar[r]^-{h}\ar[d]& 	A (\C_{\scriptstyle {X\setminus \{x_1, x_2\}}}\amalg \C_Y)   \ar[d] \\
						{\Tk} A_{\scriptscriptstyle {(X \amalg Y)\setminus \{x_1, x_2\}}}\ar[rrdd]_-{h}& \ar[l]_-{{\Tk} h} {\Tk} ^2A_{\scriptscriptstyle {(X \amalg Y)\setminus \{x_1, x_2\}}}  \ar[r]^-{ \mu^{\TTk} k A}& {\Tk} A_{\scriptscriptstyle {(X \amalg Y)\setminus \{x_1, x_2\}}}  \ar[dd]^{h}& {\Tk} ^2A_{\scriptscriptstyle{ (X \amalg Y)\setminus \{x_1, x_2\}}}  \ar[r]^-{{\Tk} h} \ar[l]_-{ \mu^{\TTk} k A} &  {\Tk} A_{\scriptscriptstyle {(X \amalg Y)\setminus \{x_1, x_2\}}} \ar[ddll]^-{h} \\
			&&&&\\
			&& A_{\scriptscriptstyle{ (X \amalg Y)\setminus \{x_1, x_2\}}} &&}}\] commutes.
		Therefore, (C3) is satisfied and $(A, \boxtimes, \zeta)$ is a non-unital circuit operad. 
		
		Conversely, let $(S, \boxtimes, \zeta)$ be a non-unital circuit operad. For finite sets $X_1, \dots, X_n$, let $\boxtimes_{ X_1, \dots, X_n}: \prod_{i = 1}^n S_{X_i} \to S_{ \coprod_i X_i}$ be the obvious morphism induced by $\boxtimes$. This is well defined since $ (S, \boxtimes, \zeta)$ satisfies (C1)-(C3). Recall from \cref{ssec CO} that, if $x_1, y_1, \dots, x_k, y_k$ are mutually distinct elements of $\coprod_i X_i$, then $(\prod_i S_{X_i})^{x_1 \ddagger y_1, \dots, x_k \ddagger y_k }$ is the limit of the diagram of parallel morphisms 
		\[\boxtimes_{ X_1, \dots, X_n} \circ S(ch_{x_i}), \boxtimes_{ X_1, \dots, X_n} \circ S(ch_{y_i}\circ \tau) \colon \prod_{i = 1}^n S_{X_i} \to S_{ \coprod_i X_i} \to S_\S, \] and that this is independent of the ordering of the pairs $(x_i, y_i)$.

	Let $X$ be a finite set and let $\X$ be an admissible 
		$X$-graph. For each $v \in V$, let $(\CX[Z_v], b_v) \in \elG[\X]$ be a {neighbourhood} of $v$, and let $Z_v \cong |v|$ be such that $\coprod_v Z_v = X \amalg \{y_1, z_1,\dots, y_k, z_k\}$.
		has no stick components, it follows from (\ref{eq: general coequaliser}) that 
		\begin{equation}\label{eq. SX resolution}
		S(\X) = \left(\prod_{v \in V(\X)} S_{Z_v}\right )^{y_1 \ddagger z_1, \dots, y_k \ddagger z_k} .
		\end{equation} 
	%	where, for each $v \in V$, $(\CX[Z_v], b_v) \in \elG[\X]$ is a {neighbourhood} of $v$, and $Z_v \cong |v|$ is such that   $\coprod_v Z_v = X \amalg \{y_1, \dots, z_k\}$.

\begin{figure}[htb!]
	 \begin{tikzpicture} [scale =  0.9]
	
	\node at(0,3){
		\begin{tikzpicture}[scale = 0.35]\begin{pgfonlayer}{above}
		\node [dot, red] (0) at (0, 0) {};
		\node  [dot, red](1) at (2, 0) {};
		\node [dot, red] (2) at (-2, 1) {};
		\node [dot, red] (3) at (-1, -1) {};
		\node  (4) at (-4.5, 2.25) {};
		\node  (5) at (-3, -3) {};
		\node  (6) at (0, -4) {};
		\node  (7) at (4, 2) {};
		\node  (8) at (5, -1) {};
		\node  (9) at (5, -3) {};
		\node  (10) at (-1, 2) {};
		
		\end{pgfonlayer}
		\begin{pgfonlayer}{background}
		\draw [bend left=45, looseness=1.75] (0.center) to (1.center);
		\draw [bend right=60, looseness=1.25] (2.center) to (3.center);
		\draw [in=165, out=15, looseness=1.25] (4.center) to (2.center);
		\draw [in=60, out=-105, looseness=1.25] (3.center) to (5.center);
		\draw [bend left=15] (3.center) to (6.center);
		\draw [bend left=45] (1.center) to (7.center);
		\draw [in=105, out=-30] (1.center) to (8.center);
		\draw [in=-180, out=-45, looseness=1.25] (1.center) to (9.center);
		\draw [in=135, out=90, looseness=1.50] (2.center) to (10.center);
		\draw [in=315, out=-15] (2.center) to (10.center);
		\draw [bend right=60, looseness=1.50] (0.center) to (1.center);
		\end{pgfonlayer}\end{tikzpicture}};
	
	\node at (-6.5,0){\begin{tikzpicture}[scale = 0.2]\begin{pgfonlayer}{above}
		\node [dot, red] (0) at (-0.25, 0) {};
		\node [dot, red] (1) at (1, 0) {};
		\node [dot, red] (2) at (-1.5, 0.5) {};
		\node  [dot, red](3) at (0, -1) {};
		\node  (4) at (-7.5, 3) {};
		\node  (5) at (-3.5, -4.75) {};
		\node  (6) at (-0.5, -5.75) {};
		\node  (7) at (7, 2) {};
		\node  (8) at (7, -1.75) {};
		\node  (9) at (6.75, -4) {};
		\node  (10) at (-5, 6.5) {};
		\draw[draw = blue, fill = blue, fill opacity = .2] (-.2,.1) circle (2cm);
		\end{pgfonlayer}
		\begin{pgfonlayer}{background}
		\draw [bend left=120, looseness=22.00] (0.center) to (1.center);
		\draw [bend right=60, looseness=1.25] (2.center) to (3.center);
		\draw [in=165, out=15, looseness=1.25] (4.center) to (2.center);
		\draw [in=60, out=-105, looseness=1.25] (3.center) to (5.center);
		\draw [bend left=15] (3.center) to (6.center);
		\draw [bend left=45] (1.center) to (7.center);
		\draw [in=105, out=-30] (1.center) to (8.center);
		\draw [in=-180, out=-45, looseness=1.25] (1.center) to (9.center);
		\draw [in=-180, out=90, looseness=0.75] (2.center) to (10.center);
		\draw [in=0, out=75, looseness=0.50] (2.center) to (10.center);
		\draw [bend left=105, looseness=16.25] (0.center) to (1.center);
		\end{pgfonlayer}\end{tikzpicture}};
	\draw  [->, dashed,  line width = 1.5, draw= gray](-5,0)--(-2,0);
	% \filldraw[white](-4,-4) circle (30pt);
	\node at (-3.5,.5){\tiny{apply $\boxtimes$ locally at vertices}
	};
	
	\node at (0,0){ 
		\begin{tikzpicture}[scale = 0.2]\begin{pgfonlayer}{above}
		\node [dot,blue] (0) at (0, 0) {};
		\node  (1) at (0, 0) {};
		\node  (2) at (0, 0) {};
		\node  (3) at (0, 0) {};
		\node  (4) at (-7, 0.25) {};
		\node  (5) at (-3, -4.25) {};
		\node  (6) at (0.25, -5) {};
		\node  (7) at (6.25, 3.25) {};
		\node  (8) at (5.25, -1.75) {};
		\node  (9) at (5, -4) {};
		\node  (10) at (-2.25, 1) {};
		\node  (11) at (0, 2.25) {};
		\node  (12) at (0, 1.5) {};
		\draw[draw =green, fill = green, fill opacity = .2] (-.2,.1) circle (2.5cm);
		\end{pgfonlayer}
		\begin{pgfonlayer}{background}
		\draw [bend left=135, looseness=12.00] (0.center) to (1.center);
		\draw [bend right=60, looseness=1.25] (2.center) to (3.center);
		\draw [in=-150, out=15, looseness=1.25] (4.center) to (2.center);
		\draw [in=60, out=-105, looseness=1.25] (3.center) to (5.center);
		\draw [bend left=15] (3.center) to (6.center);
		\draw [in=150, out=15] (1.center) to (7.center);
		\draw [in=105, out=-30] (1.center) to (8.center);
		\draw [in=-180, out=-45, looseness=1.25] (1.center) to (9.center);
		\draw [in=-135, out=-165] (2.center) to (10.center);
		\draw [in=45, out=75, looseness=0.50] (2.center) to (10.center);
		\draw [bend left=120, looseness=7.25] (0.center) to (1.center);
		\draw [in=135, out=165, looseness=0.75] (12.center) to (3.center);
		\draw [in=60, out=-15, looseness=0.75] (12.center) to (3.center);
		\draw [in=150, out=180] (11.center) to (3.center);
		\draw [in=30, out=0] (11.center) to (3.center);
		\end{pgfonlayer}\end{tikzpicture}};
	\draw  [->, dashed, line width = 1.5, draw= gray](2,0)--(5,0);
	\filldraw[white](6,-4) circle (30pt);
	\node at (3.5,.5){\tiny{apply $\zeta$ on inner edges}
	};

	\node at (6.5,0){ 
		\begin{tikzpicture}[scale = 0.2]	\begin{pgfonlayer}{above}
		\node [dot, green] (0) at (0, 0) {};
		\node  (1) at (0, 0) {};
		\node  (2) at (0, 0) {};
		\node  (3) at (0, 0) {};
		\node  (4) at (-7, 0.25) {};
		\node  (5) at (-3, -4.25) {};
		\node  (6) at (0.25, -5) {};
		\node  (7) at (6.25, 3.25) {};
		\node  (8) at (5.25, -1.75) {};
		\node  (9) at (5, -4) {};
		\end{pgfonlayer}
		\begin{pgfonlayer}{background}
		\draw [bend right=60, looseness=1.25] (2.center) to (3.center);
		\draw [in=-150, out=15, looseness=1.25] (4.center) to (2.center);
		\draw [in=60, out=-105, looseness=1.25] (3.center) to (5.center);
		\draw [in=75, out=-75] (3.center) to (6.center);
		\draw [in=150, out=15] (1.center) to (7.center);
		\draw [in=105, out=-30] (1.center) to (8.center);
		\draw [in=-180, out=-45, looseness=1.25] (1.center) to (9.center);
		\end{pgfonlayer}\end{tikzpicture}};\end{tikzpicture}
\caption{$S(\X) \to \Tk S_X$ and (below) the corresponding structure map $h_\X \colon S(\X)\to S_X$.} \label{fig. COnu structure map}
\end{figure}

In particular, the morphism $\prod_v S_{Z_v} \to S_{\coprod_v Z_v}$ induces a morphism \[\boxtimes^V \colon S(\X) \to S_{ \coprod_{v} Z_v}^{y_1 \ddagger z_1, \dots, y_k \ddagger z_k}. \]

Let $h_\X \colon S(\X) \to S(\C_X)  = S_X$ be the map (illustrated in \cref{fig. COnu structure map}) defined, using (\ref{eq. SX resolution}), by the composition 
\begin{equation}
\label{eq. prod eq} \xymatrix{ S(\X) \ar[rr]^-{\boxtimes^{V}} && 
	S_{ \coprod_{v} Z_v}^{y_1 \ddagger z_1, \dots, y_k \ddagger z_k}\ar[rr]^- {(\zeta)^{k-1}}&&S_X}
\end{equation}

where $(\zeta)^{k-1}$ is obtained by the obvious iterated contractions. Since $(S, \boxtimes, \zeta)$ satisfies (C1) and (C2), these are independent of choices of ordering.

The maps $h_\X$ so defined are the components of a morphism $h \colon \Tk S \to S$ in $\GSE$.

Moreover, for any $\X$-shaped graph of graphs $\Gg$, $\Gg(b_v)$ is a $Z_v$-graph. So, by (C3), we obtain morphisms $h_{ \Gg(b_v)} \colon S(\Gg(b_v)) \to S_{Z_v}$, and by iterating (\ref{eq. prod eq}), it is straightforward to show that $h \colon \Tk S \to S$ satisfies the axioms for a monad morphism. (\cref{fig. COnu h alg} for coherence of $h$ and $\mu^{\TTk}$.)
\end{proof}

\begin{figure}[htb!]
\begin{tikzpicture} [scale = .9]
\node at(-5,0){ 
	\begin{tikzpicture}[scale = .4]
	\begin{pgfonlayer}{above}
	\node [dot, red] (0) at (0, 0) {};
	\node  [dot, red](1) at (2, 0) {};
	\node [dot, red] (2) at (-2, 1) {};
	\node [dot, red] (3) at (-1, -1) {};
	\node  (4) at (-4.5, 2.25) {};
	\node  (5) at (-3, -3) {};
	\node  (6) at (0, -4) {};
	\node  (7) at (4, 2) {};
	\node  (8) at (5, -1) {};
	\node  (9) at (5, -3) {};
	\node  (10) at (-1, 2) {};
	\draw[draw = blue, fill = blue, fill opacity =.2] (-1.6,1.2) circle (1.2cm);
	\draw[draw = blue, fill = blue, fill opacity =.2] (-1,-1) circle (.6cm);
	\draw[draw = blue, fill = blue, fill opacity =.2] (1,0)ellipse (1.7cm and .9cm);
	\end{pgfonlayer}
	\begin{pgfonlayer}{background}
	\draw [bend left=45, looseness=1.75] (0.center) to (1.center);
	\draw [bend right=60, looseness=1.25] (2.center) to (3.center);
	\draw [in=165, out=15, looseness=1.25] (4.center) to (2.center);
	\draw [in=60, out=-105, looseness=1.25] (3.center) to (5.center);
	\draw [bend left=15] (3.center) to (6.center);
	\draw [bend left=45] (1.center) to (7.center);
	\draw [in=105, out=-30] (1.center) to (8.center);
	\draw [in=-180, out=-45, looseness=1.25] (1.center) to (9.center);
	\draw [in=135, out=90, looseness=1.50] (2.center) to (10.center);
	\draw [in=315, out=-15] (2.center) to (10.center);
	\draw [bend right=60, looseness=1.50] (0.center) to (1.center);
	\end{pgfonlayer}
	\end{tikzpicture}
};

\draw  [->, line width = 1.22, draw= gray, dashed](-2,0)--(2,0);
%\filldraw[white](1,0) circle (30pt);
\node at (0,.5){
	$\Tk h$
};
\draw  [->, line width = 1.2, draw= gray](-5,-2)--(-5,-3);
%\filldraw[white](-6,-4) circle (30pt);
\node at (-5.8,-2.5){$\mu^{\TTk}S$
	%\begin{tikzpicture}[scale = 0.2]\oneB\end{tikzpicture}
};

\node at (5,0){ 
	\begin{tikzpicture}[scale = .4]
	\begin{pgfonlayer}{above}
	\node [dot, blue] (0) at (0, 0) {};
	\node  [dot, blue](1) at (0, 0) {};
	\node [dot, blue] (2) at (-1, 1) {};
	\node [dot, blue] (3) at (-1, -1) {};
	\node  (4) at (-4.5, 2.25) {};
	\node  (5) at (-3, -3) {};
	\node  (6) at (0, -4) {};
	\node  (7) at (4, 2) {};
	\node  (8) at (5, -1) {};
	\node  (9) at (5, -3) {};
	\node  (10) at (-1, 2) {};
	\draw[draw = green, fill = green, fill opacity =.2] (-.5,0) circle (1.8cm);
	%	\draw[draw = blue, fill = blue, fill opacity =.2] (-1,-1) circle (.6cm);
	%	\draw[draw = blue, fill = blue, fill opacity =.2] (1,0)ellipse (1.7cm and .9cm);
	\end{pgfonlayer}
	\begin{pgfonlayer}{background}
	\draw [bend left=45, looseness=1.75] (0.center) to (1.center);
	\draw [bend right=60, looseness=1.25] (2.center) to (3.center);
	\draw [in=165, out=15, looseness=1.25] (4.center) to (2.center);
	\draw [in=60, out=-105, looseness=1.25] (3.center) to (5.center);
	\draw [bend left=15] (3.center) to (6.center);
	\draw [bend left=45] (1.center) to (7.center);
	\draw [in=105, out=-30] (1.center) to (8.center);
	\draw [in=-180, out=-45, looseness=1.25] (1.center) to (9.center);
	%	\draw [in=135, out=90, looseness=1.50] (2.center) to (10.center);
	%	\draw [in=315, out=-15] (2.center) to (10.center);
	\draw [bend right=60, looseness=1.50] (0.center) to (1.center);
	\end{pgfonlayer}
	\end{tikzpicture}
};
\draw  [->, line width = 1.2, draw= gray](5,-2)--(5,-3);

\node at (5.8,-2.5){
	$h$
};

\node at (-5,-4.5){ 
	\begin{tikzpicture}[scale = .4]
	\begin{pgfonlayer}{above}
	\node [dot, red] (0) at (0, 0) {};
	\node  [dot, red](1) at (2, 0) {};
	\node [dot, red] (2) at (-2, 1) {};
	\node [dot, red] (3) at (-1, -1) {};
	\node  (4) at (-4.5, 2.25) {};
	\node  (5) at (-3, -3) {};
	\node  (6) at (0, -4) {};
	\node  (7) at (4, 2) {};
	\node  (8) at (5, -1) {};
	\node  (9) at (5, -3) {};
	\node  (10) at (-1, 2) {};
	\end{pgfonlayer}
	\begin{pgfonlayer}{background}
	\draw [bend left=45, looseness=1.75] (0.center) to (1.center);
	\draw [bend right=60, looseness=1.25] (2.center) to (3.center);
	\draw [in=165, out=15, looseness=1.25] (4.center) to (2.center);
	\draw [in=60, out=-105, looseness=1.25] (3.center) to (5.center);
	\draw [bend left=15] (3.center) to (6.center);
	\draw [bend left=45] (1.center) to (7.center);
	\draw [in=105, out=-30] (1.center) to (8.center);
	\draw [in=-180, out=-45, looseness=1.25] (1.center) to (9.center);
	\draw [in=135, out=90, looseness=1.50] (2.center) to (10.center);
	\draw [in=315, out=-15] (2.center) to (10.center);
	\draw [bend right=60, looseness=1.50] (0.center) to (1.center);
	\end{pgfonlayer}
	\end{tikzpicture}
};
\draw  [->, line width = 1.2, draw= gray, dashed](-2,-4)--(2,-4);
%\filldraw[white](1,0) circle (30pt);
\node at (0,-4.5){
	$h$
};
\node at (5,-4){ 
	\begin{tikzpicture}[scale = 0.35]	\begin{pgfonlayer}{above}
	\node [dot, green] (0) at (0, 0) {};
	\node  (1) at (0, 0) {};
	\node  (2) at (0, 0) {};
	\node  (3) at (0, 0) {};
	\node  (4) at (-7, 0.25) {};
	\node  (5) at (-3, -4.25) {};
	\node  (6) at (0.25, -5) {};
	\node  (7) at (6.25, 3.25) {};
	\node  (8) at (5.25, -1.75) {};
	\node  (9) at (5, -4) {};
	\end{pgfonlayer}
	\begin{pgfonlayer}{background}
	\draw [bend right=60, looseness=1.25] (2.center) to (3.center);
	\draw [in=-150, out=15, looseness=1.25] (4.center) to (2.center);
	\draw [in=60, out=-105, looseness=1.25] (3.center) to (5.center);
	\draw [in=75, out=-75] (3.center) to (6.center);
	\draw [in=150, out=15] (1.center) to (7.center);
	\draw [in=105, out=-30] (1.center) to (8.center);
	\draw [in=-180, out=-45, looseness=1.25] (1.center) to (9.center);
	\end{pgfonlayer}
	%	\end{tikzpicture}
	\end{tikzpicture}};

\end{tikzpicture}
\caption{The structure map $h \colon \Tk S \to S$ is compatible with the monad multiplication $\mu^{\TTk}$ on $(\Tk)^2 S$.}\label{fig. COnu h alg}
\end{figure}

\begin{rmk}\label{rmk. no empty graph}
	The monad in \cite{Koc18} is defined identically to $\TTk$ on $\GS$ except the empty graph $\oslash$ is not included as a $\nul$-graph. Consequently, algebras for the monad in \cite{Koc18} are equipped with a non-unital external product. See also \cref{rmk. monoidal unit}.
\end{rmk}

\begin{rmk}
It is straightforward to verify that $\TTk$ and $\Gret \subset \GS$ satisfy the conditions of \cite[Sections~1~\&~2]{BMW12} and hence there is an abstract nerve theorem, analogous to \cref{thm: CO nerve}, for $\Set$-valued non-unital circuit operads. This construction is not described here since the nerve theorem for unital circuit operads is far more interesting (and challenging).
\end{rmk}

\section{Review of modular operads and pointed graphs}\label{sec. mod op}
This section reviews the construction of the monad for $\Set$-valued modular operads in \cite[Sections~5-7]{Ray20} --  in terms of monads $\TT$ and $\DD$ on $\GS$ and a distributive law $\lambda_{\DD\TT}:TD \Rightarrow DT$ for composing them -- in the more general setting of a category $\E$ that is assumed to have sufficient limits and colimits. 

\subsection{Non-unital modular operads}

The non-unital modular operad monad $\TT = (T, \mu^\TT, \eta^\TT)$ on $\GS$ was described in \cite[Section~5]{Ray20}. In this paper, $\TT$ is used to denote 
the non-unital $\E$-valued modular operad monad on $\GSE$ where $ \E$ is an arbitrary enriching category with all finite limits and sufficient colimits. 

The endofunctor $\T\colon\GSE \to \GSE$ is described, for all graphical species $S$ in $\E$, and all finite sets $X$, by
\begin{equation}\label{eq. T def}
TS_X \defeq \mathrm{colim}_{\X \in X\Griso}S(\X),\end{equation} where $X\Griso \subset X \Grisok$ is the full subgroupoid of connected $X$-graphs. %In particular, 

By \cite[Corollary~5.19]{Ray20} (which follows \cite[Lemma~1.5.12]{Koc16}), if $\G$ is a connected graph, and $\Gg \colon \elG \to \Gr$ is a non-degenerate $\G$-shaped graph of graphs such that $\Gg(b)$ is connected for all $(\C, b) \in \elG$, then the colimit $ \Gg(\G)$ is also connected. Hence, the triple $\TT = (T, \mu^\TT, \eta^\TT)$, where $\mu^{\TT}, \eta^{\TT}$ are the obvious (co)restrictions, defines a monad on $\GSE$. %This is the monad $\TT$.  %,restriction of $ \mu^{\TTk}$ to $T^2$ defines a multiplication for $T$

\begin{prop}\label{prop. Talg}
	The EM category $\GSE^\TT$ of algebras for $\TT$ on $\GSE$ is canonically equivalent to $\nuCSME$.  
\end{prop}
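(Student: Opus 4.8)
The plan is to follow the proof of \cref{unpointed modular operad} almost verbatim, systematically replacing the external product $\boxtimes$ by the modular multiplication $\diamond$ and restricting throughout to \emph{connected} graphs, so that every graph that appears lies in one of the subgroupoids $X\Griso$ used to define $T$ in (\ref{eq. T def}). For $\E = \Set$ this equivalence is essentially \cite[Section~5]{Ray20}, so the substance of the argument is already available; the only genuinely new point is to confirm that each step uses nothing beyond the finite limits and sufficient colimits now assumed of $\E$. The constructions below produce functors $\GSE^\TT \to \nuCSME$ and $\nuCSME \to \GSE^\TT$ that are mutually inverse up to canonical isomorphism, and that carry algebra morphisms to morphisms of modular operads and back, yielding the asserted equivalence.

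First I would extract the non-unital modular operad structure from a $\TT$-algebra $(A,h)$. For finite sets $X,Y$ with $x \in X$, $y \in Y$, the graph $\mathcal D_{X,Y}^{x \ddagger y}$ of \cref{ex: M graph} is connected, so it lies in $\bigl((X \amalg Y)\setminus\{x,y\}\bigr)\Griso$; identifying $(A_X \times A_Y)^{x \ddagger y}$ with $A(\mathcal D_{X,Y}^{x \ddagger y})$ and composing the canonical map to $TA_{(X \amalg Y)\setminus\{x,y\}}$ with $h$ defines $\diamond_{X,Y}^{x \ddagger y}$. Likewise $\C_X^{x \ddagger y}$ of \cref{ex: N graph} is connected and yields $\zeta_X^{x \ddagger y}$ after composition with $h$. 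Each of the axioms (M1)--(M4) of \cref{def. mod op} is then verified by the same ``graph of graphs'' argument as in \cref{unpointed modular operad}: the two sides of the relevant square are the structure maps associated to two non-degenerate graphs of graphs whose colimits in $\Gret$ coincide (a single connected graph), so the two composites agree by the monad algebra axioms for $(A,h)$.

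Conversely, given a non-unital modular operad $(S,\diamond,\zeta)$ I would build the structure map $h \colon TS \to S$. For a connected $X$-graph $\X$ with no stick components, the essential-cover presentation (\ref{eq: general coequaliser}) of \cref{lem: essential cover} expresses $S(\X)$ as the limit $\bigl(\prod_{v} S_{Z_v}\bigr)^{y_1 \ddagger z_1, \dots, y_k \ddagger z_k}$ of the vertex-corollas glued along the inner edges, exactly as in (\ref{eq. SX resolution}). Choosing a spanning tree of $\X$, I would apply $\diamond$ along the tree edges to assemble the corollas $\Cv$ into a single corolla, and then apply $\zeta$ along the remaining inner edges; this defines $h_\X \colon S(\X) \to S_X$. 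The iterated-colimit formula of \cref{lem. Tk mult} (with $\Griso$ in place of $\Grisok$) together with the compatibility of these maps with arbitrary $\X$-shaped graphs of graphs then shows that $h$ respects $\mu^{\TT}$ and $\eta^{\TT}$, i.e.\ is a $\TT$-algebra structure, and that the two constructions invert one another.

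The main obstacle will be the well-definedness of $h_\X$: I must check that $\diamond$ and $\zeta$ may be applied in any order, and along any spanning tree, without changing the outcome. This is precisely what (M1) (associativity), (M3) (multiplication and contraction commute) and (M4) (parallel multiplication) guarantee, with (M2) giving order-independence of the contractions. Here connectivity of $\X$ is used in an essential way: it ensures a spanning tree exists and that each intermediate application of $\diamond$ joins two genuinely distinct components, so that the self-gluings are handled exclusively by $\zeta$ --- this is exactly the point at which the modular case differs from the circuit-operad case, where $\boxtimes$ absorbs disjoint unions directly. Since all of these coherences are statements about finite limits and colimits, the $\Set$-level verification of \cite{Ray20} transfers without change to an arbitrary $\E$ with sufficient limits and colimits.
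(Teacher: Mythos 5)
Your proposal is correct and takes essentially the same route as the paper: the multiplication $\diamond$ and contraction $\zeta$ are extracted from the algebra structure $h$ via the connected graphs $\mathcal D_{X,Y}^{x\ddagger y}$ and $\C_X^{x\ddagger y}$, the axioms (M1)--(M4) are verified by exhibiting pairs of graphs of graphs with a common colimit and invoking the monad algebra axioms, and the converse is the iterated $\diamond$/$\zeta$ construction, which the paper does not spell out but defers to \cite[Proposition~5.29]{Ray20}. The only difference is presentational: you make explicit the spanning-tree argument (and its independence of choices via (M1)--(M4)) that the paper cites rather than reproduces.
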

\begin{proof}
The proof for general $\E$ closely resembles the proof of \cite[Proposition~5.29]{Ray20} for the case $\E = \Set$. 

The inclusion $X \Griso \hookrightarrow X \Grisok$ induces a monomorphism $TS_X \to  \Tk S_X$. In particular, since $\X^{x \ddagger y}$ is connected for all connected $X$-graphs $\X$ and all pairs $x, y$ of distinct elements of $X$, if $(A, h)$ is an algebra for $\TT$, then $A$ admits a contraction $\zeta$
\[ \zeta_X^{x \ddagger y} \colon A_X^{x \ddagger y} = A(\C_X^{x \ddagger y}) \xrightarrow{h} TA_{X\setminus \{x,y\}}, \] that satisfies (M2) by the proof of \cref{unpointed modular operad}.

Moreover, 
there is a multiplication $\diamond$ on $A$ such that, for all finite sets $X$ and $Y$ and all elements $x \in X$, and $y \in Y$, $\diamond_{X,Y}^{x \ddagger y}$ is induced by 
\[ A(\mathcal D_{X,Y}^{x \ddagger y} ) \to TA_{(X \amalg Y) \setminus \{x,y\}} \xrightarrow {h} A_{(X \amalg Y) \setminus \{x,y\}} .\]

To show that $(A, \diamond, \zeta)$ satisfy the axioms (M1)-(M4), we construct (as in the proof of \cref{unpointed modular operad} and \cite[Proposition~5.29]{Ray20}), for each axiom, a pair of graphs of graphs that have the same colimit in $\Gr$. 

For example, to prove Axiom (M4), let $X$ and $Y$ be finite sets with distinct elements $x_1, x_2 \in X$ and $y_1 , y_2 \in Y$. Then the diagram 
\[\scriptsize{\xymatrix@C=6pt{ && (A_{\scriptscriptstyle{X}} \times A_{\scriptscriptstyle{Y}} )^{\scriptscriptstyle{x \ddagger y_1, x_2\ddagger y_2}}\ar[dl]_-{=} \ar[d]^{=}\ar[dr]^-{=}&&\\
		%A_{X \amalg Y} \times A_Z \ar@{=}[d]
		A_{\scriptscriptstyle{X \amalg Y \setminus \{x_1, y_1\}}}^{\scriptscriptstyle{ x_2\ddagger y_2}} \ar@{=}[d]&\ar@{-->}[l]_-{\diamond }A(\mathcal D_{\scriptscriptstyle{X,Y}}^{\scriptscriptstyle{x_1\ddagger y_1}})^{\scriptscriptstyle{x_2\ddagger y_2}}\ar[r]^-{=}\ar[d]& A(\C_X \amalg \C_Y )^{\scriptscriptstyle{x_1 \ddagger y_1, x_2\ddagger y_2}}\ar[dd]&  A(\mathcal D_{\scriptscriptstyle{X,Y}}^{\scriptscriptstyle{x_2 \ddagger y_2}})^{\scriptscriptstyle{ x_1\ddagger y_1}}\ar[d] \ar[l]_-{=}\ar@{-->}[r]^-{ \diamond}& A_{\scriptscriptstyle{ X \amalg Y\setminus \{x_2,y_2\}}} ^{\scriptscriptstyle{x_1 \ddagger y_1}}\ar@{=}[d]\\
		A(\C_{\scriptscriptstyle{X \amalg Y \setminus\{x_1, y_1\}}} ^{\scriptscriptstyle{x_2\ddagger y_2}})  \ar[d]&TA (\C_{\scriptscriptstyle{X \amalg Y \setminus\{x_1, y_1\}} }^{\scriptscriptstyle{x_2\ddagger y_2}}) \ar[l]_-{h}\ar[d]&& TA (\C_{\scriptscriptstyle{X \amalg Y \setminus\{x_2, y_2\}}} ^{\scriptscriptstyle{x_1\ddagger y_1}}) \ar[d]\ar[r]^-{h}\ar[d]&A(\C_{X \amalg Y \setminus\{x_2, y_2\}} ^{\scriptscriptstyle{x_1\ddagger y_1}})\ar[d]\\
		TA_{\scriptscriptstyle{X \amalg Y\setminus \{x_1, x_2, y_1, y_2\}}}\ar[rrdd]^-{h}& \ar[l]_{Th}T^2A_{X \amalg Y \setminus \{x_1, x_2, y_1, y_2\}}\ar[r]^-{ \mu^{\TTk A}}& TA_{\scriptscriptstyle{X \amalg Y\setminus\{x_1, x_2, y_1, y_2\}}}\ar[dd]^-{h}& \ar[l]_-{ \mu^{\TTk A}}T^2A_{\scriptscriptstyle{X \amalg Y \setminus\{x_1, x_2, y_1, y_2\}}}\ar[r]^-{Th} &  TA_{\scriptscriptstyle{X \amalg Y \setminus\{x_1, x_2, y_1, y_2\}}}\ar[ddll]^-{h}\\
		&&&&\\
		&& A_{\scriptscriptstyle{X \amalg Y \setminus \{x_1, x_2, y_1, y_2\}}}&&}}\]
commutes, whereby it follows that 
	$\diamond$ and $\zeta$ satisfy (M4).
	
	Axioms (M1)-(M3) follow by the same method. (Of course, (M2) is proved in \cref{unpointed modular operad}, and, when $\E = \Set$, the proof that $(A,h)$ satisfies (M1) is described in detail in \cite{Ray20}.)

The converse follows exactly the same method as the proof of \cite[Proposition~5.29]{Ray20} and \cref{unpointed modular operad}.	
\end{proof}

\subsection{The monad $\DD$}\label{ssec. D monad}
The modular operad monad on $\GSE$ will be described in terms of a distributive law for composing $\TT$ and the \textit{pointed graphical species monad} $\DD = (D, \mu^{\DD}, \eta^{\DD})$. This was defined on graphical species in $\Set$ in \cite{Ray20}, and is generalised here to a monad on $\GSE$, for $\E$ with sufficient limits and colimits. 

	The endofunctor $D $ adjoins \textit{formal (contracted) units} to a graphical species $S$ in $\E$ according to $DS_\S = S_\S$ with $DS_\tau = S_\tau$ and, for finite sets $X$: 
	\[ DS_X   = \left \{ \begin{array}{ll}
	S_X & X \not \cong \two, \text{ and } X \not \cong \nul,\\
	S_\two \amalg S_\S& X = \two,\\
	S_\nul \amalg \widetilde{S_\S} & X = \nul,
	\end{array}  \right . \]
	where $ \widetilde{S_\S}$ is the coequaliser of the identity and $S_\tau$ on $S_\S$

There are canonical natural transformations $\eta^\DD \colon 1_{\GSE} \Rightarrow D$, provided by the induced morphism $S \hookrightarrow DS$, and $\mu^\DD  \colon D^2 \Rightarrow D$ induced by the canonical projections $D^2S_\two \to DS_\two$, so that $\DD = (D, \mu^\DD, \eta^\DD)$ defines the \textit{pointed graphical species monad} on $\GSE$. % has monadic unit 

Algebras for $\DD$ -- called \textit{pointed graphical species in $\E$} -- are graphical species $S$ in $\E$ equipped with 
	a distinguished unit-like morphism (\cref{defn: formal connected unit}) $\epsilon \colon S_\S \to  S_\two,$ 
	and a distinguished morphism $o \colon S_\S \to S_\nul $ that factors through $ \widetilde{S_\S}$.

Let $\fisinvp$ be category  
 obtained from $\fisinv$ by formally adjoining morphisms
$u \colon  \two \to \S$ and $ z\colon  \nul \to \S$, subject to the relations:
\begin{enumerate}[(i)]
	\item 
	$u \circ ch_1 =  id \in\fisinv (\S, \S)$ and  $ u \circ ch_2= \tau \in \fisinv (\S, \S) $;
	\item $\tau \circ u = u \circ \sigma_{\mathbf 2}\in \fisinv (\two, \S)$ ;
	\item 
	$z = \tau \circ z \in \fisinv (\nul, \S)$.
	\label{relations}
\end{enumerate}

It is straightforward to verify that $\fisinvp$ is completely described by:
\begin{enumerate}[]
	\item $\fisinvp(\S, \S) = \fisinv (\S, \S)$ and $\fisinvp (Y,X)  = \fisinv(Y,X)$ whenever $Y \not \cong \nul$ and $ Y \not \cong \two$;
	\item $\fisinvp (\nul, \S) = \{z\}$, and $\fisinvp (\nul, X) = \fisinv (\nul, X) \amalg \{ ch_x\circ z\}_{x \in X}$;
	\item $\fisinvp (\two, \S) = \{u, \tau \circ u\}$, and $\fisinvp (\two, X) = \fisinv (\nul, X) \amalg \{ ch_x\circ u, ch_x \circ \tau \circ u\}_{x \in X}$. 
\end{enumerate}

Hence:
\begin{lem}\label{lem: elp presheaves}
The EM category $\GSEp$ of algebras for $\DD$ is the category of $\E$-presheaves on $\fisinvp$. In other words, the following are equivalent:
	\begin{enumerate}
		\item $S_*$ is a presheaf on $ \fisinvp$ that restricts to a graphical species $S$ on $ \fisinv$;
		\item $(S, \epsilon, o)$, with $\epsilon  = S_*(u)$ and $o = S_*(z)$ is a pointed graphical species.
	\end{enumerate}
	\label{CGSp}
\end{lem}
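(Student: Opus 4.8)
\textbf{Proof plan for \cref{lem: elp presheaves}.}

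The plan is to prove the equivalence of (1) and (2) directly, and then observe that this equivalence is functorial, so that it upgrades to an isomorphism of categories $\GSEp \cong \prE{\fisinvp}$. The key input is the explicit presentation of $\fisinvp$ displayed just before the statement: this presentation tells us exactly what extra data an $\E$-presheaf on $\fisinvp$ carries beyond a graphical species (i.e.\ a presheaf on $\fisinv$), namely the images of the two adjoined generators $u \colon \two \to \S$ and $z \colon \nul \to \S$, subject to the relations (i)--(iii).

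First I would show that (1) implies (2). Given a presheaf $S_* \colon \fisinvp^{\mathrm{op}} \to \E$ restricting to a graphical species $S$ on $\fisinv$, set $\epsilon \defeq S_*(u) \colon S_\two \to S_\S$ --- wait, more carefully, since $S_*$ is contravariant and $u \in \fisinvp(\two, \S)$, the induced map is $S_*(u) \colon S_\S \to S_\two$, which is exactly the type required for a unit-like morphism $\epsilon$; similarly $o \defeq S_*(z) \colon S_\S \to S_\nul$. The content is then to check that the defining relations (i)--(iii) of $\fisinvp$ translate precisely into the defining conditions for $(S,\epsilon, o)$ to be a pointed graphical species. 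Relation (i), $u \circ ch_1 = id$ and $u \circ ch_2 = \tau$, yields under $S_*$ the identities $S(ch_1) \circ \epsilon = id_{S_\S}$ (the second equation of \cref{eq: unit compatible with involution}) and $S(ch_2) \circ \epsilon = S_\tau$. Relation (ii), $\tau \circ u = u \circ \sigma_\two$, yields $\epsilon \circ S_\tau = S(\sigma_\two) \circ \epsilon$, which is the first equation of \cref{eq: unit compatible with involution}, so $\epsilon$ is unit-like in the sense of \cref{defn: formal connected unit}. Relation (iii), $z = \tau \circ z$, yields $o = S_\tau \circ o$, i.e.\ $o$ is $\tau$-invariant and hence factors through the coequaliser $\widetilde{S_\S}$, matching the description of the distinguished morphism $o$ in the definition of algebras for $\DD$.

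Next I would prove the converse, (2) implies (1). Given a pointed graphical species $(S, \epsilon, o)$, I would define $S_*$ on objects by restriction to equal $S$, and extend it to the adjoined generators by $S_*(u) = \epsilon$ and $S_*(z) = o$, extending multiplicatively to composites such as $ch_x \circ u$, $ch_x \circ \tau \circ u$, and $ch_x \circ z$ using the explicit hom-set description of $\fisinvp$. Well-definedness amounts to checking that this assignment respects all relations in $\fisinvp$, which is exactly the reverse of the computation in the previous paragraph, together with the functoriality relations $ch_{f(x)} = f \circ ch_x$ already present in $\fisinv$. Since the presentation of $\fisinvp$ is given by generators and relations (adjoin $u, z$ subject to (i)--(iii)), a functor out of $\fisinvp^{\mathrm{op}}$ is uniquely and freely determined by its values on $\fisinv$ together with images of $u$ and $z$ satisfying the relations, so $S_*$ exists and is unique.

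Finally, I would note that both constructions are manifestly natural in $S$: a colour-preserving morphism of pointed graphical species is precisely a natural transformation commuting with the structure maps $\epsilon$ and $o$, which is exactly a natural transformation of the corresponding $\fisinvp$-presheaves. The correspondence is therefore a bijection on morphisms as well as objects, giving the desired isomorphism of categories $\GSEp \cong \prE{\fisinvp}$. The main obstacle --- though it is more bookkeeping than genuine difficulty --- is verifying that the explicit presentation of $\fisinvp$ (the enumerated hom-set description asserted to be ``straightforward to verify'') really is a complete and consistent presentation, so that no relations beyond (i)--(iii) are forced and the hom-sets are exactly as listed; once that presentation is in hand, the dictionary between the two relations (i)--(iii) and the two axioms on $(\epsilon, o)$ is a direct, essentially mechanical translation.
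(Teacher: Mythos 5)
Your proposal is correct and takes essentially the same route as the paper, which likewise deduces the lemma directly from the generators-and-relations presentation of $\fisinvp$: the paper simply records the explicit hom-set description and concludes with ``Hence'', while you spell out the dictionary between relations (i)--(iii) and the axioms on $(\epsilon, o)$. Two trivial slips that do not affect the argument: contravariance in relation (iii) gives $o = o \circ S_\tau$ (your $S_\tau \circ o$ does not typecheck), and the morphism-level correspondence concerns $\DD$-algebra morphisms generally, not only ``colour-preserving'' ones in the paper's sense.
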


As a consequence of \cref{lem: elp presheaves}, the notation $ S_*$ and $ (S, \epsilon, o)$ will be used interchangeably to denote the same pointed graphical species.

\subsection{ Pointed graphs}\label{ss. pointed}

The category $\Grp$ of connected graphs and \textit{pointed \'etale morphisms}, obtained in the bo-ff factorisation of the functor $\Gr \to \GSp$ is described in detail in \cite[Section~7.2]{Ray20}. Here, $\Grp$ is extended to the category $\Gretp$ of \textit{all} graphs and pointed \'etale morphisms. In other words, $\Gretp$ is the category defined by the bo-ff factorisation of the functor $ \Gret \xrightarrow{\yet} \GS \to \GSp$, as in the following commuting diagram of functors:
\begin{equation} \label{defining Grp}
\xymatrix{ 
	\fisinvp \ar[rr]^-{\text{ }}_-{\text{ f.f.}}	\ar@/^2.0pc/[rrrr]_-{\text{ f.f.} }			&& \Gretp \ar [rr]_-{\text{ f.f.} }^-{\yetp}				&& \GSp \ar@<2pt>[d]^-{\text{forget}^{\DD}}\\ 
	\fisinv \ar@{^{(}->} [rr]\ar[u]^{\text{b.o.}}	&& \Gret \ar@{^{(}->} [rr]_-{\yet} \ar[u]^{\text{b.o.}}	&& \GS.  \ar@<2pt>[u]^-{\text{free}^{\DD}}
}\end{equation}

The inclusion $\fisinvp \to \Gretp$ is fully faithful (by uniqueness of bo-ff factorisation), and the induced nerve $\yetp\colon \Gretp \to \GSp$ is fully faithful by construction

Let $\G\in \Gret$ be a graph. By \cref{ssec. D monad}, for each edge $e \in E$, the $ch_e$-coloured unit for $ \yetp \G$ is given by $ \epsilon^\G_e  =  ch_e \circ u \in \Gretp (\C_\two, \G)$, and the corresponding contracted unit is given by $o^\G_{\tilde e} = ch_e \circ z \in \Gretp (\C_\nul, \G)$.

Since the functor $\yetp$ embeds $\Gretp$ as a full subcategory of $\GSp$, I will write $\G$, rather than $\yetp \G$, where there is no risk of confusion. In particular, the element category $\elpG[\yetp \G]$ is denoted simply by $\elpG$ and called the 
\emph{category of pointed elements of a graph $\G$}.%%   %\srnote{Always hate these things.}

For all pointed graphical species $S_*$, the forgetful functor $\GSp \to \GS$ induces an inclusion $ \ElS[S]\hookrightarrow \elpG[S_*]$, and, for all graphs $\G$, the inclusion $\elG \hookrightarrow \elpG$ is final (see \cite[Section~IX.3]{Mac98}) by \cite[Lemma~7.8]{Ray20}. Hence,  
\begin{equation}
\label{eq: pointed sheaves}  \mathrm{lim}_{ (\C, b) \in \elpG}S(\C)  = \mathrm{lim}_{ (\C, b) \in \elG}S(\C) = S(\G) \end{equation} for all pointed graphical species $S_* = (S, \epsilon, o)$ in $\E$. 

Moreover, by \cite[Section~7.3]{Ray20}, the induced inclusion $\GSEp \hookrightarrow\prE{\Gretp}$ is fully faithful. (In fact, $\GSE$ is equivalent to the category of $\E$-sheaves for the obvious extension of the canonical \'etale topology on $\Gret$ to $\Gretp$.)

To better understand  morphisms in $\Gretp$, let $\G$ be a graph and 
let $W \subset V_0 \amalg V_2$ a subset of isolated and bivalent vertices of $\G$. 

The \emph{vertex deletion functor (for $W$)} is the $ \G$-shaped graph of graphs ${\Gdg}^\G_{\setminus W}\colon  \elG \to \Gret $ given by 
\[{\Gdg}^\G_{\setminus W} (\CX, b)  = \left \{ \begin{array}{ll} 
(\shortmid)& \text{ if } (\CX, b) \text{ is a {neighbourhood} of } v \in W\ (\text{so }  X \cong \nul \text{ or } X \cong \two),\\
\CX &\text{ otherwise. }  
\end{array} \right.  \]   

By \cref{colimit exists}, this has a colimit $\Gnov$ in $\Gret$, and, for all $(\C, b) \in \elG$, ${\Gdg}^\G_{\setminus W}$ describes a morphism $\C \to {\Gdg}^\G_{\setminus W} (b)$ in $\fisinvp$. Hence, ${\Gdg}^\G_{\setminus W}$ induces a morphism $\delW \in \Gretp (\G, \Gnov)$.

\begin{defn}\label{def: vertex deletion}

The morphism $\delW\in \Gretp (\G,\Gnov)$ induced by ${\Gdg}^\G_{\setminus W}\colon  \elG \to \Gret $ is called the  \emph{vertex deletion morphism corresponding to $W \subset V_0 \cong V_2$}. 

\end{defn}

\begin{ex} If $\G = \C_\nul$ is an isolated vertex $*$ and $W = \{*\}$, then $\delW$ is precisely $ z \colon \C_\nul \to (\shortmid). $	

\end{ex}

\begin{ex}\label{line deletion} For $\G = \C_\two$ and $ W = V = \{*\}$, ${\Gdg}^\G_{\setminus W}$ is the constant functor $\elG \to (\shortmid)$:
	\begin{equation}
	\xymatrix{ 
		(\shortmid) \ar[rr]^-{ch_1}\ar[drr]_-{id_{(\shortmid)}}  && \C_\two \ar[d]^{u} && (\shortmid) \ar[ll]_-{ch_2 \circ \tau} \ar[dll]^{id_{(\shortmid)}}  \\
		&&(\shortmid)&&}
	\end{equation}
	and hence $\Gnov = (\shortmid)$ and $\delW  = u \in \Gretp (\C_\two, \shortmid)$.

	More generally, if $\G = {\Lk}$, and $ W  = V$, then ${\Gdg}^\G_{\setminus W}$ is also the constant functor to $(\shortmid)$ and hence $\Gnov = (\shortmid)$. The induced morphism is denoted by $u^k \defeq \delW\colon \Lk \to \Gretp$:
	\[
	\xymatrix{ 
		(\shortmid) \ar[rr]^-{ch_1}\ar[drr]_-{id_{(\shortmid)}}  && \C_\two \ar[d]^{u} &&{ \text{ \dots }} \ar[ll]_-{ch_2 \circ \tau} \ar[rr]^-{ch_1}
		&& \C_\two \ar[d]^-{u}&& \ar[ll]_-{ch_2 \circ \tau}\ar[dll]^{id_{(\shortmid)}} (\shortmid)\\
		%(\shortmid)\ar@{=}[rr]
		&&(\shortmid)\ar@{=}[r]&&{ \text{ \dots }} &&\ar@{=}[l](\shortmid).&&}\]
	In particular, $u^1 = u \colon  \C_\two \to (\shortmid)$ and $u^0$ is just the identity on $(\shortmid)$.

\end{ex}

\begin{ex}\label{wheel deletion} 
	Let $\G = \W$ be the wheel graph with one vertex $v$. Then $\W_{/\{v\}} = \mathrm{colim}_{ \elG[\W]} {\Gdg}^\W_{/\{v\}} $ exists and is isomorphic to $(\shortmid)$ in $ \Gret$.  The induced morphism $\kappa \defeq \delW[\{v\}]\colon  \W \to (\shortmid)$ is described in (\ref{epsilon}).

	Hence, there are precisely two morphisms $\kappa$ and $ \tau \circ \kappa$ in $ \Gretp (\W, \shortmid)$: 
	
	\begin{minipage}[t]{0.5\textwidth}
		\begin{equation} \label{epsilon}
		\xymatrix{
			& \W & \\
			(\shortmid) \ar[ur]^{ch_a} \ar@<-2pt>[rr]_{ch_2\circ \tau}  \ar@<2pt>[rr]^{ ch_{1} }  \ar@{=}[dr] && \C_\two \ar[ul]_{ 1_{\C_\two} \mapsto  a} \ar[dl]^{u}\\
			& (\shortmid)& }\end{equation}
		
	\end{minipage}
	\begin{minipage}[t]{0.5\textwidth}
		\begin{equation} \label{tau epsilon}
		\xymatrix{
			& \W & \\
			(\shortmid) \ar[ur]^{ch_a} \ar@<-2pt>[rr]_{ch_2\circ \tau}  \ar@<2pt>[rr]^{ ch_1 } \ar@{=}[d] && \C_\two \ar[ul]_{ 1_{\C_\two} \mapsto  a} \ar[d]^{\sigma_\two}\\
			(\shortmid) \ar@<-2pt>[rr]_{ch_1\circ \tau}  \ar@<2pt>[rr]^{ ch_2}  \ar@{=}[dr] && \C_\two  \ar[dl]^{u}\\
			& (\shortmid).& }\end{equation}
		
	\end{minipage}
	
	More generally, let $ \Wl$ be the wheel graph with $m$ vertices $(v_i) _{i = 1}^m$, and let $\iota \in \Gr(\Lk[m-1], \Wl)$ be an \'etale inclusion. If $W$ is the image of $ V(\Lk[m-1])$ in $V(\Wl)$, then $ V(\Wl) \cong W \amalg \{*\}$, and by (\ref{epsilon}) and \cref{line deletion}, 
	there are two distinct pointed morphisms, $\kappa^m  $ and $\tau \circ \kappa^m$,  in $ \Gretp (\Wl, \shortmid)$. Hence, for all $\G$, 
	\[\Gretp (\Wl, \G) =  \Gret(\Wl, \G)\amalg \{ch_{e} \circ \kappa^m\}_{ e\in E(\G)} \cong \Gretp(\W, \G).\]  \end{ex}

\begin{rmk}
	Recall from \cref{rmk. pushout} that the composition $\bigcirc = \cup \circ \cap$ of Brauer diagrams is not described by a pushout of cospans as in (\ref{eq. cospan pushout}). Instead, the pushout (\ref{eq. cospan pushout}) for $(\cup \circ \cap)$ 
	is induced by 
	the trivial diagram $(\shortmid) \xrightarrow{id}(\shortmid) \xleftarrow{id} (\shortmid)$, and therefore agrees with $\kappa \colon \W \to (\shortmid)$ described in \cref{wheel deletion}. 
	
	(See \cite[Section~6]{Ray20} for more details on the combinatorics of contractions.)
	\end{rmk}

For convenience, let us define $\Wm[0] \defeq  \Wm_{/V(\Wm)} = (\shortmid) $ ($m \geq 1$). Then for all $k \geq 0$, $u^k \colon \Lk \to (\shortmid)$ factors canonically as $ \Lk \to \Wm[k] \xrightarrow{\kappa^k} (\shortmid)$.

\begin{ex}\label{ex. admissible deletion}

Assume that $\G \not \cong \C_\nul$ is connected (so $V_0$ is empty). Let $W \subset V_2$ be a subset of bivalent vertices of $\G$. Unless $W = V(\G)$ and $\G = \Wm$ for some $m \geq 1$, the graph $\Gnov$ may be intuitively described as `$\G$ with a subset of bivalent vertices deleted' as in \cref{fig: vertex deletion}.

In this case, the graph $\Gnov$ is described explicitly by:
\[\ \Gnov = \ \xymatrix{
	*[r] { \Enov}\ar@(ul,dl)[]_{\taunov} && { \Hnov} \ar[ll]_-{\snov} \ar[rr]^-{\tnov}&& \Vnov,}\] where
\[ \begin{array}{ll}\Vnov&=V \setminus W,\\
\Hnov &= H \setminus \left (\coprod_{v \in W} \vH \right),\\

&=  E \setminus\left (\coprod_{v \in W} \vE \right), 
 	\end{array}\]
and $\snov, \tnov$ are just the restrictions of $s$ and $t$. The description of the involution $\taunov$ is more complicated than needed here. (The interested reader is referred to \cite[Section~7.2]{Ray20} which contains full descriptions of the vertex deletion morphisms). 

\begin{figure}[htb!]
	\begin{tikzpicture}[scale = .3]

	\draw[->] (-15,0)--(-11,0);
	\node at (-13,.5){\scriptsize{$\delW$}};

	\node at (-25,0){\begin{tikzpicture}[scale = .25]
		
		\draw [thick, cyan]
		(-3,2).. controls (-2.5,1.5) and (-2.5,0.5).. (-3,0)
		(-3,0).. controls (-3.5,-1).. (-5,-1)
		(-5,3) --(-2,4)--(1,4)--(3,3)
		%	(-5,3)..controls (-2,4) and (1,4) .. (3,3)
		
		%	(1,-4)..controls (2, -4) and (6,-2).. (5,2)
		(1,-4)--(4, -3.5)--(5,-1)--(5,2)
		%	(-1,-5) .. controls (-2.5,-5) and (-5, -4.5) ..(-6, -9)
		(-1,-5) -- (-3,-5)--(-5, -6)--(-6, -9)
		;
		
		\draw[blue, fill = cyan]
		(-3,0) circle (8pt)
		(4, -3.5) circle (8pt)
		(5,-1)circle (8pt)
		(5,2)circle (8pt)
		(-2,4)circle (8pt)
		(1,4)circle (8pt)
		(-3,-5)circle (8pt)(-5, -6)circle (8pt)(-5.5,-7.5)circle (8pt)
		;

		\draw[ thick] 	
		%cluster 1
		(-6,1).. controls (-6,2).. (-5,3)
		(-5,3).. controls (-4.5,3) and (-3.5,3).. (-3,2)
		%	(-3,2).. controls (-2.5,1.5) and (-2.5,0.5).. (-3,0)
		%	(-3,0).. controls (-3.5,-1).. (-5,-1)
		(-5,-1).. controls (-6,-0) .. (-6,1)
		(-5,-1)-- (-3,2)
		(-3,2)--(-6,1)
		(-6,1)..controls ( -7.5,1) and (-9, 0.5) ..(-11,0.5)
		%cluster 2
		(-1,-5)..controls (-0.5,-3.5) and ( 0.5, -3.5)..(1,-4)
		(-1,-5)..controls (-0.5,-5) and ( 0.5, -5)..(1,-4)
		%	(-1,-5) .. controls (-2.5,-5) and (-5, -4.5) ..(-6, -9)
		%cluster 3
		(4,1)--(5,2)
		(5,2)--(3,3)
		(3,3)--(4,1)
		(3,3).. controls (4,3.5) and (5,2.5)..(5,2)
		(3,3).. controls (3,4) and(4,6)  ..(8,6)
		%between clusters
		%	(-5,3)..controls (-2,4) and (1,4) .. (3,3)
		(-3,2).. controls (1,-1.5) and (5,-1).. (5,2)
		%	(1,-4)..controls (2, -4) and (6,-2).. (5,2)
		;
		
		\filldraw [black]	
		%cluster 1
		(-6,1) circle (8pt)
		(-5,3) circle (8pt)
		(-3,2) circle (8pt)
		%	(-3,0) circle (8pt)
		(-5,-1) circle (8pt)
		%cluster 2			
		(1,-4) circle (8pt)
		(-1,-5) circle (8pt)
		%cluster 3			
		(4,1) circle (8pt)
		(5,2) circle (8pt)
		(3,3) circle (8pt);
		
		\draw[black, fill = white]
		(-4,.5)circle (8pt)
		(4,1) circle (8pt);
		
		%	\filldraw[white] (-3,0) circle (15pt);

		%\draw [ultra thick] (,0) to [out=87,in=150] (1,1) -- (.85,.15) -- (0,0);
		%\draw [ultra thick, fill=blue] 	(-7.5,0) ..controls ( -6,4) and (-2,6)..(0,6)
		(0,6 ) .. controls (2,6) and (7,4) ..(7,0)
		( 7,0) .. controls (7,-2) and (2,-8) ..(0,-8)--(0,0)--(-7.5,0);
		%\path [fill=blue] (4,0) to [out=87,in=150] (5,1) -- (4.85,.15) -- (4,0);
		\end{tikzpicture}};
	
	\node at (-2,0){\begin{tikzpicture}[scale = .25]
		
		\draw [thick, red]
		(-3,2).. controls (-2.5,1.5) and (-2.5,0.5).. (-3,0)
		(-3,0).. controls (-3.5,-1).. (-5,-1)
		%	(-5,3) --(-2,4)--(1,4)--(3,3)
		(-5,3)..controls (-2,4) and (1,4) .. (3,3)
		
		(1,-4)..controls (2, -4) and (6,-2).. (5,2)
		%	(1,-4)--(4, -3.5)--(5,-1)--(5,2)
		(-1,-5) .. controls (-2.5,-5) and (-5, -4.5) ..(-6, -9)
		%	(-1,-5) -- (-3,-5)--(-5, -6)--(-6, -9)
		;

		\draw[ thick] 	
		%cluster 1
		(-6,1).. controls (-6,2).. (-5,3)
		(-5,3).. controls (-4.5,3) and (-3.5,3).. (-3,2)
		%	(-3,2).. controls (-2.5,1.5) and (-2.5,0.5).. (-3,0)
		%	(-3,0).. controls (-3.5,-1).. (-5,-1)
		(-5,-1).. controls (-6,-0) .. (-6,1)
		(-5,-1)-- (-3,2)
		(-3,2)--(-6,1)
		(-6,1)..controls ( -7.5,1) and (-9, 0.5) ..(-11,0.5)
		%cluster 2
		(-1,-5)..controls (-0.5,-3.5) and ( 0.5, -3.5)..(1,-4)
		(-1,-5)..controls (-0.5,-5) and ( 0.5, -5)..(1,-4)
		%	(-1,-5) .. controls (-2.5,-5) and (-5, -4.5) ..(-6, -9)
		%cluster 3
		(4,1)--(5,2)
		(5,2)--(3,3)
		(3,3)--(4,1)
		(3,3).. controls (4,3.5) and (5,2.5)..(5,2)
		(3,3).. controls (3,4) and(4,6)  ..(8,6)
		%between clusters
		%	(-5,3)..controls (-2,4) and (1,4) .. (3,3)
		(-3,2).. controls (1,-1.5) and (5,-1).. (5,2)
		%	(1,-4)..controls (2, -4) and (6,-2).. (5,2)
		;
		
		\filldraw [black]	
		%cluster 1
		(-6,1) circle (8pt)
		(-5,3) circle (8pt)
		(-3,2) circle (8pt)
		%	(-3,0) circle (8pt)
		(-5,-1) circle (8pt)
		%cluster 2			
		(1,-4) circle (8pt)
		(-1,-5) circle (8pt)
		%cluster 3			
		(4,1) circle (8pt)
		(5,2) circle (8pt)
		(3,3) circle (8pt);
		
		\draw[black, fill = white]
		(-4,.5)circle (8pt)
		(4,1) circle (8pt);
		
		%	\filldraw[white] (-3,0) circle (15pt);

		%\draw [ultra thick] (,0) to [out=87,in=150] (1,1) -- (.85,.15) -- (0,0);
		%\draw [ultra thick, fill=blue] 	(-7.5,0) ..controls ( -6,4) and (-2,6)..(0,6)
		(0,6 ) .. controls (2,6) and (7,4) ..(7,0)
		( 7,0) .. controls (7,-2) and (2,-8) ..(0,-8)--(0,0)--(-7.5,0);
		%\path [fill=blue] (4,0) to [out=87,in=150] (5,1) -- (4.85,.15) -- (4,0);
		\end{tikzpicture}};

\end{tikzpicture}
\caption{Vertex deletion $\delW\colon  \G \to \Gnov$. } 
\label{fig: vertex deletion}
\end{figure}
\end{ex}

\begin{defn}
	\label{def: similar}
	The \emph{similarity category} $\Grsimp\hookrightarrow \Gretp$ is the identity-on-objects subcategory of $\Gretp$ whose morphisms are generated under composition by $z\colon \C_\nul \to (\shortmid)$, the vertex deletion morphisms, and graph isomorphisms. Morphisms in $\Grsimp$ are called \emph{similarity morphisms}, and connected components of $\Grsimp$ are \emph{similarity classes}. Graphs in the same connected component of $\Grsimp$ are \emph{similar}. 
\end{defn}

The following proposition summarises some important basic properties of vertex deletion morphisms. Detailed proofs may be found in \cite[Section~7.2]{Ray20}.

\begin{prop}
	\label{prop. vertex deletion}
	For all graphs $\G$ and all subsets $W \subset V_2$ of bivalent vertices of $\G$,
	\begin{enumerate}
		\item $\delW \colon \G \to \Gnov$ preserves connected components, and may be defined componentwise;
		\item	if $\G$ is connected and $W = V$, then $\G \cong \Lk$ or $\G \cong \Wm$ for $k, m \geq 0$ and $\Gnov \cong (\shortmid)$;
		\item 	unless $\G \cong \Wm$ and $W = V$ for $m \geq 1$, if $\G$ is connected, then $\delW$ induces an identity on ports: $\delW \colon E_0 (\G) \xrightarrow{=}E_0 (\Gnov)$.
	
	\end{enumerate} 
The pair $(\Grsimp, \Gret)$ of subcategories of $\Gretp$ defines an orthogonal 
factorisation system on $\Gretp$.% 

\end{prop}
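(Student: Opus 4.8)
The plan is to prove that $(\Grsimp, \Gret)$ is an orthogonal factorisation system on $\Gretp$ by exhibiting, for every pointed \'etale morphism $f \in \Gretp(\G, \H)$, a factorisation $f = g \circ s$ with $s \in \Grsimp$ and $g \in \Gret$, and then verifying the unique diagonal lifting property for commuting squares with a similarity morphism on the left and an (ordinary) \'etale morphism on the right. First I would analyse the structure of an arbitrary morphism in $\Gretp$. By \cref{defining Grp} and the description of $\fisinvp$ via the adjoined morphisms $u \colon \two \to \S$ and $z \colon \nul \to \S$, every pointed \'etale morphism is built from ordinary \'etale morphisms together with the `unit-detecting' morphisms $ch_e \circ u$ and $ch_e \circ z$. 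Combining this with \cref{ex. admissible deletion} and \cref{prop. vertex deletion}, I expect to show that the data of $f$ decomposes canonically: the vertices of $\G$ that $f$ sends into stick components or whose neighbourhoods are collapsed to $(\shortmid)$ are precisely the isolated and bivalent vertices handled by a vertex deletion $\delW \colon \G \to \Gnov$, while the remaining assignment is a genuine \'etale morphism $\Gnov \to \H$.

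The key steps, in order, would be: (1) Given $f\in\Gretp(\G,\H)$, identify the subset $W \subset V_0(\G) \amalg V_2(\G)$ consisting of those isolated or bivalent vertices $v$ of $\G$ such that $f$ factors the essential morphism $\esv$ through one of the adjoined morphisms $u$ or $z$ (equivalently, $v$ maps to a `unit' rather than to an honest vertex of $\H$). (2) Set $s \defeq \delW \colon \G \to \Gnov$, which lies in $\Grsimp$ by \cref{def: similar}, and use the universal property of the colimit $\Gnov = \mathrm{colim}\, {\Gdg}^\G_{\setminus W}$ (\cref{colimit exists}) to produce the unique induced morphism $g \colon \Gnov \to \H$; then check that $g$ satisfies the pullback condition of \cref{defn. etale morphism}, i.e.\ that $g \in \Gret$. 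The point here is that once the `unit' vertices have been deleted, every remaining vertex of $\Gnov$ is sent by $g$ to an actual vertex of $\H$ with a local isomorphism on incident edges, which is exactly \'etaleness. (3) Prove uniqueness of this factorisation up to unique isomorphism, which reduces to showing that $W$ is forced by $f$ and that $\Gret$ contains no non-trivial similarity morphisms other than isomorphisms — this last fact follows because a non-isomorphism vertex deletion strictly decreases the number of vertices and so cannot satisfy the \'etale pullback condition. (4) Verify the orthogonality/lifting property: given a commuting square with $\sigma \in \Grsimp$ on the left and $\phi \in \Gret$ on the right, construct the diagonal by using that similarity morphisms are generated by $z$, vertex deletions, and isomorphisms (\cref{def: similar}), and that each generator lifts uniquely against \'etale morphisms because \'etale morphisms reflect the local edge-structure that vertex deletion collapses.

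I anticipate that the main obstacle will be the interaction between vertex deletion on \emph{wheel} graphs and the factorisation, since by \cref{prop. vertex deletion}(2),(3) the wheel graphs $\Wm$ with $W = V$ are the one family where $\delW$ does \emph{not} induce an identity on ports (indeed $\Wm$ has no ports and collapses to $(\shortmid)$, which does). The two morphisms $\kappa^m$ and $\tau \circ \kappa^m$ of \cref{wheel deletion} mean that the collapse of a wheel introduces a genuine choice, and I must check that this choice is correctly recorded either in $s$ or in $g$, and that no ambiguity infects the uniqueness statement. The clean way to handle this will be to treat the wheel case as part of the \emph{similarity} data (so $\kappa^m$ is absorbed into $s \in \Grsimp$) and to confirm that the residual $g$ is then forced; combined with \cref{prop. vertex deletion}'s assertion that $(\Grsimp,\Gret)$ already gives a factorisation system, the remaining verification is that these two classes satisfy the standard orthogonality axioms, which I would assemble from the componentwise description in \cref{prop. vertex deletion}(1) and the generation result for $\Grsimp$.
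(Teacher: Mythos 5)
Your strategy for the factorisation-system claim --- read off from $f \in \Gretp(\G,\H)$ the set $W$ of isolated and bivalent vertices whose components are formal units, factor $f$ through $\delW$ via the universal property of the colimit defining $\Gnov$, and check orthogonality only on the generators of $\Grsimp$ --- is the natural one, and it is essentially the argument of \cite[Section~7.2]{Ray20}, which is where the paper itself sends the reader (it offers no proof of \cref{prop. vertex deletion} beyond that citation). However, as written your proposal contains a genuine circularity: parts (1)--(3) of \cref{prop. vertex deletion}, and at one point even its factorisation-system assertion itself (``combined with \cref{prop. vertex deletion}'s assertion that $(\Grsimp,\Gret)$ already gives a factorisation system, the remaining verification is that these two classes satisfy the standard orthogonality axioms''), are invoked as available facts. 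These are exactly what is to be proved; in particular, orthogonality is part of the \emph{definition} of an orthogonal factorisation system, so it cannot be ``the remaining verification'' once the system is assumed. A complete proof must first establish (1)--(3) independently --- (1) because colimits of graphs of graphs are computed componentwise, (2) from the classification of connected bivalent graphs \cite[Proposition~4.23]{Ray20}, (3) by inspection of the edge set $\Enov$ --- and only then turn to existence of factorisations and unique lifting.

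The second genuine flaw is your step (3) justification that $\Grsimp \cap \Gret$ consists of isomorphisms: you argue that a non-trivial vertex deletion ``strictly decreases the number of vertices and so cannot satisfy the \'etale pullback condition''. That inference is false, because \'etale morphisms can strictly decrease vertex count: the double cover $\Wm[2] \to \W$, wrapping the two-vertex wheel twice around the one-vertex wheel, is \'etale. The correct argument is structural rather than numerical: for $W \neq \emptyset$, the component of $\delW$ at a vertex of $W$ is a formal unit $ch_e \circ u$ (or $ch_e \circ z$), and these lie in the summands $E(\H)$, resp.\ $\widetilde{E(\H)}$, of $\Gretp(\C_\two,\H) \cong \Gret(\C_\two,\H) \amalg E(\H)$ and $\Gretp(\C_\nul,\H) \cong \Gret(\C_\nul,\H) \amalg \widetilde{E(\H)}$, which are \emph{disjoint} from the honest \'etale morphisms by the very construction of the monad $\DD$; hence $\delW \notin \Gret$. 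Relatedly, your lifting argument glosses over the one place where uniqueness is genuinely delicate: since $z = \tau \circ z$, both $ch_e$ and $ch_{\tau e}$ satisfy the upper triangle $d \circ z = o_{\tilde e}$, so the lift against the generator $z$ is pinned down only by the lower triangle $\phi \circ d = b$ together with the fact that $\phi(\tau e) = \tau\phi(e) \neq \phi(e)$ (as $\phi$ commutes with the fixed-point-free involutions). The same two-fold ambiguity, which you correctly flag for $\kappa^m$ versus $\tau \circ \kappa^m$, is therefore not resolved merely by ``absorbing the wheel collapse into $s$''; it must be settled by the second triangle in every case.
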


\begin{ex}\label{ex. DS}
	For all graphical species $S$ in $\E$, and all graphs $\G$ with no isolated vertices,
	\begin{equation} \label{eq. D on graphs}
	DS(\G) = \coprod_{W \subset V_2} S(\Gnov).
	\end{equation} 
\end{ex}

More generally, since $DS_\nul  = DS (\C_\nul) = S_\nul \amalg \widetilde{S_\S}$ :  \begin{equation}\label{eq. D on graphs general} DS(\G) \cong \mathrm{colim}_{(\H,f) \in \G \ov \Gretsimp} S(\H), \quad  \text{  for all graphs } \G \in \Gret.\end{equation}

\begin{ex}
	\label{wheel and line structure notation} Let $S_*  = (S, \epsilon, o)$ be a pointed graphical species in $\E$. For $k,m \geq 0$, 
	there are distinguished monomorphisms in $\E$: 
	\begin{equation}
	\label{eq. unit graph definitions}
 S_*(u^k)\colon S_\S \to S(\Lk), \ \text{ and } S_*(\kappa ^m)\colon S_\S \to S(\Wm).
	\end{equation}
\end{ex}

\subsection{The distributive law $TD \Rightarrow DT$}\label{ssec. Similar connected X-graphs}

For the remainder of this section, all graphs will be assumed to be connected, unless stated otherwise. 

 Recall from \cref{def. X graph} that an $X$-graph $\X$ is said to be admissible if it has no stick components.
For any finite set $X$, we may enlarge the category $X\Griso$ of admissible connected $X$-graphs and port-preserving isomorphisms, to form a category $X\XGrsimp$ that includes vertex deletion morphisms $\delW$ and $z \colon \C_\nul \to (\shortmid)$ out of $X$-graphs together with any labelling of ports from the domain:

By \cref{prop. vertex deletion}, any similarity morphism that is not of the form $z \colon \C_\nul \to (\shortmid)$ or $\kappa^m \colon \Wm \to (\shortmid)$ is boundary preserving. So, let $\X = (\G , \rho)$ be an admissible connected $X$-graph such that $\X \not \cong \Wm$, $\X \not \cong \C_\nul$, and let $f \in \Grsimp (\G, \G')$ be a similarity morphism. Then $f$ and the $X$-labelling $\rho \colon E_0 \to X$ for $\X$, induce an $X$-labelling on $\G'$. 
The category $X\XGrsimp$ is obtained by adjoining to the category $X\Griso$ of {(admissible)} connected $X$-graphs and port-preserving isomorphisms, all similarity morphisms from objects of $X\Griso$, 
and their codomains, equipped with the induced labelling. 
\begin{itemize}
	\item If $X \not \cong \nul$, $X \not \cong \two$, then $X \XGrsimp$ is the category whose objects are connected $X$-graphs and whose morphisms are similarity morphisms 
	that preserve the labelling of the ports.
	
	\item For $X = \two $, $\two\XGrsimp$ contains the morphisms $ \delW[V] \colon \Lk \to (\shortmid)$, and hence the labelled stick graphs $(\shortmid, id)$ and $(\shortmid, \tau)$. There are no non-trivial morphisms out of these special graphs, and $\X$ is in the same connected component as $(\shortmid, id)$ if and only if $\X = \Lk$ (with the identity labelling) for some $k \in \N$. In particular, $\tau \colon (\shortmid) \to (\shortmid)$ does not induce a morphism in $\two \XGrsimp$.
	
	\item Finally, when $X = \nul$, the morphisms $\delW[V] \colon \Wm \to (\shortmid)$, and $z \colon \C_\nul \to (\shortmid)$ are not boundary-preserving, and, in particular, do not equip $(\shortmid)$ with any labelling of its ports. So, the objects of $\nul\XGrsimp$ are the admissible $\nul$-graphs, and $(\shortmid)$. 
	
	In particular, $\Wm, \C_\nul$ and $(\shortmid)$ are in the same connected component of $\nul\XGrsimp$. Since $(\shortmid)$ is not admissible, there are no non-trivial morphisms in $\nul \XGrsimp$ with $(\shortmid)$ as domain.

\end{itemize}
	To simplify notation in what follows, we write ${\C_{\nul}}_{\setminus V} \defeq (\shortmid)$ and $ \delW[V] = z \colon \C_\nul \to (\shortmid)$.

The following lemma is immediate from the definitions:
\begin{lem}\label{lem. terminal X}
	For all finite sets $X$ and all connected $X$-graphs $\X$, the connected component of $X\XGrsimp$ containing $\X$ has a terminal object $\X^\bot$ -- without bivalent or isolated vertices -- given by $\X_{ /(V_2\amalg V_0)}$.
	
If $\X \cong \Lk$ or $\X \cong \Wm $, $\X \cong \C_\nul$ for some $k, m \geq 0$, then $\X^\bot \cong (\shortmid)$. Otherwise, $\X^\bot$ is an admissible, connected $X$-graph.

\end{lem}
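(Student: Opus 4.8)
The plan is to exhibit $\X^\bot = \X_{/(V_2 \amalg V_0)}$ explicitly, to show it lies in the connected component of $\X$, that it has no bivalent or isolated vertices, and that it receives a unique similarity morphism from every object of the component. I would organise the whole argument around \cref{prop. vertex deletion} (componentwise deletion, identity on ports outside the wheel case, and the classification of connected graphs with $W = V$ as $\Lk$ or $\Wm$) together with the orthogonal factorisation system $(\Grsimp, \Gret)$ on $\Gretp$.

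First I would record the elementary fact that vertex deletion composes and is valency-preserving on the surviving vertices: from the explicit formulas for $\Vnov$, $\Hnov$, $\Enov$ in \cref{ex. admissible deletion}, restricting $s$ and $t$ shows that $|w|$ is unchanged for every $w \in V \setminus W$. Consequently deleting a set of bivalent and isolated vertices creates no new bivalent or isolated vertices, so $\X^\bot$ has none, and the bivalent and isolated vertices of an intermediate deletion $\X_{\setminus W}$ are precisely $(V_2 \amalg V_0) \setminus W$. This yields idempotence, $(\X_{\setminus W})^\bot \cong \X^\bot$ for every $W \subseteq V_2 \amalg V_0$, and hence $\X^\bot$ is an invariant of the entire connected component: each generating similarity morphism is, up to isomorphism, either a deletion of bivalent or isolated vertices (including $z$ via the convention ${\C_\nul}_{\setminus V} \defeq (\shortmid)$) or an isomorphism, and each leaves $(-)^\bot$ unchanged.

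Next I would establish terminality. Existence of a morphism $\Y \to \X^\bot$ for any $\Y$ in the component is supplied by the total deletion $\delW[V_2(\Y) \amalg V_0(\Y)] \colon \Y \to \Y^\bot \cong \X^\bot$, the identification $\Y^\bot \cong \X^\bot$ coming from the previous paragraph. For uniqueness I would argue that any morphism of $X\XGrsimp$ whose codomain has no bivalent or isolated vertices must delete exactly all bivalent and isolated vertices of its domain (a similarity morphism only deletes such vertices or applies $z$ or an isomorphism, and the codomain admits no further deletions), so it factors as the total deletion followed by an isomorphism $\Y^\bot \to \X^\bot$; the exclusion of $\tau$ from $X\XGrsimp$ and the rigidifying effect of the port-labelling $\rho \colon E_0 \xrightarrow{\cong} X$ are then what force this isomorphism, and hence the whole morphism, to be unique.

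Finally I would dispatch the three regimes of the definition separately, since this is where any multiplicity of deletion morphisms lives. For $X \not\cong \nul, \two$ every connected $X$-graph differs from $(\shortmid)$ and $\Wm$, so by \cref{prop. vertex deletion} all deletions are port-identities and $\X^\bot$ is an admissible connected $X$-graph. For $X \cong \two$, the component of a line $\Lk$ has terminal object $(\shortmid, id)$ precisely because $\tau$ is not a morphism of $\two\XGrsimp$, which separates $(\shortmid, id)$ from $(\shortmid, \tau)$ and removes the only competing map, while a non-line $\two$-graph again deletes to an admissible connected graph. For $X \cong \nul$, \cref{prop. vertex deletion} gives $\Wm, \C_\nul \to (\shortmid)$, and $(\shortmid)$ is terminal because it is not admissible and so admits no outgoing morphisms. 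I expect the uniqueness clause in the $X \cong \nul$ wheel case to be the main obstacle: a priori $\Wm$ carries the two similarity morphisms $\kappa^m$ and $\tau \circ \kappa^m$ to $(\shortmid)$ by \cref{wheel deletion}, so I must verify that the definition of $\nul\XGrsimp$, together with the conventions ${\C_\nul}_{\setminus V} \defeq (\shortmid)$ and $\delW[V] = z$, retains only the canonical total-deletion representative; checking this, and the analogous rigidity of the automorphisms of a general $\X^\bot$, is the delicate definition-chasing part, whereas everything else follows formally from \cref{prop. vertex deletion}.
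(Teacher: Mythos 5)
The paper offers no argument for this lemma at all --- it is introduced with ``The following lemma is immediate from the definitions'' --- so the only question is whether your fleshing-out is sound. Most of it is: valency-preservation under deletion (read off from \cref{ex. admissible deletion}), composability of deletions, component-invariance of $(-)^\bot$, existence of a morphism $\Y \to \X^\bot$ via total deletion, and the three-regime case analysis are all correct. The genuine gap is the uniqueness half of terminality. You claim that the port-labelling $\rho \colon E_0 \xrightarrow{\cong} X$ ``rigidifies'' the isomorphism $\Y^\bot \to \X^\bot$ and hence makes the morphism $\Y \to \X^\bot$ unique. It does not: the labelling constrains only ports, and a graph with no bivalent or isolated vertices can perfectly well have non-identity port-preserving automorphisms, and these are morphisms of $X\Griso \subset X\XGrsimp$. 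Concretely, the paper's own graph $\C_{\mathbf{4}}^{1 \ddagger 2, 3 \ddagger 4}$ from \cref{rmk: Gr no arities} (one $4$-valent vertex carrying two loops) is an admissible $\nul$-graph equal to its own reduction, and swapping the two loops, or flipping either one, is a non-identity automorphism; the graph $\C_{\mathbf{3}}^{1 \ddagger 2}$ of \cref{ex: N graph} exhibits the same phenomenon with non-empty boundary. Any such object admits several endomorphisms inside its own connected component, so it cannot be strictly terminal; in general $X\XGrsimp(\Y, \X^\bot)$ is a torsor over $\mathrm{Aut}(\X^\bot)$ rather than a singleton. The same mechanism defeats the ``definition-chasing'' you defer in the wheel case: since $\tau_\W \in \nul\Griso(\W, \W)$ and $\nul\XGrsimp$ is closed under composition, both $\kappa$ and $\tau \circ \kappa = \kappa \circ \tau_\W$ lie in $\nul\XGrsimp(\W, (\shortmid))$, so there is no reading of the definitions that ``retains only the canonical total-deletion representative''.

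This is therefore not a repairable oversight in your write-up: strict uniqueness is simply false whenever $\mathrm{Aut}(\X^\bot)$ is non-trivial, and outside the components whose reduced graph is rigid (e.g.\ the $\two$-line components, where $(\shortmid, id)$ genuinely is a strict terminal object because $\tau$ is excluded and lines with pinned endpoints have no automorphisms), the lemma can only be read as asserting weak terminality: every object of the component admits a similarity morphism onto $\X^\bot$, which is, up to isomorphism, the unique object of its component without bivalent or isolated vertices. That weaker statement is exactly what your first two paragraphs and your case analysis establish, and it is all the surrounding construction needs --- $\Tp S_X$ is a colimit over $X\XGrsimp$, in which the automorphisms of $\X^\bot$ are supposed to act non-trivially, so a strictly terminal object would in fact be at odds with the rest of \cref{ssec. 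Similar connected X-graphs}. Your proof should either stop at the weak statement or record the automorphism caveat explicitly, rather than promising that rigidity can be checked.
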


Let $S$ be a graphical species in $\E$. 
By (\ref{eq. D on graphs general}), if $\X$ is a connected $X$-graph and $X\XGrsimp^\X$ is the connected component of $\X$ in $X\Grsimp$, then there is a canonical isomorphism
\[ DS(\X)  \cong  \mathrm{colim}_{\X' \in X\XGrsimp^\X}S(\X').\]

%Hence, by (\ref{eq. D on graphs}), 
In particular, $TDS_X$ is described by a colimit --  indexed by pairs $(\X, W)$, of a connected $X$-graph $\X$ and a subset $W \subset V_0 \amalg V_2 $ of vertices of $(\X)$, and isomorphisms $\X \xrightarrow{\cong} \X'$ in $X \Griso$ --  of objects $S(\X_{\setminus W})$ in $\E$. The distributive law $\lambda_{\DD, \TT} \colon TD \Rightarrow DT$ is induced by forgetting the pair $(\X, W)$, and remembering only $\X_{\setminus W}$, in the evaluation of  $S(\X_{\setminus W})$.

Precisely, for $X \not \cong \nul$, 
\[TD S_X= \mathrm{colim}_{ \X \in X\Griso} DS(\X)= \mathrm{colim}_{ \X \in X\Griso} \coprod_{W \subset V_2(\X)} S(\X_{\setminus W}).\]

Since vertex deletion morphisms preserve ports when $X \neq \nul$, each $\X_{\setminus W}$ is either an admissible $X$-graph or $\X_{\setminus W} \cong (\shortmid)$, and hence there are canonical morphisms
\[ TD S_X 
\to \left \{ \begin{array}{llll}
\mathrm{colim}_{ \X \in X\Griso}  S(\X)  &= 
TS_X& = DTS_X & \text{ when } X \not \cong \nul, X \not \cong \two,\\
 \left( \mathrm{colim}_{ \X \in X\Griso}  S(\X) \right) \amalg  S_\S &  = 
TS_\two \amalg S_\S &= DTS_\two &\text{ when } X = \two
\end{array} \right. \]
(since $(\shortmid)$ is not a $\two$-graph by convention). These describe the morphisms $ \lambda_{\DD\TT} S_X \colon TDS_X \to DTS_X$ in $\E$ for $X \not \cong \nul$. 

When $X = \nul$, 
\[\begin{array}{lll}
TDS_\nul & = &\mathrm{colim}_{ \X \in \nul\Griso} DS(\X)\\
& = & \left(\mathrm{colim}_{\overset{ \X \in \nul \Griso}{\X \not \cong \C_\nul}} \coprod_{W \subset V_2(\X)} S(\X_{\setminus W}) \right) \amalg DS(\C_\nul) \\ %\\
& = &  \left(\mathrm{colim}_{\overset{ \X \in \nul \Griso}{\X \not \cong \C_\nul}} \coprod_{W \subset V_2(\X)} S(\X_{\setminus W}) \right) \amalg S_\nul \amalg \widetilde{S_\S}.
\end{array}\]
Vertex deletion maps $\delW \colon \G \to \Gnov$ from $\nul$-graphs preserve ports unless $\delW \cong z \colon \C_\nul \to (\shortmid)$ or $\delW \cong \kappa ^m \colon \Wm \to (\shortmid)$. So, $\lambda_{\DD\TT} S _\nul \colon TDS_\nul \to DTS_\nul$ is given by the canonical morphism 
\[ TDS_\nul = \left(\mathrm{colim}_{\overset{ \X \in \nul \Griso}{\X \not \cong \C_\nul}} \coprod_{W \subset V_2(\X)} S(\X_{\setminus W}) \right) \amalg S_\nul \amalg \widetilde{S_\S} \to  \mathrm{colim}_{ \X \in \nul \Griso} S(\X) \amalg \widetilde{S_\S}  = DTS_\nul.  \]

The verification that $\lambda_{\DD\TT}$ satisfies the four distributive law axioms \cite{Bec69} follows by a straightforward application of the definitions. To help the reader gain familiarity with the constructions, I describe just one here, namely that the following diagram of endofunctors on $\GSE$ commutes:
\begin{equation}\label{eq. dist 1 DT}
\xymatrix{T^2 D \ar@{=>}[rr]^-{ T \lambda_{ \DD \TT}}\ar@{=>}[d]_-{ \mu^{\TT}D} && TDT \ar@{=>}[rr]^-{ \lambda_{ \DD \TT}T}&& DT^2 \ar@{=>}[d]^-{ D \mu^\TT}\\
	TD\ar@{=>}[rrrr]^-{ \lambda_{ \DD \TT}}&&&& DT.}
\end{equation}

Recall, from (\ref{eq. T mult}) and (\ref{eq. D on graphs}), that, for all graphical species $S$ in $\E$ and all finite sets $X$, \[\begin{array}{lll}T^2 D S_X &=&\mathrm{colim}_{ \X \in X \Griso} \mathrm{lim}_{(\C,b) \in \elG[\X]} \mathrm{colim}_{ \Y^b \in [b]\Griso} \coprod_{W^b \subset V_2}  S(\Y^b_{\setminus W^b})\\ 
&= &\mathrm{colim}_{ \X \in X \Griso} \mathrm{lim}_{ \Gg \in \GrG[\X]} \coprod_{W^\Gg \subset V_2(\Gg(\X))} S (\Gg (\X)_{\setminus W^{\Gg}}),
\end{array}\]
where, as usual, $\esv \colon \Cv \to \X$ is the essential morphism. So, to describe the two paths in (\ref{eq. dist 1 DT}), 
 let $\X$ be a connected $X$-graph, $\Gg$ a non-degenerate $\X$-shaped graph of connected graphs, and let $W^\Gg \subset (V_0 \amalg V_2)(\Gg(\X))$ be a subset of (bivalent or isolated) vertices of the colimit $\Gg(\X)$ of $\Gg$ in $\Gr$. 
 
 For each $(\C, b) \in \elG[\X]$, let 
 \[ W^b \defeq V(\Gg(b)) \cap W^\Gg, \ \text{ so } W^{\Gg}  \cong \coprod_{v \in V(\X)} W^{\esv}.\] 
 
 If $(\X, \Gg, W) = (\C_\nul, \Gid[\C_{\nul}], V(\C_\nul))$, then $\Gg(\X) = \C_\nul$, and $(\C_\nul, b)$ is the unique object of $\elG[\X]$. 
 
  Otherwise, since $\X$ is connected, it must be the case that $W^{\Gg} \subset V_2 (\Gg(\X))$.

In each case we may define $W \subset V(\X)$ to be the set of vertices $v$ of $\X$ such that $W^{\esv} = V(\Gg(\esv))$. 
Then, 
there is a unique non-degenerate $\X_{\setminus W}$-shaped graph of connected graphs $\Gg_{\setminus W}$ given by $\Gg_{\setminus W} (\delW \circ b) = \Gg(b)_{\setminus W^b}$, and with colimit $\Gg_{\setminus W} (\X_{\setminus W})$ in $\Gretp$:
\begin{equation}\label{eq. colimits agree}
\Gg_{\setminus W} (\X_{\setminus W}) = \Gg(\X)_{\setminus W^\Gg}.
\end{equation}

	\begin{figure}[htb!]
		\begin{tikzpicture} [scale = .85]
		\node at(-6,0){ 
			\begin{tikzpicture}[scale = .35]
			\begin{pgfonlayer}{above}
			\node [dot, red] (0) at (0, 0) {};
			\node [dot, red] (1) at (1.5, -1) {};
			\node [dot, red] (2) at (0, -1) {};
			\node [dot, red] (3) at (-3, 0) {};
			\node [dot, red] (4) at (-2.5, -1) {};
			\node [dot, red] (5) at (-4, -1) {};
			\node [dot, green] (6) at (-1.5, -2.5) {};
			\node [dot, green] (7) at (-0.5, -3) {};
			\node [dot, green] (8) at (1, -3) {};
			\node [dot, green] (9) at (0, 2) {};
			\node  (10) at (-6, -4) {};
			\node  (11) at (3, -5) {};
			\node  (12) at (3, 4) {};
			\draw[draw = blue, fill = blue, fill opacity = .2] (.65,-.5) circle (1.6cm);
			\draw[draw = blue, fill = blue, fill opacity = .2] (-3.25,-.5) circle (1.6cm);
			\draw[draw = green, fill = green, fill opacity = .2] (0,2) circle (.7cm);
			\draw[rotate around={80:(-.5,-2.8)}, draw = green, fill = green, fill opacity = .2] (-.5,-2.8) ellipse (.7cm and 2cm);
			\end{pgfonlayer}
			\begin{pgfonlayer}{background}
			\draw [bend right, looseness=1.25] (0) to (2);
			\draw [bend right=60, looseness=1.25] (2) to (1);
			\draw [bend right] (0) to (1);
			\draw [in=165, out=-120, loop] (0) to ();
			\draw [bend right] (3) to (5);
			\draw [bend right=45, looseness=1.25] (5) to (4);
			\draw [in=120, out=-165, loop] (4) to ();
			\draw [in=135, out=60] (3) to (0);
			\draw [bend left] (3) to (9);
			\draw [bend left=60, looseness=0.75] (0) to (1);
			\draw [in=150, out=-60] (4) to (6);
			\draw [bend left=15] (6) to (7);
			\draw [bend right=15, looseness=1.25] (7) to (8);
			\draw [in=-75, out=30] (9) to (12.center);
			\draw [in=135, out=0, looseness=1.25] (8) to (11.center);
			\draw (5) to (10.center);
			\end{pgfonlayer}
			\end{tikzpicture}
			
		};
		
		\node at(0,0){ 
			\begin{tikzpicture}[scale = .35]
			\begin{pgfonlayer}{above}
			\node [dot, red] (0) at (0, 0) {};
			\node [dot, red] (1) at (1.5, -1) {};
			\node [dot, red] (2) at (0, -1) {};
			\node [dot, red] (3) at (-3, 0) {};
			\node [dot, red] (4) at (-2.5, -1) {};
			\node [dot, red] (5) at (-4, -1) {};
			\node (6) at (-1.5, -2.5) {};
			\node  (7) at (-0.5, -3) {};
			\node  (8) at (1, -3) {};
			\node  (9) at (0, 2) {};
			\node  (10) at (-6, -4) {};
			\node  (11) at (3, -5) {};
			\node  (12) at (3, 4) {};
			\draw[draw = blue, fill = blue, fill opacity = .2] (.65,-.5) circle (1.6cm);
			\draw[draw = blue, fill = blue, fill opacity = .2] (-3.25,-.5) circle (1.6cm);
			\draw[draw = green, fill = green, fill opacity = .8] (0,2) circle (.7cm);
				\draw[draw = green, fill = green, fill opacity = .8] (7) circle (.7cm);
%			\draw[rotate around={80:(-.5,-2.8)}, draw = green, fill = green, fill opacity = .2] (-.5,-2.8) ellipse (.7cm and 2cm);
			\end{pgfonlayer}
			\begin{pgfonlayer}{background}
			\draw [bend right, looseness=1.25] (0) to (2);
			\draw [bend right=60, looseness=1.25] (2) to (1);
			\draw [bend right] (0) to (1);
			\draw [in=165, out=-120, loop] (0) to ();
			\draw [bend right] (3) to (5);
			\draw [bend right=45, looseness=1.25] (5) to (4);
			\draw [in=120, out=-165, loop] (4) to ();
			\draw [in=135, out=60] (3) to (0);
			\draw [bend left] (3) to (9.center);
			\draw [bend left=60, looseness=0.75] (0) to (1);
			\draw [in=150, out=-60] (4) to (6.center);
			\draw [bend left=15] (6.center) to (7.center);
			\draw [bend right=15, looseness=1.25] (7.center) to (8.center);
			\draw [in=-75, out=30] (9.center) to (12.center);
			\draw [in=135, out=0, looseness=1.25] (8.center) to (11.center);
			\draw (5) to (10.center);
			
			\end{pgfonlayer}
			\end{tikzpicture}
			
		};
		\node at(6,0){
			\begin{tikzpicture}[scale = .35]
			\begin{pgfonlayer}{above}
			\node [dot, red] (0) at (0, 0) {};
			\node [dot, red] (1) at (1.5, -1) {};
			\node [dot, red] (2) at (0, -1) {};
			\node [dot, red] (3) at (-3, 0) {};
			\node [dot, red] (4) at (-2.5, -1) {};
			\node [dot, red] (5) at (-4, -1) {};
			\node (6) at (-1.5, -2.5) {};
			\node  (7) at (-0.5, -3) {};
			\node  (8) at (1, -3) {};
			\node  (9) at (0, 2) {};
			\node  (10) at (-6, -4) {};
			\node  (11) at (3, -5) {};
			\node  (12) at (3, 4) {};
			\draw[draw = blue, fill = blue, fill opacity = .2] (.65,-.5) circle (1.6cm);
			\draw[draw = blue, fill = blue, fill opacity = .2] (-3.25,-.5) circle (1.6cm);
			
			\end{pgfonlayer}
			\begin{pgfonlayer}{background}
			\draw [bend right, looseness=1.25] (0) to (2);
			\draw [bend right=60, looseness=1.25] (2) to (1);
			\draw [bend right] (0) to (1);
			\draw [in=165, out=-120, loop] (0) to ();
			\draw [bend right] (3) to (5);
			\draw [bend right=45, looseness=1.25] (5) to (4);
			\draw [in=120, out=-165, loop] (4) to ();
			\draw [in=135, out=60] (3) to (0);
			\draw [bend left] (3) to (9.center);
			\draw [bend left=60, looseness=0.75] (0) to (1);
			\draw [in=150, out=-60] (4) to (6.center);
			\draw [bend left=15] (6.center) to (7.center);
			\draw [bend right=15, looseness=1.25] (7.center) to (8.center);
			\draw [in=-75, out=30] (9.center) to (12.center);
			\draw [in=135, out=0, looseness=1.25] (8.center) to (11.center);
			\draw (5) to (10.center);
			
			\end{pgfonlayer}
			\end{tikzpicture}
			
		};
		
		\draw  [->, line width = 1.22, draw= gray, dashed](-4,0)--(-2,0);
		%\filldraw[white](1,0) circle (30pt);
		\node at (-3,.5){
			$T\lambda_{ \DD \TT}$
		};
		\draw  [->, line width = 1.22, draw= gray, dashed](2,0)--(4,0);
		%\filldraw[white](1,0) circle (30pt);
		\node at (3,.5){
			$\lambda_{ \DD \TT}T$
		};
		\draw  [->,dashed,  line width = 1.2, draw= gray](-5,-2)--(-5,-3);
		%\filldraw[white](-6,-4) circle (30pt);
		\node at (-5.8,-2.5){$D\mu^{\TT}S$
			%\begin{tikzpicture}[scale = 0.2]\oneB\end{tikzpicture}
		};

		\draw  [->,dashed,  line width = 1.2, draw= gray](5,-2)--(5,-3);
		
		\node at (5.8,-2.5){
			$D\mu^{\TT}$
		};
		
		\node at (-6,-4.5){ 
			\begin{tikzpicture}[scale = .35]
			\begin{pgfonlayer}{above}
			\node [dot, red] (0) at (0, 0) {};
			\node [dot, red] (1) at (1.5, -1) {};
			\node [dot, red] (2) at (0, -1) {};
			\node [dot, red] (3) at (-3, 0) {};
			\node [dot, red] (4) at (-2.5, -1) {};
			\node [dot, red] (5) at (-4, -1) {};
			\node [dot, green] (6) at (-1.5, -2.5) {};
			\node [dot, green] (7) at (-0.5, -3) {};
			\node [dot, green] (8) at (1, -3) {};
			\node [dot, green] (9) at (0, 2) {};
			\node  (10) at (-6, -4) {};
			\node  (11) at (3, -5) {};
			\node  (12) at (3, 4) {};
		
			\end{pgfonlayer}
			\begin{pgfonlayer}{background}
			\draw [bend right, looseness=1.25] (0) to (2);
			\draw [bend right=60, looseness=1.25] (2) to (1);
			\draw [bend right] (0) to (1);
			\draw [in=165, out=-120, loop] (0) to ();
			\draw [bend right] (3) to (5);
			\draw [bend right=45, looseness=1.25] (5) to (4);
			\draw [in=120, out=-165, loop] (4) to ();
			\draw [in=135, out=60] (3) to (0);
			\draw [bend left] (3) to (9);
			\draw [bend left=60, looseness=0.75] (0) to (1);
			\draw [in=150, out=-60] (4) to (6);
			\draw [bend left=15] (6) to (7);
			\draw [bend right=15, looseness=1.25] (7) to (8);
			\draw [in=-75, out=30] (9) to (12.center);
			\draw [in=135, out=0, looseness=1.25] (8) to (11.center);
			\draw (5) to (10.center);
			\end{pgfonlayer}
			\end{tikzpicture}
			
		};
		\draw  [->, line width = 1.2, draw= gray, dashed](-2.5,-4)--(2.5,-4);
		%\filldraw[white](1,0) circle (30pt);
		\node at (0,-4.5){
			$\lambda_{ \DD \TT}$
		};
		\node at (6,-4){ 
			\begin{tikzpicture}[scale = .35]
			\begin{pgfonlayer}{above}
			\node [dot, red] (0) at (0, 0) {};
			\node [dot, red] (1) at (1.5, -1) {};
			\node [dot, red] (2) at (0, -1) {};
			\node [dot, red] (3) at (-3, 0) {};
			\node [dot, red] (4) at (-2.5, -1) {};
			\node [dot, red] (5) at (-4, -1) {};
			\node (6) at (-1.5, -2.5) {};
			\node  (7) at (-0.5, -3) {};
			\node  (8) at (1, -3) {};
			\node  (9) at (0, 2) {};
			\node  (10) at (-6, -4) {};
			\node  (11) at (3, -5) {};
			\node  (12) at (3, 4) {};
		
			\end{pgfonlayer}
			\begin{pgfonlayer}{background}
			\draw [bend right, looseness=1.25] (0) to (2);
			\draw [bend right=60, looseness=1.25] (2) to (1);
			\draw [bend right] (0) to (1);
			\draw [in=165, out=-120, loop] (0) to ();
			\draw [bend right] (3) to (5);
			\draw [bend right=45, looseness=1.25] (5) to (4);
			\draw [in=120, out=-165, loop] (4) to ();
			\draw [in=135, out=60] (3) to (0);
			\draw [bend left] (3) to (9.center);
			\draw [bend left=60, looseness=0.75] (0) to (1);
			\draw [in=150, out=-60] (4) to (6.center);
			\draw [bend left=15] (6.center) to (7.center);
			\draw [bend right=15, looseness=1.25] (7.center) to (8.center);
			\draw [in=-75, out=30] (9.center) to (12.center);
			\draw [in=135, out=0, looseness=1.25] (8.center) to (11.center);
			\draw (5) to (10.center);
			
			\end{pgfonlayer}
			\end{tikzpicture}
			
		};

	\end{tikzpicture}

	\caption{Diagram (\ref{eq. dist 1 DT}) commutes. Here the vertices in $W$ are marked in green.}
	\label{fig. DTdist}
\end{figure}

Then, the two paths (illustrated in \cref{fig. DTdist}) described by the diagram (\ref{eq. dist 1 DT}) are induced by the following canonical maps (where, in each case, the vertical arrows are the defining universal morphisms):\\
%\begin{description}
{	\bf{Top-right}}
		\[\xymatrix @R =.25cm@C = .75cm{
			&(\X, (\Gg(b)_{\setminus W^b})_{b}, W) \ar@{..>}[dd] \ar@{..>}[rr]^-{\tiny{\text{ delete } W \text{ in } \X}} && (\X_{\setminus W}, \Gg_{\setminus W}) \ar@{..>}[dd] \ar@/^1pc/@{..>}[rd]^-{\tiny{\text{evaluate colimit } \Gg_{\setminus W} }}&\\
			(\X, (\Gg(b))_{b}, (W^b)_b) \ar@{..>}[dd]\ar@/^1pc/@{..>}[ur]^-{\tiny{ \text{ delete } W^b \text{ in } \Gg(b)}}&&&&\Gg_{\setminus W}(\X_{\setminus W}) \ar@{..>}[dd] \\
	&\mathrm{lim}_{\elG[\X]}S(\Gg(b)_{\setminus W^b})\ar[rr]\ar[dd]&&S(\Gg_{\setminus W}(\X_{\setminus W})\ar@/^1pc/[dr]\ar[dd]&\\
	\mathrm{lim}_{\elG[\X]}	S(\Gg(b)_{\setminus W^b})\ar@/^1pc/[ur]\ar[dd]	&&&&S(\Gg_{\setminus W}(\X_{\setminus W}))\ar[dd] \\
&TDTS_X \ar[rr]_-{\lambda_{\DD\TT} T S_X}	&& DT^2 S_X \ar@/^1pc/[rd]_-{ D \mu^\TT S_X }&\\
T^2 DS_X \ar@/^1pc/[ur]_-{T\lambda_{\DD\TT}S_X}&&&& DTS_X }
\]

{\bf{Left-bottom}}
	\[	\xymatrix{(\X, \Gg, W^{\Gg}) \ar@{..>}[d]\ar@{..>}[rrr]^-{\tiny{\text{ evaluate colimit } \Gg}}&&&(\Gg(\X), W^{\Gg})\ar@{..>}[d] \ar@{..>}[rrr]^-{\tiny{\text{ delete } W^{\Gg} \text{ in } \Gg(\X)}}&&&\Gg(\X)_{\setminus W^{\Gg}}\ar@{..>}[d]\\ 
	S(\Gg(\X)_{\setminus W^{\Gg}})\ar@{=}[rrr]\ar[d]&&& S(\Gg(\X)_{\setminus W^{\Gg}})\ar@{=}[rrr] \ar[d]&&&S(\Gg(\X)_{\setminus W^{\Gg}})\ar[d]\\
		T^2 DS_X \ar[rrr]_-{\mu^{\TT}DS_X}&&&TDS_X \ar[rrr]_-{\lambda_{\DD\TT}  S_X}	&&& DT S_X.}\]

These are equal since $S(\Gg_{\setminus W}(\X_{\setminus W})) = S(\Gg(\X)_{\setminus W^{\Gg}})$ by construction, and hence (\ref{eq. dist 1 DT}) commutes.

The proofs that $\lambda_{\DD\TT}$ satisfies the remaining three distributive law axioms follow similarly (and are somewhat simpler than the proof that (\ref{eq. dist 1 DT}) commutes.

 By \cite[Section~3]{Bec69}, there is a monad $\TTp= (\Tp, \mu^{\TTp}, \eta^{\TTp})$ on $\GSEp$ whose EM category of algebras is equivalent to the EM category of algebras for $\DD\TT$ on $\GSE$. %\CSM$ of modular operads,
In case $\E = \Set$, $\TTp$ has been described in detail in \cite[Section~7]{Ray20}, and the description of $\TTp$ given in \cite[Corollary~7.43]{Ray20} for graphical species in $\Set$, holds for graphical species in a general category $\E$ with sufficient (co)limits. 

Precisely, the endofunctor $\Tp$ is the quotient of $T$ given by
\begin{equation}\label{eq. Tp} \Tp S_\S = S_\S, \text { and } \Tp S_X = \mathrm{colim}_{\X \in X\XGrsimp} S(\X)  \end{equation}
for all pointed graphical species $S_* = (S, \epsilon, o)$ in $\E$. The unit $\epsilon ^{\Tp S}\defeq \Tp (\epsilon)  \colon S_\S \to \Tp S_\two$ is described by the obvious composite 
\[  \xymatrix{S_\S \ar[r]^-{\epsilon} &S_\two\ar[r]^- {\eta^\TT S}& TS_\two \ar@{->>}[r]& \Tp S_\two,}  \] and the contracted unit $o ^{\Tp S}\defeq \Tp (o)  \colon S_\S \to \Tp S_\nul$ is described by
\[  \xymatrix{S_\S \ar[r]^-{o}& S_\nul \ar[r]^-{\eta^\TT S} & TS_\nul \ar@{->>}[r]& \Tp S_\nul.} \]
In particular, $\epsilon^{\Tp S}$ is defined by maps $S(u^k) \colon S_\S \to S(\Lk)$ (for $k \geq 1$), and $o^{\Tp S}$ by maps $S(\kappa^m)\colon S_\S \to S(\Wm)$, $m \geq 1$ and $S( o) \colon S_\S \to S(\C_\nul)$ such that $o \circ S_\tau = o$.

The multiplication $\mu^{\TTp}$ for $\TTp$ is the one induced by $\mu^\TT$, that forgets the pair $(\X, \Gg)$ -- of an $X$-graph $\X$, and a non-degenerate $\X$-shaped graph of connected graphs $\Gg$ -- and replaces it with the colimit $\Gg(\X)$. The unit $\etap$ for $\TTp$ is defined by the maps $S_X =  S(\C_X) \mapsto \mathrm{colim}_{ \X \in X\XGrsimp} S(\X)$.

It is then straightforward to derive the following theorem (that was the main result, Theorem~7.46, of \cite{Ray20} in the case $\E = \Set$) for general $\E$.
\begin{thm}\label{CSM monad DT}
	The EM category $\GS^{\DD\TT}$ of algebras for $\DD\TT$ is canonically isomorphic to $\CSM$.
\end{thm}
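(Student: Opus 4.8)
The plan is to factor the problem through the lifted monad on pointed graphical species and then invoke the identification of non-unital algebras already in hand. By the classical theory of distributive laws \cite[Section~3]{Bec69}, the distributive law $\lambda_{\DD\TT}\colon TD \Rightarrow DT$ makes $\DD\TT$ into a monad on $\GSE$ and furnishes a canonical isomorphism of Eilenberg--Moore categories $\GSE^{\DD\TT} \cong \GSEp^{\TTp}$, where $\TTp = (\Tp, \mu^{\TTp}, \eta^{\TTp})$ is the lifted monad of (\ref{eq. Tp}). Hence it is enough to produce a canonical isomorphism $\GSEp^{\TTp} \cong \CSM$, and I would do this by comparing the data of a $\TTp$-algebra with the axioms of \cref{def. mod op}.

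First I would unwind a $\TTp$-algebra $(S_*, h)$: by \cref{lem: elp presheaves} the object $S_* = (S, \epsilon, o)$ is a pointed graphical species, and $h\colon \Tp S \to S$ is a morphism in $\GSEp$. Since $\Tp$ is by (\ref{eq. Tp}) a quotient of the non-unital modular operad functor $T$, the canonical surjection $TS_X \twoheadrightarrow \Tp S_X$ being induced by the inclusion $X\Griso \hookrightarrow X\XGrsimp$, precomposition with this surjection endows $S$ with a $\TT$-algebra structure. By \cref{prop. Talg} this is exactly a non-unital modular operad $(S, \diamond, \zeta)$, with $\diamond$ and $\zeta$ read off from the components of $h$ indexed by the graphs $\mathcal D_{X,Y}^{x\ddagger y}$ and $\C_X^{x\ddagger y}$. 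The pointed structure supplies the candidate unit $\epsilon$, and the contracted unit $o$ is recovered as $\zeta^{1\ddagger 2}_\two$ applied to $\epsilon$, which factors through the equaliser $S_\two^{1\ddagger 2}$ precisely because $\epsilon$ is unit-like.

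The crux is the second step: showing that $h$ factoring through the quotient $\Tp$, together with its compatibility with the adjoined units $\epsilon^{\Tp S} = \Tp(\epsilon)$ and $o^{\Tp S} = \Tp(o)$, is equivalent to the unit axiom (\ref{eq. mult unit}) of \cref{def. mod op}. The quotient $T \twoheadrightarrow \Tp$ is generated by the vertex-deletion morphisms $\delW$, and the relation identifying the line graph $\Lk$, built by inserting $k$ formal units along an edge via the maps $S(u^k)$, with the stick is, after applying $h$, exactly the assertion that gluing a unit onto an input leaves a structure unchanged, i.e.\ the composite in (\ref{eq. mult unit}). Thus a $\TTp$-algebra yields a modular operad; conversely, given $(S, \diamond, \zeta, \epsilon)$, its underlying non-unital modular operad gives a $T$-algebra by \cref{prop. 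Talg}, and I would check that the unit axiom forces this structure map to be invariant under all similarity (vertex-deletion) morphisms, so that it descends along $T \twoheadrightarrow \Tp$ to a well-defined $h\colon \Tp S \to S$ respecting both $\epsilon$ and $o$.

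I expect the main obstacle to be the wheel case of this descent, the \emph{problem of loops}. Whereas deletion of bivalent vertices is port-preserving for line segments and is governed directly by (\ref{eq. mult unit}), the total collapse $\Wm \to (\shortmid)$ via $\kappa^m$ is not boundary-preserving and falls outside $X\Griso$ for $X = \nul$; here one must verify that the structure induced on $S(\Wm)$ by inserting $m$ units and contracting agrees with the contracted unit $o$, using \cref{prop. vertex deletion} together with the interplay of $\epsilon$, $\zeta$ and $o$. Once the two constructions are seen to be mutually inverse and visibly natural, both directions being identities on the underlying graphical species and on colours, functoriality is immediate and the isomorphism $\GSEp^{\TTp} \cong \CSM$, hence $\GSE^{\DD\TT} \cong \CSM$, follows. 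Since every step is colimit/limit-formal, the argument goes through verbatim for an arbitrary $\E$ with sufficient (co)limits, recovering \cite[Theorem~7.46]{Ray20} when $\E = \Set$.
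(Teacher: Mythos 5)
Your proposal is correct and follows essentially the same route as the paper: both directions run through Beck's correspondence between $\DD\TT$-algebras and $\TTp$-algebras on pointed graphical species, use \cref{prop. Talg} to identify the underlying non-unital modular operad, and read the unit axiom off the quotient description (\ref{eq. Tp}) of $\Tp$ (with the wheel/contracted-unit compatibility you flag being handled there by the maps $S(\kappa^m)$ defining $o^{\Tp S}$). The paper merely interleaves these steps rather than separating the Beck isomorphism as a first stage.
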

\begin{proof}
	Let $(A,h)$ be a $\DD \TT$-algebra in $\E$. Then $h$ induces a $\TT$-algebra structure $h^\TT \colon TA \to A$ by restriction, and a $\DD$-algebra structure $h^\DD = h \circ D \eta^{\TT}A \colon DA \to A$. In particular, $h^\TT$ equips $A$ with the a multiplication $\diamond$ and contraction $\zeta$ such that $(A, \diamond , \zeta)$ is a non-unital modular operad in $\E$, and $h^\DD$, equips $A$ with the structure of a pointed graphical species $A_* = (A, \epsilon, o) $ in $\E$. To prove that $(A, h)$ describes a modular operad, we must show that $\epsilon$ is a unit for $\diamond.$ 
	
This is straightforward since $(A, h)$ is a $\DD\TT$ algebra and therefore $(A, \epsilon , o)$, together with $h^\TT$, describe a $\TTp$-algebra. In particular, $\epsilon$ is a unit for $\diamond$, by (\ref{eq. Tp}).

Conversely, if $(S, \diamond, \zeta, \epsilon)$ is a modular operad in $\E$, then $(S, \diamond, \zeta)$ defines a $\TT$-algebra $p^\TT \colon TS \to S$ by \cref{prop. Talg}, and $(S, \epsilon, \zeta \epsilon)$ defines a $\DD$-algebra by \cref{ssec. D monad}. In particular, since $\epsilon$ is a unit for $\diamond$, and $p^\TT$ is induced by iterations of $\diamond$ and $\zeta$, $p^\TT$ induces a $\TTp$-algebra structure on $(S, \epsilon, \zeta\epsilon)$ by (\ref{eq. Tp}), whereby $(S, \diamond, \zeta, \epsilon)$  describes an algebra for $\DD\TT$.
\end{proof}

\section{A monad for circuit operads}  \label{sec. monad CO}

Despite the similarity between the definitions of the monads $\TT$ and $\TTk$ on $\GSE$, it is not possible to modify $\lambda_{\DD\TT}$ by replacing $\TT$ with $\TTk$ to obtain a distributive law for $\DD$ and $\TTk$: 
Namely, for all graphical species $S$, $DS_\two\cong S_\two \amalg S_\S$ by definition. Hence there is a canonical morphism \[S(\shortmid \amalg \shortmid) = S_\S \times S_\S   \to DS_\two \times DS_\two =  DS(\C_\two \amalg \C_\two) \to TDS_{ \two \amalg \two}.  \] 
However, there is, in general, no morphism
\[ (S_\two \amalg S_\S)\times (S_\two \amalg S_\S) \to T S_{ \two \amalg \two}  = DTS_{\two \amalg \two}. \] Therefore $\lambda_{ \DD \TT} \colon TD \Rightarrow DT$ does not extend to a distributive law $\Tk D \Rightarrow D \Tk$.

The problem of finding a circuit operad monad on $\GSE$ is easily solved by observing that the monad $\TTk$ is itself a composite: the free graded monoid monad on $\pr{\Sigma}$ induces a monad $\LL = (L, \mu^\LL, \eta^\LL)$ on $\GSE$, (that will be described in \cref{ssec. LL}), whose algebras are graphical species in $\E$ that are equipped with an external product. Moreover, there is a distributive law $\lambda_{ \LL \TT} \colon TL \Rightarrow LT$, and the induced composite monad is $\TTk = \LL \TT$. 

It is then straightforward to check that, for all (unpointed) graphical species $S$ in $\E$, $LDTS$ describes a circuit operad in $\E$. By \cite{Che11}, there is a distributive law $\lambda_{ \LL(\DD\TT)} \colon (DT)L \Rightarrow L(DT)$, and hence a composite monad $\LL \DD \TT$ on $\GSE$, if and only if there are pairwise distributive laws $\lambda_{ \LL \TT} \colon TL \Rightarrow LT$, and $\lambda_{ \LL\DD} \colon DL \Rightarrow LD$, such that the following \textit{`Yang-Baxter' diagram} of endofunctors on $\GSE$ commutes:
	\begin{equation}\label{YB}
	\xymatrix{
		&& DTL \ar[rr]^-{D\lambda_{\LL\TT}} && DLT \ar[drr]^-{\lambda_{\LL\DD}T}&& \\
		TDL \ar[urr]^-{\lambda_{\DD\TT}L}\ar[drr]_-{T\lambda_{\LL\DD}}&&  && && LDT.\\
		&& TLD \ar[rr]_-{\lambda_{\LL\TT}D}&& LTD \ar[urr]_-{L\lambda_{\DD\TT}}&& }
	\end{equation}

In this section, we construct the monad $\LL$ and show that there is a distributive law $\lambda_{\LL\TT}: TL \Rightarrow LT$ such that the induced composite monad is precisely $\TTk$. 

The following section gives a description of the distributive law $\lambda_{\LL \DD}: DL \Rightarrow LD $ such that the diagram (\ref{YB}) commutes, and proves that the EM category of algebras for the composite monad $\LL\DD \TT$ on $\GSE$ is canonically equivalent to the category $\COE$ of circuit operads in $\E$.

\subsection{The free external product monad $\LL$}\label{ssec. LL}

For any finite set $X$, recall (\cref{ssec. presheaves}) that $\core{X \ov \fin}$ is the maximal groupoid in $X \ov \fin$ whose objects are pairs $(Y, f)$ with $f \in \fin (X, Y)$ a morphism of finite sets. 

As usual, let $\E$ be a category with sufficient (co)limits. The free graded monoid endofunctor $L$ on $\GSE$ is given by 
\begin{equation}\label{eq. defn L}\begin{array}{lll}
LS_\S &= & S_\S \\ 
LS _X  &= & \mathrm{colim}_{(Y,f) \in \core{X \ov \fin}} \prod_{y \in Y} S_{  f^{-1}(y)}
\end{array}\end{equation}
for all finite sets $X$. This underlies a monad $\LL  = (L, \mu^\LL, \eta^\LL)$  with monadic multiplication $\mu^\LL$ induced by concatenation of lists, and monadic unit $\eta^\LL$ induced by inclusion of singleton lists. 

The following lemma is immediate:
\begin{lem}\label{lem. L free monoid}
The EM category of algebras for the monad $\LL$ on $\GSE$ is equivalent to the category of graphical species with unital external product in $\E$.
	\end{lem}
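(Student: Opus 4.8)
The plan is to recognise the endofunctor $L$ of (\ref{eq. defn L}) as the free graded-monoid monad for the \emph{external-product} monoidal structure on $\GSE$, and then to invoke the standard fact that algebras for a free-monoid monad are exactly monoids for the corresponding product (see e.g.\ \cite[Chapter~VI]{Mac98}). The relevant monoidal structure is the (relative) Day convolution along $(\fiso, \amalg, \nul)$ that fixes the colour object $S_\S$; a symmetric associative unital monoid for it is precisely a graphical species equipped with a unital external product in the sense of \cref{def. external}. The decisive point is that the colimit in (\ref{eq. defn L}) is taken over the \emph{groupoid} $\core{X\ov\fin}$, whose isomorphisms permute the blocks $f^{-1}(y)$; this is exactly what encodes the $\fiso$-equivariance and the symmetry of $\amalg$, so that $L$ is the free symmetric graded monoid rather than a free ordered one.

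Concretely, I would establish the equivalence by writing down the two constructions explicitly. Given an $\LL$-algebra $(S,h\colon LS\to S)$, the unit axiom $h\circ\eta^\LL=\mathrm{id}$ forces $h_\S=\mathrm{id}_{S_\S}$, so $h$ is colour-preserving. For finite sets $X,Y$ let $\iota_{X,Y}\colon S_X\times S_Y\to LS_{X\amalg Y}$ be the colimit coprojection at the two-block object $(\{a,b\},\,X\amalg Y\to\{a,b\})$ sending $X\mapsto a$, $Y\mapsto b$, and set $\boxtimes_{X,Y}\defeq h_{X\amalg Y}\circ\iota_{X,Y}$; the external unit $\circleddash\colon *\to S_\nul$ is $h_\nul$ composed with the coprojection at the empty object $(\nul,\mathrm{id}_\nul)$, for which $\prod_{\varnothing}=*$. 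Conversely, given $(S,\boxtimes,\circleddash)$, one defines $h\colon LS\to S$ on the component indexed by $(Y,f)$, with $Y=\{y_1,\dots,y_n\}$, as the iterated external product $\prod_i S_{f^{-1}(y_i)}\to S_X$ (the empty product $n=0$ being $\circleddash$). The $\fiso$-equivariance and projection compatibility of $\boxtimes$ ensure that each component is a morphism of graphical species, and \cref{ex. product union} identifies $S_X\times S_Y$ with the relevant product on corollas.

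The main obstacle — and the only step requiring real care — is the well-definedness of $h$ on the colimit. One must check that the iterated external product is independent of the chosen ordering of the blocks $y_1,\dots,y_n$ and invariant under the block-permuting isomorphisms of $\core{X\ov\fin}$; this is exactly where associativity (C1) of $\boxtimes$, together with its symmetry (compatibility with the swaps $X\amalg Y\cong Y\amalg X$) and the coherence theorem for symmetric monoidal structures, are used. In the reverse direction, precisely these axioms are recovered from the algebra laws: associativity of $\boxtimes$ is the restriction of $h\circ\mu^\LL=h\circ Lh$ to the three-block components, and unitality of $\circleddash$ is the restriction of $h\circ\eta^\LL=\mathrm{id}$. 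Since a morphism of $\LL$-algebras is a morphism of $S$ commuting with $h$ — equivalently, a colour-preserving morphism commuting with $\boxtimes$ and $\circleddash$ — the two assignments are functorial and mutually inverse, giving the claimed equivalence of categories.
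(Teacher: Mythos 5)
Your proof is correct and is exactly the elaboration of what the paper leaves unwritten: the paper declares the lemma ``immediate'' from the description of $\LL$ as the free graded monoid monad, and your construction --- algebra structure maps $h \colon LS \to S$ corresponding to iterated external products, with the only delicate point being well-definedness of $h$ over the block-permuting isomorphisms of the groupoid $\core{X \ov \fin}$ --- is precisely the compressed argument. The one remark worth adding is that you (rightly) invoke associativity and symmetry of $\boxtimes$, neither of which is literally part of \cref{def. external} (associativity appears only as axiom (C1) of \cref{def. co}); these are genuinely needed, since the groupoid colimit in \cref{eq. defn L} forces the induced product on any $\LL$-algebra to be associative and invariant under block swaps, so your reading of ``unital external product'' as ``unital, associative, symmetric graded product'' is the only interpretation under which the stated equivalence holds, and is the one the paper uses later (e.g.\ in the proof of \cref{iterated law}).
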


In order to describe a distributive law $\lambda_{\LL \TT}\colon TL \Rightarrow$ for $\LL$ and $\TT$, observe first that $L \subset \Tk$ is a subfunctor since, if $S$ is a graphical species in $\E$, and $X$ a finite set, then 
\[ \begin{array}{lll}
LS_X  & =  & \mathrm{colim}_{(Y,f) \in \core{X \ov \fin}} \prod_{y \in Y}S_{  f^{-1}(y)}\\
& = & \mathrm{colim}_{\X \in  \Coriso} S(\X)
\end{array}\] where $\Coriso\subset X\Grisok$ is the groupoid of $X$-graphs $\X_{\mathrm{cor}}$ that are disjoint unions of corollas $\X_{\mathrm{cor}}= \coprod_{ i \in I} \CX[X_i]$ (with $\coprod_{i \in I} X_i = X$). In particular, from this point of view, the multiplication $\mu^\LL$ for $\LL$ is just the restriction of the multiplication $\mu^{\TTk}$ for $\TTk$ (\cref{unpointed modular operad}) to $\X_{\mathrm{cor}}$-shaped graphs of disjoint unions of corollas, 
and the unit $\eta^\LL$ for $\LL$, is the obvious corestriction. 

Given a finite set $X$, and a connected $X$-graph $\X \in X \Griso$, let $\CorGg$ denote the full subgroupoid of $\Gret^{(\X)}$ whose objects are non-degenerate $ \X$-shaped graphs of graphs $\Gg$ such that, for all $(\C_{X_b}, b) \in \elG[\X]$, $ \Gg (\C_{X_b}, b) \in \Coriso$ is a disjoint union of corollas. Then, \[ \begin{array}{lll}
TL S_X& = &\mathrm{colim}_{ \X \in X \Griso}  \mathrm{lim}_{(\C,b) \in \elG[\X]} LS(\C)\\
&=& \mathrm{colim}_{ \X \in X \Griso}  \mathrm{lim}_{ \Gg \in\CorGg} S(\Gg(\X)).
\end{array}\]

Since, for all $\Gg \in \CorGg$, the colimit $\Gg(\X)$ has the structure of an admissible (in general, not connected) $X$-graph, there is a canonical morphism $TLS_X \to \mathrm{colim}_{\X' \in X \Grisok} S(\X')$. 
And, therefore, since every graph is the coproduct in $\Gret$ of its connected components, $S(\X')= \prod_{k \in K}S(\X'_k)$ describes an element of $LTS_X$ for all $X$-graphs $\X'=  \coprod_{k \in K} \X'_k$.

Let $\lambda_{\LL \TT}\colon TL \Rightarrow LT$ be the natural transformation so obtained. In other words, $\lambda_{ \LL \TT}$ is induced by the canonical morphisms, 
\[\mathrm{colim}_{ \X \in X \Griso } \left( \mathrm{colim}_{ \Gg \in \CorGg} S (\Gg(\X)) \right)\to \mathrm{colim}_{ \X' \in X \Griso } S (\X') \  (\text{where } S\in \GSE, \text{ and } X \in \fiso),\] that forgets pairs
 $(\X, \Gg)$ where $\X$ is a connected admissible $X$-graph and $\Gg \in\CorGg$, and remembers only the $X$-graphs given by the colimit $\Gg(\X) = \coprod_{k \in K} \Gg(\X)_k$, in the evaluation of $S(\Gg(\X))$.

\begin{lem}\label{lem. LT is Tk}
The natural transformation $\lambda_{\LL \TT} \colon TL \Rightarrow LT$ satisfies the four axioms of a distributive law, and $\LL \TT = \TTk$ on $\GSE$. Hence, algebras for $\LL\TT$ are non-unital circuit operads.
\end{lem}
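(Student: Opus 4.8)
The plan is to verify the four distributive law axioms of Beck \cite{Bec69} for $\lambda_{\LL\TT}\colon TL \Rightarrow LT$, and then to identify the resulting composite monad $\LL\TT$ with $\TTk$. The final sentence (that $\LL\TT$-algebras are non-unital circuit operads) then follows immediately from \cref{unpointed modular operad}, which establishes $\GS^{\TTk} \cong \nuCO$. So the real content is the identification $\LL\TT = \TTk$, and the distributive law verification is what makes the composite a monad in the first place.

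First I would set up the bookkeeping carefully. The key structural observation, already made in the excerpt, is that $L \subset \Tk$ is the subfunctor indexed by $X$-graphs that are disjoint unions of corollas (elements of $\Coriso$), while $T \subset \Tk$ is the subfunctor indexed by \emph{connected} $X$-graphs. The heart of the argument is then a decomposition statement: every admissible (not necessarily connected) $X$-graph $\X$ decomposes \emph{uniquely} as $\X = \coprod_{k \in K} \X_k$ into its connected components, and conversely any connected $X$-graph $\X'$ together with a choice $\Gg \in \CorGg[\X']$ of a $\CorGg$-shaped graph of disjoint-unions-of-corollas has colimit $\Gg(\X')$ an admissible $X$-graph. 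This is exactly the data that $\lambda_{\LL\TT}$ records and forgets. I would make precise the claim that, at the level of indexing groupoids, there is a canonical equivalence between the category indexing $TL S_X$ (pairs: connected $X$-graph $\X'$ with a decoration by disjoint unions of corollas) and the category indexing $LT S_X$ (disjoint unions $\coprod_k \X_k$ of connected $X$-graphs). Both are, in effect, two ways of presenting an arbitrary admissible $X$-graph together with a ``blocking'' of its vertices into corollas: $TL$ blocks first into a connected shape then refines each vertex, while $LT$ groups the connected components at the outer level. The verification of the four axioms (compatibility with $\mu^\LL, \eta^\LL, \mu^\TT, \eta^\TT$) is then a matter of checking that forgetting the pair $(\X', \Gg)$ and remembering only $\Gg(\X')$ is compatible with concatenation of lists, inclusion of singletons, composition of graphs-of-graphs, and inclusion of corollas — all of which reduce to the universal property of colimits, exactly as in the detailed verification of the axioms for $\lambda_{\DD\TT}$ given earlier in \cref{ssec. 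Similar connected X-graphs}.

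For the identification $\LL\TT = \TTk$, the composite monad has underlying endofunctor $LT$ with multiplication $\mu_{\lambda_{\LL\TT}} = (\mu^\LL \mu^\TT) \circ (L \lambda_{\LL\TT} T)$. I would compute
\[ LTS_X = \mathrm{colim}_{\X' \in X\Grisok} S(\X'), \]
using that every $X$-graph is the disjoint union of its connected components: a disjoint union $\coprod_k \X_k$ of connected $X_k$-graphs (with $\coprod_k X_k = X$) is precisely the data indexing $LT$, and this is exactly the data indexing $\Tk$ by \cref{def. X graph}, since a general admissible $X$-graph \emph{is} such a disjoint union. Thus the underlying endofunctors agree. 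I would then check that the induced multiplication $\mu_{\lambda_{\LL\TT}}$ and the unit $\eta^\LL \eta^\TT$ coincide with $\mu^{\TTk}$ and $\eta^{\TTk}$ respectively; both are characterised by the universal property via \cref{lem. Tk mult} (forming colimits of $X$-graphs of graphs and remembering only the colimit graph), and both $\LL$ and $\TT$ arise as the restrictions of $\TTk$ to corolla-shaped and connected graphs-of-graphs, so the composite multiplication is forced to be the restriction of $\mu^{\TTk}$ to the full indexing category — i.e.\ $\mu^{\TTk}$ itself.

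The main obstacle I anticipate is the decomposition/uniqueness bookkeeping in the distributive law verification: specifically, showing that the connected-components decomposition of an admissible $X$-graph interacts coherently with the ``refine each connected shape by disjoint unions of corollas'' operation, so that the two composites in each axiom diagram land on literally the \emph{same} $X$-graph (up to canonical isomorphism) and hence the same object $S(-)$ of $\E$. This is the analogue of the equation \eqref{eq. colimits agree} that did the crucial work in the $\lambda_{\DD\TT}$ proof, and I would expect to prove the needed coherence by the same strategy: for each axiom, exhibit the two paths as induced by canonical maps that forget successively more structure, and observe that they agree because the underlying graph colimits agree by the uniqueness of connected-component decomposition together with associativity of graph substitution (\cref{colimit exists}). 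Since connectedness is preserved under the relevant colimits by \cite[Corollary~5.19]{Ray20}, no degenerate interaction between the corolla-blocking and the connected-components grouping can occur, which is what makes the verification go through cleanly rather than requiring the more delicate analysis (as with $\DD$) around the stick and wheel graphs.
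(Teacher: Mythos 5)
Your proposal shares the paper's key observation — that $L \subset \Tk$ and $T \subset \Tk$ are subfunctors, that $LT = \Tk$ because every graph is the disjoint union of its connected components, and that $\mu^{\LL}$, $\mu^{\TT}$ and $\lambda_{\LL\TT}$ are all restrictions of $\mu^{\TTk}$ — but you deploy it in the opposite order and therefore plan far more work than the paper does. The paper's proof is essentially two lines: since $LT = \Tk$ and $TL \subset (\Tk)^2$ is a subfunctor, with $\lambda_{\LL\TT}$ the restriction of $\mu^{\TTk} \colon (\Tk)^2 \Rightarrow \Tk = LT$ to that subfunctor, the four distributive-law axioms are instances of the monad axioms for $\TTk$ (already established via \cref{unpointed modular operad} and \cref{lem. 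Tk mult}), restricted along these inclusions; there is nothing to verify graph-by-graph. Your plan of a direct verification of the four axioms, modelled on the verification for $\lambda_{\DD\TT}$, would presumably succeed, but it is exactly the labour that the subfunctor observation makes unnecessary: the delicate bookkeeping that $\lambda_{\DD\TT}$ required (stick and wheel graphs, the identity (\ref{eq. colimits agree})) has no analogue here precisely because every map in sight is a restriction of a single monad's structure maps. Your final identification of $\mu_{\lambda_{\LL\TT}}$ with $\mu^{\TTk}$ is correct, and once you have it, the axioms come for free — so the natural repair of your proposal is simply to run your last paragraph first.

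One claim in your proposal is false and should be flagged: there is no ``canonical equivalence between the category indexing $TLS_X$ and the category indexing $LTS_X$''. The $TL$-side is indexed by pairs $(\X, \Gg)$ of a connected $X$-graph $\X$ and $\Gg \in \CorGg$ — equivalently (since corolla substitution creates no new inner edges, so $E(\Gg(\X)) \cong E(\X)$ by \cref{colimit exists}), by an admissible $X$-graph together with a partition of its vertices into blocks whose collapse is connected — whereas the $LT$-side is indexed by admissible $X$-graphs alone, carrying no such blocking. The assignment $(\X, \Gg) \mapsto \Gg(\X)$ forgets the blocking and is many-to-one; this is precisely why $\lambda_{\LL\TT}$ is a genuinely directional natural transformation rather than an isomorphism, and a distributive law need not be invertible. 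The error is not load-bearing, since the remainder of your verification strategy uses only the forgetting map — which is what $\lambda_{\LL\TT}$ actually is, and what you yourself describe correctly elsewhere in the proposal — but if you attempted to ``make precise'' the equivalence as stated, that step would fail.
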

\begin{proof}
Since a graph is the disjoint union of its connected components, $LT = \Tk  \colon \GSE \to \GSE$. 

Moreover, $L \subset \Tk$ and $T \subset \Tk$ so $TL \subset ({\Tk})^2$ is a subfunctor, and $\lambda_{\LL\TT}$ is just the restriction to $TL$ of $\mu^{\TTk} \colon ({\Tk})^2 \Rightarrow \Tk = LT$ on $\GSE$. The four distributive law axioms then follow immediately from the monad axioms.
\end{proof}

\subsection{An iterated distributive law for circuit operads}\label{sec. iterated}
So far, we have defined three monads $\TT, \DD$ and $\LL$ on $\GSE$, and two distributive laws $\lambda_{\DD\TT}\colon TD \Rightarrow DT$ and $\lambda_{\LL\TT}\colon TL\Rightarrow LT$. 

Observe also that there is an obvious choice of natural transformation $\lambda_{ \LL \DD} \colon DL \Rightarrow LD$ that defines the distributive law for $\LL$ and $\DD$: By definition of $D$, we may extend the natural transformation 
$L \eta^{\DD} \colon L \Rightarrow LD$ to a natural transformation $\lambda_{\LL \DD} \colon DL \Rightarrow LD$ by 
\[ S_\S \to DS_\two \xrightarrow {\eta^\LL DS} LDS_\two, \ \text{ and } S_\S \to DS_\nul \xrightarrow {\eta^\LL DS} LDS_\nul, \ \text{ for all } S \in \GSE.\]

Informally, $\lambda_{\LL \DD}$ takes a tuple $(S_{X_i})_{i = 1}^n $ of objects in the image of $S$, and regards it as a tuple in the image of $DS$, and takes the image of $S_\S$ in $DS_\nul$ or $DS_\two$ and regards it as a 1-tuple in the image of $DS$.

This construction is natural in $S$, and it is simple to show that $\lambda_{\LL \DD}$ so obtained describes a distributive law for $\LL$ and $\DD$.

It remains to show that there is an iterated distributive law for the three monads, and that the resulting composite monad $\LL \DD \TT $ on $\GSE$ is the desired circuit operad monad. 

\medspace

Observe first that, for any graph $\G$ and any subset $W \subset V_2 $ of bivalent vertices of $\G$, there is a canonical full embedding $\Gret^{(\Gnov)}\hookrightarrow \Gret^{(\G)}$ of categories of non-degenerate graphs of graphs, that preserves similarity classes of colimits:   

Namely, if  $\Gg_{\setminus W} \colon \elG [\Gnov]\to \Gret$ is a non-degenerate $\Gnov$-shaped graph of graphs, then there is an induced non-degenerate $\G$-shaped graph of graphs $\Gg^W \colon \elG \to \Gret$ defined by
\begin{equation}
\label{eq. GgW}
(\C,b) \mapsto \left \{ \begin{array}{ll}
\C_\two & (\C,b) \text{ is a neighbourhood of } v \in W\\
\Gg_{\W} (\delW \circ b) & \text{ otherwise,}
\end{array} \right .
\end{equation} 
and such that $\Gg^W$ induces an inclusion $W \hookrightarrow V_2(\Gg^W (\X))$ (see \cref{fig. GgW}), and   
 \begin{equation}
 \label{eq. deleted colims} \Gg^W(\X)_{\setminus W} = \Gg_{\setminus W}(\X_{\setminus W}) .
 \end{equation} 

\begin{figure}[htb!]
	\begin{tikzpicture} [scale = .8]
	\node at (-2,-4){$\Gg_{\setminus W}$};
	\node at(0,-5){ 
		\begin{tikzpicture}[scale = .3]
\begin{pgfonlayer}{above}
\node [dot, red] (0) at (2.5, -0.5) {};
\node [dot, red] (1) at (3, -1) {};
\node [dot, red] (3) at (-1, -0.25) {};
\node [dot, red] (4) at (-0.25, -1) {};
\node [dot, red] (5) at (-1.75, -1) {};
\node  (7) at (1.5, -3) {};
\node  (9) at (2, 2) {};
\node  (10) at (-4, -4) {};
\node  (11) at (5, -5) {};
\node  (12) at (5, 4) {};
\node  (13) at (7, 0) {};
\draw [draw = blue, fill = blue, fill opacity = .2](2.7,-.75) circle (1.3cm);
\draw [draw = blue, fill = blue, fill opacity = .2](-1, -.5) circle (1.3cm);
\end{pgfonlayer}
\begin{pgfonlayer}{background}
\draw [in=165, out=-120, loop] (0) to ();
\draw [bend right=45, looseness=1.25] (5) to (4);
\draw [in=135, out=60] (3) to (0);
\draw [bend left] (3) to (9.center);
\draw [bend right, looseness=1.25] (9.center) to (12.center);
\draw [bend right] (5) to (10.center);
\draw [bend left] (4) to (7.center);
\draw [in=150, out=-90, looseness=0.75] (7.center) to (11.center);
\draw [in=180, out=60] (1) to (13.center);
\end{pgfonlayer}
		\end{tikzpicture}
		
	};
	\node at (6,-4){$\Gg^W$};
\node at(8,-5){ 
	\begin{tikzpicture}[scale = .3]
	\begin{pgfonlayer}{above}
	\node [dot, red] (0) at (2.5, -0.5) {};
	\node [dot, red] (1) at (3, -1) {};
	\node [dot, red] (3) at (-1, -0.25) {};
	\node [dot, red] (4) at (-0.25, -1) {};
	\node [dot, red] (5) at (-1.75, -1) {};
	\node [dot, green] (7) at (1.5, -3) {};
	\node [dot, green] (9) at (2, 2) {};
	\node  (10) at (-4, -4) {};
	\node  (11) at (5, -5) {};
	\node  (12) at (5, 4) {};
	\node  (13) at (7, 0) {};
	\draw [draw = blue, fill = blue, fill opacity = .2](2.7,-.75) circle (1.3cm);
	\draw [draw = blue, fill = blue, fill opacity = .2](-1, -.5) circle (1.3cm);
		\draw [draw = green, fill = green, fill opacity = .2](7) circle (.6cm);
			\draw [draw = green, fill = green, fill opacity = .2](9) circle (.6cm);
	\end{pgfonlayer}
	\begin{pgfonlayer}{background}
	\draw [in=165, out=-120, loop] (0) to ();
	\draw [bend right=45, looseness=1.25] (5) to (4);
	\draw [in=135, out=60] (3) to (0);
	\draw [bend left] (3) to (9.center);
	\draw [bend right, looseness=1.25] (9.center) to (12.center);
	\draw [bend right] (5) to (10.center);
	\draw [bend left] (4) to (7.center);
	\draw [in=150, out=-90, looseness=0.75] (7.center) to (11.center);
	\draw [in=180, out=60] (1) to (13.center);
	\end{pgfonlayer}
	\end{tikzpicture}
	
};
	\node at (-2,1){$\Gnov$};
	\node at(0,0){ 
	\begin{tikzpicture}[scale = .3]
	\begin{pgfonlayer}{above}
	\node [dot, blue] (0) at (2.5, -0.5) {};
	\node [dot, blue] (1) at (2.5, -0.5) {};
	\node [dot, blue] (3) at (-1, -0.5) {};
	\node [dot, blue] (4) at (-1, -0.5) {};
	\node [dot, blue] (5) at (-1, -0.5) {};
	\node  (7) at (1.5, -3) {};
	\node  (9) at (2, 2) {};
	\node  (10) at (-4, -4) {};
	\node  (11) at (5, -5) {};
	\node  (12) at (5, 4) {};
	\node  (13) at (7, 0) {};
	\end{pgfonlayer}
	\begin{pgfonlayer}{background}
	\draw [in=135, out=60] (3) to (0);
	\draw [bend left] (3) to (9.center);
	\draw [bend right, looseness=1.25] (9.center) to (12.center);
	\draw [bend right] (5) to (10.center);
	\draw [bend left] (4) to (7.center);
	\draw [in=150, out=-90, looseness=0.75] (7.center) to (11.center);
	\draw [in=180, out=60] (1) to (13.center);
	\end{pgfonlayer}
	\end{tikzpicture}	
};
	\node at (6,1){$\G$};
	\node at(8,0){ 
	\begin{tikzpicture}[scale = .3]
	\begin{pgfonlayer}{above}
	\node [dot, blue] (0) at (2.5, -0.5) {};
	\node [dot, blue] (1) at (2.5, -0.5) {};
	\node [dot, blue] (3) at (-1, -0.5) {};
	\node [dot, blue] (4) at (-1, -0.5) {};
	\node [dot, blue] (5) at (-1, -0.5) {};
	\node [dot, green] (7) at (1.5, -3) {};
	\node  [dot, green](9) at (2, 2) {};
	\node  (10) at (-4, -4) {};
	\node  (11) at (5, -5) {};
	\node  (12) at (5, 4) {};
	\node  (13) at (7, 0) {};
	\end{pgfonlayer}
	\begin{pgfonlayer}{background}
	\draw [in=135, out=60] (3) to (0);
	\draw [bend left] (3) to (9.center);
	\draw [bend right, looseness=1.25] (9.center) to (12.center);
	\draw [bend right] (5) to (10.center);
	\draw [bend left] (4) to (7.center);
	\draw [in=150, out=-90, looseness=0.75] (7.center) to (11.center);
	\draw [in=180, out=60] (1) to (13.center);
	\end{pgfonlayer}
	\end{tikzpicture}	
};
\end{tikzpicture}

\caption{The colimits of the $\G$-shaped graph of graphs $\Gg^W$ described in (\ref{eq. GgW}), and the $\Gnov$-shaped graph of graphs $\Gg_{\setminus W}$ are similar in $\Gretp$. }
	\label{fig. GgW}
\end{figure}
 In particular, the embedding $\Gret^{(\Gnov)}\hookrightarrow \Gret^{(\G)}$ restricts to an inclusion  $\mathsf{Cor}_\boxtimes^{(\Gnov)}\hookrightarrow \mathsf{Cor}_\boxtimes^{(\G)}$. 
 
 This will be used to prove:

\begin{prop}\label{prop. iterated law}
	The triple of distributive laws $\lambda_{ \DD \TT}\colon TD \Rightarrow DT$, $\lambda_{ \LL \TT}\colon TL \Rightarrow LT$, and $\lambda_{ \LL \DD }\colon DL  \Rightarrow LD$ on $\GS$, describe a composite monad $\LL \DD\TT$ on $\GSE$. 

\end{prop}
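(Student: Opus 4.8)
The plan is to appeal to Cheng's iterated distributive law theorem \cite{Che11}. The three pairwise distributive laws $\lambda_{\DD\TT}\colon TD \Rightarrow DT$, $\lambda_{\LL\TT}\colon TL \Rightarrow LT$ and $\lambda_{\LL\DD}\colon DL \Rightarrow LD$ have all been constructed above and shown to satisfy Beck's four axioms \cite{Bec69} — the first by the computations of \cref{ssec. Similar connected X-graphs}, the second by \cref{lem. LT is Tk}, and the third immediately from the fact that $\lambda_{\LL\DD}$ is the canonical extension of $L\eta^{\DD}$. By \cite{Che11}, these assemble into an iterated distributive law, and hence yield the composite monad $\LL\DD\TT$ on $\GSE$, provided the single Yang--Baxter hexagon (\ref{YB}) of endofunctors commutes. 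So the entire content of the proposition reduces to verifying (\ref{YB}).

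First I would fix a graphical species $S$ in $\E$ and a finite set $X$ and describe the apex $TDLS_X$ of (\ref{YB}) as a single colimit. Unwinding the definition of $T$, the formula (\ref{eq. D on graphs}) for $D$ on graphs, and the definition of $L$, this colimit is indexed by triples $(\X, W, \Gg)$, where $\X$ is a connected $X$-graph, $W$ is a subset of bivalent vertices of $\X$, and $\Gg \in \CorGg[\X_{\setminus W}]$ is a non-degenerate $\X_{\setminus W}$-shaped graph of graphs whose vertex-graphs are disjoint unions of corollas; the summand attached to $(\X,W,\Gg)$ is $S(\Gg(\X_{\setminus W}))$. Dually, the terminal vertex $LDTS_X$ is indexed by a decomposition $X = \coprod_k X_k$ (the outer $L$) together with, for each $k$, a $\DD\TT$-datum over $X_k$: a connected $X_k$-graph, or, when $X_k \cong \two$ or $X_k \cong \nul$, a formal or contracted unit drawn from $S_\S$ respectively $\widetilde{S_\S}$.

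Second, and this is the core of the argument, I would trace both legs of (\ref{YB}) on a datum $(\X,W,\Gg)$ and check they land on the same indexed summand of $LDTS_X$. Each arrow of the hexagon is one of the forgetful reindexing maps of \cref{ssec. Similar connected X-graphs}, \cref{lem. LT is Tk} and \cref{ssec. LL}: $\lambda_{\DD\TT}$ remembers $\X_{\setminus W}$ while discarding the pair $(\X,W)$ into a top-level $D$; $\lambda_{\LL\TT}$ is the restriction of $\mu^{\TTk}$ that forms the colimit graph and records its connected components; and $\lambda_{\LL\DD}$ reinterprets a tuple in the image of $S$ as a tuple in the image of $DS$. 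Following the definitions, both composites amount to forming the colimit $\Gg(\X_{\setminus W})$ in $\Gret$, splitting it into its connected components $\coprod_k \Gg(\X_{\setminus W})_k$ (the external product recorded by the outer $L$), and carrying $W$ as a family of formal units. The two legs differ only in whether the deletion of $W$ and the passage to connected components are carried out before or after forming the colimit of the graph of graphs over $\X$; that these operations commute is exactly the identity (\ref{eq. deleted colims}), $\Gg^W(\X)_{\setminus W} = \Gg_{\setminus W}(\X_{\setminus W})$, established via the embedding (\ref{eq. GgW}), together with the fact (\cref{prop. vertex deletion}(1)) that vertex deletion preserves and is computed on connected components. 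Invoking \cref{lem. terminal X} to replace each component by its unique terminal representative free of bivalent and isolated vertices then identifies the two resulting summands on the nose.

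The main obstacle, exactly as in the verification of $\lambda_{\DD\TT}$ itself, will be the degenerate low-valence cases — $\two$- and $\nul$-graphs, wheels $\Wm$, the stick graph $(\shortmid)$, and isolated vertices — where vertex deletion no longer preserves ports and where the formal units of $\DD$ (living over $\two$ and $\nul$) interact with the component-merging performed by $\LL$. Concretely, one must check that a formal unit produced on a $\two$-corolla, and a contracted unit landing in the $\widetilde{S_\S}$ summand of $DS_\nul$, are transported identically along both legs, even when the external product fuses several $\nul$-components or a connected component is a wheel that deletes to a stick. I expect to dispatch these by the same case analysis, organised by the similarity classes of \cref{prop. vertex deletion}, that established the commutativity of (\ref{eq. dist 1 DT}). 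With (\ref{YB}) in hand, the remaining hypotheses of \cite{Che11} are formal, so the hexagon is the only substantive step and the composite monad $\LL\DD\TT$ follows.
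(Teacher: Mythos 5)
Your proposal is correct and follows essentially the same route as the paper: reduce everything to the Yang--Baxter hexagon (\ref{YB}) via \cite{Che11}, index $TDL$ by triples $(\X, W, \Gg_{\setminus W})$, and identify the two legs using the embedding (\ref{eq. GgW}) and the key identity $\Gg^W(\X)_{\setminus W} = \Gg_{\setminus W}(\X_{\setminus W})$ of (\ref{eq. deleted colims}). The degenerate cases you flag as the main obstacle are dispatched in the paper more quickly than you anticipate (when $W = V(\X)$ one has $\X_{\setminus W} \cong (\shortmid)$, the graph of graphs is trivial, and both legs factor through $S_\S$), and your detour through \cref{lem. terminal X} is unnecessary since the summands agree on the nose.
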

\begin{proof}
By \cite[Theorem~1.6]{Che11}, we must check that the diagram (\ref{YB}) commutes for $\lambda_{ \DD \TT}, \lambda_{ \LL \TT}$, and $\lambda_{ \LL \DD }$:

For all graphical species $S$ in $\E$ and all finite sets $X$, 
\[
TDLS_X  = \mathrm{colim}_{ \X \in X \Griso} \coprod_{W \subset (V_0 \amalg V_2)(\X)} \mathrm{lim}_{ \Gg_{\setminus W}\in \mathsf{Cor}_\boxtimes^{\scriptscriptstyle{(\X_{\setminus W})}}} S(\Gg_{\setminus W}(\X_{\setminus W})) \] %by the previous discussion. 
is indexed by triples 
%So, let be given a triple 
$(\X, W, \Gg _{\setminus W})$, where $\X$ is a connected $X$-graph, $W \subset V_2(\X)$ (or $\X \cong \C_\nul$ and $W= V(\C_\nul)$) and $\Gg_{\setminus W} $ is a non-degenerate $\X_{\setminus W}$-shaped graph of graphs such that $\Gg_{\setminus W} (b)$ is a disjoint union of corollas for each $(\C_{X_b}, b) \in \elG[\X_{\setminus W}]$.

Observe first that, if $ W = V(\X)$, then either $\X \cong \C_\nul$ or $\X \cong \Wm$ in $\nul \Griso$, or $\X \cong \Lk $ in $ \two \Griso$. In each case $\X_{\setminus W} \cong (\shortmid)$ and hence $\Gg_{\setminus W}$ is the trivial graph of graphs $id \mapsto (\shortmid)$. In particular, the maps $S(\Gg_{\setminus W} (\X_{\setminus W})) \to TDLS_X \to LDTS_X$ induced by each path $TDL \Rightarrow  LDT$ in (\ref{YB}) coincide, since both factor through $S_\S$. 

Assume therefore that $W \neq V(\X)$. The two paths in (\ref{YB}) are then described by the following data:

\begin{description}
	\item[Top]
		\[\small{\xymatrix@C = .8cm{
			&	{	\begin{array}{c}
				(\X_{\setminus W}, \Gg_{\W}),\\ \X_{\setminus W}\in X \Griso
				\end{array}}
			\ar@{..>}[rrr]^-{\tiny{\text{ take colimit of } \Gg_{\setminus W}}}\ar@{..>}[d]&&& 	{	\begin{array}{c}
			\Gg_{\setminus W}(\X_{\setminus W})\\ \in X \Grisok
				\end{array}}	\ar@/^1.0pc/@{..>}[rd]^-{L\eta^{\DD}T}\ar@{..>}[d]&\\
		{	\begin{array}{c}
	(\X, W,  \Gg_{\setminus W}),\\ \X \in X \Griso
			\end{array}}\ar@/^1.0pc/@{..>}[ur]^-{\tiny{\text{delete } W \text{ in } \X}}\ar@{..>}[d] &S(\Gg_{\setminus W}(\X_{\setminus W}))\ar[d]\ar[rrr]&&&
	%	\mathrm{colim}_{ \X' \in X \Grisok} S(\X')
		S(\Gg_{\setminus W}(\X_{\setminus W}))\ar[d]\ar@/^1.0pc/[rd]& {	\begin{array}{c}
			\Gg_{\setminus W}(\X_{\setminus W}) \\  \in X \Grisok
			\end{array}} \ar@{..>}[d]
			\\
		 S(\Gg_{\setminus W}(\X_{\setminus W}))\ar[d]	\ar@/^1.0pc/[ru]& DTLS_X\ar[rrr]_-{D\lambda_{\LL\TT}S} &&& DLTS_X \ar@/^1.0pc/[rd]_-{\lambda_{\LL\DD}TS}& S(\Gg_{\setminus W}(\X_{\setminus W}))\ar[d]\\%\mathrm{colim}_{ \X' \in X \Grisok} S(\X')\ar[d]\\
			TDLS_X	\ar@/^1.0pc/[ur]_-{\lambda_{ \DD \TT}LS} &&&&  & LDTS_X}}\]
where, in each case, we evaluate $S(\Gg_{\setminus W}(\X_{\setminus W}))$ and take the appropriate colimit of such objects.
	\item[Bottom]
The bottom path of (\ref{YB}) gives
	\[ \small{\xymatrix@C = .8cm{ {	\begin{array}{c}
					(\X, W,  \Gg_{\setminus W}) ,\\ \X\in X \Griso
				\end{array}}
	%(\X, W,  \Gg_{\setminus W})
	\ar@{..>}[d]	\ar@/_1.0pc/@{..>}[dr]^-{\tiny{\text{ forget } W}}% \\{\small{\text{ eval: }}}
&&&&&{	\begin{array}{c}
\Gg^W(\X)_{\setminus W} \\ \in X \Grisok
	\end{array}} 
 \ar@{..>}[d] \\
		S( \Gg_{\setminus W}(\X_{\setminus W}))\ar[d]	\ar@/_1.0pc/[dr]&   {	\begin{array}{c}
			(\X, \Gg^{W}, W)  ,\\ \X\in X \Griso
			\end{array}}  \ar@{..>}[d]  \ar@{..>}[rrr]^-{\tiny{\text{ take colimit of } \Gg^W}}  &&& {	\begin{array}{c}
		(\Gg^W(\X), W)  ,\\ \Gg^W(\X)\in X \Grisok
			\end{array}}  \ar@/_1.0pc/@{..>}[ru]^-{\tiny{\text{ delete } W \text{ in } \Gg^W(\X)}} \ar@{..>}[d]&
	S(\Gg^W(\X)_{\setminus W})\ar[d]	\\
	TDLS_X\ar@/_1.0pc/[dr]_{T \lambda_{\DD\LL}S} & S(\Gg^W(\X)_{\setminus W})\ar[d] \ar[rrr]&&&S(\Gg^W(\X)_{\setminus W})\ar@/_1.0pc/[ru]\ar[d]&LDTS_X \\
	& TLDS_X\ar[rrr]_-{\lambda_{\LL\TT}DS}&&& LTDS_X \ar@/_1.0pc/[ru]_-{L \lambda_{\DD\TT} S}&}}\]
where $\Gg^W$ is the non-degenerate $\X$-shaped graph of graphs in $\mathsf{Cor}_\boxtimes^{(\X)}$ obtained from $\Gg_{/W}$ as in (\ref{eq. GgW}).
\end{description}

So, $ \Gg_{\setminus W}(\X_{\setminus W}) = \Gg^W(\X)_{\setminus W}$ by (\ref{eq. deleted colims}). Hence, the diagram (\ref{YB}) commutes, and $(\lambda_{\LL\DD}, \lambda_{\LL\TT}, \lambda_{\DD\TT})$ describe an iterated distributive law for $\LL, \DD$ and $\TT$ on $\GSE$.
\end{proof}

It therefore remains to prove:
\begin{thm}\label{iterated law}
The EM category of algebras for the composite monad $\LL \DD\TT$ on $\GSE$ is isomorphic to the category $\COE$ of circuit operads in $\E$. \end{thm}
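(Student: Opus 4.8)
The plan is to leverage the general theory of composite monads via iterated distributive laws, combined with the intrinsic characterisations of algebras for each constituent monad that have already been established. By \cref{prop. iterated law}, the triple $(\lambda_{\LL\DD}, \lambda_{\LL\TT}, \lambda_{\DD\TT})$ furnishes a genuine iterated distributive law, so the composite $\LL\DD\TT$ is a monad on $\GSE$, and by the general theory of \cite{Che11, Bec69} an algebra for $\LL\DD\TT$ is equivalently an object of $\GSE$ carrying compatible $\LL$-, $\DD$- and $\TT$-algebra structures, where ``compatible'' means the structure maps intertwine according to the three distributive laws. The strategy is therefore to unwind this description: an $\LL\DD\TT$-algebra is a graphical species $S$ equipped with an external product $\boxtimes$ (from the $\LL$-algebra structure, by \cref{lem. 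L free monoid}), a unit-like morphism $\epsilon$ together with a contracted unit $o$ (from the $\DD$-algebra structure, by \cref{lem: elp presheaves}), and a contraction $\zeta$ with multiplication $\diamond$ (from the $\TT$-algebra structure, by \cref{prop. Talg}), all satisfying the coherences imposed by the distributive laws.

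First I would exploit the factorisation $\DD\TT$ already analysed: by \cref{CSM monad DT}, the EM category of $\DD\TT$-algebras is $\CSM$, the category of (unital) modular operads. Since $\LL\DD\TT$ is the composite of $\LL$ with $\DD\TT$ via the distributive law $\lambda_{\LL(\DD\TT)}\colon (DT)L \Rightarrow L(DT)$ guaranteed by \cref{prop. iterated law}, the classical theory \cite{Bec69} gives a lifted monad $\widetilde\LL$ on $\GSE^{\DD\TT} = \CSME$ whose EM category coincides with $\GSE^{\LL\DD\TT}$. Thus an $\LL\DD\TT$-algebra is precisely a modular operad $(S,\diamond,\zeta,\epsilon)$ together with an external product $\boxtimes$ that is compatible — via $\lambda_{\LL\DD}$ and $\lambda_{\LL\TT}$ — with the modular multiplication, contraction and unit. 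The content of the theorem is then that this is exactly the data of a circuit operad: the external product $\boxtimes$ and contraction $\zeta$ satisfy (C1)--(C3) and the unit $\epsilon$ satisfies the circuit operad unit axiom (\ref{eq. conn unit}), and conversely that the derived multiplication $\diamond$ of (\ref{eq. forget mult}) gives back the modular operad structure.

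Concretely I would argue in two directions. For the forward direction, given an $\LL\DD\TT$-algebra I would verify that $(S,\boxtimes,\zeta)$ satisfies (C1)--(C3) by the same graph-of-graphs colimit technique used in the proof of \cref{unpointed modular operad}: each axiom is witnessed by exhibiting two $\G$-shaped graphs of graphs with the same colimit in $\Gret$, and the $\LL\DD\TT$-algebra axioms force the corresponding structure maps to agree. The unit axiom follows because the $\DD$-component contributes exactly the morphisms $S(u^k)$ and $S(\kappa^m)$ recorded in (\ref{eq. unit graph definitions}), and the compatibility with $\boxtimes$ and $\zeta$ — encoded by the Yang--Baxter diagram (\ref{YB}) — realises the composite in (\ref{eq. conn unit}) as the identity. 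For the converse, given a circuit operad $(S,\boxtimes,\zeta,\epsilon)$ I would define the external product from $\boxtimes$ (giving the $\LL$-algebra structure via \cref{lem. L free monoid}), the contraction and induced multiplication from $\zeta$ and $\diamond$ (giving the $\DD\TT$-algebra structure via \cref{CSM monad DT}), and check these assemble to a single $\LL\DD\TT$-algebra structure map — this amounts to checking that the structure maps $h_\X \colon S(\X) \to S_X$ of (\ref{eq. prod eq}), now available for all admissible (not necessarily connected) $X$-graphs $\X$, are coherent with the iterated monad multiplication, which follows from (C1)--(C3) applied componentwise across the connected components of $\X$.

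The main obstacle I anticipate is the interaction between the external product and the \emph{units} — precisely the subtlety that motivated introducing the iterated law in the first place, as flagged at the start of \cref{sec. monad CO}: the naive extension of $\lambda_{\DD\TT}$ fails because $DS_{\two} \cong S_\two \amalg S_\S$ does not admit the required map on $\boxtimes$-products. The delicate point is therefore verifying that the contracted unit $o$ and the behaviour of vertex deletion on wheel and line graphs $\Wm, \Lk$ (as catalogued in \cref{wheel deletion} and \cref{ex. admissible deletion}) are correctly accounted for when $\boxtimes$ interacts with contraction at edges adjacent to formal units; this is exactly where the commutativity of (\ref{YB}), established in \cref{prop. iterated law}, does the essential work, and I would lean on the identity (\ref{eq. deleted colims}) relating $\Gg^W(\X)_{\setminus W}$ and $\Gg_{\setminus W}(\X_{\setminus W})$ to ensure the two ways of forming a contracted-unit-decorated product agree. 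Once this compatibility is isolated, the remaining verifications are the routine graph-of-graphs colimit arguments already rehearsed in \cref{unpointed modular operad} and \cref{prop. Talg}, and the isomorphism of categories — being the identity on underlying graphical species and preserving morphisms that respect all the structure — follows formally.
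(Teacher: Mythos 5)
Your proposal is correct, and its skeleton coincides with the paper's: decompose the $\LL\DD\TT$-algebra structure via the theory of \cite{Bec69, Che11}, identify the $\DD\TT$-part with a modular operad using \cref{CSM monad DT}, identify the $\LL$-part with an external product using \cref{lem. L free monoid}, and assemble the converse as the composite $h^\LL \circ L h^{\DD\TT}$, exactly as the paper does. The genuine difference is which path through the cube (\ref{eq. cube}) you take in the forward direction. You read an $\LL\DD\TT$-algebra as a $\DD\TT$-algebra carrying a lifted $\widetilde\LL$-structure, and as a consequence you must re-derive the axioms (C1)--(C3) by re-running the graph-of-graphs colimit arguments of \cref{unpointed modular operad}. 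The paper instead restricts the structure map along $L\eta^{\DD}T \colon LT \Rightarrow LDT$ to obtain an $\LL\TT$-algebra structure, and then invokes \cref{lem. LT is Tk} --- the composite $\LL\TT$ is literally the monad $\TTk$ --- so that \cref{unpointed modular operad} delivers $(\boxtimes, \zeta)$ satisfying (C1)--(C3) with no further computation; the only thing left to check is the unit axiom (\ref{eq. conn unit}), which follows from the relation $\diamond = \zeta \circ \boxtimes$ together with the modular operad unit supplied by the $\DD\TT$-restriction. Your route is valid but repeats work the paper gets for free, and one point of emphasis is slightly misplaced: the Yang--Baxter diagram (\ref{YB}) and the identity (\ref{eq. deleted colims}) are what guarantee that the composite monad \emph{exists} (\cref{prop. iterated law}); once that is granted, the circuit operad unit axiom is an immediate consequence of $\diamond = \zeta\boxtimes$ rather than a further application of (\ref{YB}).
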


\begin{proof}
By \cite{Bec69}, for a composite monad is equipped with an algebra structure for each of the component monads.  Hence, an algebra $(A, h)$ for the composite monad $\LL \DD \TT$ on $\GSE $ has an $\LL \TT$-algebra structure $h^{\LL\TT} \colon LTA \to A$, and also a $\DD \TT$ algebra structure $h^{\DD\TT} \colon DTA \to A$. Each structure is given by the corresponding restriction $LTA \to LDT A \xrightarrow{h} A$, respectively $DTA \to LDT A \xrightarrow{h} A$. Therefore, by \cref{lem. LT is Tk}, $h$ induces an external product $\boxtimes$, and contraction $\zeta$ on $A$ such that $(A, \boxtimes, \zeta)$ is a non-unital circuit operad. And, by \cref{CSM monad DT}, $h$ induces a unital multiplication $(\diamond, \epsilon)$ on $A$, such that 
\[\diamond_{X,Y}^{x \ddagger y}  = \zeta_{X \amalg Y}^{x \ddagger y}\boxtimes_{X, Y}\colon (A_X \times A_Y)^{x\ddagger y } \to A_{X \amalg Y \setminus \{x, y\}}.\] Hence, 
\[  \xymatrix{ A_X \ar[rr]^-{ (id \times ch_x)\circ \Delta}&&( A_X \times A_\S )^{x \ddagger 2}\ar[rr] ^-{ id \times \epsilon} &&(A_X \times A_\two)^{x \ddagger 2} \ar[rr]^-{\zeta^{x \ddagger 2}_{X \amalg \two}\circ \boxtimes}&&
	A_X }\] is the identity on $A_X$. And therefore, $(A, h))$ has a canonical circuit operad structure.

Conversely, a circuit operad $(S, \boxtimes, \zeta, \epsilon)$ in $\E$ has an underlying modular operad structure $(S, \diamond, \zeta, \epsilon)$, where $\diamond_{X,Y}^{x \ddagger y}  = \zeta_{X \amalg Y}^{x \ddagger y}\boxtimes_{X, Y}$ and hence $(S, \boxtimes, \zeta, \epsilon)$ defines a $\DD \TT$-algebra structure $h^{\DD\TT} \colon DTS \to S$ by \cref{CSM monad DT}. By \cref{lem. L free monoid}, $(S, \boxtimes)$ has an $\LL$-algebra structure where $h^\LL \colon LS \to S$ is given by $\boxtimes_{X,Y} \colon S_X \times S_Y$ on $S_X \times S_Y$. It follows immediately from the circuit operad axioms that $h^\LL \circ L h^{\DD\TT} \colon LDT S \to S$ gives $S$ the structure of an $\LL\DD \TT$-algebra.
                                                                   
These maps are functorial by construction and extend to an equivalence $\COE \simeq \GSE^{\LL\DD\TT}$.\end{proof}

\begin{thm}\label{thm. CO CA}
For each palette $(\CCC, \omega)$, there is an equivalence of the categories $\CCA \simeq \CCO$ of $(\CCC, \omega)$-coloured circuit algebras and circuit operads in $\Set$. This extends to an equivalence  $\CA\simeq \CO$ of the categories of all circuit algebras and all circuit operads in $\Set$. 
\end{thm}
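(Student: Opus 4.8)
The goal is to establish the equivalence $\CCA \simeq \CCO$ for each palette $(\CCC, \omega)$, and then to bootstrap this to the global equivalence $\CA \simeq \CO$ by assembling the palette-indexed equivalences into an equivalence of Grothendieck constructions. The key technical input, already in hand, is \cref{iterated law}, which identifies $\COE[\Set]$ with the EM category $\GS^{\LL\DD\TT}$ of algebras for the composite monad $\LL\DD\TT$. The first step is therefore to fix a palette $(\CCC, \omega)$ and restrict attention to the slice over the terminal $(\CCC, \omega)$-coloured graphical species $\CComm{}$ (see \cref{ex. terminal coloured species}); a $(\CCC, \omega)$-coloured circuit operad is exactly an $\LL\DD\TT$-algebra whose underlying graphical species has $S_\S = \CCC$ with involution $\omega$ and maps compatibly to $\CComm{}$. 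So the palette-fixed statement $\CCA \simeq \CCO$ amounts to comparing the colour-preserving algebras with the circuit algebras of \cref{def CA}.

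The heart of the argument is to exhibit mutually inverse functors between $\CCA$ and $\CCO$. In one direction I already have the canonical faithful functor $\CA \to \CO$ of \cref{prop. CA CO}, which sends a circuit algebra to its underlying graded monoid with contraction, packaged as a circuit operad via \cref{prop. product and contraction prop}. What remains is to construct a functor $\CCO \to \CCA$ and to check both composites are naturally isomorphic to identities. Given a $(\CCC,\omega)$-coloured circuit operad $(S, \boxtimes, \zeta, \epsilon)$, I would reconstruct the circuit algebra structure --- equivalently, by \cref{thm. lax functor ca}, a symmetric monoidal functor $\CBD \to \Set$ --- by using the generators-and-relations description of $\CBD$. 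Concretely, by \cref{rmk. generators} the category $\BD$ (and its coloured version) is generated under horizontal and vertical composition by $id_1$, $\cup$, $\cap$, and the symmetry $\sigma_{\two}$; the external product $\boxtimes$ supplies horizontal composition, the contraction $\zeta$ together with $\boxtimes$ supplies the multiplication $\diamond$ via (\ref{eq. forget mult}), the unit-like morphism $\epsilon$ supplies the image of $\cap$, and the evaluation/coevaluation morphisms of the compact closed structure on $\CBD$ are built from these. The axioms (C1)--(C3) and the unit axiom for circuit operads translate precisely into the functoriality and symmetric-monoidality constraints, so these data assemble into a well-defined functor $\alg \colon \CBD \to \Set$.

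The main obstacle, and the step I would treat most carefully, is verifying that this assignment is genuinely inverse to $\CA \to \CO$ on the nose --- that is, that passing from a circuit algebra to its circuit operad and back recovers the original symmetric monoidal functor, and conversely. The subtlety is the one flagged throughout the paper as the \emph{problem of loops}: the composite $\cup \circ \cap = \bigcirc \in \BD(0,0)$ is not an open Brauer diagram, so the scalar/unit-trace data must be matched correctly, and one must confirm that the unit $\epsilon$ of the circuit operad reproduces exactly the distinguished morphism $\epsilon_{\alg}$ of \cref{ex. unit in Set CA} induced by $\cap$. I expect this to reduce to the uniqueness of units for the modular-operadic multiplication (already used in \cref{cor. inclusion CO nuCO}) together with the triangle identities (\ref{eq. n triangle}), which guarantee that the $\cup$ and $\cap$ data determined by $\zeta$ and $\epsilon$ satisfy the compact closed coherence. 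Once the palette-fixed equivalence is established and shown to be natural in $(\CCC,\omega)$ --- pullback along a palette morphism $\phi$ commuting with both constructions --- the global statement follows formally: $\CA$ and $\CO$ are the Grothendieck constructions of the $\Cat$-valued presheaves $(\CCC,\omega) \mapsto \CCA$ and $(\CCC,\omega)\mapsto \CCO$ on $\pal$, and a pointwise equivalence of $\Cat$-valued presheaves induces an equivalence of their Grothendieck constructions.
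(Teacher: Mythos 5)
Your overall architecture---the faithful functor of \cref{prop. CA CO} in one direction, an explicit inverse construction, then assembly over $\pal$ via Grothendieck constructions---matches the paper's, and your handling of the global statement is fine. The genuine gap is in the inverse construction $\CCO \to \CCA$. You propose to define the symmetric lax monoidal functor $\alg\colon \CBD \to \Set$ by sending the generators $id_1$, $\sigma_{\two}$, $\cup$, $\cap$ of \cref{rmk. generators} to identities, the $\Sigma$-action, the contraction $\zeta$ and the unit $\epsilon$ respectively, and then assert that axioms (C1)--(C3) and (e1) make this ``assemble into a well-defined functor''. But \cref{rmk. generators} records only \emph{generation}, not a presentation: to define a functor on generators you need a complete set of relations and a verification that your assignments respect every relation. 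That completeness claim---that the circuit operad axioms are a complete axiomatisation of the wiring-diagram operad---is precisely the mathematical content of the theorem, and nothing in your proposal proves it. Moreover, the difficulty cannot be repaired simply by importing a known presentation of $\BD$ as a symmetric monoidal category (or its universal property as the free compact closed category on a self-dual object), because a circuit algebra is a \emph{lax} monoidal functor, equivalently an algebra over the operad $\CWD = \op^{{\CBD}}$ (\cref{thm. lax functor ca}): for a lax functor, $\alg(f \oplus g)$ is not determined by $\alg(f)$, $\alg(g)$ and the laxity maps, so prescribing images of generating morphisms does not determine $\alg$ on all of $\CBD$. What is actually needed is either a presentation of the \emph{operad} $\CWD$, or some other device guaranteeing that the value assigned to an arbitrary wiring diagram is independent of how it is cut into generators.

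The paper circumvents exactly this by never decomposing wiring diagrams. Using \cref{iterated law}, a circuit operad is an algebra $(A,h)$ for $\LL\DD\TT$ on $\GS$; given a wiring diagram $(\overline f, \lambda)$, the open part of its underlying Brauer diagram is realised as a single graph $\mathring{\X}(\overline f)$ (a canonical ``normal form''), a tuple of inputs determines an element of $A(\mathring{\X})$, and the structure map $h$ produces the output in one shot, while each closed component contributes a contracted unit $o_{\tilde c} = \zeta\epsilon_c$ through a wheel graph $\W$. Well-definedness is then automatic, and compatibility with operadic composition follows from the monad algebra axioms and the distributive law $\lambda_{\DD\TT}$, because composition of wiring diagrams is graph substitution. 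Your instinct about the problem of loops---matching $\cup \circ \cap = \bigcirc$ with $o = \zeta \circ \epsilon$---is correct and agrees with the paper's treatment, but it addresses only one coherence issue among many; without the graph-theoretic argument (or an operadic presentation with full relation-checking), the central well-definedness and composition-compatibility claims in your second and third paragraphs remain unproven.
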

\begin{proof}
By \cref{prop. CA CO}, there is a faithful functor $\CA \to \CO$, that restricts to a functor $ \CCA \to \CCO$ for all palettes $(\CCC, \omega)$. 

For the converse, let $(A, h)$ be an algebra for $\LL \DD\TT$, with $(A_\S, A_{\tau}) = (\CCC, \omega)$, and let $(A, \boxtimes, \zeta, \epsilon)$ be the underlying circuit operad, with contracted unit $ o = \zeta \circ \epsilon \colon \CCC \to A_{\nul}$.  

Let $\ddd  = (d_1, \dots, d_n) \in  \CCC^{\nn}$, and, for $k \in \N$, and each $1 \leq i \leq k$, $\ccc_i = (c_{i, 1}, \dots, c_{i,m_i}) \in \CCC^{\mm_i}$, be elements of $\listm \CCC$, and let $(\overline f, \lambda)$ in $\CWD(\ccc_1, \dots, \ccc_k;\ddd)$ be a wiring diagram with underlying uncoloured Brauer diagram $f = (\tau,\kcl) \in \BD(\sum_{i = 1}^k m_i, n)$. 

By (\ref{eq. wiring graph}), the open part $(\tau, 0)$ of $\overline f$ describes an $\nn$-graph $\mathring \X = \mathring \X (\overline f)$ with $k$ ordered vertices $(v_i)_{ i  = 1}^k$ and a distinguished \'etale neighbourhood $(\C_{\mm_i}, b_i) \in \elG[\mathring \X]$ of each $v_i$. 
Moreover, the colouring $\lambda$ of $ \overline f$ describes an involution-preserving map $\colop \colon E (\mathring \X) \to \CCC$, such $j \mapsto d_j$ for each port $1 \leq j \leq n$ of $\mathring \X$, and $b_i(l_i) \mapsto c_{i,l_i}$ for each port $1 \leq l_i \leq m_i$ of $\C_{\mm_i}$, $1 \leq i \leq k$. 

So, for any $k$-tuple $(\phi_1, \dots, \phi_k) \in A_{\ccc_1} \times \dots \times A_{\ccc_k}$ there is an element $\alpha \in A(\mathring \X)$ defined by $ A(b_i) (\alpha)= \phi_i \in A(\C_{\mm_i})$ for each $v_i$, and $ A(ch_e) (\alpha)= \colop (e) $ for each edge $e \in E(\mathring \X)$. Since $LTA\subset LDTA$, the assignment 
$ (\phi_1, \dots, \phi_k )\mapsto \alpha \in A(\mathring \X) $ extends to a map 
\[ A( \fopen, \lambda )\colon A_{\ccc_1} \times \dots \times A_{\ccc_k} \to A (\mathring \X) \to LTA_{\nn} \to LDTA_{\nn} \xrightarrow {h} A_{\nn}.\]

Moreover, the restriction of $\colcl \colon \pi_0(f) \to \widetilde{\CCC}$ to the set of $\kcl$ closed components of $f$ defines a $\kcl$-tuple $(\tilde c^1, \dots, \tilde c^{\kcl})$ in $ {\widetilde \CCC}$, and hence an element 
\[ o(\kcl, \colcl) \defeq  o_{\tilde c^1} \boxtimes \dots \boxtimes o_{\tilde c^{\kcl}}  \in A_{\nul}.\] 

Hence, there is a morphism of sets $A (\overline f, \lambda) \colon A_{\ccc_1}\times \dots \times A_{\ccc_k}\to A_{\ddd }$ given by 
\[ (\phi_1, \dots, \phi_k) \mapsto \left(A(\overline \fopen)(\phi_1, \dots, \phi_k)\right) \boxtimes   o(\kcl, \colcl)  \in A(\ddd).\]
Let $\X = \X(\overline f) \defeq \mathring \X \amalg \W \amalg \dots \amalg \W$ be the disjoint union of $ \mathring \X$ with $\kcl$ copies of the wheel graph $\W$. Since $o_{\tilde c} = h (A (\kappa) (c))$ for all $ c \in \CCC$,
\[o(\kcl, \colcl)  =  h \left (A(\kappa \amalg \dots \amalg \kappa ) (c^1 \boxtimes \dots \boxtimes c^{\kcl} )\right ) \text { with } A(\kappa \amalg \dots \amalg \kappa ) (c^1 \boxtimes \dots \boxtimes c^{\kcl} ) \in A (\W \amalg \dots \amalg \W).\]
Therefore $ A (\overline f, \lambda)$ is precisely the composite
\[  A_{\ccc_1} \times \dots \times A_{\ccc_k} \xrightarrow{A( \fopen, \lambda ) \amalg \left(\coprod_{i = 1}^\kcl \W(e_{c^i})\right) } A (\X) \longrightarrow LDT(A)_{\nn} \xrightarrow {h} A_{\nn}\] that restricts to $ A (\fopen, \lambda)$ on $A (\mathring \X) $.

The distinguished element $\cup_c \in \CWD( (\omega c,  c); \varnothing_{\CCC})$ has $\X (\cup_c) = \W$, and induces a morphism $A({\cup_c}) \colon A_{ (\omega c, c)} \to A_\nul$ by 
\[ A_{\cup_c} \colon A_{ (\omega c, c)} \longrightarrow  A(\W) \longrightarrow LDT A(\W) \xrightarrow {h} A_\nul.\]
Slightly more care is needed for $\cap_c \in \CWD(-; (c, \omega c))$, with $ \X (\cap_c) = (\shortmid)$. Then, the morphism $ A(\cap_c) \colon I \to A _{ (c, \omega c)}$ is induced by $ c \in A(\shortmid) \xrightarrow{\epsilon_c} A _\two$.

If $(\overline f, \lambda) \in \CWD (\ccc_1, \dots, \ccc_k; \ddd)$ and $\mathring \X$ are as before, let $(\overline f^i, \lambda^i)\in \CWD (\bbb_{i, 1}, \dots, \bbb_{i, k_i};\ccc_i)$ with $\bbb_{i,j} = (b_{i,j,1}, \dots, b_{i, j, K_{i,j} }) \in \CCC^{K_{i,j}}$, for $1 \leq i \leq k $, $1 \leq j \leq k_i$. 

For each $ 1\leq i \leq k$, let
\[ A (\overline f^i, \lambda^i) \colon A_{\bbb_{i,1} }\times \dots \times A_{\bbb_{i,k_i}} \to A (\X^i) \to LT(A)_{\mm_i} \to LDT(A)_{\mm_i} \xrightarrow {h} A_{\mm_i}\] be the induced morphism with image in $A_{\ccc_i} \subset  A_{\mm_i}$. 
	Then the collection of composable wiring diagrams $\left((\overline f, \lambda), (\overline f^i, \lambda^i)_i\right)$, describes an $\X$-shaped graph of graphs $\Gg\colon \elG[\X] \to \Gret$, $(\C_{\mm_i}, b^i) \mapsto \X^i = \X(\overline f^i)$ (that is non-degenerate if and only if none of the graphs $\X^i $ have stick components). By the monad algebra axioms, and composing with $\epsilon$ where necessary, $ \Gg$ induces an element of $(LDT)^2A_{\ddd}$. Hence there are morphisms 
	\[A \left(\gamma( \overline f, \lambda),( \overline f^i, \lambda^i)_{ i}\right) \colon 
	\prod_{i = 1}^k \prod_{ j = 1}^{k_i}A_{\bbb_{i,j}} \longrightarrow \prod_{i = 1}^k A _{\ccc_i} \longrightarrow A_{\ddd} .\]
	In particular, the composition $\bigcirc_{\tilde c} = \cup_c \circ \cap_{\omega c}$ corresponds to the morphism $I \to A_{(c, \omega c)} \to A_\nul$ given by $o_{\tilde c }  = \zeta \epsilon_c \colon I \to A_{(c, \omega c)} \to A_\nul$. 
	%\warn{Look here at the use of $\odot$}
	 
	The monad algebra axioms, and the definition of the distributive law $\lambda_{\DD\TT}\colon  	TD \Rightarrow DT$,  imply that this defines a circuit algebra structure on the collection $(A_{\ccc})_{\ccc \in \CCC}$.

It is straightforward to verify that this assignment is natural in $(A, h)$, and hence extends to a functor $\CO \to \CA$ such that, moreover, the pair of functors $\CA \leftrightarrows \CO$ in fact define an (adjoint) equivalence of categories. 
\end{proof}

 \section{A nerve theorem for circuit operads}\label{sec. nerve}

This section recalls the nerve theorem for $\Set$-valued modular operads \cite[Theorem~8.2]{Ray20}, and gives a proof, by the same methods and using \cref{thm. CO CA}, of an analogous result for $\Set$-valued circuit algebras.

\subsection{Overview of the method}
Let $\Klgr$ be the category, whose objects are indexed by connected graphs, obtained in bo-ff factorisation of the functor $\Gr \to \GS \to \CSM$. Let $\Grp \hookrightarrow \GSp$ be the full subcategory of connected graphs and pointed \'etale morphisms from \cref{ss. pointed}, that is obtained in the bo-ff factorisation of the functor $\Gr \to \GS \to \GSp$, and, as in \cref{ssec. Similar connected X-graphs}, let $\TTp$ be the monad on $\GSp$ whose EM category of algebras is equivalent to the category $\CSM$ of modular operads (in $\Set$).

So, there is a commuting diagram of functors
 
\begin{equation} \label{eq: MO nerve big picture}
\xymatrix{ 
	&&\Klgr\ar@{^{(}->} [rr]_-{\text{f.f.}}						&& \CSM\ar@<2pt>[d]^-{\text{forget}^{\TTp}} \ar [rr]^-N				&&\pr{\Klgr}\ar[d]^{j^*}	\\
	\fisinvp \ar@{^{(}->} [rr]	^-{\text{}}_-{\text{ f.f.}}		&& \Grp \ar@{^{(}->} [rr] ^-{\text{}}_-{\text{ f.f.}} \ar[u]^{j}_{\text{b.o.}}			&& \GSp \ar@<2pt>[d]^-{\text{forget}^{\DD}} \ar@{^{(}->} [rr]^-{\text{   }}_-{\text{f.f.}}	 \ar@<2pt>[u]^-{\text{free}^{\TTp}}	&& \pr{\Grp}\ar[d] \\
	\fisinv\ar@{^{(}->} [rr]^-{\text{}}_-{\text{ f.f.}}\ar[urr]_-{\text{}}\ar[u]^{\text{b.o.}}	&& \Gr \ar@{^{(}->} [rr]^-{\text{}}_-{\text{ f.f.}}	 \ar[u]_{\text{b.o.}}	&& \GS \ar@{^{(}->} [rr]_-{\text{   }}_-{\text{f.f.}}	 \ar@<2pt>[u]^-{\text{free}^{\DD}}		&& \pr{\Gr}.}
\end{equation}

In \cite[Section~8]{Ray20}, it was shown that that, though the modular operad monad $\DD\TT$ on $\GS$ does not, itself, satisfy the conditions of \cite[Sections~1~\&~2]{BMW12}, the monad $\TTp$ on $\GSp$ does, whereby the following theorem holds: 

\begin{thm}{\cite[Theorem~8.2]{Ray20}}\label{MO nerve}
The monad $\TTp$, and the category $\Grp \hookrightarrow \GSp$, satisfy the conditions of \cite{BMW12} ($\TTp$ has arities $\Grp$). Hence, the nerve $N\colon \CSM \to \pr{\Klgr} $ is fully faithful. 
Its essential image consists of those presheaves $P$ on $\Klgr$ such that 
\[P(\G)  = \mathrm{lim}_{(\C,b) \in \elG} P(\C) \text{ for all graphs } \G.\]

\end{thm}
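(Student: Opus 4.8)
The plan is to verify directly that the monad $\TTp$, together with the dense subcategory $\Grp \hookrightarrow \GSp$, satisfies the hypotheses of the abstract nerve machinery of \cite[Sections~1~\&~2]{BMW12}, and then to read off both assertions of the theorem from \emph{loc.\ cit.}, transported along the identification $\GSp^{\TTp} \cong \CSM$ of \cref{CSM monad DT}.

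First I would check density. By \cref{lem: elp presheaves} one has $\GSp \cong \pr{\fisinvp}$, so $\fisinvp$ is dense in $\GSp$ via the Yoneda embedding; since the corollas $\CX$ and the stick $(\shortmid)$ are connected graphs, $\fisinvp$ is a full subcategory of $\Grp$, and a full subcategory containing a dense full subcategory is itself dense. Equivalently, this follows from the full faithfulness of $\GSp \hookrightarrow \pr{\Gretp}$ recorded in \cref{ss. pointed} together with the canonical presentation $\G = \mathrm{colim}_{(\C,b)\in \elG}\C$ of \cref{lem: essential cover}, whose pointed refinement $\elG \hookrightarrow \elpG$ is final. Density is what allows the arity category $\Klgr$ to be formed as the full subcategory of the Kleisli category of $\TTp$ on the objects of $\Grp$, and it makes the density nerve $N_{\Grp}\colon \GSp \to \pr{\Grp}$ fully faithful.

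The substance of the theorem — and the step I expect to be the main obstacle — is the \emph{arity condition}: that $N_{\Grp}\circ \TTp$ preserves the canonical colimits $\mathrm{colim}_{(\C,b)\in \elpG}\C = \G$. Concretely, for each connected graph $\G$ one must show that $N_{\Grp}(\TTp\,\yetp\G)$ is the colimit over $\elpG$ of the values $N_{\Grp}(\TTp\,\yetp\C)$ on the essential subgraphs $\C$ of $\G$. Here the graph-of-graphs calculus of \cref{colimit exists} and \cref{lem. Tk mult} does the work: an element of $\TTp$ on a corolla or stick is a connected graph, an element over $\G$ assembles such local data compatibly along inner edges, and the colimit of a connected $\G$-shaped graph of connected graphs is again connected by \cite[Corollary~5.19]{Ray20}, so the construction never leaves $\Grp$. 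The reason for working with $\TTp$ on $\GSp$, rather than with $\DD\TT$ on $\GS$ directly, is precisely that pointing absorbs the combinatorics of units, bivalent vertices, and the wheel graphs $\Wm$ through the vertex-deletion and $\kappa^m$ morphisms of \cref{ss. pointed}; this is what repairs the failure of the arity condition caused by the problem of loops. The genuinely delicate bookkeeping is the handling of the two morphisms $\kappa,\tau\kappa\colon \W \to (\shortmid)$ and the compatibility of $\mu^{\TTp}$ with vertex deletion, which is where most of the care will go.

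Granting the arity condition, the main theorem of \cite[Sections~1~\&~2]{BMW12} yields that the induced nerve $N\colon \GSp^{\TTp} \to \pr{\Klgr}$ is fully faithful, and transporting along $\GSp^{\TTp}\cong\CSM$ (\cref{CSM monad DT}) gives the fully faithful nerve $N\colon \CSM \to \pr{\Klgr}$ induced by $\Klgr \hookrightarrow \CSM$. Finally, for the essential image I would invoke \cite[Section~4]{Web07}: once $\Grp$ provides arities for $\TTp$, a presheaf $P$ on $\Klgr$ lies in the image of $N$ if and only if its restriction $j^*P$ along $j\colon \Grp \to \Klgr$ lies in the image of $N_{\Grp}$, i.e.\ is a sheaf for the pointed \'etale topology. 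Unwinding this condition through the colimit presentation of \cref{lem: essential cover} then translates it into the explicit Segal formula $P(\G)=\mathrm{lim}_{(\C,b)\in\elG}P(\C)$ for all graphs $\G$, which is exactly the stated description of the essential image.
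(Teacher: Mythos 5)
Your overall architecture coincides with the paper's route (recalled there from \cite{Ray20}, and reproduced in its own proof of \cref{thm: CO nerve} for circuit operads): density of $\Grp$ in $\GSp$, an arity condition for $\TTp$, the nerve theorem of \cite{BMW12}, transport along $\GSp^{\TTp}\cong\CSM$ (\cref{CSM monad DT}), and extraction of the Segal formula via finality of $\elG\hookrightarrow\elpG$. The density and essential-image steps are fine. The gap is in the middle step, which you yourself call the substance of the theorem. Your proposed verification — local elements are connected graphs, they assemble along inner edges, colimits of connected graphs of connected graphs are connected by \cite[Corollary~5.19]{Ray20}, ``so the construction never leaves $\Grp$'' — is a statement about where the \emph{values} of $\Tp$ live, not about the arity condition, which is a statement about hom-sets: for every $\H\in\Grp$ the canonical map $\mathrm{colim}_{(\C,b)\in\elpG}\GSp(\yetp\H,\Tp\yetp\C)\to\GSp(\yetp\H,\Tp\yetp\G)$ must be bijective (and moreover this must hold for the canonical cocone of \emph{every} pointed graphical species $S_*$, not only the representables $\yetp\G$ to which you restrict). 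Preservation of connectedness addresses neither surjectivity nor, more seriously, the identifications needed for injectivity; note that the unpointed composite $\DD\TT$ on $\GS$ is equally assembled from connected graphs and sticks, yet the paper is explicit that it fails the conditions of \cite{BMW12} — so the property your argument isolates cannot be what makes the pointed case work.

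What actually has to be checked — and what \cite{Ray20} checks, and the present paper checks again for $\TTpk$ — is the criterion of \cite[Proposition~2.5]{BMW12}: the monad has arities if and only if the factorisation categories $\factcat$ of morphisms $\beta\in\GSp(\yetp\G,\Tp S_*)$, for arbitrary pointed graphical species $S_*$, are connected. Concretely, every $\beta$ factors as a generic morphism (a non-degenerate graph of graphs) followed by a free one, and any two such factorisations must be joined by a zigzag of comparison morphisms. This is precisely where the pointed structure earns its keep: the zigzags are built from $u$, $z$, $\kappa^m$ and vertex-deletion morphisms (cf.\ diagram (\ref{0 connected}) and \cref{lem: Klgr representatives}), none of which exist unpointedly; and connectedness is a condition that genuinely can fail — \cref{rmk: Gr no arities} exhibits a disconnected factorisation category showing that $\Grp$ does \emph{not} provide arities for the circuit operad monad $\TTpk$. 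So your plan should be reorganised around proving connectedness of these factorisation categories (the $\TTp$-analogue of \cref{connected}), with the $\kappa$ versus $\tau\kappa$ bookkeeping you correctly flag appearing inside that connectedness argument, rather than around preservation of connectedness of graphs.
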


The abstract nerve theory conditions of \cite{BMW12} are also not satisfied by the the circuit operad monad $\LL \DD \TT$ on $ \GS$, and the aim of this section is to modify the proof of \cref{MO nerve} 
to the circuit operad case. 

By (\ref{eq. cube}) there exists a monad $\widetilde \LL$ on $\GSp$, and 
a distributive law $\Tp \widetilde L \Rightarrow \Tpk$ for the monads $\widetilde \LL$ and $\TTp$, such that the EM category of algebras for the composite $\TTpk$ on $\GSp$ is canonically isomorphic to the EM category of algebras for $\LL \DD \TT$ on $\GS$, which itself is just the category $\CO\simeq \CA$ of circuit operads in $\Set$. 

The idea now is to show that $\TTpk$ together with a suitable subcategory of $\GSp$, satisfies the conditions of \cite[Sections~1~\&~2]{BMW12} and hence that the induced nerve is fully faithful. 

Let $\Klgrt$ be the category obtained in the bo-ff factorisation of $\Gret \to \GS \to \CO$. By \cref{ss. pointed} (see also  \cite[Section~7.3]{Ray20}), the full subcategory $\Gretp \subset \GSp$ of all graphs and pointed morphisms induces a fully faithful inclusion $\GS \hookrightarrow \pr{\Gretp}$. 

Hence, we may consider the following diagram of functors in which the left and middle squares commute, and the right squares commute up to natural isomorphism: 
\begin{equation} \label{eq: CO Gr diag picture}
\xymatrix{ 
	&&{\Klgrt}\ar@{^{(}->} [rr]_-{\text{f.f.}}						&& \CSM\ar@<2pt>[d]^-{\text{forget}^{\widetilde \LL\TTp}} \ar [rr]^-{N_{{\Klgrt}}	}			&&\pr{{\Klgrt}}\ar[d]^{j^*}	\\
	\fisinvp \ar@{^{(}->} [rr]	^-{\text{}}_-{\text{ f.f.}}		&& \Gretp \ar@{^{(}->} [rr] ^-{\text{}}_-{\text{ f.f.}} \ar[u]^{j}_{\text{b.o.}}			&& \GSp \ar@<2pt>[d]^-{\text{forget}^{\DD}} \ar@{^{(}->} [rr]^-{\text{   }}_-{\text{f.f.}}	 \ar@<2pt>[u]^-{\text{free}^{\widetilde \LL\TTp}}	&& \pr{\Gretp}\ar[d] \\
	\fisinv\ar@{^{(}->} [rr]^-{\text{}}_-{\text{ f.f.}}\ar[urr]_-{\text{dense}}\ar[u]^{\text{b.o.}}	&& \Gret \ar@{^{(}->} [rr]^-{\text{}}_-{\text{ f.f.}}	 \ar[u]_{\text{b.o.}}	&& \GS \ar@{^{(}->} [rr]_-{\text{   }}_-{\text{f.f.}}	 \ar@<2pt>[u]^-{\text{free}^{\DD}}		&& \pr{\Gr}.}
\end{equation}

The remainder of this paper is dedicated to proving the following theorem using a modification of the proof of \cref{MO nerve}  (\cite[Theorem~8.2]{Ray20}):

\begin{thm}\label{nerve theorem} 	\label{thm: CO nerve}The functor $N\colon\CO \to\pr{\Klgrt}$ is full and faithful. Its essential image consists of precisely those presheaves $P$ on $\Klgrt$ whose restriction to $\pr{\Klgr}$ are graphical species. In other words, 	
	\begin{equation}
	\label{eq. Segal nerve sec}P(\G)  = \mathrm{lim}_{(\C,b) \in \elG} P(\C) \text{ for all graphs } \G.
	\end{equation}
\end{thm}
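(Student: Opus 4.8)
The plan is to deduce \cref{thm: CO nerve} by combining the abstract nerve machinery of \cite{BMW12} with the iterated distributive law of \cref{prop. iterated law}, exactly mirroring the strategy for modular operads in \cref{MO nerve}. The key structural input is \cref{thm. CO CA} and \cref{iterated law}, which identify $\CO$ with the EM category of algebras for the composite monad $\LL\DD\TT$ on $\GS$. As explained in \cref{sec. nerve}, the classical theory of distributive laws \cite{Bec69} lifts this, via the distributive laws involving $\DD$, to a monad $\widetilde{\LL}\TTp$ on $\GSp$ whose EM category of algebras is again (isomorphic to) $\CO$. The heart of the argument is therefore the claim that the monad $\widetilde{\LL}\TTp$ on $\GSp$, together with the full subcategory $\Gretp \hookrightarrow \GSp$, satisfies the hypotheses of \cite[Sections~1~\&~2]{BMW12}, i.e.\ that $\widetilde{\LL}\TTp$ \emph{has arities} $\Gretp$ (in the sense of \cref{ssec: Weber}).

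First I would record that the diagram (\ref{eq: CO Gr diag picture}) is well-defined: the category $\Klgrt$ is by definition the full subcategory of $\CO$ obtained in the bo-ff factorisation of $\Gret \to \GS \to \CO$, so $\Klgrt = \Theta_{\widetilde{\LL}\TTp, \Gretp}$ in the notation of \cref{ssec: Weber}, and the inclusion $\fisinvp \hookrightarrow \Gretp$ is dense by \cref{lem: essential cover} and \cref{ss. pointed}. Then, following the template of \cref{ssec. combine}, the fully faithfulness of $N\colon \CO \to \pr{\Klgrt}$ reduces to verifying the Weber conditions for the \emph{single} lifted monad $\widetilde{\LL}\TTp$ on $\GSp$, rather than for the composite $\LL\DD\TT$ on $\GS$ (which, as noted in \cref{sec. monad CO} and the opening of \cref{sec. nerve}, fails the conditions because of the $\DD$-factor). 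The point is that passing to $\GSp$ absorbs the problematic unit combinatorics into the base category, exactly as in the modular operad case.

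The main obstacle, and the step requiring genuine work, is establishing that $\widetilde{\LL}\TTp$ preserves the relevant sifted colimits and that the arity functor is dense and fully faithful on nerves; concretely, I expect the crux to be showing the Segal-type identity for graphs of graphs built from \emph{disjoint unions} of connected graphs, since $\widetilde{\LL}$ introduces the external product and hence disconnected graphs into the picture. For the connected part this is precisely \cref{MO nerve} applied to $\TTp$; what must be added is that the free external product monad $\widetilde{\LL}$, lifted along the distributive law $\lambda_{\LL\DD}$, is compatible with the arities. Here I would use that every graph is canonically the disjoint union of its connected components (\cref{defn. conn graph}), that $S(\G\amalg\H)=S(\G)\times S(\H)$ for graphical species (\cref{ex. product union}), and that $\widetilde{L}T = \Tk$ reflects the decomposition of \cref{lem. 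LT is Tk}; these together should let me reduce the arity condition for $\widetilde{\LL}\TTp$ to the already-established arity condition for $\TTp$ componentwise.

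Once the arity condition is verified, the remaining statements follow formally from \cite[Section~4]{Web07} and \cite{BMW12}: the nerve $N\colon \CO \to \pr{\Klgrt}$ is fully faithful, and its essential image is characterised by the Segal condition, namely that a presheaf $P$ lies in the image if and only if $j^*P$ is a graphical species, which by \cref{lem: essential cover} and (\ref{eq: pointed sheaves}) is precisely the condition
\begin{equation*}
P(\G) = \mathrm{lim}_{(\C,b)\in\elG} P(\C) \quad \text{for all graphs } \G,
\end{equation*}
as asserted in (\ref{eq. Segal nerve sec}). Finally, I would remark that the full faithfulness transfers to $\CA$ along the equivalence $\CA \simeq \CO$ of \cref{thm. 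CO CA}, giving \cref{thm nerve intro} in the form stated in the introduction.
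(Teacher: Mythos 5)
Your overall architecture is the same as the paper's: identify $\CO$ with the algebras for $\LL\DD\TT$, lift along the distributive laws to the monad $\TTpk = \widetilde{\LL}\TTp$ on $\GSp$, let $\Klgrt$ be the bo-ff factorisation, and invoke \cite{BMW12} once $\TTpk$ is shown to have arities $\Gretp$; your treatment of the essential image via finality of $\elG \subset \elpG$ is also correct. The gap is in the one step you identify as the crux and then dispose of too quickly: the claim that the arity condition for $\TTpk$ reduces ``componentwise'' to the already-established arity condition for $\TTp$ (\cref{MO nerve}), with ``the connected part'' handled by \cref{MO nerve} and the $\widetilde{\LL}$-layer added on via $S(\G \amalg \H) = S(\G) \times S(\H)$ and \cref{lem. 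LT is Tk}. This reduction fails, and the paper devotes \cref{rmk: Gr no arities} to an explicit counterexample. Take the \emph{connected} domain $\G = \C_\nul$ and $S_* = \yetp(\shortmid \amalg \shortmid)$: the Kleisli morphism $\beta$ represented by the $\C_\nul$-shaped graph of graphs with colimit $\C_\nul \amalg \C_\nul$ together with $z \amalg z$ is also represented by the graph of graphs $\Gg_\boxtimes$ on the connected graph $\H = \C_{\mathbf 4}^{1 \ddagger 2, 3\ddagger 4}$ with colimit $\W \amalg \W$, followed by $\kappa \amalg \kappa$. Every morphism $\C_\nul \to \H$ in $\Grp$ has the form $ch_e \circ z$, so its composite to $\shortmid \amalg \shortmid$ factors through a single stick and no triangle over $S_*$ commutes; hence the connected factorisation category $\factcat_c$ is \emph{not} connected, and these two representatives can only be linked through the disconnected intermediate $\shortmid \amalg \shortmid$. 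In other words, even for connected domains the verification of arities for $\TTpk$ is not governed by $\TTp$ and $\Grp$ at all, so there is no componentwise reduction to \cref{MO nerve}.

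What the paper does instead (\cref{def. factcat} and \cref{connected}) is work directly with the full factorisation category $\factcat$ over $\Gretp$ and prove its connectedness from the colimit formula $\Tpk S_X = \mathrm{colim}_{\X \in X\XGretsimp} S(\X)$, where $X\XGretsimp$ is the similarity category of \emph{all} (in general disconnected) admissible $X$-graphs: the corolla case is immediate from this formula, since two representatives of the same element of the colimit are by definition linked by similarity morphisms in $\Gretp$, and the case of a general connected $\G$ follows by evaluating at the elements $(\CX, b) \in \elG$. The only componentwise reduction that is legitimate is on the \emph{domain} $\G$ (colimits of graphs of graphs are computed componentwise); the intermediate graphs must be allowed to be disconnected throughout. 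So to repair your argument you should replace the proposed reduction to \cref{MO nerve} by a direct proof of connectedness of $\factcat$ for the composite monad, which is precisely the new technical content of \cref{connected}.
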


\begin{rmk}
	\label{rmk. grp no arities} Unlike the operad case (\cref{MO nerve}), the category $\Grp \subset \GSp$ of connected graphs and pointed morphisms cannot be used to prove an abstract nerve theorem for circuit operads (see \cref{rmk: Gr no arities}). 
\end{rmk}
\subsection {The composite monad $\widetilde \LL\TTp$ on $\GSEp$}

The first step is to understand the monad $\widetilde \LL\TTp$ on $\GSEp$. Here $\E$ is, as usual, an arbitrary category with sufficient limits and colimits. From \cref{ss. graphical category}, we will restrict to the case $\E = \Set$. 

For all pointed graphical species $S_* = (S, \epsilon, o)$ in $\E$, the lift $\widetilde L (S_*)$ is given by $\widetilde L (S_*) = (LS, \eta^\LL \epsilon ,\eta^\LL o)$ where $\mu^{\widetilde \LL}, \eta^{\widetilde \LL}$ are induced by $\mu^\LL$ and $\eta^\LL$ in the obvious way.

The categories $ X\Grsimp$ of similar connected $X$-graphs were defined in \cref{ssec. Similar connected X-graphs}. Similarity morphisms preserve connected components, and hence the category $X\XGretsimp$ of \textit{all} $X$-graphs and similarity morphisms, may be defined as follows: 
\begin{equation}\label{eq. XGretsimp} X\XGretsimp \defeq \mathrm{colim}_{(Y,f) \in \core{X\ov\fin} }
\coprod_{y \in Y} \left(f^{-1}(y)\right) \XGrsimp.\end{equation} 

The monad $\TTp = (\Tp, \mu^{\TTp}, \eta^{\TTp})$ with  $\Tp S_X  = \mathrm{colim}_{\X \in X \Grsimp} S(\X)$ was discussed in \cref{ssec. Similar connected X-graphs}. 

Let $S_* = (S, \epsilon, o)$ be a pointed graphical species in $\E$ and let $X$ be a finite set. Then, the endofunctor $\Tpk$ admits is described by the obvious quotient of $\Tk $: \begin{equation}\label{eq. lift LT}
\begin{array}{lll}{\Tpk} S_X& = &\mathrm{colim}_{(Y,f) \in \core{X\ov\fin} }
\left( \prod_{y \in Y}  \Tp S_{f^{-1}(y)} \right)\\
& = &\mathrm{colim}_{(Y,f) \in \core{X\ov\fin} } \left( \prod_{y \in Y}
\left(\mathrm{colim}_{(\G, \rho) \in (f^{-1}(y))\XGrsimp} S(\G)\right)\right)\\
& = &\mathrm{colim}_{\X \in X\XGretsimp} S(\X).%\\

\end{array}\end{equation}

And, 
\[ \begin{array}{lll}
\Tp \tilde L S_X& = &\mathrm{colim}_{ \X \in X \Grsimp}  \mathrm{lim}_{(\C,b) \in \elG[\X]} LS(\C)\\
&=& \mathrm{colim}_{ \X \in X \Grsimp}  \mathrm{lim}_{ \Gg \in\CorGg} S(\Gg(\X)).
\end{array}\] 
The distributive law $\Tp \widetilde L \Rightarrow \Tpk$ is induced by the distributive law $\lambda_{\LL \TT} \colon TL \Rightarrow LT = \Tk$, which is compatible with similarity morphisms by construction: So, for each finite set $X$, a colimit of $S$ indexed by pairs $(\X, \Gg)$ of a connected admissible $X$-graph $\X$, and $\Gg \in \CorGg$, is replaced by the %with the 
colimit indexed over 
(in general not connected) admissible $X$-graphs $\Gg(\X)$. 

\subsection{A graphical category for circuit operads}\label{ss. graphical category}

Assume $\E = \Set$ for the remainder of the paper. 

The category $\Gretp$ obtained in the bo-ff factorisation of $\Gret \to \GS \to \GSp$ has been discussed in \cref{ss. pointed}. Let $\Klgrt$ be the category obtained in the bo-ff factorisation of $\Gret \to \GS \to \CO$ (and $\Gretp \to \GSp \to \CO$). This is the full subcategory of the Kleisli category ${\GSp}_{\TTpk}$ of $\TTpk$ on free circuit operads of the form $\Tpk \yetp \G = LDT \yet \G$ for graphs $\G \in \Gret$. So, we may regard objects of $\Klgrt$ as graphs. Morphisms $ \beta \in \Klgrt(\G, \H)$ are given by morphisms in $\GSp (\yetp \G, \Tpk \yetp \H)$.

To understand $\Klgrt$, the first step is therefore to describe $\Tpk \yetp \H$ for all graphs $\H$.

Since each connected component of $X\XGretsimp $ contains an (in general, not connected) admissible $X$-graph $\X$, it follows from (\ref{eq. lift LT}) that, for all finite sets $X$, elements of $ (\Tpk \yetp \H)_X \cong \Klgrt(\CX, \H) $ are represented by pairs $(\X, f)$ where $\X$ % = (\G, \rho)$ 
is an admissible $X$-graph and $f \in \Gretp(\X, \H)$. In particular, $f$ factors uniquely as a morphism $\X \to \X_{\setminus W_f}$ in $\Gretsimp$ followed by a morphism $f^\bot \in \Gret(\X_{\setminus W_f}, \H)$.
Pairs $(\X^1, f^1)$ and $(\X^1, f^2)$ represent the same element $[\X,f]_* \in  (\Tpk \yetp \H)_X  $ if and only if there is an object $\X_f \in X\XGretsimp$ and a commuting diagram 
\begin{equation}\label{connected graph factors}\xymatrix{ 
	\X^1 \ar[rr]^-{g^1} \ar[drr]_{f^1}& & \X^f  \ar[d]^{f^\bot }&&\ar[ll]_-{g^2} \X^2 \ar[dll]^{f^2} \\&&\H&&}\end{equation} in $\Gretp$ such that $g^j$ are similarity morphisms in $X\XGretsimp$ for $j = 1,2$, and $f^\bot \in \Gret (\X^f, \H)$ is an (unpointed) \'etale morphism.

In particular, for all $\H$, $ e \in E(\H)$, and all $m \geq 1$, the following special case of (\ref{connected graph factors}) commutes in $\Gretp$:
\begin{equation}\label{0 connected} \xymatrix{ \C_\nul \ar[rr]^-{  z} \ar[drr]_{ch_e \circ z } & &(\shortmid)\ar[d]^{ch_e}&& \Wl \ar[ll]_{\kappa } \ar[dll]^-{ch_e \circ \kappa}\\
	&&\H.&&}\end{equation} 

Since $\Klgrt (\G, \H) \cong \Tpk \yetp \H (\G)   = \mathrm{lim}_{ (\C, b) \in \elG} \Tpk \yetp \H(\C)$, morphisms $\gamma \in \Klgrt(\G, \H)$ are represented by pairs $(\Gg, f)$, where $\Gg: \elG \to \Gret$ is a non-degenerate $\G$-shaped graph of graphs, and $f \in \Gretp(\Gg(\G), \H)$ is a morphism in $\Gretp$ (see \cite[Section~8.1]{Ray20}). 

The following lemma generalises \cite[Lemma~8.9]{Ray20}, and is proved in exactly the same manner.

 \begin{lem}\label{lem: Klgr representatives} Let $\G$ and $\H$ be graphs and, for $i = 1,2$, let $\Gg^i$ be a non-degenerate $\G$-shaped graph of graphs with colimit $\Gg^i(\G)$, and $f^i \in \Gretp(\Gg^i(\G), \H)$.
	Then $(\Gg^1, f^1), (\Gg^2, f^2)$ represent the same element $\gamma$ of $\Klgrt(\G, \H)$ if and only if there is a representative $(\Gg, f)$ of $\gamma$, and a commuting diagram in $\Gretp$ of the following form: 
	\begin{equation}\label{eq: well-defined kleisli}\xymatrix{\Gg^1(\G) \ar[rr] \ar[drr]_{f^1} && \Gg(\G)\ar[d]^{f} &&\ar[ll] \Gg^2(\G) \ar[dll]^{f^2}\\&&\H,&&}\end{equation} where the morphisms in the top row are vertex deletion morphisms (including $z \colon \C_\nul \to (\shortmid)$) in $\Gretsimp$, and $f \in \Gret (\Gg(\G), \H)$. \end{lem}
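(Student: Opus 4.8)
\textbf{Proof strategy for \cref{lem: Klgr representatives}.}

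The plan is to reduce the statement about morphisms in $\Klgrt(\G,\H)$ to the corresponding statement for \emph{edges}, i.e.\ for the sets $\Klgrt(\CX,\H)\cong (\Tpk\yetp\H)_X$, which were analysed in the previous subsection via the explicit description \eqref{connected graph factors}. The key structural fact I would invoke is the canonical isomorphism $\Klgrt(\G,\H)\cong \Tpk\yetp\H(\G) = \mathrm{lim}_{(\C,b)\in\elG}\Tpk\yetp\H(\C)$, so that a morphism $\gamma$ is determined by a compatible family of its components $\gamma_b\in\Tpk\yetp\H(\C)$ over the objects $(\C,b)$ of the element category $\elG$. A representative $(\Gg,f)$ of $\gamma$ is precisely a non-degenerate $\G$-shaped graph of graphs $\Gg$ together with $f\in\Gretp(\Gg(\G),\H)$, and restricting along the universal embeddings $b_{\Gg}\colon\Gg(b)\to\Gg(\G)$ (from \cref{colimit exists}) recovers the components $\gamma_b$ as the classes $[\Gg(b), f\circ b_{\Gg}]_*$.

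First I would establish the ``only if'' direction by working componentwise. Suppose $(\Gg^1,f^1)$ and $(\Gg^2,f^2)$ represent the same $\gamma$. For each essential $(\C_{X_b},b)\in\elG$, the two pairs restrict to representatives of the same element $\gamma_b\in\Tpk\yetp\H(\C_{X_b})$; by \eqref{connected graph factors} there is then a common $X_b$-graph $\X^f_b$ with similarity morphisms $g^{1}_b,g^{2}_b$ and an \'etale $f^\bot_b\in\Gret(\X^f_b,\H)$ making the relevant triangles commute. The task is to assemble these local data into a single non-degenerate $\G$-shaped graph of graphs $\Gg$, with a morphism $f\in\Gret(\Gg(\G),\H)$, fitting into \eqref{eq: well-defined kleisli}. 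I would carry this out by setting $\Gg(b)\defeq\X^f_b$ on objects and checking that the $g^j_b$ are natural in $(\C,b)$, so that they induce vertex-deletion morphisms $\Gg^j(\G)\to\Gg(\G)$ of colimits (using that vertex deletion and $z\colon\C_\nul\to(\shortmid)$ generate $\Grsimp$, by \cref{def: similar}, and that colimits of graphs of graphs are compatible with the full embedding $\Gret^{(\Gnov)}\hookrightarrow\Gret^{(\G)}$ from \eqref{eq. GgW}). The morphism $f$ is then the one induced on $\Gg(\G)$ by the compatible family $f^\bot_b$. The ``if'' direction is the easy one: given a diagram of the form \eqref{eq: well-defined kleisli}, applying the equivalence relation defining $\Tpk\yetp\H$ componentwise shows immediately that $(\Gg^1,f^1)$ and $(\Gg^2,f^2)$ represent the same $\gamma$.

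The main obstacle I anticipate is the \emph{gluing} step: ensuring that the locally chosen common graphs $\X^f_b$ and similarity morphisms $g^j_b$ can be chosen coherently across all of $\elG$, so that they genuinely assemble into a functor $\Gg\colon\elG\to\Gret$ and induce honest morphisms \emph{of colimits} rather than merely a vertex-by-vertex comparison. The subtlety is exactly that similarity morphisms out of wheels $\Wl$ and the empty-vertex graph $\C_\nul$ fail to preserve ports (\cref{prop. vertex deletion}(3)), so the naturality of the $g^j_b$ and the well-definedness of $f$ on the glued colimit must be checked against the non-port-preserving cases \eqref{0 connected}. I expect this to be handled by the same bookkeeping as in \cite[Lemma~8.9]{Ray20}, using \eqref{eq. deleted colims} and \eqref{eq. colimits agree} to identify $\Gg_{\setminus W}(\X_{\setminus W})$ with $\Gg^W(\X)_{\setminus W}$, so that the factorisation of $f$ through vertex deletions followed by an \'etale map is forced; once this identification is in place, the remainder is routine diagram-chasing in $\Gretp$.
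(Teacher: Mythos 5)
Your overall skeleton --- identifying $\Klgrt(\G,\H)$ with $\mathrm{lim}_{(\C,b)\in\elG}\Tpk\yetp\H(\C)$, applying the corolla-level characterisation \eqref{connected graph factors} componentwise, and then assembling --- is indeed the shape of the argument the paper intends (the paper gives no proof here beyond deferring to \cite[Lemma~8.9]{Ray20}, which runs along these lines). But there is a concrete step in your plan that fails: you insist on assembling the local data into a \emph{non-degenerate} $\G$-shaped graph of graphs $\Gg$, and in general no such $\Gg$ exists; the mediating object of \eqref{eq: well-defined kleisli} is often forced to be degenerate, which is exactly why the statement explicitly admits $z\colon\C_\nul\to(\shortmid)$ in the top row and only asks that $f$ land in $\Gret$. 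For a minimal example, take $\G=\C_\nul$ and $\H=(\shortmid)$, with $\Gg^1\colon(\C_\nul,id)\mapsto\C_\nul$, $f^1=z$, and $\Gg^2\colon(\C_\nul,id)\mapsto\W$, $f^2=\kappa$ (\cref{wheel deletion}). These represent the same $\gamma$ by \eqref{0 connected}, but the only object of $\Gretsimp$ receiving similarity morphisms from both $\C_\nul$ and $\W$ is $(\shortmid)$, so the mediating graph of graphs must assign a stick to the vertex of $\C_\nul$ --- it is degenerate in the sense of \cref{defn degenerate} (and its colimit exists only because \cref{colimit exists} covers the degenerate case). Requiring non-degeneracy makes your construction break in precisely the wheel/isolated-vertex situations that the pointed formalism (the ``problem of loops'') was built to handle.

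The second issue is that the gluing step you rightly flag as the main obstacle is where the whole content of the lemma lives, and your proposal defers it rather than supplying it. The objects $\X^f_b$ produced by \eqref{connected graph factors} are far from unique, so ``checking that the $g^j_b$ are natural in $(\C,b)$'' is not a formality: for arbitrary local choices it is simply false, and there is no canonical way to correct them after the fact. The argument the paper points to proceeds in the opposite direction, global to local: factor each $f^i\in\Gretp(\Gg^i(\G),\H)$ \emph{once}, using that $(\Gretsimp,\Gret)$ is an orthogonal factorisation system on $\Gretp$ (\cref{prop. vertex deletion}), as a vertex-deletion morphism $\Gg^i(\G)\to\Gg^i(\G)_{\setminus W_i}$ followed by an unpointed \'etale morphism; then \eqref{eq. deleted colims} exhibits $\Gg^i(\G)_{\setminus W_i}$ as the colimit of a (possibly degenerate) $\G$-shaped graph of graphs, so each reduced pair is again a representative of $\gamma$, and uniqueness of the factorisation, read componentwise through the limit over $\elG$, identifies the two reductions and produces the cospan \eqref{eq: well-defined kleisli}. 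Since the global factorisation automatically restricts to the correct factorisation on every piece $b_{\Gg^i}$, no compatibility of independently chosen local data ever has to be verified. Your closing observation that the factorisation is ``forced'' is the right idea, but it must be the first move of the proof rather than a patch applied to the gluing.
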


 The following terminology is from \cite{Koc16}. 
\begin{defn}\label{free} A \emph{(pointed) free morphism in $\Klgrt$} is a morphism in the image of the inclusion $\Grp \hookrightarrow \Klgrt$. An \emph{unpointed free morphism} in $\Klgrt$ is a morphism in the image of $\Gr \hookrightarrow \Grp \hookrightarrow \Klgrt$. A \emph{generic morphism $[\Gg]$ in $\Klgrt$} is a morphism in $\Klgrt(\G,\H)$ with a representative of the form $(\Gg, id_{\H})$. \end{defn}

So, a free morphism in $\Klgrt(\G, \H)$ has a representative of the form $(id, f)$ where $id \colon \elG \to \Gret$ is the trivial identity $\G$-shaped graph of graphs with colimit $\G$ and $f \in \Gretp(\G, \H)$. Generic morphisms $[\Gg]$ in  $\Klgrt(\G, \H)$ are described by non-degenerate $\G$-shaped graph of graphs $\Gg$ with colimit $\H$, and hence induce isomorphisms $E_0(\G) \xrightarrow{\cong} E_0(\H)$ on boundaries. 

In particular, morphisms in $\Klgrt(\G, \H)$ factor as $\G \xrightarrow{[\Gg]} \Gg(\G) \xrightarrow{\delW} \Gg(\G)_{\setminus W}\xrightarrow {f} \H$. Here $\Gg\colon \elG \to \Gret$ is a non-degenerate $\G$-shaped graph of graphs with colimit $\Gg(\G) \in \Gret$, 
and $f \in \Gret(\Gg(\G)_{\setminus W}, \H)$ is an (unpointed) \'etale morphism $\Gret$.

\subsection{Factorisation categories and the nerve theorem}

Let ${\GSp}_{\TTpk}$ be the Kleisli category of the monad $\TTpk$ on $\GSp$. 
By \cite[Proposition~2.5]{BMW12}, the monad $\TTpk$ together with the category $\Gretp$ satisfy the conditions of the nerve theorem \cite[Theorem~1.10]{BMW12} (that is, $\TTpk$ has arities $\Gretp$), and therefore the nerve functor $N \colon \CO \to \pr{\Klgrt}$ is fully faithful 
if certain categories -- associated to morphisms in ${\GSp}_{\TTpk}$ -- are connected.

Let $S_*$ be a pointed graphical species in $\Set$, $ \G$ a graph in $\Gret$, and let $\beta \in {\GSp}_{\TTpk}(\G, S_*) $ be a morphism in the Kleisli category of $\TTpk$. By the Yoneda lemma, \[{\GSp}_{\TTpk}(\G, S_*) \cong \GSp (\G, \Tpk S_*) \cong \Tpk S_* (\G) = \mathrm{\mathrm{colim}_{(\C,b) \in \elG}} \Tpk S_*(\C) \] where
\[ \Tpk S_*(\C)\cong \left \{ \begin{array}{ll}
S_\S & \C \cong (\shortmid),\\
\mathrm{colim}_{\X \in X\XGretsimp} S (\X) & \C \cong \CX.
\end{array}\right.\]

So, $\beta \in{\GSp}_{\TTp}(\G, S_*)$ is represented by pairs $(\Gg, \alpha)$ of
  \begin{itemize}
  	\item a non-degenerate $\G$-shaped graph of graphs $\Gg$ with colimit $\Gg(\G)$, viewed as a  {generic} morphism $[\Gg] \in \Klgrt (\G, \Gg(\G))  = {\GSp}_{\TTp}(\G, \Gg(\G))$, 
  	\item an element $\alpha \in S(\Gg (\G))$ viewed, by the Yoneda lemma, as a morphism $\alpha \in {\GSp}_{\TTp}(\yetp\Gg(\G), S_*)$ 
  \end{itemize}
such that the induced composition $\G \xrightarrow {[\Gg]} \Gg(\G) \xrightarrow {\alpha}  S(\Gg(\G)) \twoheadrightarrow (\Tpk S_*)(\G)$ is precisely $\beta$.

\begin{defn}\label{def. factcat}
	Objects of the  \emph{factorisation category} $\factcat$ of $\beta$ are pairs $(\Gg, \alpha)$ that represent $\beta$ as above, and morphisms in $\factcat((\Gg^1, \alpha^1), (\Gg^2, \alpha^2))$ are commuting diagrams in ${\GSp}_{\TTpk}$
\begin{equation}\label{factcat mor}
\xymatrix{&& {[\Gg^1]}(\G)\ar[dd]_{g} \ar[drr]^{ \alpha^1}&&\\
	\G \ar[urr]\ar[drr]&&&& S_*\\
	&&{[\Gg^2]}(\G) \ar[urr]_{ \alpha^2}&&}
\end{equation} such that $g $ is a morphism in $ \Gretp\hookrightarrow {\GSp}_{\TTpk}$.

\end{defn}

\begin{lem}\label{connected} For all pointed graphical species $S_*$ in $\Set$, all graphs $\G \in \Gret$, and all $ \beta \in \GSp(\G, \Tpk S)$, the category $\factcat$ is connected.
\end{lem}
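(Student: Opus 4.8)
The plan is to show that the factorisation category $\factcat$ has a terminal object, or at least that any two objects can be connected by a zig-zag of morphisms passing through a canonical ``most generic'' representative. This mirrors the strategy used for the modular operad nerve theorem (\cite[Lemma~8.11]{Ray20}), adapted to account for the external product encoded by the monad $\widetilde\LL$.

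First I would analyse the structure of an arbitrary object $(\Gg, \alpha)$ of $\factcat$. By \cref{lem: Klgr representatives} and the factorisation of Kleisli morphisms through generic and free morphisms discussed in \cref{ss. graphical category}, the generic morphism $[\Gg]\colon \G \to \Gg(\G)$ is determined by a non-degenerate $\G$-shaped graph of graphs, and $\alpha \in S(\Gg(\G))$. The key observation is that, since $\Gg$ is non-degenerate, the colimit $\Gg(\G)$ is (up to the vertex-deletion and similarity data) recovered from the combinatorics of $\G$ together with the choice of $\X_b \defeq \Gg(b)$ at each vertex. I would use \cref{colimit exists} to track how the ports, edges, and vertices of $\Gg(\G)$ arise from those of $\G$ and the $\X_b$.

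Next, I would construct, for each object $(\Gg, \alpha)$, a canonical morphism in $\factcat$ to a distinguished representative. The natural candidate is the representative obtained by deleting all bivalent and isolated vertices, i.e.~passing to $\Gg(\G)_{\setminus W}$ for the maximal admissible $W$, using \cref{lem. terminal X} which guarantees that each connected component of $X\XGrsimp$ has a terminal object $\X^\bot$ with no bivalent or isolated vertices. The commuting diagram (\ref{factcat mor}) required for a morphism in $\factcat$ is then supplied by the vertex deletion morphism $\delW$, which is a morphism in $\Gretsimp \hookrightarrow \Gretp \hookrightarrow {\GSp}_{\TTpk}$, together with the compatibility $S(\delW)(\alpha^\bot) = \alpha$ that holds because $\alpha$ factors through the limit defining $S$ on the deleted graph (cf.~(\ref{eq: pointed sheaves}) and the fact that $\elG \hookrightarrow \elpG$ is final). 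This shows every object of $\factcat$ admits a morphism to an object whose underlying graph of graphs lands in admissible (reduced) graphs.

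The main obstacle, and where care is needed, is the external-product direction: unlike the connected case of \cref{MO nerve}, the target graphs here need not be connected, so two representatives $(\Gg^1, \alpha^1)$ and $(\Gg^2, \alpha^2)$ may present $\beta$ via different decompositions of $\Gg^i(\G)$ into connected components (different ``bracketings'' of the product $\boxtimes$). I would resolve this by appealing to the description (\ref{eq. XGretsimp}) of $X\XGretsimp$ via $\core{X\ov\fin}$, which shows that all such decompositions of a fixed admissible $X$-graph are identified through the colimit, together with \cref{lem: Klgr representatives} applied componentwise. Concretely, given two reduced representatives mapping to the same $\beta$, \cref{lem: Klgr representatives} yields a common refinement $(\Gg, f)$ with vertex-deletion morphisms down from each; since both reduced representatives have no bivalent or isolated vertices, these deletion morphisms are isomorphisms, exhibiting the two as connected in $\factcat$. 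Combining the two steps---every object connects to a reduced one, and any two reduced representatives of $\beta$ are connected---establishes that $\factcat$ is connected, completing the verification that $\TTpk$ has arities $\Gretp$ and hence (by \cite[Theorem~1.10]{BMW12}) that the nerve $N\colon \CO \to \pr{\Klgrt}$ is fully faithful.
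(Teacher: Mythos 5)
Your strategy---connect every object of $\factcat$ to a canonical ``reduced'' representative obtained by deleting all bivalent and isolated vertices---breaks down, and it breaks down exactly on the cases this lemma exists to handle. There are two separate problems with your first step. First, the decoration transfer goes the wrong way: a morphism $(\Gg,\alpha)\to(\Gg^\bot,\alpha^\bot)$ in $\factcat$ with underlying map $\delW$ requires, by (\ref{factcat mor}), that $S(\delW)(\alpha^\bot)=\alpha$, i.e.\ that $\alpha$ lie in the image of $S(\delW)\colon S(\Gg(\G)_{\setminus W})\to S(\Gg(\G))$. That map inserts (contracted) units at the deleted vertices, so any $\alpha$ whose decorations at bivalent or isolated vertices are not units is simply not in its image, and the morphism you want does not exist. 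The finality of $\elG\hookrightarrow\elpG$ recorded in (\ref{eq: pointed sheaves}) only identifies two limit formulas for $S(\G)$; it does not make elements descend along vertex deletion. Second, even when $\alpha$ does descend, the reduced graph need not underlie an object of $\factcat$: if a component of $\Gg(\G)$ is a line graph, a wheel $\Wm$, or an isolated vertex $\C_\nul$, then by \cref{lem. terminal X} its reduction is the stick $(\shortmid)$, so the ``reduced'' graph of graphs is degenerate and is excluded from $\factcat$ by \cref{def. factcat}. Indeed, for $\beta$ the class of a contracted unit, with representatives $(\C_\nul, S(z)(c))$ and $(\W, S(\kappa)(c))$ as in (\ref{0 connected}), there is no reduced representative at all, so no argument routing all connectivity through reduced objects can succeed; this is precisely the problem of loops.

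Your second step also misfires: \cref{lem: Klgr representatives} is about morphisms in $\Klgrt(\G,\H)$, whose codomain is (the free circuit operad on) a graph, whereas $\beta$ here lands in an arbitrary pointed graphical species $S_*$, so the lemma does not apply as stated. What does work---and is the paper's actual argument---is to use the colimit presentation $(\Tpk S_*)_X = \mathrm{colim}_{\X\in X\XGretsimp}S(\X)$ directly: two representatives of the same element of a colimit of sets are joined by a zigzag in the indexing category compatible with the decorations, i.e.\ by similarity morphisms, and these transport to morphisms of $\factcat$. When such a zigzag passes through a stick (which is not an object of $\factcat$), it is shortcut by a composite in $\Gretp$ such as $ch_a\circ z\in\Gretp(\C_\nul,\W)$, which satisfies $S(ch_a\circ z)\bigl(S(\kappa)(c)\bigr)=S(z)(c)$ because $\kappa\circ ch_a=id_{(\shortmid)}$; no intermediate object on the stick is needed. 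The paper proves this first for $\G$ a corolla and then handles general connected $\G$ vertexwise, using that colimits of graphs of graphs are computed componentwise. As written, your proposal establishes connectivity only for those $\beta$ all of whose representatives have unit decorations at bivalent vertices and no line, wheel, or isolated-vertex components, which excludes the essential cases.
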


\begin{proof} 
If $\G \cong (\shortmid)$, then $\Gg$ is isomorphic to the identity $(\shortmid)$-shaped graph of graphs. 
 If $\G \cong \CX$ for some finite set $X$, and $\beta \in \Tpk S_X \cong  {\GSp}_{\TTp}(\G, S_*)$, then objects of $\factcat$ have the form $(\X, \alpha)$ where $\X\in X \Grisok$ is an admissible (in general not connected) $X$-graph, $\alpha \in S_*(\X)$ and $\beta$ is the image of $\alpha$ under the universal morphism $S_*(\X) \to (\Tpk S_* )_X$. Since $(\Tpk S_* )_X = \mathrm{colim}_{ \X' \in X\XGretsimp}S(\X')$, if $(\X', \alpha')$ also represents $\beta$, then $\X$ and $\X'$ are connected in $X\Gretsimp$ and hence they are similar in $\Gretp$. 
Hence, if $\G = \C$ is in the image of $\fisinvp \subset \Gretp$, then $\factcat$ is connected for all $\beta \in{\GSp}_{\TTp}(\C, S_*) \cong \Tpk S(\C)$.

For general $\G \in \Gretp$, and $\beta \in{\GSp}_{\TTp}(\G, S_*)$, observe first that, since colimits of graphs of graphs are computed componentwise, we may assume that $\G$ is connected. Assume also that $\G \not \cong \C_\nul$. Let $(\Gg^i, \alpha^i)$, $i = 1, 2$ be objects of $\factcat$. Then, for each $(\CX, b) \in \elG$, $\Gg^1(b)$ and $\Gg^2(b)$ are in the same connected component of $X \XGretsimp$, since the lemma holds when $\G \cong \CX$. Therefore, $\Gg^1(\G)$ and $\Gg^2(\G)$ are similar in $\Gretp$. 
\end{proof}

\cref{nerve theorem} now follows from \cite[Sections~1~\&~2]{BMW12}.

\begin{proof}[Proof of \cref{nerve theorem}]
The induced nerve functor $N\colon \CO \to \pr{\Klgrt}$ is fully faithful by \cref{connected}, \cite[Proposition~2.5]{BMW12}, and \cite[Propositions~ 1.5~\&~1.9]{BMW12}.

Moreover, by \cite[Theorem~1.10]{BMW12} its essential image is the subcategory of those presheaves on $\Klgrt$ whose restriction to $\Gretp$ are in the image of the fully faithful embedding $\GSp \hookrightarrow \pr{\Gretp}$. So, a presheaf $P$ on $\Klgrt$ described the nerve of a circuit operad if and only if 
$ P(\G)  \cong \mathrm{lim}_{(\C,b) \in \elpG} P(\C)$, and hence, by \cite[Lemma~7.8]{Ray20} (finality of $\elG \subset \elpG$, see also \cite[Section~IX.3]{Mac98}), 
\[ P(\G)  = \mathrm{lim}_{(\C,b) \in \elG} P(\C). \] Therefore, the Segal condition (\ref{eq. Segal nerve sec}) is satisfied and \cref{thm: CO nerve} is proved. 
\end{proof}

\begin{rmk}\label{rmk: Gr no arities} 
Let $S_*$ be a pointed graphical species and $\G$ a connected graph. A morphism $\beta \in {\GSp}_{\TTpk}(\G, S_*) $ in the Kleisli category of $\TTpk$ is represented by a pair $(\Gg, \underline\alpha)$ of 
\begin{itemize}
	\item a non-degenerate $\G$-shaped graph of connected graphs $\Gg \colon \elG \to \Gr$ with (connected) colimit $\Gg(\G)$;
	\item an element $\underline \alpha \in \widetilde L S_*(\Gg(\G))$ such that, for each $(\CX, b) \in \elG[\Gg(\G)]$, $L S_*(b)(\underline \alpha)$ is given by a tuple $(\phi_i)_{i = 1}^k \in  LS_X$ (with $\phi _i \in S_{X_i}$, and $\coprod_{i = 1}^k X_i = X$) of elements of $S$. 
\end{itemize}
	
	Such pairs $(\Gg, \underline \alpha)$ are the objects of the \textit{connected factorisation category} $\factcat_c$ of $\beta$. Morphisms in $\factcat_c ((\Gg, \underline \alpha), (\Gg', \underline \alpha'))$ are pointed morphisms $g \in \Grp(\Gg(\G), \Gg'(\G))$ of connected graphs, such that the following diagram commutes in ${\GSp}_{\TTpk}$:
\begin{equation}
\label{eq. fact grp co}
	\xymatrix{&& {\Gg}(\G)\ar[dd]_{g} \ar[drr]^{\underline \alpha}&&\\
		\G \ar[urr]\ar[drr]&&&& S_*.\\
		&&{\Gg'}(\G) \ar[urr]_{ \underline \alpha}&&}
\end{equation}

By \cite[Proposition~2.5]{BMW12}, the full subcategory of $\Klgrt$ on the connected graphs induces a fully faithful nerve functor from $\CO$ (i.e.~$\Grp$ provides arities for $\TTpk$) if and only if $\factcat_c$ is connected for all connected graphs $\G$, all pointed graphical species $S_*$, and all $\beta \in {\GSp}_{\TTpk}(\G, S_*)$. %, the category $\factcat_c$ is connected. 

To see that this is not the case, 
let $S_* = \yetp (\shortmid \amalg \shortmid)$, and let $\G = \C_\nul$. A $\C_\nul$-shaped graph of connected graphs is just a connected graph $\H$ with empty boundary. Recall (from \cref{ssec. LL}) that $\mathsf {Cor}_{\boxtimes}^{(\H)}$ denotes the category of non-degenerate $\H$-shaped graphs of disjoint unions of corollas. An element $ \underline \alpha\in \widetilde L \yetp (\shortmid \amalg \shortmid)(\H)$ is just an $\H$-shaped graph of disjoint unions of corollas $\Gg_\boxtimes \in \mathsf {Cor}_{\boxtimes}^{(\H)}$ with colimit $\Gg_{\boxtimes}(\H) \in \Gretp$, followed by a morphism $f \in \Gretp(\Gg_\boxtimes (\H), (\shortmid \amalg \shortmid))$. 
In particular, each component of $\Gg_{\boxtimes}(\H)$ is either an isolated vertex, or of the form $\Lk$ or $\Wm$ ($k,m \geq 0$). 

So, let $\beta \in {\GSp}_{\TTpk}(\C_\nul,  (\shortmid \amalg \shortmid)) $ be the morphism represented by the unique $\C_\nul$-shaped graph of graphs with colimit $\C_\nul \amalg \C_\nul$, together with the componentwise morphism $(z \amalg z) \in \Gretp (\C_\nul \amalg \C_\nul, \shortmid \amalg \shortmid) $.

Then $\beta$ is also represented by $(\H,\Gg_\boxtimes, f)$, illustrated in \cref{fig. counter}, where 
$\H = \C_{\mathbf{4}}^{1 \ddagger 2, 3 \ddagger 4}$ is the graph with two loops at one vertex, $\Gg_\boxtimes$ is given by $ (\C_{\mathbf 4}, b)\mapsto \C_\two \amalg \C_\two $ with colimit $\Gg_\boxtimes(\H) = \W \amalg \W$, and $f =( \kappa \amalg \kappa )\in \Gretp (\W \amalg \W, \shortmid \amalg \shortmid)$.

\begin{figure}	[htb!]
	
	\begin{tikzpicture}
	
	\node at (-6,-1.5) {\begin{tikzpicture}[scale = .5]
		\filldraw[blue] (0,0) circle (.2cm);
		%				\filldraw [red] (0,0) circle (2pt);
		%				\filldraw [red] (-.4,0) circle (2pt);
		\end{tikzpicture}
	};
\draw[gray, line width = 1.2, dashed, ->] (-5.5,-1.25)--(-4,-.2);
\node at(-5,-.5){$=$};
\draw[gray, line width = 1.2, dashed, ->] (-5.5,-1.8)--(-5,-2.2);
%\node at(-6,-2.5){$\H$};
	\node at (7,-1.5) {\begin{tikzpicture}[scale = .5]
	\draw[ thick] (0,-2.2)--(0,-.8)
	(.5,-2.2)--(.5,-.8);
	%				\filldraw [red] (0,0) circle (2pt);
	%				\filldraw [red] (-.4,0) circle (2pt);
	\end{tikzpicture}
};
\draw[gray, line width = 1.2, dashed,<-] (6.5,-1.25)--(4,-.2);
\node at(6.5,-.3){$(z \amalg z)$};
\draw[gray, line width = 1.2, dashed, <-] (6.5,-1.8)--(5,-2.5);
\node at(6.5,-2.8){$(\kappa \amalg \kappa)$};
		\node at (0,0){
			\begin{tikzpicture}
				\node at (-1.8,0){
					\begin{tikzpicture}[scale = .5]
					%	\draw (1,0) circle (1cm);
						\draw[draw = green, fill = green, fill opacity = .2] (-.2,0) circle (.5cm);
						\filldraw [red] (0,0) circle (2pt);
						\filldraw [red] (-.4,0) circle (2pt);
					\end{tikzpicture}
				};
		\draw[gray, line width = 1.2, dashed, ->] (-4.2,0)--(-2.8,0);
		\draw[gray, line width = 1.2, dashed, ->] (-.8,0)--(.8,0);
		\node at(-3.5,.3){$\C_\nul \amalg \C_\nul$};	
			\node at (-5,-.2){
				\begin{tikzpicture}[scale = .5]
			%	\draw (1,0) circle (1cm);
				\filldraw[green] (0,0) circle (.3cm);
				\node at (0, -1){$\C_\nul$};
%				\filldraw [red] (0,0) circle (2pt);
%				\filldraw [red] (-.4,0) circle (2pt);
				\end{tikzpicture}
			};
		
		\node at (1.5,0){
			\begin{tikzpicture}[scale = .5]
		%	\draw (1,0) circle (1cm);
		%	\draw[draw = green, fill = green, fill opacity = .2] (-.2,0) circle (.5cm);
			\filldraw [red] (0,0) circle (3pt);
			\filldraw [red] (-.4,0) circle (3pt);
			\end{tikzpicture}
		};
			\end{tikzpicture}
		};
			\node at (0,-3){
		\begin{tikzpicture}
		\node at (-.8,0){
			\begin{tikzpicture}[scale = .5]
			\draw (1,0) circle (1cm);
				\draw (-1.4,0) circle (1cm);
			\draw[draw = green, fill = green, fill opacity = .2] (-.2,0) circle (.5cm);
			\filldraw [red] (0,0) circle (2pt);
			\filldraw [red] (-.4,0) circle (2pt);
			
			\end{tikzpicture}
		};
		\draw[gray, line width = 1.2, dashed, ->] (-3.2,0)--(-2.2,0);
	\draw[gray, line width = 1.2, dashed, ->] (.7,0)--(1.7,0);
	\node at(-2.7,-.3){$\Gg_\boxtimes$};		
		\node at (-4.4,.2){
			\begin{tikzpicture}[scale = .5]
			\draw (1,0) circle (1cm);
				\draw (-1,0) circle (1cm);
\filldraw[green] (0,0) circle (.3cm);
	\node at (0,1.5){$\H$};
%			\filldraw [red] (0,0) circle (2pt);
%			\filldraw [red] (-.4,0) circle (2pt);
			\end{tikzpicture}
		};
		
		\node at (3.2,0){
			\begin{tikzpicture}[scale = .5]
			\draw (1,0) circle (1cm);
				\draw (-1.4 ,0) circle (1cm);
		%	\draw[draw = green, fill = green, fill opacity = .2] (-.2,0) circle (.5cm);
			\filldraw [red] (0,0) circle (3pt);
			\filldraw [red] (-.4,0) circle (3pt);
			\end{tikzpicture}
		};
	\end{tikzpicture}
};
	\end{tikzpicture}

\caption{There is no zigzag of morphisms $\C_\nul \leadsto \H$ in $\Grp$ that connect $(\C_\nul, \C_\nul \amalg \C_\nul)$ and $(\H, \Gg_\boxtimes)$ in $\factcat_c$.}\label{fig. counter}
\end{figure}

Since morphisms in $\Grp$ factor as vertex deletion morphisms followed by morphisms in $\Gr$, and the single vertex of $\H$ has valency four, $\factcat_c$ is connected if there is a morphism 
of the form $ch_e\circ z \in \Gr(\C_\nul, \H)$ making (\ref{eq. fact grp co}) commute. But then the image of $\C_\nul$ under the induced morphism $\C_\nul \to \H \to (\shortmid \amalg \shortmid)$ factors through an inclusion $(\shortmid) \hookrightarrow (\shortmid \amalg \shortmid)$, so the diagram (\ref{eq. fact grp co}) does not commute. 

\end{rmk}

\begin{rmk}
	\label{rmk. No weak version}
Both the nerve theorem \cite[Theorem~8.2]{Ray20} for modular operads on which \cref{thm: CO nerve} is built, as well as Hackney, Robertson and Yau's nerve theorem \cite[3.6]{HRY19b}, lead to well-defined models of $(\infty, 1)$-modular operads in which the fibrant objects in the appropriate category of presheaves are precisely those that satisfy the weak Segal condition: in the case of \cite{Ray20} it followed from the results of \cite{CH15} that these were given by $P \colon \Klgr^{\mathrm{op}} \to \sSet$ such that, for all (connected) graphs $\G \in \Klgr$,
\[P(\G) \simeq \mathrm{lim}_{(\C,b) \in \elG} P(\C). \]

However, describing a similar model for circuit operads (algebras) is more challenging.  The presence of unconnected graphs means that, neither the method of \cite{HRY19b}, nor that of \cite[Corollary~8.14]{Ray20} may be directly extended to a model structure on functors from ${\Klgrt}^{\mathrm{op}}$ (or some subcategory thereof) to $\sSet$ that gives a good description of weak circuit operads. {For this reason, a description of a model for $(\infty, 1)$-circuit operads is deferred to a future work.}

\end{rmk}

\bibliography{Compactbib}{}
\bibliographystyle{plain}

\end{document}